\documentclass[11pt]{article}
\usepackage{fullpage}
\usepackage{amsmath,amssymb,amsthm}
\DeclareFontFamily{U}{stix2bb}{\skewchar\font127 }
\DeclareFontShape{U}{stix2bb}{m}{n} {<-> stix2-mathbb}{}
\DeclareMathAlphabet{\mathbb}{U}{stix2bb}{m}{n}
\usepackage[english]{babel}
\usepackage[left=3cm,right=3cm,top=3cm,bottom=4cm]{geometry}
\usepackage{caption}
\usepackage{tikz,tikz-cd}
\usepackage{graphicx}

\usetikzlibrary{snakes}
\usetikzlibrary{decorations.pathmorphing,shapes}
\usetikzlibrary{positioning}
\usepackage{braket}
\usepackage{mathtools,stmaryrd}

\SetSymbolFont{stmry}{bold}{U}{stmry}{m}{n}
\usepackage{lmodern,bm,bbm,mathrsfs}
\usepackage{manfnt}
\usepackage[scr=esstix]{mathalfa}
\usepackage{relsize}

\usepackage{xcolor}
\definecolor{lightorange}{RGB}{255,220,160}
\definecolor{lightblue}{RGB}{210,235,255}
\usepackage{enumitem}

\usepackage[backref=page, colorlinks=true, linkcolor=blue, citecolor=red, menucolor=green]{hyperref}
\usepackage[dvipsnames]{xcolor}

\numberwithin{equation}{section} 

\newtheorem*{theorem}{Theorem}
\newtheorem*{corollary}{Corollary}
\newtheorem*{lemma}{Lemma}
\newtheorem*{proposition}{Proposition}

\theoremstyle{definition}
\newtheorem*{definition}{Definition}
\newtheorem*{remark}{Remark}

\newtheorem*{example}{Example}

\newtheorem*{assumption}{Assumption}

\renewcommand{\bar}[1]{\overline{#1}}
\newcommand{\un}[1]{\ensuremath{\underline{#1}}}
\newcommand{\wt}[1]{\ensuremath{\widetilde{#1}}}
\newcommand{\wh}[1]{\ensuremath{\widehat{#1}}}
\newcommand{\cat}[1]{\mathscr{#1}}
\renewcommand{\hat}[1]{\widehat{#1}}

\renewcommand{\tilde}[1]{\widetilde{#1}}
\renewcommand{\vec}[1]{\bm{{#1}}}
\newcommand\numberthis{\addtocounter{equation}{1}\tag{\theequation}}

\newcommand*\al{\alpha}
\newcommand*\be{\beta}
\newcommand*\ga{\gamma}

\newcommand*\lam{\lambda}
\newcommand*\sig{\sigma}

\newcommand{\bA}{\mathbb{A}}
\newcommand{\bB}{\mathbb{B}}
\newcommand{\bC}{\mathbb{C}}
\newcommand{\bE}{\mathbb{E}}
\newcommand{\bF}{\mathbb{F}}
\newcommand{\bG}{\mathbb{G}}
\newcommand{\bGm}{{\mathbb{G}_m}}

\newcommand{\bH}{\mathbbm{H}}
\newcommand{\bk}{\mathbbm{k}}
\newcommand{\bK}{\mathbb{K}}
\newcommand{\bfK}{\mathbf{K}}
\newcommand{\bL}{\mathbb{L}}
\newcommand{\bM}{\mathbb{M}}
\newcommand{\bN}{\mathbb{N}}
\newcommand{\bP}{\mathbb{P}}
\newcommand{\bQ}{\mathbb{Q}}
\newcommand{\bR}{\mathbb{R}}
\newcommand{\bT}{\mathbb{T}}
\newcommand{\bV}{\mathbb{V}}
\newcommand{\bZ}{\mathbb{Z}}
\newcommand{\cA}{\mathcal{A}}

\newcommand{\cE}{\mathcal{E}}
\newcommand{\cF}{\mathcal{F}}

\newcommand{\cH}{\mathcal{H}}

\newcommand{\cK}{\mathcal{K}}
\newcommand{\cL}{\mathcal{L}}
\newcommand{\cM}{\mathcal{M}}

\newcommand{\cO}{\mathcal{O}}

\newcommand{\cV}{\mathcal{V}}

\newcommand{\fB}{\mathfrak{B}}
\newcommand{\fc}{\mathfrak{c}}
\newcommand{\fC}{\mathfrak{C}}
\newcommand{\fe}{\mathfrak{e}}

\newcommand{\fF}{\mathfrak{F}}

\newcommand{\fM}{\mathfrak{M}}

\newcommand{\fN}{\mathfrak{N}}

\newcommand{\fv}{\mathfrak{v}}

\newcommand{\fX}{\mathfrak{X}}
\newcommand{\fY}{\mathfrak{Y}}
\newcommand{\fZ}{\mathfrak{Z}}
\newcommand{\scE}{\mathscr{E}}

\newcommand{\scL}{\mathscr{L}}

\newcommand{\sA}{\mathsf{A}}

\newcommand{\sE}{\mathsf{E}}
\newcommand{\si}{\mathsf{i}}

\newcommand{\sS}{\mathsf{S}}
\newcommand{\sT}{\mathsf{T}}

\newcommand{\sv}{\mathsf{v}}

\newcommand{\sZ}{\mathsf{Z}}
\newcommand{\sz}{\mathsf{z}}

\newcommand{\loc}{\mathrm{loc}}

\newcommand{\pl}{\mathrm{pl}}
\newcommand{\pt}{\mathrm{pt}}
\newcommand{\red}{\mathrm{red}}

\newcommand{\vac}{\mathbf{1}}
\newcommand{\vir}{\mathrm{vir}}

\newcommand\blackbullet{\raisebox{-.1ex}{\scalebox{1.5}{$\bullet$}}}

\newcommand*\Art{\mathscr{A}\!\textit{rt}}
\DeclareMathOperator{\Aut}{Aut}
\DeclareMathOperator{\End}{End}
\DeclareMathOperator{\Ext}{Ext}
\newcommand*\mExt{\mathcal{E}\textnormal{xt}}

\DeclareMathOperator{\ch}{ch}

\newcommand*\Coh{\textnormal{Coh}}

\newcommand*\cs{\textnormal{cs}}

\DeclareMathOperator{\fix}{\mathsf{fix}}
\DeclareMathOperator{\Fr}{Fr}
\DeclareMathOperator{\fr}{fr}
\DeclareMathOperator{\cFr}{\mathcal{F}{\it r}}

\DeclareMathOperator{\GL}{GL}
\DeclareMathOperator{\Hilb}{Hilb}

\DeclareMathOperator{\Hom}{Hom}

\DeclareMathOperator{\id}{id}

 \DeclareMathOperator{\gr}{gr}
\DeclareMathOperator{\im}{im}

\DeclareMathOperator{\JS}{JS}

\DeclareMathOperator{\PGL}{PGL}

\newcommand*\PPerf{\mathscr{P}\!\textit{erf}}

\DeclareMathOperator{\PVP}{PVP}
\newcommand*\rig{\textnormal{pl}}

\newcommand*\QJS{Q^{\JS}}

\newcommand{\RHom}{\textnormal{RHom}}
\DeclareMathOperator{\rk}{rk}
\newcommand*\Sh{\textnormal{Sh}}

\DeclareMathOperator{\tot}{tot}

\DeclareMathOperator{\vdim}{vdim}

\DeclarePairedDelimiterX{\lseries}[1]{(}{)}{\mkern-2mu\delimsize(#1\delimsize)\mkern-2mu}
\DeclarePairedDelimiterX{\pseries}[1]{[}{]}{\mkern-2mu\delimsize[#1\delimsize]\mkern-2mu}

 \newcommand{\bbLambda}{%
  \tikz[
    baseline=0pt,
    x=1em,
    y=1em,
    line cap=round,
    line join=round
  ]{
    \path[use as bounding box] (0,0) rectangle (.82,.73);

    \draw[line width=.055em]
      (.06,.02) -- (.41,.70) -- (.76,.02);

    \draw[line width=.055em]
      (.17,.02) -- (.44,.55);

    \draw[line width=.055em]
      (.06,.02) -- (.17,.02);
  }%
}

\tikzset{%
  vertex/.style={shape=circle,fill=black,minimum size=6pt,inner sep=0},
  framing/.style={shape=rectangle,fill=black,minimum size=6pt,inner sep=0},
  baseline={([yshift=-0.8ex]current bounding box.center)}
}

\title{Wall-crossing for equivariant DT4 invariants}
\author{Arkadij Bojko, Nikolas Kuhn, Henry Liu, Felix Thimm}
\date{\today}

\begin{document}

\maketitle
\begin{abstract}
We prove the wall-crossing formula conjectured by Gross--Joyce--Tanaka for equivariant enumerative invariants of CY4 categories equipped with framing functors. We also establish a version for stable pairs with fixed-determinant obstruction theories, as used in earlier applications by the first-named author. The main technical ingredient, which we develop in this work, is a construction of well-behaved CY4 pullback virtual classes using Jouanolou devices.
\end{abstract}

\tableofcontents

\section{Introduction}
Invariants associated to moduli commonly depend on a choice of a parameter that we will henceforth call \textit{stability conditions}. \textit{Enumerative wall-crossing} studies how such invariants change as one crosses walls between different chambers in the space of stability conditions. In the case of sheaves, or more generally $\bC$-linear exact categories, this phenomenon was studied, for example, in \cite{Tha96, mochizuki, KS, JoyceSong, GJT, Joyce2021, Bo25, KLT25, HL-nonabelian, karpov-moreira-nal-wc}.
 
Here, we focus on enumerative invariants of Calabi--Yau 4 (CY4) categories which has its roots in \cite{DT} while \cite{BJ,OT} laid the formal foundations of the subject.\footnote{Some examples were already studied in \cite{CL}.} Wall-crossing of these invariants has been conjectured in \cite[§4.4]{GJT} (see also \cite[§2.5]{Bo24}) and has been studied in \cite{Bo25} where it was also proved for CY4 quivers and local CY fourfolds. Here, we build on techniques from \cite{Bo25} together with ideas from \cite{KLT23, KLT25} and prove the general statement of equivariant wall-crossing in CY4 categories in the presence of framing functors. The proof uses the CY4 pullback machinery developed in §\ref{sec:sym-pullback}.

In particular, we prove \cite[Conjecture 4.11]{GJT} and \cite[Conjecture 2.9]{Bo24} making the results in \cite{Bo24, Bo21} into robust theorems.  Further applications of this work will include addressing conjectures about curve and surface counting invariants stated, for example, in \cite{CK3, CT1, CT2, CT2-1, BKP2}.  
\subsection{Equivariant Invariants in Lie Algebras}
\subsubsection{}
Let $\cat{A}$ be a CY4 abelian category, in the sense explained in §\ref{sec:CY4-category}, with a possible action of a torus $\sT$. When defining invariants, one fixes two pieces of data first:
\begin{itemize}
\item Consider the \textit{even classes} \begin{equation}
\label{eq:Keven}
K^0_e(\cA) = \big\{\alpha\in K^0(\cA):\chi(\alpha,\alpha)\in 2\bZ\big\}
\end{equation}
in the Grothendieck group of $\cat{A}$ and a quotient $K^0_e(\cat{A})\twoheadrightarrow \bar{K}(\cat A)$. We then fix a class $\al\in \bar{K}(\cat A)$.
\item We also choose a \textit{weak stability condition} $\tau$ in the sense of §\ref{sec:weak-stability}. Such weak stabilities will form spaces $W$ as in §\ref{sec:W-space-of-stability-cons}.
\end{itemize}
Let $M_{\al}(\tau)$ be the moduli space parametrizing $\tau$-stable objects in the class $\al$. Relying on the presence of $-2$-shifted symplectic structures, it was described in \cite{BJ} as a real derived manifold. This introduced the question of \textit{orientability}. Denote by $T^{\vir}_{M_{\al}(\tau)}$ the natural virtual tangent bundle with Serre duality
$
T^{\vir}_{M_{\al}(\tau)}\cong \big(T^{\vir}_{M_{\al}(\tau)}\big)^\vee[-2]
$
induced by the $-2$-shifted symplectic structure. A choice of orientation for the associated derived manifold is equivalent to a choice of a trivialization 
$
\det\big(T^{\vir}_{M_{\al}(\tau)}\big)\cong \cO_{M_{\al}(\tau)}
$
compatible with Serre duality as made precise in \eqref{eq:EEorient} in Definition~\ref{sp:def:obstruction-theory}. The existence of orientations for perfect complexes on CY4 folds was addressed in \cite{CGJ,bojko, JU,karpov-thimm} and for representations of CY4 quivers in \cite{Bo25, Schmier}.

Once orientations are determined, the machinery of \cite{BJ} produces a virtual fundamental class $\big[M_{\al}(\tau)\big]^{\vir}\in H_*\big(M_{\al}(\tau),\bZ\big)$
assuming properness of the moduli space. The more algebro-geometric approach of \cite{OT} produces equivariant virtual fundamental cycles 
\begin{equation}
\label{eq:MasigvirOT}
\big[M_{\al}(\tau)\big]^{\vir}\in A^{\sT}_*\big(M_{\al}(\tau),\bZ[2^{-1}]\big)\qquad\textnormal{and}\qquad [\widehat{\mathcal{O}}_{M_{\al}(\tau)}^{\vir}]\in G^{\sT}_0\big(M_{\al}(\tau),\bZ[2^{-1}]\big)
\end{equation}
where $G^{\sT}_0(-)$ denotes the equivariant Grothendieck group of coherent sheaves. The Oh--Thomas formulation additionally supports the virtual equivariant localization formula proved in \cite{OT} and virtual pullbacks established in \cite{Park21}. Our proof of wall-crossing relies on these two results. 

\subsubsection{}
The wall-crossing itself is formulated in terms of equivariant homology or K-homology of the moduli stack $\fM_{\cat{A}}$ of objects in $\cat{A}$. Two approaches to constructing such theories are recalled in §\ref{sec:background}:
\begin{itemize}
\item operational K-homology $\bfK_\circ^\sT(-)$ following \cite{Liu2022,KLT25} explained in Definition \ref{def:operational-k-homology} and
\item equivariant homology $H^{\sT}_*(-)$ and K-homology $K_0^{\sT}(-)$ from \cite{BB1}, \cite[Appendix B]{Bo25} as summarized in §\ref{sec:Khan-bivariant} and §\ref{sec:homology-to-khomology}, respectively.
\end{itemize}
Henceforth, we give them the generic label $H^\sT(-)_\loc$. Their necessary properties are briefly summarized in §\ref{sec:homology-theories-LA}.

One such property is the existence of equivariant pushforwards along any equivariant morphism of Artin stacks. Let $ \fM^{\pl}_{\cat{A}} := \fM_{\cat{A}}\fatslash B\bG_m$ be the rigidified stack as in \cite[App. A]{AOV} or \cite{Romagny}. Then there are open embeddings 
$$
j: M_{\al}(\tau)\to \fM^{\pl}_{\cat{A}}\,.
$$
However, to pushforward the classes \eqref{eq:MasigvirOT} along $j$, one first needs to localize as explained in §\ref{sec:universal-invariants-shorthand} and §\ref{def:bivariant-hom-invariants}. We denote the resulting invariants by 
$$
j_*\sZ_{M_{\al}(\tau)}\in H^\sT(\fM^{\pl}_{\cat{A}})_{\loc}\,.
$$
\subsubsection{}
In \cite[§3.2]{Joycehall}, Joyce introduced graded vertex algebras on the homology $H_*(\fM_{\cat{A}})$. Their quotient by the translation operator carries a Lie algebra structure due to \cite{Borcherds}.  This Lie algebra was partly identified with $H_*(\fM^{\pl}_{\cat{A}})$ and used to formulate a wall-crossing conjecture for the invariants $\sZ_{M_{\al}(\tau)}$ in \cite[§4.4]{GJT}.

In §\ref{sec:homology-theories-LA}, we give an overview of how the equivariant homology theories $H^{\sT}_*(-)$, $\bfK_\circ^\sT(-)$, and $K_0^{\sT}(-)$ give rise to vertex algebra-like structures on $H^\sT(\fM_{\cat{A}})_\loc$ which in turn lead to Lie algebras on the quotient denoted by $L(\fM_{\cat{A}})_\loc$. Note that $H^{\sT}_*(\fM_{\cat{A}})$ for $\sT=\{1\}$ is identical to the homology used in \cite{GJT} and carries the same vertex algebra structure. For the theories from §\ref{sec:equivariant-homology-bivariant}, there is still a partial identification of $L(\fM_{\cat{A}})_\loc$ with $H^\sT(\fM^{\pl}_{\cat{A}})_\loc$ by Proposition \ref{prop:quotient-by-T}, but more generally, one can use a natural morphism
$$
\Pi^\pl_*:L(\fM_{\cat{A}})_\loc\to H^\sT(\fM^{\pl}_{\cat{A}})_\loc
$$
as in Lemma \ref{lem:pl-group-functoriality}. We choose lifts of $j_*\sZ_{M_{\al}(\tau)}$ in $L(\fM_{\cat{A}})_{\loc}$ along $\Pi^\pl_*$. 
\subsection{Wall-Crossing Results}
\subsubsection{}
The invariants $\sZ_{M_{\al}(\tau)}$ can be generalized to cases with strictly semistables using the approach in §\ref{sec:sst-inv}, the philosophy behind which was explained in \cite[§5.3]{Bo25}. The construction, a priori, depends on a choice of a framing functor $\Fr$ from Definition \ref{bg:def:framing-functor}. Framing functors are defined on smaller subcategories $\cat{A}^{\Fr}\subset \cat{A}$ and assign a vector space $\Fr(E)$ to each $E\in \cat{A}$. Its dimension should be constant for a fixed class $\al$ and is denoted by $\fr(\al)$.

Our next theorem proves that the resulting invariants are independent of choices of $\Fr$. In the following we use $\fM_{\al}(\tau)$ to denote the moduli substacks of $\tau$-semistable objects in class $\al$ contained in $\fM_{\cat{A}}$. We only consider a fixed subset of so-called emergent classes $\al$ denoted by $\scE(\cat{A})\subset \bar{K}(\cat{A})$ as in Definition \ref{def:categoryA} (d).
\begin{theorem}[Semistable invariants] \label{thm:sst-invariants}
  Suppose $W$ is a space of weak stability conditions on $\cat{A}$ with fixed $\scE(\cat{A})$ for which
  Assumption~\ref{ass:stab} holds. Then, for any $\tau\in W$, there exist canonically defined classes
  \begin{equation}
    \sz_{\alpha}(\tau)\in L(\fM_{\cat{A}})_{\loc}\qquad\textnormal{for all }\al\in \scE(\cat{A})\,,
  \end{equation}
  which satisfy the following properties:
  \begin{enumerate}[label = (\roman*)]
  \item \label{item:vss-support} $\sz_\alpha(\tau)$ is supported on
    $\fM_\alpha(\tau)$;
  \item \label{item:vss-no-strictly-semistables} 
    if $\fM_{\alpha}(\tau)$ contains no strictly semistables, then  $\Pi^\pl_*\sz_{\al}(\tau) = \sZ_{M_{\al}(\tau)}$;
  \item \label{item:vss-isomorphic-moduli} if $\tau'\in W$ is another weak
    stability condition and $\fM_\alpha(\tau) = \fM_\alpha(\tau')$
    for a given $\alpha$, then $\sz_\alpha(\tau) = \sz_\alpha(\tau')$;
  \item \label{item:vss-pairs-relation} for any framing functor $\Fr$ from Definition \ref{bg:def:framing-functor} such that $\fM_\alpha(\tau) \subset
    \fM_\alpha^{\Fr}$, in the notation of
    Definition~\ref{def:pair-invariant}, we have
    \begin{equation} \label{eq:sstable-def-intro}
     \fr(\al) \breve\sZ_{\alpha,1}^{\Fr}(\tau^{\JS}) = \sum_{\substack{n>0 \\ \alpha = \alpha_1+\cdots+\alpha_n\\ \forall i: \,\phi(\alpha_i) = \phi(\alpha)\\ \;\;\fM_{\alpha_i}(\tau) \neq \emptyset}} \frac{\fr(\al_1)}{n!} \left[\sz_{\alpha_n}(\tau), \left[\cdots,\left[\sz_{\alpha_2}(\tau), \sz_{\alpha_1}(\tau)\right]\cdots\right]\right]
    \end{equation}
    in $L(\fM_{\cat{A}})_{\loc}$, where the left-hand side is defined in \eqref{eq:pairs-invariant}.
  \end{enumerate}
\end{theorem}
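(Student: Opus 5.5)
We follow the strategy of Joyce's wall-crossing programme and of \cite[§5.3]{Bo25}: we first \emph{define} $\sz_\al(\tau)$ by inverting \eqref{eq:sstable-def-intro} for one particular framing functor, and then show that the result does not depend on that choice. The argument is by induction on $\al$ with respect to the partial order on $\scE(\cat{A})$ in which $\be<\al$ means that $\al-\be$ is a nonzero sum of classes $\al_i$ with $\phi(\al_i)=\phi(\al)$ and $\fM_{\al_i}(\tau)\neq\emptyset$. Assumption~\ref{ass:stab} should ensure that only finitely many such decompositions occur.

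\textbf{Definition.} Fix $\al$ and choose a framing functor $\Fr$ with $\fM_\al(\tau)\subset\fM^{\Fr}_\al$ and $\fr(\al)\neq 0$; Assumption~\ref{ass:stab} should guarantee that one exists. The $n=1$ term of \eqref{eq:sstable-def-intro} is $\fr(\al)\sz_\al(\tau)$, and all other terms involve only classes $\al_i<\al$, which are already defined by induction. Dividing by $\fr(\al)$ therefore gives a candidate $\sz_\al(\tau)$.

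\textbf{Properties \ref{item:vss-support} and \ref{item:vss-isomorphic-moduli}.} The pair invariant $\breve\sZ^{\Fr}_{\al,1}(\tau^{\JS})$ is pushed forward from a moduli space of pairs that maps to $\fM_\al(\tau)$. Each Lie bracket of classes supported on $\fM_{\al_i}(\tau)$ is supported on the image of the direct-sum map, and for equal phases this image lies in $\fM_\al(\tau)$, since semistability is closed under extensions. This gives \ref{item:vss-support}. Property \ref{item:vss-isomorphic-moduli} follows in the same way: the defining data (the pair moduli spaces and the summands that appear) depend only on the stacks $\fM_{\al_i}(\tau)\subset\fM_\al(\tau)$, so equal stacks give equal classes, again by induction.

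\textbf{Independence of $\Fr$ and property \ref{item:vss-pairs-relation}.} This is the main obstacle. Given two framing functors $\Fr$ and $\Fr'$, we first reduce to the case where one dominates the other via a direct-sum framing $\Fr\oplus\Fr'$, so it suffices to compare $\Fr$ with $\Fr\oplus\Fr'$. We then use the master space technique of \cite{KLT23,KLT25}. Consider pairs with a framing in $\Fr(E)\oplus\Fr'(E)$, together with the $\bC^*$-action scaling one factor. Apply $\sT\times\bC^*$-equivariant virtual localization \cite{OT} to the master space. The key input is the CY4 pullback classes of §\ref{sec:sym-pullback}, built with Jouanolou devices, which give oriented obstruction theories on the fixed loci. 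With these, the fixed loci are identified with products of pair spaces and semistable stacks, and the identification is compatible with the obstruction theories and orientations via the virtual pullback of \cite{Park21}.

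Taking residues at $\bC^*$-weight zero yields a relation between $\breve\sZ^{\Fr}$ and $\breve\sZ^{\Fr\oplus\Fr'}$ involving Lie brackets with lower $\sz_{\al_i}$. Rewritten through the vertex-algebra operations of §\ref{sec:homology-theories-LA}, this relation reproduces \eqref{eq:sstable-def-intro} with the weight $\fr(\al_1)$. Verifying that the combinatorial factors $\fr(\al_1)/n!$ match, and that the orientations on the fixed loci agree with the product orientations, is the delicate part. Once this is done, induction shows that every admissible $\Fr$ produces the same $\sz_\al(\tau)$, which proves \ref{item:vss-pairs-relation}.

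\textbf{Property \ref{item:vss-no-strictly-semistables}.} If $\fM_\al(\tau)$ has no strictly semistables, all terms with $n>1$ vanish. The pair space $P$ is then a projective bundle over $M_\al(\tau)$ with fibre $\bP(\Fr)$, i.e.\ $\bP^{\fr(\al)-1}$, and its obstruction theory is pulled back from $M_\al(\tau)$. Pushing forward along this bundle gives $\Pi^\pl_*$ of the pair invariant equal to $\fr(\al)\,\sZ_{M_\al(\tau)}$, which is \ref{item:vss-no-strictly-semistables}.
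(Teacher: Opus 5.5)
Your architecture matches the paper's. You define $\sz_\alpha(\tau)$ by inverting \eqref{eq:sstable-def-intro} for one framing; the induction should run over $\rk_\tau$ from Assumption~\ref{ass:stab}(e), which is what makes it well-founded. You prove framing independence by $\bG_m$-localization on a master space carrying the Jouanolou CY4-pullback class. You get (ii) from the projective-bundle pushforward of Proposition~\ref{prop:smoothpushforward} and Corollary~\ref{cor:flag-pushforward}. Note that the pair space $P$ has no pulled-back obstruction theory; what is used is $[\hat\cO^\vir_P]=p^*[\hat\cO^\vir_M]\otimes\det(\Omega_p)^{1/2}$.

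The gap is the independence step, which you describe rather than carry out, and which is misdescribed in places.

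\textbf{What your master space actually compares.} Take $M^{\Fr\oplus\Fr'}_{\alpha,1}(\tau^{\JS})$ with $\bC^*$ scaling the $\Fr'$-component. Its fixed loci are $M^{\Fr}_{\alpha,1}(\tau^{\JS})$ (where $\rho'=0$), $M^{\Fr'}_{\alpha,1}(\tau^{\JS})$ (where $\rho=0$), and products $M^{\Fr}_{\alpha_1,1}(\tau^{\JS})\times M^{\Fr'}_{\alpha_2,1}(\tau^{\JS})$. The invariant of the master space itself enters only through the vanishing of the total residue. So the localization compares $\Fr$ with $\Fr'$ directly, and your domination reduction is superfluous. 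Its output is not \eqref{eq:sstable-def-intro} but the analogue of \cite[(9.51)]{Joyce2021}:
\[
\breve\sZ^{\Fr'}_\alpha-\breve\sZ^{\Fr}_\alpha=\sum\frac{\fr(\alpha_1)\fr'(\alpha_2)}{\fr(\alpha)\fr'(\alpha)}\big[\breve\sZ^{\Fr}_{\alpha_1},\breve\sZ^{\Fr'}_{\alpha_2}\big].
\]
Deducing $\sz^{\Fr}_\alpha=\sz^{\Fr'}_\alpha$ from this needs the separate inductive argument of \cite[\S9.3]{Joyce2021}.

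\textbf{You need a specific insertion.} The residue sees nothing without one. The normal bundle of $\{\rho'=0\}$ is $z\,\cV^\vee\otimes\cFr'(\cE)$, of rank $\fr'(\alpha)$. The natural insertion $\hat\fe(\Omega_\pi^\vee)$ cancels it exactly and leaves no pole. The paper uses the wedge quiver $Q\wedge Q$ precisely to avoid this. There the pure loci are $\bP^{\fr_i(\alpha)-1}$-bundles with rank-one positive normal parts $z\scL_{32}$ and $z\scL_{13}$. The insertion built from $G=\Omega^\vee_\pi-\scL_{31}\otimes\scL_{32}$ has residue $1$ on these loci and reduces to the bracket integrand on the split loci. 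The factors $\fr_i(\alpha)$ then come from Corollary~\ref{cor:flag-pushforward}. Your space can presumably be rescued with $\hat\fc_{\rk}(\Omega^\vee_\pi-\cO)$: with $W=\cV^\vee\otimes\cFr'(\cE)$ and $r=\fr'(\alpha)$, one has $\rho_z\big(\fc_{r-1}(zW)/\fc_r(zW)\big)=r$, which reproduces the factor $\fr'(\alpha)$. But this must be supplied and checked on all three loci.

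\textbf{The missing idea: comparing classes on the fixed loci.} This is the more serious omission. The Jouanolou construction does not give oriented obstruction theories on the master space or its fixed loci. It gives one only on the affine device $A$, and in general none exists on the framed spaces, which is the whole reason for \S\ref{sec:sym-pullback}. So there is no PVP diagram along $Z\hookrightarrow M$ to which \cite{Park21} could be applied.

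Localization (Theorem~\ref{sp:thm:symm-pullback-localization}(c)) produces $[\tilde\cO^\vir_Z]$, oriented by weight-splitting on $A^{\bG_m}$. The pair invariants and their products, however, are built from the CY4-pullback classes $[\hat\cO^\vir_Z]$ along $Z\to\fM_\alpha$, respectively $Z\to\fM_{\alpha_1}\times\fM_{\alpha_2}$. These are related by Proposition~\ref{prop:comparison}:
\[
[\hat\cO^\vir_Z]=\varepsilon_Z(-1)^{\omega^>_\pi}[\tilde\cO^\vir_Z],
\]
where $\omega^>_\pi=\rk(\Omega_\pi|_Z^>)$ and $\varepsilon_Z=\varepsilon_{\alpha_1,\alpha_2}$ on split loci. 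These signs enter every term. In the paper they produce the minus sign in front of the $\{\rho_3=0\}$ contribution. They cancel the $(-1)^{\fr_1(\alpha_2)}$ coming from the Euler-class ratio on split loci. They also supply the $\varepsilon_{\alpha_1,\alpha_2}$ in the Lie bracket. Without this step, the relation you want is not even determined.
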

For torsion-free sheaves on projective Calabi--Yau fourfolds, this theorem is proved in \S \ref{sec:independencequantumLefschetz} by completing the argument from \cite[§1.4, §6.5]{Bo25}. It relied on a quantum Lefschetz type argument, but needed Joyce--Song wall-crossing to hold, which is proved using Example \ref{ex:JS-wall-crossing-setup}. For more general invariants, we use the approach introduced in \cite[§9.2]{Joyce2021} based on \cite[§7.2.2]{mochizuki}. Additionally, we need to use the results and constructions in Theorem \ref{sp:thm:symm-pullback-localization}, Proposition \ref{prop:comparison}, and Corollary \ref{cor:flag-pushforward}, which are necessary due to the absence of CY4 obstruction theories on moduli spaces represented by \eqref{eq:Vshapedquiver}. We return to this point in §\ref{sec:CY4-pullback-intro}.
\subsubsection{}
\label{sec:general-wall-crossing-intro}

Our main theorem is now the following general wall-crossing formula for the semistable invariants defined above.
\begin{theorem}
\label{thm:general-wall-crossing-intro}
 Suppose $W$ is a space of weak stability conditions on $\cat{A}$ with fixed $\scE(\cat{A})$ for which
  Assumption~\ref{ass:stab} holds. Let $\tau,\tau'\in W$, then the wall-crossing formula
    \begin{equation} \label{eq:general-wcf}
  \sz_\alpha(\tau') = \sum_{\substack{n>0\\\alpha = \alpha_1 + \cdots + \alpha_n\\\forall i:\, \fM_{\alpha_i}(\tau)\neq \emptyset}}\tilde U\left(\alpha_1,\dots,\alpha_n;\tau,\tau'\right)\left[\left[\cdots\left[\sz_{\alpha_1}(\tau),\sz_{\alpha_2}(\tau)\right],\cdots\right],\sz_{\alpha_n}(\tau)\right]
\end{equation}
holds in $L(\fM_{\cat{A}})_{\loc}$. Here  $\tilde U\left(\alpha_1,\dots,\alpha_n;\tau,\tau'\right)$ are the coefficients defined, for example, in \cite[§3.5]{GJT}.
\end{theorem}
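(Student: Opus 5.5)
The plan is to follow the strategy of \cite[§9]{Joyce2021}, adapted to the CY4 setting as in \cite{Bo25}. I reduce the general wall-crossing formula \eqref{eq:general-wcf} to two ingredients.

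The first is a \emph{dominant wall-crossing} along a path. Assumption~\ref{ass:stab} lets me connect $\tau$ and $\tau'$ by a finite sequence $\tau=\tau_0,\tau_1,\dots,\tau_N=\tau'$ in $W$. At each step, either $\tau_{k}$ dominates $\tau_{k+1}$, or the reverse. Since the coefficients $\tilde U$ compose under concatenation, as in \cite[§3.5]{GJT} and \cite[Thm.~3.11]{Joyce2021}, it suffices to prove \eqref{eq:general-wcf} for a single dominant pair. The Lie-algebraic bookkeeping here is purely combinatorial and is taken verbatim from Joyce.

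The second ingredient is the core argument for a dominant pair $\tau \succeq \tau'$. Fix a framing functor $\Fr$ with $\fM_\alpha(\tau)\cup\fM_\alpha(\tau')\subset \fM^{\Fr}_\alpha$, which exists by Assumption~\ref{ass:stab}. I then introduce an auxiliary category of framed objects built from the V-shaped quiver \eqref{eq:Vshapedquiver}, whose representations pair an object $E$ with two subspaces of $\Fr(E)$ (one of dimension $1$, one a full flag). A stability condition $\tilde\tau$ on this auxiliary category interpolates between the pair stabilities $\tau^{\JS}$ and $\tau'^{\JS}$, following \cite[§7.2.2]{mochizuki} and \cite[§9.2]{Joyce2021}. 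Next I choose a torus $\bG_m$ scaling the framing data with isolated weights. The moduli of $\tilde\tau$-stable auxiliary objects then has no strictly semistables and is proper, which by Assumption~\ref{ass:stab} covers everything except the interior of a family of walls. Applying virtual localization \cite{OT} to this proper space and taking the residue in the equivariant parameter gives an identity. In this identity the $\bG_m$-fixed loci are products of moduli of pairs $\breve\sZ^{\Fr}_{\alpha_i,1}(\tau^{\JS})$ and of stable objects, glued via the Lie bracket. The vanishing of the total residue yields the relation between $\breve\sZ^{\Fr}_{\alpha,1}(\tau^{\JS})$ and $\breve\sZ^{\Fr}_{\alpha,1}(\tau'^{\JS})$, expressed through brackets of pair invariants. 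Combined with Theorem~\ref{thm:sst-invariants}\ref{item:vss-pairs-relation}, which inverts \eqref{eq:sstable-def-intro} using $\fr(\alpha)\neq 0$, this turns into \eqref{eq:general-wcf} for $\sz_\alpha(\tau')$. Identifying the numerical coefficients with $\tilde U$ is the same algebraic computation as in \cite[§9.3]{Joyce2021}. I would treat it by induction on the rank of $\alpha$ in the partial order on $\scE(\cat A)$, using properties \ref{item:vss-support} and \ref{item:vss-isomorphic-moduli} to handle the lower-order terms.

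The main obstacle is that the auxiliary flag or V-shaped moduli spaces do not carry CY4 (self-dual) obstruction theories, because the framing data break the $-2$-shifted symmetry. So Oh--Thomas classes are not directly available for them. Here I would invoke the CY4 pullback machinery. Theorem~\ref{sp:thm:symm-pullback-localization} constructs virtual classes on these auxiliary spaces as symmetric pullbacks, via Jouanolou devices, of the Oh--Thomas class on the underlying moduli of objects along the smooth forgetful map, and shows that these classes satisfy virtual localization. Proposition~\ref{prop:comparison} identifies them with the pair classes $\breve\sZ$ on fixed loci. Corollary~\ref{cor:flag-pushforward} computes the pushforward along forgetting the flag as multiplication by $\fr(\alpha)$-type factors. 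The remaining delicate point is matching orientations on the fixed loci with the orientations used in the Lie bracket. I would first check this on the rank-one (two-factor) case and then use the compatibility of orientations with direct sums from \eqref{eq:EEorient} to propagate it to all fixed components.
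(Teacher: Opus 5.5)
There is a genuine gap: the geometric step that actually crosses the wall is missing, and the master space you propose cannot perform it. Your toolkit is right: the CY4 pullback classes of Theorem~\ref{sp:thm:symm-pullback-localization}, Proposition~\ref{prop:comparison}, Corollary~\ref{cor:flag-pushforward}, Joyce's combinatorics, and framing independence to glue steps along a path.

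The V-shaped quiver \eqref{eq:Vshapedquiver} carries two framings $\Fr_1,\Fr_2$, with all of $V_1,V_2,V_3$ one-dimensional, and it lives at a \emph{single} stability condition $\tau$. Its space $\bbLambda_\alpha$ has fixed loci (Proposition~\ref{prop:pairs-master-space}) that are exactly the ``products of pair moduli $M^{\Fr_i}_{\alpha_i,1}(\tau^{\JS})$'' you describe. Localizing on it relates $\breve\sZ^{\Fr_1}_\alpha(\tau)$ to $\breve\sZ^{\Fr_2}_\alpha(\tau)$; this is the framing-independence argument behind Theorem~\ref{thm:sst-invariants} (the analogue of \cite[\S9]{Joyce2021}), not wall-crossing.

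A single master space interpolating directly between $\tau^{\JS}$ and $(\tau')^{\JS}$ does not work either. Along such a family you hit walls where an object splits into more than two pieces and strictly semistables appear. So you have neither the absence of strictly semistables nor a description of the fixed loci as binary products. Your orientation plan (check the two-factor case, then propagate) is compensating for a problem the correct construction avoids.

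What you need is the flag refinement of \cite[\S10]{Joyce2021}.
\begin{enumerate}
\item Take $\tau$ on the wall and $\mathring\tau$ nearby. Work in the category of $\tau$-semistable objects of fixed phase equipped with a full flag $V_1\subset\cdots\subset V_{r-1}\subset\Fr(E)$, $\dim V_a=a$.
\item Use the family $\tau^s$ from \eqref{eq:barphi} with $\lambda=s\lambda^{t,t'}_\alpha$ and $1\gg\delta_1\gg\cdots\gg\delta_{r-1}>0$. Assumption~\ref{ass:stab}(d), which your proposal never uses, encodes $\mathring\tau$-stability as the linear functional $\lambda$ on this common category. Hence $s=0$ and $s=1$ recover $\tau$ and $\mathring\tau$ after flag pushforward.
\item The generic $\vec\delta$ guarantees only finitely many simple walls $s_k$. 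Each has only binary splittings $(\alpha,\vec d)=(\beta,\vec e)+(\gamma,\vec f)$, indexed by a finite set $J_k$.
\item Each $s_k$ gets its own master space $\bM$ (Definition~\ref{wc:def:horizontal-master-space}): an extra vertex $V_0$, the relation $\rho_0\circ\rho_{b-1}=0$, and $\bG_m$ scaling $\rho_{-1}$. Its fixed loci are $M^{\bar Q(r)}_{\alpha,\vec d}(\tau^{s_\pm})$ and products $M^{\bar Q(r)}_{\beta,\vec e}(\tau^{s_k})\times M^{\bar Q(r)}_{\gamma,\vec f}(\tau^{s_k})$ of \emph{flag} moduli with partial dimension vectors, not pair moduli.
\item Localization plus Proposition~\ref{prop:comparison} gives the horizontal flag wall-crossing in $L(\fM^{\bar Q(r)})_\loc$. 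The signs come from the extra $(-1)^{\omega^>_\pi}$ combined with $\varepsilon_{\beta,\gamma}$ from \eqref{as:eq:orientation-sum-discrepancy} (not from \eqref{eq:EEorient}). Together these produce the framed signs of Definition~\ref{def:framed-epsilons}.
\item Only then apply $(\pi_{\fM^{\Fr}_\alpha})_*\bigl(-\cap\hat\fc_{\rk}(\bL^\vee_{\pi_{\fM^{\Fr}_\alpha}})\bigr)$. You must check this is compatible with the brackets. It is, because on the product loci the relative tangent complex splits into vector bundles whose $\bF$-summands cancel against those in $\Theta^{(\gamma,\vec f)}_{(\beta,\vec e)}$.
\item This yields Proposition~\ref{prop:OmegaWC}. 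Summing over $k$ with \cite[\S10.5]{Joyce2021} gives the dominant formula, and framing independence then glues the dominant steps along the path.
\end{enumerate}
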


In \cite{Bo25}, this was proved under the limiting conditions that required the existence of CY4 obstruction theories on moduli spaces represented by the quivers \eqref{wc:eq:flag-quiver} and \eqref{wc:fig:wc-ms-quiver}. This included $\cat{A}$ consisting of representations of CY4 quivers and compactly supported sheaves on local CY fourfolds. For more general sheaves, \cite[§6.3]{Bo25} shows that the required obstruction theories may fail to exist. Using the machinery developed in Section~\ref{sec:sym-pullback}, this restriction is removed here entirely.

Using $H_*(-)$ for $\sT = \{1\}$, this in particular addresses \cite[Conjecture 4.11]{GJT}. We give a brief overview of the strategy of the proof in §\ref{sec:strategy} as it follows along the lines of \cite[§10, §11]{Joyce2021}\footnote{This work, in turn, was based on the approach in \cite{mochizuki}.} which was already summarized in \cite[§8.1]{Bo25} and \cite[§4]{KLT25}.
\subsubsection{}
Another setting we consider is wall-crossing for stable pairs with fixed determinant obstruction theories. For now we choose a projective CY4 fold $X$ with a heart $\cat{B}$ of $D^b(X)$ and a subset $\scE(\cat{B})\subset \bar{K}(\cat{B}) = \bar{K}\big(\Coh(X)\big)$. Let $W$ be a space of stability conditions on $\cat{B}$ and let $O\in Coh(X)$. We consider the category $\cat{B}_O$ of pairs $P=(V\otimes O\xrightarrow{s} F)$ where $V$ is a vector space, $F$ a positive codimension sheaf on $X$, and $s$ a morphism of sheaves. The class of such a $P$ will be $(\dim(V), \llbracket F\rrbracket)\in \bZ_{\geq 0}\times \bar{K}\big(\Coh(X)\big)$. Suppose that for each $\tau\in W$ and $\beta\in \scE(\cat{B})$ we are given a stability condition $\tau^p$ on $\cat{B}_O$ and a $(d,\al)$ with $d=0,1$ such that 
\begin{equation}
\label{eq:semistable-pair-intro}
\left\{\begin{array}{c}\tau^p\textnormal{-semistable pairs }P  \\ 
\textnormal{of class }(d,\al)
\end{array}
\right\}^{\pl}\cong \begin{cases}
         \fM_{\be}(\tau)_{\det(O)}  &\text{if }d=1\,,\\
         \\
    \big(\fM_{\be}(\tau)\big)^{\pl}  &\text{if }d=0 \,.
        \end{cases}
\end{equation}
Here, the subscript $(-)_{\det(O)}$ denotes the moduli substack of objects with fixed determined $\det(O)$. Minor modifications of the proof of Theorem \ref{thm:general-wall-crossing-intro} shows the following.
\begin{theorem}
    In the situation described above, suppose that the data $\cat{B},\scE(\cat{B})$, and $W$ satisfy Assumption \ref{ass:pairWC}. Then for any $\tau\in W$ and $\beta\in \scE(\cat{B})$, there are canonically defined classes $\sz_{\beta}(\tau) \in L(\fM_{\cat{B}})_{\loc}$ satisfying the properties from Theorem \ref{thm:sst-invariants}. For any two weak stability conditions $\tau,\tau'\in W$ and  $\be\in \scE(\cat{B})$, the wall-crossing formula
        \begin{equation} \label{eq:general-wcf-pairs}
  \sz_\beta(\tau') = \sum_{\substack{n>0\\\beta = \beta_1 + \cdots + \beta_n\\\forall i:\, \fM_{\beta_i}(\tau)\neq \emptyset}}\tilde U\left(\beta_1,\dots,\beta_n;\tau,\tau'\right)\left[\left[\cdots\left[\sz_{\beta_1}(\tau),\sz_{\beta_2}(\tau)\right],\cdots\right],\sz_{\beta_n}(\tau)\right]
\end{equation}
holds in $L(\fM_{\cat{B}})_{\loc}$. 
\end{theorem}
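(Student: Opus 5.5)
The plan is to run the proof of Theorem~\ref{thm:general-wall-crossing-intro} in the category $\cat{B}_O$ of pairs, with the CY4 obstruction theories replaced by fixed-determinant ones. The invariants live in $L(\fM_{\cat{B}})_{\loc}$, which we reach from the pair stacks through the identification \eqref{eq:semistable-pair-intro}.

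\textbf{Step 1: defining $\sz_\beta(\tau)$.} Fix $\tau\in W$ and $\beta\in\scE(\cat{B})$ with $\fM_\beta(\tau)$ contained in the framed locus.
\begin{itemize}
\item Build the auxiliary framed pair stacks (Joyce--Song pairs with framing $\Fr$). For $d=1$, equip them with fixed-determinant obstruction theories. For $d=0$, use the ordinary obstruction theory on $\big(\fM_\beta(\tau)\big)^{\pl}$.
\item When there are no strictly semistables, use the fixed-determinant virtual class of $\fM_\beta(\tau)_{\det(O)}$. This is the $d=1$ case of \eqref{eq:semistable-pair-intro}.
\item In general, define $\sz_\beta(\tau)$ by inverting \eqref{eq:sstable-def-intro} recursively in $\fr(\beta)$ and the rank, exactly as in \S\ref{sec:sst-inv}.
\end{itemize}

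\textbf{Step 2: independence of the framing.} To show properties \ref{item:vss-support}--\ref{item:vss-pairs-relation}, repeat the argument of \cite[\S9.2]{Joyce2021} and \cite[\S7.2.2]{mochizuki}.
\begin{itemize}
\item Compare two framings $\Fr,\Fr'$ through a master space built on pairs with both framings, carrying a $\bG_m$-action.
\item Apply virtual localization to it.
\item The fixed loci split into products of pair moduli and moduli of semistable objects, joined by flag-type (V-shaped quiver) data. These carry no natural obstruction theory of the required type.
\item Handle them with the CY4 pullback of Theorem~\ref{sp:thm:symm-pullback-localization}, Proposition~\ref{prop:comparison} and Corollary~\ref{cor:flag-pushforward}, replacing the CY4 obstruction theory by its fixed-determinant version.
\end{itemize}
The one new input is that the fixed-determinant obstruction theory on each fixed locus is again the symmetric pullback, now along the fibre of $\det$ over $\det(O)$. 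Concretely, the trace-free part splits off, $\RHom(P,P)_0$ for pairs corresponds to $\RHom(F,F)_0$ plus framing terms. One checks via the Jouanolou-device construction of \S\ref{sec:sym-pullback} that the trace part $H^*(\cO_X)$ only shifts the virtual normal bundles by constant terms, and that these cancel in the Lie bracket.

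\textbf{Step 3: wall-crossing.} Follow the strategy of \S\ref{sec:strategy}.
\begin{itemize}
\item Reduce to simple wall-crossings along a path $\tau_t$ in $W$, using the dominance assumptions in Assumption~\ref{ass:pairWC}.
\item For each elementary wall, build the master space of \cite[\S10--11]{Joyce2021} over pairs in $\cat{B}_O$ with $d=1$.
\item Localize, identify the fixed-point contributions as iterated Lie brackets via the vertex-algebra pushforward from \S\ref{sec:homology-theories-LA}, and obtain \eqref{eq:general-wcf-pairs} for simple walls.
\item Combine these using the combinatorics of $\tilde U$ from \cite{GJT}.
\end{itemize}

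\textbf{Main obstacle.} The hardest step is compatibility of the bracket with fixed-determinant theories. In the Lie algebra $L(\fM_{\cat{B}})_{\loc}$, the bracket uses the full $\Ext$ pairing $\chi(\beta_i,\beta_j)$ on sheaves. The fixed-determinant virtual normal bundles instead involve $\RHom$ between pairs, and these include extra $\RHom(O,F)$ terms. I would first try to show that these extra terms cancel between the two sides of each edge term of the localization formula, using Serre duality $\RHom(F,O)\cong\RHom(O,F)^\vee[-4]$. I would also verify that the orientations on the pair moduli induce those on $\fM_\beta(\tau)$ compatibly with the $d=0$ identification in \eqref{eq:semistable-pair-intro}. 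If this cancellation holds, the remaining steps are formal repetitions of the proof of Theorem~\ref{thm:general-wall-crossing-intro}.
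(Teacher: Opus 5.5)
\textbf{Gap: the main obstacle is misidentified.} Your overall architecture matches the paper's: framed pair stacks, the $Q\wedge Q$ master space for framing independence, horizontal flag master spaces, and Theorem~\ref{sp:thm:symm-pullback-localization}, Proposition~\ref{prop:comparison} and Corollary~\ref{cor:flag-pushforward} in place of the missing obstruction theories. However, the step you call the main obstacle rests on a misreading, and the fix you propose would fail. $L(\fM_{\cat{B}})_{\loc}$ is built on the stack of objects of the heart $\cat{B}$. For $d=1$ these objects are the complexes $I=[V\otimes O\to F]$ themselves, not the sheaves $F$. So the bilinear element $\Theta_{\cat{B}}$ is $\RHom_X$ between such complexes; numerically this is $\chi$ on $\bar{K}(\cat{B})=\bar{K}(\Coh(X))$ with $[I]=[F]-[O]$. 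The $\RHom(O,G)$-type terms you call ``extra'' are therefore already part of $\Theta_{\cat{B}}$.

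They also do not cancel. Serre duality only identifies the weight-$z$ and weight-$z^{-1}$ halves $\RHom(I,G)$ and $\RHom(G,I)$ of the virtual normal bundle with each other's duals. Only $\bN^{>}$ enters the localization formula, so these terms survive in $\hat{\fe}_{\bT}(\bN^{>})$, exactly as the bracket on $L(\fM_{\cat{B}})_{\loc}$ requires. Your plan is conditional on a cancellation that is false, and forcing it would change the bracket.

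\textbf{The missing idea: the traceless splitting.} Since $d\in\{0,1\}$, every splitting of a $d=1$ class has one summand $I$ with $d=1$ and one summand $G$ with $d=0$, and
\[ \RHom(I\oplus G,I\oplus G)_0=\RHom(I,I)_0\oplus\RHom(I,G)\oplus\RHom(G,I)\oplus\RHom(G,G). \]
This gives three things.
\begin{itemize}
\item The trace part lies entirely in the $\bG_m$-weight-zero summand $\RHom(I,I)$, so it never enters a virtual normal bundle. Your Step~2 claim that it shifts the normal bundles by constant terms which later cancel is not what happens, and weight-zero terms cannot appear in $\bN$ in any case. The decomposition ``$\RHom(P,P)_0$ versus $\RHom(F,F)_0$ plus framing terms'' is also not the relevant one, since $F$ has positive codimension.
\item The fixed part is the product of the fixed-determinant theory on the $I$-factor and the ordinary theory on the $G$-factor.
\item The moving part is exactly $z\Theta_{\cat{B}}\oplus z^{-1}\Theta_{\cat{B}}^\vee$, i.e.\ the analogue of \eqref{wc:eq:virtual-tangent-pulled-back-crossing-fixed-locus}.
\end{itemize}
Together these verify hypothesis \eqref{sp:comp-ass:bE-isom} of Proposition~\ref{prop:comparison}. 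Take $\psi$ to be the direct-sum map $\fN_{1,\alpha_1}(\tau^p)^{\pl}\times\fN_{0,\alpha_2}(\tau^p)\to\fN_{1,\alpha_1+\alpha_2}(\tau^p)^{\pl}$. This map must be built using the de-rigidification of Definition~\ref{def:de-rigidification}, because there is no obstruction theory on the unrigidified $d=1$ stack.

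Assumption~\ref{ass:pairWC}(b),(e) then lets you build the $Q\wedge Q$, flag and master spaces in $\cat{B}^{Q(\vec\Fr)}_{\tau,\phi}$ without distinguishing pairs from objects of $\cat{B}$. With that, the computations of \S\ref{sec:generalindependence}, \S\ref{sec:horizontalflagWC} and \S\ref{sec:flagprojection} go through verbatim, with no cancellation argument. This is the paper's route.
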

For local CY4 folds, this was proved in \cite[Theorem 1.8]{Bo25}.
\begin{remark}
  Let $\fN_O$ be the moduli stack of $\fB_O$.  Using the identification \eqref{eq:semistable-pair-intro}, one can consider the classes $\sz_{d,\al}(\tau^p)=\sz_{\beta}(\tau)$ in $L(\fN_
  O)_{\loc}$, instead. Then, the wall-crossing formula \eqref{eq:general-wcf-pairs} still holds there.
\end{remark}
\subsubsection{}
A particular scenario of the above fixed-determinant stable pair wall-crossing is discussed in Example \ref{ex:JS-wall-crossing-setup}. Note that this example also allows higher rank sheaves $F$ inside of the pairs $V\otimes O \to F$ as long as $X$ is strict (for now). This example proves \cite[Conjecture 2.9]{Bo24} which was applied to Hilbert schemes there.
\begin{corollary}
In $L(\fN_{\cO_X})$, the following wall-crossing formula holds:
$$\sum_{n\geq 0}j_*\big[\Hilb^n(X)\big]^{\vir}q^n = \exp\left(\sum_{m>0}\Big[[\fM_{mp}]^{\operatorname{inv}},-\Big]q^m\right)e^{(1,0)} $$
where $[\fM_{np}]^{\operatorname{inv}} = \sz_{n p}(\tau)$ for the class $p\in K^0(\Coh(X))$ of a sky-scraper sheaf, $\tau$ the Gieseker stability, and $e^{(1,0)}$ the point-class of $(\cO_X\to 0)$.
\end{corollary}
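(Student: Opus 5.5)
The plan is to deduce the corollary from the fixed-determinant pair wall-crossing theorem stated just above, in the Joyce--Song setup of Example~\ref{ex:JS-wall-crossing-setup}. There we take $\cat{B}=\Coh(X)$ and $O=\cO_X$, with $\tau$ Gieseker stability restricted to zero-dimensional sheaves, so all classes $np$ have the same phase. The pair stability conditions $\tau^p$ interpolate between two chambers:
\begin{itemize}
\item a \emph{stable pair chamber}, where a semistable pair of class $(1,np)$ is a surjection $\cO_X\twoheadrightarrow F$, i.e.\ a point of $\Hilb^n(X)$, with fixed-determinant identification as in \eqref{eq:semistable-pair-intro};
\item an \emph{empty chamber} on the other side, where the only semistable objects of rank $d=1$ are $(\cO_X\to 0)$.
\end{itemize}
In this second chamber, $\sz_{(1,0)}=e^{(1,0)}$ and every other class of rank one has $\sz=0$.

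First I would check that Assumption~\ref{ass:pairWC} holds for this data: the relevant moduli are proper and there are finitely many walls for a fixed $n$. I would also check that the rank-zero classes $\sz_{(0,mp)}$ pulled back along \eqref{eq:semistable-pair-intro} agree with $\sz_{mp}(\tau)=[\fM_{mp}]^{\operatorname{inv}}$. The latter follows from Theorem~\ref{thm:sst-invariants}\ref{item:vss-isomorphic-moduli}, since the rank-zero semistable loci do not change as $\tau^p$ varies. By Theorem~\ref{thm:sst-invariants}\ref{item:vss-no-strictly-semistables}, in the stable pair chamber $\sz_{(1,np)}$ equals $j_*[\Hilb^n(X)]^{\vir}$, because there are no strictly semistables: the $\cO_X$-component forces stability.

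Next I would apply \eqref{eq:general-wcf-pairs}, working in $L(\fN_{\cO_X})_{\loc}$ as in the Remark, with $\tau$ in the empty chamber and $\tau'$ in the stable pair chamber. The left side is $j_*[\Hilb^n(X)]^{\vir}$. On the right, the decompositions contributing a nonzero term contain exactly one rank-one summand $(1,0)$ together with rank-zero classes $m_ip$. This is because a rank-one summand of class $(1,mp)$ with $m>0$ has vanishing $\sz$ in the empty chamber, and rank is additive.

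The main obstacle is the combinatorial identification of the resulting sum with the exponential generating series. I need
\[
\sum\tilde U\big((0,m_1p),\dots,(1,0),\dots;\tau,\tau'\big)[\cdots]
=\text{the }q^n\text{ coefficient of }\exp\Big(\sum_{m}\big[[\fM_{mp}]^{\operatorname{inv}},-\big]q^m\Big)e^{(1,0)}.
\]
The standard Joyce--Song argument handles this. Because the stability crosses a single wall, on which the rank-zero classes become destabilizing all at once, the $\tilde U$ coefficients collapse to $\frac{1}{k!}$ times iterated adjoint actions on $e^{(1,0)}$, up to ordering signs. I would verify this with the explicit formula for $\tilde U$ from \cite[§3.5]{GJT}, which for a single dominant wall reduces to $S$-coefficients with all orderings weighted equally. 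I would then rewrite the nested brackets using the Jacobi identity as $\operatorname{ad}$ operators, absorbing the sign of $[\sz_{(1,0)},\sz_{mp}]$ versus $[\sz_{mp},-]$ by antisymmetry. If the one-wall reduction is awkward, I would instead pass through a sequence of intermediate walls $t=m$, add one $m$ at a time, and take the product of the exponentials; these exponentials commute because the rank-zero classes Lie-commute up to terms of rank zero, which act trivially in the relevant degree. Summing over $n$ with weight $q^n$ then gives the stated formula.
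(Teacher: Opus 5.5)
Your proposal is correct and follows the same route as the paper. The corollary is the Joyce--Song case of the fixed-determinant pair wall-crossing (Example~\ref{ex:JS-wall-crossing-setup} with $O=\cO_X$ and zero-dimensional $F$). Crossing the single wall $t=0$, from the chamber in which only $(\cO_X\to 0)$ survives in rank one to the chamber of surjections $\cO_X\twoheadrightarrow F$, yields \eqref{eq:JSwall-crossingformula} for $\alpha=np$, and summing over $n$ gives the exponential.

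Two small corrections to your combinatorial sketch. First, the orderings are not weighted equally: when $e^{(1,0)}$ is preceded by $r$ and followed by $s$ rank-zero classes, Joyce's $U$-coefficient is $U=(-1)^s/(r!\,s!)$, and these coefficients assemble to exactly $\exp(\operatorname{ad})\,e^{(1,0)}$. Second, drop the multi-wall fallback. There is only one wall here, and its commutation argument is invalid, because $\operatorname{ad}$ of a rank-zero bracket $[\sz_{mp},\sz_{m'p}]$ need not act trivially on rank-one classes.
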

Assumption \ref{ass:pairWC} was set up in a way that it should also allow to prove the analogous Quot-scheme formula from \cite{Bo21} once the first-named author finds some time to address the assumptions.
\subsection{General Methods}
\subsubsection{}
A core ingredient in our approach to wall-crossing are the auxiliary framed moduli stacks explained in \S\ref{sec:auxiliary-stacks}, which are determined by a quiver $Q$ and a vector of framing functors $\vec\Fr$. The quiver vertices $Q_0$ are decomposed as $Q_0 = Q_0^o \sqcup Q_0^f$ denoted by $\blackbullet$ and $\blacksquare$, respectively. Each framing of $\vec\Fr$ is uniquely attached to a vertex of $Q^o_0$. The auxiliary framed moduli stacks $\fM^{Q(\Fr)}$ then consist of objects $(E,\vec{V},\vec{\rho})$ where $E\in \cat{A}^{\Fr}$ and $(\vec{V}, \vec\Fr(E),\vec{\rho})$ is a representation of the quiver $Q$. 

Such auxiliary spaces come up in four types, each for a different purpose:
\begin{itemize}
  \item The auxiliary stack of the quiver $Q^{\JS}$ of Definition~\ref{def:pair-invariant} helps to define invariants when there are strictly semistable objects. This is important since it necessarily happens at any stability condition on a wall.
  \item The auxiliary stack of the quiver $Q\wedge Q$ of Definition~\ref{sst:def:wedge-quiver} is used to show that the above invariants are independent of the chosen framing functor.
  \item The auxiliary stack of the quiver $\bar{Q}(r)$ of \eqref{wc:eq:flag-quiver} is used to break down a potentially complicated wall-crossing into simple wall-crossing steps.
  \item The auxiliary stack of the quiver $\hat{Q}$ of \eqref{wc:fig:wc-ms-quiver} is used to define the master space $\bM$, which allows us to solve the simple wall-crossings via localization.
\end{itemize}
The auxiliary spaces come equipped with smooth maps to the base moduli stack, and for all these uses, we require a well-behaved lift of the virtual class to stable loci in the auxiliary stacks.
  
\subsubsection{}\label{sec:CY4-pullback-intro}
In \cite[§6.3]{Bo25}, the first-named author explained why a direct construction of CY4 obstruction theories on the above auxiliary framed stacks is not possible. Instead, drawing on the approach developed in \cite{KLT23,KLT25}, we use Jouanolou devices to define virtual fundamental classes that satisfy the virtual equivariant localization formula.
More generally, let $\fM$ be a locally finite Artin stack with a $\bT:=\bG_m\times \sT$-action and a $\bT$-equivariant CY4 obstruction theory $\bE\to \bL_{\fM}$ in the sense of Definition \ref{sp:def:obstruction-theory}. If $M\to \fM$ is a smooth $\bT$-equivariant morphism from a quasi-compact, separated algebraic $\bT$-space with the $\bT$-equivariant resolution property, then Theorem \ref{sp:thm:symm-pullback-localization} introduces classes
\begin{equation}
\label{eq:CY4-pullback-classes-intro}
  [\widehat{\mathcal{O}}_{M}^{\vir}]\in G^{\bT}_0\big(M,\bZ[2^{-1}]\big)\qquad\textnormal{and}\qquad  [\tilde{\mathcal{O}}_{M^{\bG_m}}^{\vir}]\in G_0^\bT\big(M^{\bG_m},\bZ[2^{-1}]\big)
\end{equation}
independent of the $\bT$-equivariant Jouanolou device $A\to M$. These classes satisfy the Oh-Thomas equivariant localizations formula from Theorem \ref{thm:eqvirloc} with respect to the first factor $\bG_m$.

The proof of this result uses functoriality of Park's virtual pullback diagrams as recalled in §\ref{sec:pvpinfty}. This was formulated and shown in \cite[Appendix A]{Bo25} on the level of stable $\infty$-categories. This setting also simplifies the question of whether the isotropy condition is preserved due to \cite[Lemma A.16]{Bo25}.
\subsubsection{}
Once the $\bG_m$-localization formula from Theorem \ref{sp:thm:symm-pullback-localization} is applied to the enhanced master space $\bM$ from Definition \ref{wc:def:horizontal-master-space}, we use Proposition \ref{prop:comparison} to compare the orientations and invariants on the fixed-point loci. This established the horizontal flag wall-crossing formula from Theorem \ref{thm:horizontal-flag-wc} which takes place in $L(\fM^{\bar{Q}(\Fr)})_\loc$ for the following quiver
$$
\includegraphics{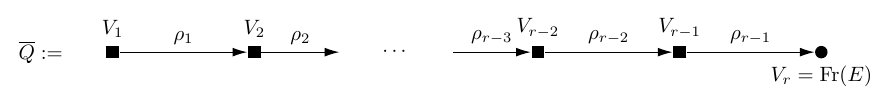}
$$
The final step in recovering Theorem \ref{thm:general-wall-crossing-intro} is projecting the horizontral flag wall-crossing formula along the natural map $\fM^{\bar{Q}(\Fr)}\to \fM_{\cat A}$ which forgets everything except of the objects $E\in \cat{A}^{\Fr}$.  This uses the flag pushforward formula from Corollary \ref{cor:flag-pushforward} which applies to the first class in \eqref{eq:CY4-pullback-classes-intro}. This argument is explained in §\ref{sec:flagprojection}.
\subsection{Notation, Conventions, Acknowledgements}

\subsubsection{}

We work over $\bC$. All Artin stacks, i.e. algebraic stacks, are
assumed to be locally of finite type. 

\subsubsection{}

During the course of this project, we benefited from fruitful discussions with Emile Bouaziz, Dominic Joyce, and Adeel Khan.

A.B. was supported by the Research Start-up Fund of the Shanghai Institute for Mathematics and Interdisciplinary Sciences (No. 2302-SRFP-2026-0009) and by NSFC RFIS Grant No. W2533004 (“Wall-crossing for enumerative invariants and Virasoro constraints”). N.K. was supported by Research Council of Norway grant number 302277 -
”Orthogonal gauge duality and non-commutative
geometry” and by EPSRC grant number EP/X040674/1. H.L. was
supported by World Premier International Research Center Initiative
(WPI), MEXT, Japan.

The authors were informed by I. Karpov and M. Moreira that their group is developing an alternative approach to the problem of CY$4$ wall-crossing, similar to their previous wall-crossing paper \cite{karpov-moreira-nal-wc}, based on non-abelian localization.
\subsubsection{}
Here, we summarize some notation, used throughout this work, regarding Chern classes and equivariant Euler classes:
\begin{itemize}
\item We write $K_\sT^\circ(\fX) \coloneqq K_0(D_{\cat{Perf},\sT}(\fX))$ and $G_0^\sT(\fX) \coloneqq K_0(D_{\cat{Coh},\sT}(\fX))$\footnote{This is called $K_\sT(\fX)$ in \cite{KLT25}.};
\item $\rk(\Theta)$ without any further subscripts denotes the usual rank of a K-theory class $\Theta$;
\item $c_k(\Theta)$ is the $k$-th Chern class and $c_{\rk}(\Theta) =  c_{\rk(\Theta)}(\Theta)$;
\item $\sT$ will denote a finite-dimensional algebraic torus and $\bG_m$ the 1-dimensional one;
\item Since, as discussed in Section~\ref{sec:homology-theories-LA} is relatively independent of the underlying homology theory, we denote the Euler class in both homology and K-theory by $\fe(-)$;
\item for a $\bG_m$-weight $t=e^u$, the equivariant Euler class $\fe_{\bG_m}(t\Theta)$ is expanded as
$$
\fe_{\bG_m}(t\Theta)=u^{\rk(\Theta)}c_{u^{-1}}(\Theta)
$$
for $c_{u^{-1}}(\Theta) = \sum_{k\geq 0}c_k(\Theta)u^{-k}$, which naturally generalizes to any $\sT$-weight;
\item for a $\sT$-equivariant vector bundle $E$, we also write $\fe_{\sT}(E)=c_{\rk}(E)$;
\item For a perfect complex $\Theta$, we use $\Lambda_t(\Theta) = \sum_{k\geq 0}\Lambda^k(\Theta)t^k$ where $\Lambda^k(\Theta)$ are perfect complexes defined for example in \cite[Chapter I, §4.2.2.2]{Illusie1971}.
\item $\fc_k(\Theta)$ is the $k$-th K-theoretic Conner--Floyd--Chern class,
  i.e.
  \[ \fc_k(\Theta) \coloneqq \text{coefficient of } (1 - t)^{\rk(\Theta) - k} \text{ in } \Lambda_{-t}(\Theta^\vee), \]
  and $\fc_{\rk}(\Theta) \coloneqq \fc_{\rk(\Theta)}(\Theta)$;
  \item for a $\bG_m$-weight $t$, the equivariant Connor-Floyd-Euler class $\fe_{\bG_m}(t\Theta)$ is expanded as
$$
\fe_{\bG_m}(t\Theta) = (1-t^{-1})^{\rk(\Theta)}\sum_{k\geq 0}\fc_k(\Theta)(1-t^{-1})^{-k}\,,
$$
which naturally generalizes to any $\sT$-weight;
\item we also use $\hat{\fc}_k(\Theta)
  \coloneqq \fc_k(\Theta) \otimes \det(\Theta)^{1/2}$ and $\hat{\fe}_{\sT}(\Theta) = \fe_{\sT}(\Theta)\otimes\det(\Theta)^{1/2}$;
  \item for a $\sT$-equivariant vector bundle $E$, we also write $\fe_{\sT}(E)=\fc_{\rk}(E)$ and $\hat{\fe}_{\sT}(E)=\hat{\fc}_{\rk}(E)$;
\end{itemize}

\section{CY4 Pullback and PVP Diagrams}
\label{sec:sym-pullback}

\subsection{CY4 Obstruction Theories and PVP Diagrams}

\subsubsection{}

Throughout this subsection, let $\fM$ be an Artin stack over a smooth
equidimensional base $\fB$, all in $\cat{Art}_\sT$. When $\fM$ is additionally an algebraic space, it will be written as $M$. Let
$D^-_{\cat{QCoh},\sT}(\fM)$ be its derived category of bounded-above
$\sT$-equivariant complexes with quasi-coherent cohomology
\cite{Olsson2007}. Let $\bL_{\fM/\fB} \in D^-_{\cat{QCoh},\sT}(\fM)$
denote the cotangent complex \cite{Illusie1971}.

\subsubsection{}
Following \cite[Definition 3.1]{Bo25} and \cite[§2.5.2]{KLT25}\footnote{This work discusses the equivariant CY3 version which requires different conditions from the current ones.}, where the former also applies to stacks as explained in \cite[§3.2]{Bo25}, we summarize the necessary conditions of obstruction theories for the purpose of defining invariants.
 \begin{definition}
 \label{def:viradmcl} \label{sp:def:obstruction-theory}
  A $\sT$-equivariant\footnote{Since all obstruction theories used in equivariant settings in this
    paper are $\sT$-equivariant, we will omit writing
    ``$\sT$-equivariant''.} {\it obstruction theory} on $\fM$ relative
  to $\fB$ is an object $\bE$ with a morphism
  \begin{equation}
   \label{eq:EEobtheory}
  \phi\colon \bE \to \bL_{\fM/\fB} 
\end{equation}  
in $D^-_{\cat{QCoh},\sT}(\fM)$, such that $\cH^i(\phi)$ for $i\geq 0$ are isomorphisms and $\cH^{-1}(\phi)$ is surjective. 
For $\bE$ perfect of tor-amplitude $[-3,1]$, we say that $(\bE,\phi)$ is a \textit{CY4 obstruction theory} if it satisfies the following conditions $\sT$-equivariantly:
\begin{enumerate}[wide, align=left]
    \item[\textit{(Symmetry)}] $\bE$ is called a \textit{symmetric complex} if there is an isomorphism\footnote{Note that this data is equivalent to a non-degenerate symmetric form $q$ in terms of \cite{Park21}.}
    \begin{equation}
    \label{eq:sigma}
    \begin{tikzcd} \mathbb{i}_q:\bE\arrow[r,"\sim"]&{\bE^\vee[2]}\end{tikzcd}\,, \qquad \mathbb{i}_q^\vee[2] = \mathbb{i}_q\,.
    \end{equation}
    \item[\textit{(Evenness)}] The rank of $\bE$ is even.  
    \item[\textit{(Orientability)}] There exists a trivialization, called an \textit{orientation},
    $
    o: \cO_\fM \xrightarrow{\sim}\det(\bE)
    $
satisfying
\begin{equation}
\label{eq:EEorient}
(o^*)^{-1}\circ o^{-1}= \det(\mathbb{i}_q):\begin{tikzcd}\det(\bE)\arrow[r,"\sim"]& \det(\bE)^*\end{tikzcd}\,.
\end{equation}
In this case, we will say that $(\bE,\phi)$ is orientable for given a given symmetry isomorphism \eqref{eq:sigma}.
   \item[\textit{(Cone Isotropy)}] 
   Let $\fC_\bE$ be the virtual normal cone associated to $\bE$ as defined in \cite[Proposition 2.4]{Behrend1997}, which is an abelian cone stack over $\fM$. Recall from \cite[Proposition 1.7]{Park21} (also see \cite[§3.2]{Bo25} for a discussion of this construction in the stack case) that the self-duality isomorphism $\mathbb{i}_q$ determines a quadratic function
    \begin{equation}
    \label{eq:quadratic}
    q_\bE: \fC_\bE \to \bA^1_\fM.
    \end{equation}
     By \cite[Prop. 2.6]{Behrend1997}, the obstruction theory $\phi$ provides an embedding of the intrinsic normal cone $\fC_{\fM/\fB}$  into the virtual normal cone. We say $\bE$ \textit{satisfies the isotropic condition} if the restriction of $q_\bE$ to $\fC_{\fM/\fB}$, i.e.
  \begin{equation*}
    \fC_{\fM/\fB}\hookrightarrow \fC_\bE \xrightarrow{q_\bE} \bA_\fM^1
  \end{equation*}
  vanishes.
\end{enumerate}
 \end{definition}

Exactly as in \cite[2.5.3]{KLT25}, we have the following Lemma.
\begin{lemma} \label{sp:lem:perfect-symmetric-obstruction-theory}
  Let $M \subset \fM$ be a $\sT$-invariant open locus which is an algebraic space. Then the restriction to $M$ of a symmetric obstruction theory on $\fM$ has tor-amplitude in $[-2,0]$.
\end{lemma}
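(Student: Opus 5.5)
The claim is a pointwise statement about cohomology sheaves, so I will check it at closed points using the symmetry isomorphism $\mathbb{i}_q$ and the fact that $M$ has trivial stabilizers. Throughout, $(\bE,\phi)$ is the given obstruction theory on $\fM$ with $\bE$ perfect of tor-amplitude $[-3,1]$ and with $\mathbb{i}_q\colon\bE\xrightarrow{\sim}\bE^\vee[2]$. Restriction to the open $M$ is flat pullback, so it commutes with taking cohomology sheaves, and $\phi|_M\colon \bE|_M\to \bL_{\fM/\fB}|_M = \bL_{M/\fB}$ is still an obstruction theory.

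The tor-amplitude of a perfect complex can be read off from its derived fibers at closed points. For each $x\in M$ it therefore suffices to show that $H^i(\bE|_x)=0$ for $i=1$ and $i=-3$. Here $\bE|_x = \bE\otimes^{\mathbf L} k(x)$. Tor-amplitude $[-3,1]$ already gives vanishing outside $[-3,1]$.

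\emph{Step 1: degree $1$.} Since $\cH^1(\phi)$ is an isomorphism, $\cH^1(\bE|_M)\cong \cH^1(\bL_{M/\fB})$. The target is $0$ because $M$ is an algebraic space, so $\bL_{M/\fB}$ has amplitude in $(-\infty,0]$. The space $\fB$ is smooth, so the relative cotangent complex does not pick up a positive degree from the base; alternatively, one can use the triangle $\bL_\fB|_M\to\bL_M\to\bL_{M/\fB}$, where $\bL_\fB$ lives in $[0,1]$ and $\cH^1(\bL_\fB)$ maps to the stabilizer-type term, which must be handled via smoothness of $\fB$. Because $1$ is the top degree of $\bE$, right-exactness of $\otimes^{\mathbf L}$ gives $H^1(\bE|_x)=\cH^1(\bE)\otimes k(x)=0$ for every $x\in M$. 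Hence $\bE|_M$ has tor-amplitude $[-3,0]$.

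\emph{Step 2: degree $-3$ via symmetry.} Restricting $\mathbb{i}_q$ to the fiber gives $H^{-3}(\bE|_x)\cong H^{-3}(\bE^\vee|_x[2]) = H^{-1}(\bE^\vee|_x)\cong H^{1}(\bE|_x)^*$, which vanishes by Step 1. Hence $\bE|_M$ has tor-amplitude $[-2,0]$.

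The main obstacle is Step 1 when $\fB$ is a stack: I must make sure $\bL_{M/\fB}$ has no $\cH^1$. If $\fB$ is a smooth algebraic space this is immediate. If $\fB$ has stabilizers, $\cH^1(\bL_{M/\fB})$ measures the kernel of the map on stabilizers from $M$ to $\fB$, which is trivial because $M$ has trivial stabilizers. Following \cite[2.5.3]{KLT25}, I will invoke this fact, namely that $\cH^1(\bL_{M/\fB})$ is the dual of the relative inertia Lie algebra.
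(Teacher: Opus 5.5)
Your argument is correct and is essentially the one the paper relies on (it defers to \cite[2.5.3]{KLT25}). Because $M$ is an algebraic space, $\cH^1(\bL_{M/\fB})=0$, so $\cH^1(\phi)$ forces $\bE|_M$ to have tor-amplitude $\le 0$, and the symmetry $\bE\cong\bE^\vee[2]$ then gives the lower bound $-2$. Your hedging about smoothness of $\fB$ is unnecessary: $\cH^1(\bL_{M/\fB})$ sits in an exact sequence between $\cH^1(\bL_M)=0$ and $\cH^2$ of the pulled-back $\bL_\fB$, which vanishes for any Artin stack $\fB$ (equivalently, $M\to\fB$ is representable).
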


\subsubsection{}\label{sp:sec:dt4-vir-class}
From now on $G^{\sT}_0(-)$ will denote the equivariant Grothendieck group of coherent sheaves. For a separated algebraic space $\fM=M$ with a CY4 obstruction theory in the sense of Definition~\ref{sp:def:obstruction-theory}, by \cite{Oh2023,Park21}\footnote{In \cite{Oh2023}, it is constructed only for quasi-projective $M$, but we want to allow situations when $M$ is a separated algebraic space which is covered only by the approach in \cite{Park21}.}, we have equivariant virtual cycles
\begin{equation*}
  [M]^{\vir}_{\sT}\in A^{\sT}_*\left(M,\bZ[2^{-1}]\right)\,,\qquad  [\widehat{\mathcal{O}}_{M}^{\vir}]\in G^{\bT}_0\big(M,\bZ[2^{-1}]\big)\,.
\end{equation*}
If $M$ is connected, then changing the choice of orientation $o$ changes the sign of $[M]^{\vir}_{\sT}$. 

Such a class was also defined in earlier work of Borisov--Joyce \cite{BJ}. When $M$ is projective and $\sT$ trivial, \cite{OT2} shows that $[M]^{\vir}:=[M]^{\vir}_{\sT}$ maps to the Borisov--Joyce class in $H_*\big(M,\bZ[2^{-1}]\big)$. This implies that the resulting class, which, by abuse of notation, we also denote by $[M]^{\vir}$, lies in the image of $H_*\big(M,\bZ\big)\to H_*\big(M,\bZ[2^{-1}]\big)$. It also follows from \cite{OT2} that the class from \cite{BJ} vanishes in $H_*\big(M,\bZ[2^{-1}]\big)$ if evenness does not hold. Thus, we only lose information about the $2$-torsion part by working with the class defined in \cite{Oh2023}.

\subsubsection{}\label{sp:sec:loc-setup}

Now we discuss how the localization formula for the above virtual classes works. In this paper, we often consider a situation where we have an action by a torus $\bT$ that splits into $\bT=\bG_m\times \sT$ for some fixed torus $\sT$. So, to fix notation for this section, we denote a general torus by by $\bT$.

Before stating the localization formula, we discuss orientation conventions. We follow here the conventions in \cite[§3.3]{Bo25}. Let $M$ be a moduli space with $\bT$-action and an oriented $\bT$-equivariant CY4-obstruction theory $\bE$. In this case, there is a splitting 
\begin{equation}\label{eq:fixedsplitting}
  \bE|_{M^{\bT}} =  \bE^{f}\oplus \bE^m 
\end{equation}
into a fixed part $\bE^{f}$ and a moving part $\bE^m$. The latter defines $\bN\coloneqq (\bE^m)^\vee$ called the \textit{virtual normal bundle}, which only contains the parts of $\bE|_{M^{\bT}}^\vee$ of non-zero $\bT$-weight.\footnote{In our case of CY$4$ obstruction theories, the virtual normal bundle could equivalently be written as $\bE^m[-2]$.}

Choosing a fixed decomposition into $\bT$-weights\footnote{Usually, we choose a fixed ordering on $\bT$-weights and take positive and negative weights, which explains the choice of notation.}
\begin{equation*}
  \bN = \bN^{>}\oplus \bN^{<},
\end{equation*}
where $\bN^{<} = \big(\bN^{>}\big)^\vee[2]$ and the $\bT$-weights of $\bN^{>}$ and $\bN^{<}$ are disjoint, induces the orientation 
\begin{equation}
\label{eq:oNgeq}
o\big(\bN\big):\cO \to \det\big(\bN^{>}\big)\det\big(\bN^{<}\big)\cong\det(\bN)
\end{equation}
defined in \cite[Def. 3.14(iv) \& (3.9)]{Bo25}. 

Now the fixed part $\bE^f$ is an obstruction theory on $M^\bT$. We now use the given orientation $o: \cO \to \bE$ on $M$ and the orientation $o\big(\bN\big)$ from \eqref{eq:oNgeq} above to define an orientation $o^f$ on $\bE^f$ by
\begin{equation}
\label{eq:EEonMGGmtrivialization}
\cO \xrightarrow{o|_{M^{\bT}}} \det\big(\bE|_{M^{\bT}}\big) \xrightarrow{\epsilon_{\bN,\bE^f}^{-1}}\det(\bN)\det\big(\bE^f\big)\xrightarrow{o(\bN)^{-1}\otimes \id}\det(\bE^f).
\end{equation}
With this orientation, $\bE^f$ becomes an oriented CY4 obstruction theory \cite[(3.10)]{Bo25}, and hence we obtain the virtual fundamental class $\big[M^{\bT}\big]^{\vir}$ for $M^\bT$. 

\subsubsection{}
\label{sec:localization-formula}
Recall that the usual equivariant localization in K-theory is expressed using the equivariant K-theoretic Euler class $\fe_{\bT}(-)$ which is $\Lambda_{-1}(E^*)$ on vector bundles $E$. The localization formula for the virtual classes defined in Section~\ref{sp:sec:dt4-vir-class} from CY4 obstruction theories, instead contains the \textit{symmetrized Euler class} $\widehat{\fe}_{\bT}$, which is $\widehat{\fe}_{\bT}(\alpha) = \fe_{\bT}(\alpha)\det(\alpha)^{\frac{1}{2}}$ for any equivariant K-theory class $\alpha$. 

Now the virtual localization formula can be stated as follows.

\begin{theorem}[{\cite[Theorem 7.1]{Oh2023}}, {\cite[Proposition A.5]{Park21}, \cite[Theorem 3.5]{Bo25}}]\label{thm:eqvirloc}
   Let $M$ be a separated algebraic space with a $\bT$-action and $(\bE,\phi)$ a CY4 obstruction theory. The associated virtual class $\big[M\big]_{\bT}^{\vir}$ defined in Section~\ref{sp:sec:dt4-vir-class} and the virtual class $\big[M^{\bT}\big]^{\vir}$ for $M^\bT$ defined in Section~\ref{sp:sec:loc-setup} satisfy
    \begin{equation}
    \label{eq:vireqlocgeneral}
        [M]^{\vir}_{\bT} = \iota_*\frac{\big[M^{\bT}\big]^\vir}{\fe_{\bT}(\bN^{>})} \in H^{\bT}_*(M)_{\loc}\,.
    \end{equation}
    The analogous K-theoretic localization formula
    \begin{equation*}
      \widehat{\cO}_{M}^{\vir} = \iota_*\frac{\widehat{\cO}_{M^{\bT}}^{\vir}}{\widehat{\fe}_{\bT}(\bN^{>})}\in G^{\bT}_0(M)_{\loc} \,.
    \end{equation*}
    holds in equivariant K-theory.
\end{theorem}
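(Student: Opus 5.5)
The plan is to reduce to the construction of the virtual classes via Park's symmetric pullback \cite{Park21}, and then apply the localization theorem for localized (square-root) Gysin pullbacks along the zero section of the symmetric complex. First I would pass to a global presentation. Because $M$ is a separated algebraic space with $\bT$-action, it has (after a $\bT$-equivariant Jouanolou-type affine replacement, or working $\bT$-equivariantly étale locally if needed) a $\bT$-equivariant resolution of the tor-amplitude-$[-2,0]$ complex $\bE$ from Lemma~\ref{sp:lem:perfect-symmetric-obstruction-theory}. This presentation has the form $E = [B \to V \to B^\vee]$, with $V$ an $SO$-bundle whose quadratic form is induced by $\mathbb{i}_q$. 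Using cone isotropy, the intrinsic normal cone lifts to an isotropic cone $C\subset V|_{\text{local}}$. The class $[M]^{\vir}_{\bT}$ is then the square-root Gysin pullback $\sqrt{0^!_V}[C]$ (Oh--Thomas/Park), corrected by $\fe(B)$. The K-theoretic class is handled in the same way with twisted structure sheaves $\widehat{\cO}$.

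Second, I would split everything into $\bT$-fixed and moving parts on $M^{\bT}$. The fixed part $V^f$ of $V$ carries the restricted form, and $C^{\bT}$ is the cone of the fixed obstruction theory $\bE^f$, which is isotropic in $V^f$. The moving part $V^m$ is itself quadratic, and the chosen splitting $\bN=\bN^>\oplus\bN^<$ gives a maximal isotropic subbundle $\Lambda\subset V^m$. The key steps, in order, are as follows.

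(a) Apply ordinary torus localization (the Graber--Pandharipande / Edidin--Graham argument) to the cone class: $[C]=\iota_*\big([C^{\bT}]/\fe_{\bT}(N_{C^{\bT}/C})\big)$ after localization. To do this, deform to the normal cone of $M^{\bT}$, using the $\bT$-equivariant cone-deformation exactly as in \cite{Oh2023}.

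(b) Use the fact that square-root Gysin maps commute with $\iota_*$ and are compatible with direct sums of quadratic bundles. For an orthogonal sum $V|_{M^{\bT}}=V^f\oplus V^m$ with maximal isotropic $\Lambda\subset V^m$, the square-root Euler class factors as $\sqrt{e}(V^m)=\pm\fe_{\bT}(\Lambda)$. In K-theory it becomes $\widehat{\fe}_{\bT}(\Lambda)$ up to the $\det^{1/2}$ twist.

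(c) Combine the moving parts of $B$, $B^\vee$ and $\Lambda$ into $\fe_{\bT}(\bN^{>})$, inverting the moving part of $B$.

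The main obstacle will be sign and orientation bookkeeping: showing that the sign $\pm$ in (b) is exactly absorbed by defining $o^f$ through \eqref{eq:EEonMGGmtrivialization} with $o(\bN)$ from \eqref{eq:oNgeq}. I would check this by comparing determinants. The orientation of $V$ restricts to $\det V^f\otimes\det V^m$, and $\det V^m\cong\det\Lambda\otimes\det\Lambda^\vee$ carries the canonical orientation matching \cite[Def.~3.14]{Bo25}. Following \cite[Theorem 3.5]{Bo25}, the resulting sign comparison then reduces to a local computation on a rank-two hyperbolic plane, which settles the orientation convention.

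Finally, in K-theory the same argument goes through, with $\widehat{\fe}$ replacing $\fe$. The required $\det^{1/2}$ factors exist after inverting $2$, and they match because $\det(\bN^>)$ appears symmetrically.
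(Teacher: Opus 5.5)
The paper does not prove this statement itself. It quotes it from \cite[Theorem 7.1]{Oh2023} (quasi-projective $M$ with a global equivariant self-dual resolution), \cite[Proposition A.5]{Park21} (the general case, via symmetric pullbacks) and \cite[Theorem 3.5]{Bo25} (orientation conventions).

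Your outline follows the Oh--Thomas route, but its central step (a) fails as stated. The cone $C$ is singular and $C^{\bT}\hookrightarrow C$ is not a regular embedding, so there is no normal bundle $N_{C^{\bT}/C}$. On a singular space, localization only tells you that pushforward from the fixed locus becomes an isomorphism after localizing; it gives no explicit inverse.

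More seriously, $C^{\bT}$ is in general not the cone of $\bE^f$: the fixed locus of a restricted normal cone can be strictly larger than the normal cone of the fixed locus. Take $M=\{xy=0\}\subset Y=\bA^2$ with weights $(1,-1)$. Then $M^{\bT}$ is the reduced origin, whose normal cone in $Y^{\bT}$ is a point. But $C_{M/Y}\cong M\times\bA^1$ with $\bA^1$ of weight $0$ (the equation $xy$ is invariant), so $C^{\bT}\cong\bA^1$. Moreover $[C_{M/Y}]=0$ after localization, since the two branches contribute $\pm t^{-1}[\bA^1]$, while $[C^{\bT}]\neq 0$. So no identity $[C]=\iota_*([C^{\bT}]/u)$ with $u$ invertible can hold.

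The missing ingredient is the Graber--Pandharipande-type lemma. Given a $\bT$-equivariant embedding $M\subset Y$ into a smooth $Y$, the refined Gysin pullback $\iota^![C]$ along $Y^{\bT}\hookrightarrow Y$ agrees, as a class on $C|_{M^{\bT}}$ (not as a cycle), with the class of the cone of $M^{\bT}$ for $\bE^f$, which lies in $V^f\oplus 0$. From there the argument runs as follows:
\begin{itemize}
\item commute $\iota^!$ with $\sqrt{0^!_V}$;
\item use $\sqrt{0^!_{V^f\oplus V^m}}=\sqrt{e}(V^m)\cdot\sqrt{0^!_{V^f}}$ on isotropic classes supported on the zero section of $V^m$ (this is where your $\pm\fe_{\bT}(\Lambda)$ really comes from);
\item recover $[M]^{\vir}_{\bT}=\iota_*\big(\iota^![M]^{\vir}_{\bT}/\fe_{\bT}(N_{Y^{\bT}/Y})\big)$ from the excess intersection formula.
\end{itemize}

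Your first step, the global resolution, is also not available in the generality of the statement. A $\bT$-equivariant self-dual three-term resolution, and the equivariant smooth embedding used above, exist for quasi-projective $M$. A $\bT$-equivariant Jouanolou device needs the $\bT$-equivariant resolution property, which a separated algebraic space need not have; the paper imposes exactly this hypothesis in \S\ref{sec:generalCY4pullback}. An \'etale-local argument does not glue to an identity in $A^{\bT}_*(M)_{\loc}$ or $G^{\bT}_0(M)_{\loc}$.

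So as written you recover at best the Oh--Thomas case; the stated generality is what Park's embedding-free symmetric pullbacks supply. Even when a device $a\colon A\to M$ exists, $A^{\bT}$ is generally smaller than $a^{-1}(M^{\bT})$, and the moving part of $\Omega_a$ must then be tracked (compare $N_\alpha$ in \S\ref{sp:sec:weight-splitting-orientation}).

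Finally, inverting $2$ does not produce $\det(\bN^{>})^{1/2}$ when $\bN^{>}$ carries nontrivial weights. You must adjoin square roots of characters, i.e.\ pass to a double cover of $\bT$. The orientation comparison you defer to \cite[Theorem 3.5]{Bo25} is the right reference for the convention \eqref{eq:EEonMGGmtrivialization}.
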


The first version of this formula was originally proved in \cite[Theorem 7.1]{Oh2023}, with a more general version proved in \cite[Proposition A.5]{Park21}, and a careful treatment of orientations in \cite[Theorem 3.5]{Bo25}.

\subsection{General CY4 Pullback \& Localization}
\label{sec:generalCY4pullback}
All algebraic spaces in this section are assumed to be quasi-compact with affine diagonal.
\subsubsection{}

\begin{definition}
Let $\pi:\fN\rightarrow \fM$ a \textit{quasi-smooth} morphism in $\cat{Art}_\sT$ over a smooth equidimensional base $\fB$, so that we have an obstruction theory $\bE_\pi \stackrel{\phi_f}{\rightarrow} \bL_{\pi}$ with  $\bE_\pi$ being perfect of tor-amplitude $[-1,1]$. Additionally, let $\fM$ be given a CY$4$ obstruction theory $\phi_{\fM/\fB}:\bE_{\fM/\fB}\to \bL_{\fM/\fB}$. We call the following diagram \textit{Park's virtual pullback (PVP) diagram}, following \cite[(0.3)]{Park21}:
\begin{equation} \label{eq:pvp-diagram}
  \begin{tikzcd}
    \bE_\pi[-1] \ar[equals]{d} \ar{r} & \bF^\vee[2] \ar{d}{\eta^\vee[2]} \ar{r}{\zeta} & \widehat{\bE}_{\fN/\fB} \ar{d}{\zeta^\vee[2]} \ar{r}{+1} & {} \\
    \bE_\pi[-1] \ar{d}{{\phi_\pi[-1]}} \ar{r} & \pi^*\bE_{\fM/\fB} \ar{d}{\pi^*\phi_{\fM/\fB}} \ar{r}{\eta} & \bF \ar{d}{\psi} \ar{r}{+1} & {} \\
    \bL_\pi[-1] \ar{r} & \pi^*\bL_{\fM/\fB} \ar{r} & \bL_{\fN/\fB} \ar{r}{+1} & {}
  \end{tikzcd}
\end{equation}
where the horizontal rows are distinguished triangles, the vertical arrows induce morphisms between triangles, $\psi$ is an obstruction theory, and we use the symmetry of $\bE_{\fM/\fB}$. We also assume that $\widehat{\bE}_{\fN/\fB}$ is given a fixed self-dual isomorphism $\widehat{\bE}_{\fN/\fB}\cong \widehat{\bE}_{\fN/\fB}^\vee[2]$ compatible with the diagram. If, additionally, $\psi\circ \zeta^\vee[2]: \widehat{\bE}_{\fN/\fB} \to \bL_{\fN/\fB}$ is a CY$4$ obstruction theory, then we say it is a \textit{CY$4$ pullback} of the CY$4$ obstruction theory $\phi_{\fM/\fB}:\bE_{\fM/\fB}\to \bL_{\fM/\fB}$. 
\end{definition}

\subsubsection{}

\begin{remark}\label{sp:rmk-pvpinf}
Note that we can also work with the above PVP diagram in the setting of stable $\infty$-categories as explained in \cite[Appendix A]{Bo25}. Working in this setting takes into account (higher) homotopies filling up all faces and the interior of the diagram, which ends up determining the duality of $\widehat{\bE}_{\fN/\fB}$ uniquely and can be used to prove the isotropy condition for it even in the case of stacks. Details of this process, which we omit here, are worked out by the first-named author in \cite[Prop. A.15 \& Lemma A.16]{Bo25}. Due to the assumptions in §\ref{sec:assab}, our starting CY4 obstruction theories will always have natural $\infty$-lifts in the sense of \cite[Definition A.14, p. 20]{Bo25}. We can thus work in $\infty$-stable categories whenever necessary.
\end{remark}
\subsubsection{}
\label{sec:pvpinfty}
The additional benefit of lifting to stable $\infty$-categories is that one can prove functoriality of \eqref{eq:pvp-diagram}. Consider a commutative diagram 
\begin{equation}
\label{eq:functorialitydiagram}
 \begin{tikzcd}
     \fN_2\arrow[r,"\pi_2"] \arrow[rr,bend left = 50, "\pi"]&\fN_1\arrow[r, "\pi_1"]&\fM
 \end{tikzcd}\,.
\end{equation}
of quasi-smooth morphisms in $\cat{Art}_\sT$ over $\fB$. As in \cite[(A.21)]{Bo25}, suppose that there is an $\infty$-commutative diagram
\begin{equation}
\label{eq:startingpointPVPfunctoriality}
\begin{tikzcd}[column sep=small]
  \pi^*_2 \arrow[rr,bend left = 50, blue] \arrow[d]\bE_{\pi_1}[-1]\arrow[r, blue]&  \arrow[d]\bE_{\pi}[-1]\arrow[r, blue]& \pi^*\bE_{\fM}\arrow[d]\\
   \arrow[rr,bend right = 50]  \pi^*_2\bL_{\pi_1}[-1]\arrow[r]&  \bL_{\pi}[-1]\arrow[r]& \pi^*\bL_{\fM}&{}
\end{tikzcd}
\end{equation}
where vertical arrows determine the corresponding obstruction theories. If a PVP diagram \eqref{eq:pvp-diagram} is given for $\pi_1$ and $\pi_2$ on the level of stable $\infty$-categories, then this data produces uniquely (up to contractible choices) another such diagram for $\pi$. This and similar such results are formulated and proved in \cite[§A.5]{Bo25}. We use them repeatedly below to show that the classes constructed in Theorem \ref{sp:thm:symm-pullback-localization} are independent of choices and that they satisfy compatibilities from Proposition \ref{prop:comparison} and \ref{prop:smoothpushforward}. These results are then applied in the proof of wall-crossing in Section~\ref{sec:horizontalflagWC},  Section~\ref{sec:flagprojection}, and Section~\ref{sec:generalindependence}.
\subsubsection{}
\begin{definition}\label{def:orpullback}
Suppose that we are given a pullback of CY4 obstruction theories specified by \eqref{eq:pvp-diagram}, and suppose  further that there is an orientation $o(\bE_{\fM/\fB}):\cO \xrightarrow{\sim} \det(\bE_{\fM/\fB})$. Then there is an induced orientation $o(\widehat{\bE}_{\fN/\fB})$ of $\widehat{\bE}_{\fN/\fB}$ determined by the PVP diagram, defined as the composition of the consecutive morphisms
\begin{equation}\label{sp:eq:pvp-orientation}
\begin{tikzpicture}[descr/.style={fill=white,inner sep=1.5pt}]
      \matrix (m) [
           matrix of math nodes,
           row sep=2.2em,
            column sep=3.5em,
            text height=1.5ex, text depth=0.25ex
        ]
   {  \cO & \det\big(\bE_\pi^\vee[2]\big)\det\big(\bE_\pi\big) & \\
   \det\big(\bE_\pi^\vee[2]\big)\det\big(\pi^*\bE_{\fM/\fB}\big)\det\big(\bE_\pi\big) &  \det\big(\bE_\pi^\vee[2]\big)\det\big(\bF\big) & \\
            \det\big(\widehat{\bE}_{\fN/\fB}\big)\,, &  & \\
        };
    \path[overlay,->, font=\scriptsize,>=latex]
        (m-1-1) edge node[midway, above] {$o(\bE_\pi)$} (m-1-2) 
        (m-1-2) edge[out=345,in=165]  node[descr,yshift=0.3ex] {$\id\otimes \pi^*o(\bE_{\fM/\fB})\otimes \id$}  (m-2-1)
        (m-2-1) edge node[midway, above] {$\id\otimes \epsilon_{\pi^*\bE,\bE_\pi}$} (m-2-2)
        (m-2-2) edge[out=345,in=165]  node[descr,yshift=0.3ex] {$\epsilon_{\bE_\pi^\vee[2], \bF}$} (m-3-1);
\end{tikzpicture}
\end{equation}
where $o(\bE_\pi)$ is the obvious orientation \cite[Def. 3.14.iv)]{Bo25} and the $\epsilon$-isomorphisms are the isomorphisms of determinant induced by distinguished triangles, as discussed in \cite[(3.31)]{Bo25}.
\end{definition}
In light of functoriality of \eqref{eq:pvp-diagram} summarized in Section~\ref{sec:pvpinfty}, we may ask whether these orientations are compatible with respect to it. 
\begin{lemma}[{\cite[Lemma 3.10]{Bo25}}]
\label{lem:functorialityorientations}
    In the situation of \eqref{eq:functorialitydiagram}, applying \eqref{sp:eq:pvp-orientation} to the PVP diagram for $\pi$ induces the same orientation as the consecutive application of \eqref{sp:eq:pvp-orientation} to the PVP diagrams of $\pi_1$ and $\pi_2$.
\end{lemma}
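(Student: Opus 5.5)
The plan is to compare the two orientations of $\widehat{\bE}_{\fN_2/\fB}$ by writing both as composites of $\epsilon$-isomorphisms of determinants and orientation isomorphisms, and then to reduce the comparison to the standard compatibility of the $\epsilon$-isomorphisms with octahedral diagrams. Here $\fN_2$ is the source of $\pi=\pi_1\circ\pi_2$ in \eqref{eq:functorialitydiagram}.

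\textbf{Step 1: the two sides and the two constraints.}
Write $\bF_1,\widehat{\bE}_{\fN_1/\fB}$ for the objects in the PVP diagram of $\pi_1$. Write $\bF_2,\widehat{\bE}_{\fN_2/\fB}$ for those of $\pi_2$, which pulls back $\widehat{\bE}_{\fN_1/\fB}$. Write $\bF,\widehat{\bE}$ for those of $\pi$, obtained via §\ref{sec:pvpinfty}.
\begin{itemize}
\item The functoriality of §\ref{sec:pvpinfty} provides canonical identifications $\widehat{\bE}\simeq\widehat{\bE}_{\fN_2/\fB}$ and $\bF\simeq\bF_2$, compatible with the self-dualities.
\item The triangle $\pi_2^*\bE_{\pi_1}[-1]\to\bE_\pi[-1]\to\bE_{\pi_2}[-1]$ from \eqref{eq:startingpointPVPfunctoriality} links the three relative obstruction theories.
\item The $\infty$-lift guarantees that all these triangles fit into coherent octahedra, namely $3\times3$ grids of distinguished triangles.
\end{itemize}

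\textbf{Step 2: unwind the composite orientation.}
Applying \eqref{sp:eq:pvp-orientation} first to $\pi_1$ and then to $\pi_2$ produces a composite of the following maps:
\begin{itemize}
\item $o(\bE_{\pi_2})$ and $\pi_2^*o(\bE_{\pi_1})$;
\item $\pi^*o(\bE_{\fM/\fB})$;
\item four $\epsilon$-maps, coming from the triangles $\bE_{\pi_1}[-1]\to\pi_1^*\bE\to\bF_1$, $\bF_1^\vee[2]\to\widehat{\bE}_{\fN_1}\to\ldots$ and their analogues for $\pi_2$.
\end{itemize}
The direct orientation for $\pi$ uses only $o(\bE_\pi)$, $\pi^*o(\bE_{\fM/\fB})$ and two $\epsilon$-maps. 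The first reduction is to check the following: under $\epsilon_{\pi_2^*\bE_{\pi_1},\bE_{\pi_2}}$, the orientation $o(\bE_\pi)$ corresponds to $\pi_2^*o(\bE_{\pi_1})\otimes o(\bE_{\pi_2})$. This should follow from the definition of the "obvious" orientation in \cite[Def. 3.14]{Bo25} of a complex of the form $\det(V^\vee[2])\det(V)$, together with naturality of $\epsilon$ under duality. Up to a sign that has to be tracked, this is a pairing of $\det(\bE_\pi)$ with its dual.

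\textbf{Step 3: compare the remaining $\epsilon$-maps.}
What remains is to show that two chains of $\epsilon$-maps agree as isomorphisms
$$\det(\bE_\pi^\vee[2])\det(\pi^*\bE_{\fM/\fB})\det(\bE_\pi)\to\det(\widehat{\bE}).$$
One chain passes through $\bF_1$ and $\widehat{\bE}_{\fN_1}$; the other passes directly through $\bF$. I would use the compatibility of Knudsen–Mumford $\epsilon$-isomorphisms with $3\times3$ diagrams of triangles, as recalled around \cite[(3.31)]{Bo25}. This compatibility holds up to an explicit Koszul sign $(-1)^{\rk\cdot\rk}$. Applying it twice, once to the octahedron relating $\bF_1,\bF$ and $\bE_{\pi_2}$, and once to its dual relating the $\widehat{\bE}$'s, identifies the two chains up to signs.

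\textbf{Main obstacle: the signs.}
The crux is bookkeeping of the Koszul signs. Their exponents involve products of ranks of $\bE_{\pi_1}$, $\bE_{\pi_2}$ and their duals $\bE^\vee[2]$; the shift by $2$ is even, so it does not change the parities of the ranks. I would first try to show that the signs pair off: each sign coming from a triangle appears together with the dual sign from its Serre-dual triangle. This pairing is exactly why the construction \eqref{sp:eq:pvp-orientation} places $\bE_\pi^\vee[2]$ and $\bE_\pi$ symmetrically. Because the symmetric arrangement makes the exponents appear twice, the total sign should be $+1$. The $\infty$-categorical coherence from Remark \ref{sp:rmk-pvpinf} is what allows us to use a single octahedron consistently for both chains, rather than ad hoc choices that could introduce additional discrepancies.
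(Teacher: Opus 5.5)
The paper gives no argument for this lemma beyond the citation \cite[Lemma 3.10]{Bo25}, so I am judging your proposal on its own. Your framework is the right one: write both orientations as composites of $\epsilon$-isomorphisms, and use the $\infty$-coherent diagram of \S\ref{sec:pvpinfty} to compare them. However, Step 1 contains a false identification that derails Step 3: $\bF$ and $\bF_2$ are not isomorphic. From the middle rows of \eqref{eq:pvp-diagram},
\[ \rk\bF=\rk\bE_{\fM/\fB}+\rk\bE_{\pi_1}+\rk\bE_{\pi_2}\,,\qquad \rk\bF_2=\rk\widehat{\bE}_{\fN_1/\fB}+\rk\bE_{\pi_2}=\rk\bE_{\fM/\fB}+2\rk\bE_{\pi_1}+\rk\bE_{\pi_2}\,. \]
Functoriality identifies only $\widehat{\bE}\simeq\widehat{\bE}_{\fN_2/\fB}$. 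The middle terms are instead related by the octahedral triangles
\[ \pi_2^*\bF_1\to\bF\to\bE_{\pi_2}\,,\qquad \pi_2^*\bE_{\pi_1}^\vee[2]\to\bF_2\to\bF\,, \]
where the second comes from $\pi_2^*\bE_{\pi_1}^\vee[2]\to\pi_2^*\widehat{\bE}_{\fN_1/\fB}\to\pi_2^*\bF_1$. So the consecutive chain passes through $\pi_2^*\bF_1$, $\pi_2^*\widehat{\bE}_{\fN_1/\fB}$ and $\bF_2$, and the comparison cannot be done with two octahedra. You must show that both chains compute the $\epsilon$-isomorphism of one and the same five-step filtration of $\widehat{\bE}$, with graded pieces $\bE_{\pi_2}^\vee[2],\ \pi_2^*\bE_{\pi_1}^\vee[2],\ \pi^*\bE_{\fM/\fB},\ \pi_2^*\bE_{\pi_1},\ \bE_{\pi_2}$ in this order. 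That is an associativity statement for $\epsilon$ requiring the full coherent diagram of \cite[Theorem A.18]{Bo25}. Also, \eqref{sp:eq:pvp-orientation} uses the Serre-dual triangle $\bE_\pi^\vee[2]\to\widehat{\bE}\to\bF$, not the top row $\bF^\vee[2]\to\widehat{\bE}\to\bE_\pi$ that you list in Step 2.

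The second gap is that the sign, which is the entire content of the lemma, is never computed. Both composites are orientations of the same symmetric complex, so by \eqref{eq:EEorient} they differ by a locally constant sign; the lemma says this sign is $+1$. Your Step 2 leaves ``a sign that has to be tracked'', and the closing claim that Koszul signs pair off with their Serre duals is a heuristic, not a verification. Two things must actually be checked.
\begin{enumerate}
\item[(a)] The obvious orientation of \cite[Def.~3.14(iv)]{Bo25} must be multiplicative in nested form. Under the decompositions induced by $\pi_2^*\bE_{\pi_1}\to\bE_\pi\to\bE_{\pi_2}$ and its dual $\bE_{\pi_2}^\vee[2]\to\bE_\pi^\vee[2]\to\pi_2^*\bE_{\pi_1}^\vee[2]$, the section $o(\bE_\pi)$ should become $o(\bE_{\pi_2})$ with $\pi_2^*o(\bE_{\pi_1})$ inserted between $\det(\bE_{\pi_2}^\vee[2])$ and $\det(\bE_{\pi_2})$. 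Checking this requires comparing the $\epsilon$ of a dual triangle with the dual of $\epsilon$ in the convention of \cite[(3.31)]{Bo25}.
\item[(b)] Inserting $\pi^*o(\bE_{\fM/\fB})$, respectively $\pi_2^*o(\widehat{\bE}_{\fN_1/\fB})$, in the middle must create no Koszul sign. This holds because these sections have even degree, by evenness of $\bE_{\fM/\fB}$.
\end{enumerate}
Given (a), (b) and the associativity above, both orientations are the same section of the same nested product of five determinant lines, and no reordering occurs anywhere. So the mechanism to aim for is the absence of signs, not their pairwise cancellation. As written, your proposal does not establish the statement.
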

\subsubsection{}
Here, we discuss a particular situation when \eqref{eq:startingpointPVPfunctoriality} exists automatically.
\begin{lemma}
\label{lem:smoothcompositionPVPstart}
Suppose that the morphisms of algebraic stacks in \eqref{eq:functorialitydiagram} are smooth.
Let $\bE_{\fM}\to \bL_{\fM}$ be an obstruction theory\footnote{More precisely its $\infty$-lift in the sense of Remark \ref{sp:rmk-pvpinf}.}. Then there exists an essentially unique $\infty$-commutative diagram \eqref{eq:startingpointPVPfunctoriality} with $\bE_{\pi} = \bL_{\pi}$ and $\bE_{\pi_1} = \bL_{\pi_1}$. 
\end{lemma}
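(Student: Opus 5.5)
The diagram \eqref{eq:startingpointPVPfunctoriality} will be obtained from the transitivity triangle of cotangent complexes for the composition $\pi=\pi_1\circ\pi_2$. In the stable $\infty$-category $D_{\cat{QCoh},\sT}(\fN_2)$, functoriality of the cotangent complex gives a cofiber sequence $\pi_2^*\bL_{\pi_1}\to\bL_\pi\to\bL_{\pi_2}$. It comes with the maps $\pi^*\bL_{\fM}\to \pi_2^*\bL_{\fN_1}\to \bL_{\fN_2}$ and the identification $\bL_\pi\simeq\mathrm{cofib}(\pi^*\bL_\fM\to\bL_{\fN_2})$. All of these are canonical up to contractible choices, since the cotangent complex is functorial as an $\infty$-functor. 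Shifting by $[-1]$ gives the bottom row of \eqref{eq:startingpointPVPfunctoriality}: $\pi_2^*\bL_{\pi_1}[-1]\to\bL_\pi[-1]\to\pi^*\bL_\fM$, with the bent arrow equal to $\pi_2^*$ of the connecting map $\bL_{\pi_1}[-1]\to\pi_1^*\bL_\fM$, together with a homotopy witnessing that the composite is that connecting map.

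For the top row, I take $\bE_\pi=\bL_\pi$ and $\bE_{\pi_1}=\bL_{\pi_1}$ with the identity obstruction theories; this is legitimate because smoothness makes these complexes perfect of amplitude $[0,0]$. The remaining datum is the map $\bE_\pi[-1]=\bL_\pi[-1]\to\pi^*\bE_\fM$. I must lift the connecting map $\bL_\pi[-1]\to\pi^*\bL_\fM$ along $\pi^*\phi_\fM\colon\pi^*\bE_\fM\to\pi^*\bL_\fM$, and compatibly do the same for $\pi_1$ and $\pi_2^*$.

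\textbf{Main step.} I claim that precomposition with $\phi_\fM$ induces an equivalence of mapping spaces $\mathrm{Map}(P,\pi^*\bE_\fM)\to\mathrm{Map}(P,\pi^*\bL_\fM)$ whenever $P$ is a locally free sheaf placed in degree $1$. Such $P$ include $\bL_\pi[-1]$ and $\pi_2^*\bL_{\pi_1}[-1]$. Let $C=\mathrm{cofib}(\phi_\fM)$. Since $\phi_\fM$ is an obstruction theory, $\cH^i(\phi_\fM)$ is an isomorphism for $i\ge0$ and surjective for $i=-1$, so $C$ is concentrated in degrees $\le -2$. Then $\mathrm{Map}(P,C)$ and $\mathrm{Map}(P,C[-1])$ have homotopy groups $\Ext^{-k}(P,C)$ for $k\geq 0$ (respectively shifted). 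These vanish because $P^\vee[-1]\otimes C$ lies in degrees $\le -3<0$, and $\Ext$ from a vector bundle in degree $1$ only sees cohomology of $C$ in degrees $\ge1$. Therefore the fiber of the restriction map is contractible.

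Given the claim, the lifts exist and are essentially unique. Lifting the full diagram is a homotopy limit of such mapping spaces over the finite indexing shape: two objects $\pi_2^*\bL_{\pi_1}[-1]$ and $\bL_\pi[-1]$, the map between them, and the triangle. So the whole $\infty$-commutative diagram lifts uniquely up to contractible choice. This produces the blue arrows, with the vertical identifications given by identities and $\pi^*\phi_\fM$.

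I expect the main obstacle to be the bookkeeping that turns the objectwise statement into the diagram-level one. Concretely, I need to check that the space of lifts of a diagram $K\to D(\fN_2)_{/\pi^*\bL_\fM}$ along $\pi^*\phi_\fM$ is a limit of the pointwise lifting spaces. I would handle this by working in the overcategory: the functor $D_{/\pi^*\bE_\fM}\to D_{/\pi^*\bL_\fM}$ restricts to an equivalence on the full subcategory of objects in the shifted vector-bundle range, and it then suffices to lift along this equivalence. The vanishing range should hold even stack-locally, since pullback to a smooth atlas is conservative and $t$-exact, and the $\sT$-equivariant setting goes through the same way because the vanishing is checked on underlying complexes.
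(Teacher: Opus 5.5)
Your strategy matches the paper's: lift the connecting map through the cone $C$ of $\pi^*\phi_\fM$ using degree bounds. However, your key claim is false as stated, and with it the conclusion ``unique up to contractible choice''. You need $\pi_k\,\mathrm{Map}(P,C)=\Ext^{-k}(P,C)=0$ for \emph{all} $k\ge 0$, but your degree bound points the wrong way. $\RHom(P,C)$ (locally $P^\vee\otimes C$) lies in degrees $\le -3$, which kills $\Ext^{0},\Ext^{-1},\Ext^{-2}$ and says nothing about $\Ext^{-k}$ for $k\ge 3$. Concretely, for $P=V[-1]$ on an affine, $\Ext^{-3}(P,C)=\Gamma\big(V^\vee\otimes\cH^{-2}(C)\big)$. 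Moreover, $\cH^{-2}(C)$ surjects onto $\ker\cH^{-1}(\pi^*\phi_\fM)$, which is nonzero as soon as there are genuine obstructions. For example, on a smooth moduli space with nonzero obstruction bundle, $\cH^{-1}(\bL)=0$ while $\cH^{-1}(\bE)\neq 0$. So the fibre of $\mathrm{Map}(P,\pi^*\bE_\fM)\to\mathrm{Map}(P,\pi^*\bL_\fM)$, the loop space of $\mathrm{Map}(P,C)$, is only $1$-connected. The functor $D_{/\pi^*\bE_\fM}\to D_{/\pi^*\bL_\fM}$ is therefore not an equivalence on your subcategory, and the ``lift along an equivalence'' bookkeeping is not available.

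What is true, and what the paper uses, is that $\underline{\Hom}(\bL_\pi[-1],C)$ is $2$-connected: $\Ext^{1},\Ext^{0},\Ext^{-1}$ of $\bL_\pi$ into $C$ vanish. Hence the space of lifts $\ell\colon\bL_\pi[-1]\to\pi^*\bE_\fM$ is $1$-connected. This suffices, but you must then build the diagram by hand with this limited connectivity.
Take $\ell$ on $\fN_2$ and a lift $\ell_1\colon\bL_{\pi_1}[-1]\to\pi_1^*\bE_\fM$ on $\fN_1$. The bent blue arrow has to be $\pi_2^*\ell_1$, the lift used in the PVP diagram of $\pi_1$, not merely $\ell\circ g$ for $g\colon\pi_2^*\bL_{\pi_1}[-1]\to\bL_\pi[-1]$. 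Under your contractibility claim these would agree for free; here they do not. Then fill the blue face with a homotopy $\pi_2^*\ell_1\simeq\ell\circ g$. This exists, and the space of such fillers is connected, because the space of lifts of $\pi_2^*\bL_{\pi_1}[-1]\to\pi^*\bL_\fM$ is $1$-connected. ``Essentially unique'' in the lemma is meant in this weaker sense: a connected, not contractible, space of choices.

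Two smaller points:
\begin{itemize}
\item For smooth non-representable morphisms, $\bL_\pi$ has tor-amplitude $[0,1]$, not $[0,0]$. For example, $\bL_\Pi=\cO[-1]$ for the rigidification $\Pi\colon\fM_{\cat{A}}\to\fM^{\rig}_{\cat{A}}$, to which the lemma is applied. So $\bL_\pi[-1]$ is not a bundle in degree $1$, although the extra degree-$2$ part only improves the estimate.
\item Vanishing of \emph{global} $\Ext$ is not settled by $t$-exactness of pullback to an atlas, since higher sheaf cohomology on $\fN_2$ enters. This is harmless in the applications, where $\fN_2$ is an affine Jouanolou device.
\end{itemize}
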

\begin{proof}
    Illusie's construction gives the morphism $\bL_{\pi}[-1]\to \pi^*(\bL_{\fM})$ in the stable $\infty$-category. Let $C$ be the cone of $\pi^*(\bE_{\fM})\to\pi^*(\bL_{\fM})$. Then since $$\Ext^1(\bL_{\pi}, C) =  \Ext^0(\bL_{\pi}, C)=\Ext^1(\bL_{\pi}, C) = 0\,,$$ the topological space $\underline{\Hom}\big(\bL_{\pi}[-1],C\big)$ of morphisms in the stable $\infty$-category is 2-connected. This gives a $1$-connected space of null-homotopies of the composition $\bL^\vee_{\pi}[-1]\to \pi^*(\bL_{\fM})\to C$ and therefore a 1-connected space of the required maps $\bL^\vee_{\pi}[-1]\to \pi^*(\bE_{\fM})$. Since the left square in \eqref{eq:startingpointPVPfunctoriality} is automatic as its vertical arrows are identities, we obtain the diagram \eqref{eq:startingpointPVPfunctoriality} where all 2-dimensional faces homotopy commute except for the {\color{blue} blue face}. By the above argument applied to $\pi_1$ instead of $\pi$, the space of its homotopies is connected, so we obtain the full diagram in an essentially unique way.
\end{proof}

\subsubsection{}

\begin{lemma}[{\cite[Proposition A.15]{Bo25}, \cite[§2.5]{LiuVW}, \cite[Lemma 5.4]{kuhn-spin}, \cite[2.5.12]{KLT25}}]\label{sp:lemma:affine-symm-pb}
Let $\pi:A\to \fM$ be a smooth morphism from an affine scheme $A$ to an algebraic stack $\fM$ with a CY$4$ obstruction theory $\phi_{\fM}:\bE_{\fM}\to \bL_{\fM}$. Then there is a self-dual commutative diagram
\begin{equation}\label{eq:Jouanolousquare}
  \begin{tikzcd}
    \arrow[d]\Omega_{\pi}[-1]\arrow[r]&\pi^*(\bE_\fM)\arrow[d]\\
    0\arrow[r]&\Omega_{\pi}^\vee[3]
  \end{tikzcd}
\end{equation}
in $D^b(A)$, and hence a CY4 obstruction theory $\widehat{\bE}_A$ that fits into the PVP-diagram
\begin{equation}\label{eq:Parkconst}
        \begin{tikzcd}
\Omega_{\pi}[-1]\arrow[d,equal]\arrow[r]&\bF_A^\vee[2]\arrow[d]\arrow[r]&\arrow[d]\widehat{\bE}_A\\
            \arrow[d,equal]\Omega_{\pi}[-1]\arrow[r]&\arrow[d]\pi^*(\bE_\fM)\arrow[r]&\arrow[d]\bF_A\\
\Omega_{\pi}[-1]\arrow[r]&\pi^*(\bL_{\fM}) \arrow[r,equal]&\bL_{A}
        \end{tikzcd}
\end{equation}
in $D^b(A)$, with orientation induced from the given orientation on $\fM$ as explained in Definition~\ref{def:orpullback}. In fact, the stronger version explained in Remark~\ref{sp:rmk-pvpinf} holds, which is required to prove the cone isotropy property and proves sufficient uniqueness of the diagram \eqref{eq:Parkconst}.
\end{lemma}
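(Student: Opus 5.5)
The plan is to construct the square \eqref{eq:Jouanolousquare} directly by an obstruction-theoretic vanishing argument that uses the affineness of $A$. Then I will obtain $\widehat{\bE}_A$ as the cone of this square, and finally verify the CY4 properties.

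\emph{Step 1: the map $\Omega_\pi[-1]\to\pi^*\bE_\fM$.} Since $\pi$ is smooth, $\bL_\pi=\Omega_\pi$ is a vector bundle on $A$. Illusie's triangle gives $\Omega_\pi[-1]\to\pi^*\bL_\fM$, and $\bL_A=\mathrm{cone}$ of this map. Exactly as in the proof of Lemma~\ref{lem:smoothcompositionPVPstart}, let $C=\mathrm{cone}(\pi^*\bE_\fM\to\pi^*\bL_\fM)$. Then $C$ is concentrated in degrees $\le -2$, because $\cH^{\ge 0}$ is an isomorphism and $\cH^{-1}$ is surjective. On the affine $A$ with $\Omega_\pi$ locally free, $\Ext^i(\Omega_\pi[-1],C)=H^{i+1}(C\otimes\Omega_\pi^\vee)=0$ for $i\ge 0$. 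So the map lifts essentially uniquely to $\Omega_\pi[-1]\to\pi^*\bE_\fM$, which gives the middle-left square of \eqref{eq:Parkconst}. Define $\bF_A$ as its cone. The induced map $\psi\colon\bF_A\to\bL_A$ is an obstruction theory by the five lemma on cohomology sheaves.

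\emph{Step 2: the self-dual square and $\widehat{\bE}_A$.} Dualize using $\mathbb{i}_q$ to get $\pi^*\bE_\fM\cong\pi^*\bE_\fM^\vee[2]\to\Omega_\pi^\vee[3]$. The composite $\Omega_\pi[-1]\to\Omega_\pi^\vee[3]$ lies in $\Ext^4(\Omega_\pi,\Omega_\pi^\vee)=0$, since $A$ is affine and both objects are vector bundles. Hence the square \eqref{eq:Jouanolousquare} commutes. The same vanishing of negative Ext groups shows that the space of null-homotopies is contractible, so the square can be chosen self-dual in the $\infty$-sense of Remark~\ref{sp:rmk-pvpinf}. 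Taking total cofibers of this self-dual square produces $\widehat{\bE}_A$. It comes with the top row of \eqref{eq:Parkconst}, namely $\bF_A^\vee[2]\to\widehat{\bE}_A\to\bF_A$, and the self-duality of the square gives $\widehat{\bE}_A\cong\widehat{\bE}_A^\vee[2]$. Compatibility of this duality with the diagram is built into the construction, and the essential uniqueness follows from the contractibility of all the relevant choices.

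\emph{Step 3: CY4 properties.} Symmetry was obtained in Step 2. For the rank, $\rk\widehat{\bE}_A=\rk\bE_\fM-2\rk\Omega_\pi$, which is even. The orientation is defined by \eqref{sp:eq:pvp-orientation}; checking \eqref{eq:EEorient} reduces to the fact that $o(\bE_\pi)$ is compatible with the canonical duality of $\det(\Omega_\pi^\vee[2])\det(\Omega_\pi)$, together with the corresponding property of $o(\bE_\fM)$. Perfectness in $[-2,0]$ is not required. Tor-amplitude $[-3,1]$ is inherited from $\bE_\fM$ and $\Omega_\pi$.

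\emph{Main obstacle.} I expect cone isotropy to be the hardest step. The plan is to compare cones: the intrinsic normal cone satisfies $\fC_A\simeq \fC_{\fM}\times_\fM A$ modulo the smooth directions $T_\pi$. The virtual normal cone of $\widehat{\bE}_A$ is the isotropic reduction of $\fC_{\pi^*\bE_\fM}$ along $T_\pi\hookrightarrow \fC_{\pi^*\bE}$, and the quadratic form $q_{\widehat{\bE}_A}$ is the descent of $\pi^*q_{\bE_\fM}$. Isotropy of $\fC_\fM$ for $q_{\bE_\fM}$ then gives isotropy on $A$. Making this descent of $q$ rigorous requires the full $\infty$-homotopy data of the square. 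For this step I would invoke \cite[Lemma A.16]{Bo25}, as indicated in Remark~\ref{sp:rmk-pvpinf}.
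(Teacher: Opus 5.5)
Your outline follows the paper's proof. You lift $\Omega_\pi[-1]\to\pi^*\bL_\fM$ through $\pi^*\bE_\fM$ using that the cone $C$ sits in degrees $\le -2$. You kill the composite $\Omega_\pi[-1]\to\pi^*\bE_\fM\to\Omega_\pi^\vee[3]$ using $\Ext^4(\Omega_\pi,\Omega_\pi^\vee)=0$ on the affine $A$. You then pass the self-dual square to \cite[Proposition A.15, Lemma A.16]{Bo25} to get the PVP diagram and isotropy.

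\textbf{The error: null-homotopies are not contractible.} The one step whose justification is wrong is your claim that the space of null-homotopies is contractible. Null-homotopies form a torsor under the loop space of $\underline{\Hom}(\Omega_\pi[-1],\Omega_\pi^\vee[3])$. The homotopy groups of this mapping space are $\Ext^{-k}(\Omega_\pi[-1],\Omega_\pi^\vee[3])=H^{4-k}(A,T_\pi\otimes T_\pi)$. These vanish for $k\le 3$, but for $k=4$ you get $H^0(A,T_\pi\otimes T_\pi)\neq 0$ whenever $\pi$ has positive relative dimension.

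So the mapping space is only $3$-connected, and the null-homotopies form a $2$-connected space, not a contractible one. Self-duality of the square therefore does not come for free from uniqueness; it has to be imposed. The paper does this by replacing a null-homotopy $\mu$ with $(\mu+\mu^\vee[2])/2$, which is possible because $2$ is invertible.

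\textbf{The same issue in Step 1.} The space of lifts in Step 1 is likewise only $1$-connected. The relevant vanishing is $\Ext^i(\Omega_\pi[-1],C)=0$ for $i\ge -2$, not just $i\ge 0$, and you need $i=-1$ already for uniqueness up to homotopy. So ``essential uniqueness'' holds only in this weaker sense, which is what the paper claims.

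\textbf{Two minor slips.} Neither affects the conclusion.
\begin{enumerate}
\item The rank is $\rk\widehat{\bE}_A=\rk\bE_\fM+2\rk\Omega_\pi$, since $[\widehat{\bE}_A]=[\pi^*\bE_\fM]+[\Omega_\pi]+[T_\pi]$.
\item The top row of \eqref{eq:Parkconst} is the triangle $\Omega_\pi[-1]\to\bF_A^\vee[2]\to\widehat{\bE}_A$. The sequence $\widehat{\bE}_A\to\bF_A\to\Omega_\pi^\vee[3]$ is its dual triangle, and $\bF_A^\vee[2]\to\widehat{\bE}_A\to\bF_A$ is not a triangle.
\end{enumerate}
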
	
\begin{proof}
As in Lemma \ref{lem:smoothcompositionPVPstart}, we obtain a 1-connected space of morphisms $\Omega_{\pi}[-1]\to \pi^*(\bE_{\fM})$. Because $\Ext^i\big(\Omega_{\pi}[-1],\Omega^*_{\pi}[3]\big) = 0$ for $i\geq-3$, the space $\underline{\Hom}\big(\Omega_{\pi_a}[-1],\Omega^*_{\pi_a}[3]\big)$ is 3-connected, so any such morphism admits an essentially unique null-homotopy $\mu$ which we can symmetrize by $(\mu + \mu^\vee[2])/2$. Thus, we have obtained \eqref{eq:Jouanolousquare} and can apply \cite[Proposition A.15]{Bo25} to it to construct the diagram \eqref{eq:Parkconst}.
\end{proof}
\subsubsection{}
The next result is important to obtain compatibilities between the 
\subsubsection{}

Given a map $f:M\to \fM$, where $M$ is an algebraic space and $\fM$ has an oriented CY4 obstruction theory, there does not need to be a CY4-pullback of the obstruction theory on $\fM$ along $f$. However, if $f$ is smooth and $M$ has the resolution property, we can still construct a natural virtual cycle on $M$ that recovers the Oh-Thomas class when a virtual pull-back diagram exists and that satisfies the expected localization formula in an equivariant setting.   

\begin{theorem}\label{sp:thm:symm-pullback-localization}
Write $\bT\coloneqq \bG_m\times \sT$. Let $\fM$ be an algebraic $\bT$-stack with trivial $\bG_m$-action. Let $\bE_\fM$ be a $\bT$-equivariant CY$4$ obstruction theory on $\fM$. Let $\pi:M \to \fM$ be a $\bT$-equivariant smooth morphism from a quasi-compact, separated algebraic space with a $\bT$-action. Assume that $M$ has the $\bT$-equivariant resolution property. Then
  \begin{enumerate}[label = (\alph*)]
		\item There exists a natural CY$4$ virtual pullback $K$-theory class (and an analogous class in Chow)
		\begin{equation}
    \label{eq:OvirJouanolou}[\widehat{\mathcal{O}}_{M}^{\vir}]\in G^{\bT}_0\big(M,\bZ[2^{-1}]\big)
    \end{equation} 
    constructed using an equivariant Jouanolou device $a:A\to M$ and the orientation from Definition~\ref{def:orpullback} and Section~\ref{sec:OvirJouanoloudef}. Moreover, there is an induced class
    $$\qquad [\tilde{\mathcal{O}}_{M^{\bG_m}}^{\vir}]\in G_0^\bT\big(M^{\bG_m},\bZ[2^{-1}]\big)$$
    for which the orientations are determined in Section~\ref{sp:sec:weight-splitting-orientation} following Section~\ref{sp:sec:loc-setup}. Both classes are independent of the choice of a Jouanolou device.
    \item Suppose that there exists a PVP diagram \eqref{eq:pvp-diagram} for $\pi:M\to \fM$ such that $\phi_{\pi}:\bE_{\pi}\to \Omega_{\pi}$ is an isomorphism. Then the class \eqref{eq:OvirJouanolou} is identified with the one constructed using the resulting CY4 obstruction theory $\widehat{\bE}_M$ of $M$. In this case, the class $[\tilde{\mathcal{O}}_{M^{\bG_m}}^{\vir}]$ is determined by the CY4 obstruction theory $\widehat{\bE}_M|^f_{M^{\bG_m}}$ with the orientation \eqref{eq:EEonMGGmtrivialization}.
		\item Let $\iota:M^{\bG_m}\hookrightarrow M$ be the embedding of the fixed-point locus. Denote the moving part of the complex
    \begin{equation*}
      \left(\pi^*\bE_\fM^\vee\oplus \Omega_\pi[-2]\oplus T_\pi\right)|_{M^\bGm}\cong \left(\pi^*\bE_\fM\oplus \Omega_{\pi}\oplus T_{\pi}[2]\right)|_{M^{\bG_m}}[-2]
    \end{equation*}
    by $\bN_{\iota} = \bN^{>}_{\iota}\oplus \bN^{<}_{\iota}$ where the splitting is into positive and negatives weights. Then the localization formula
    \begin{equation}\label{sp:eq:pullback-localization-formula}
      \widehat{\cO}_{M}^{\vir} = \iota_*\,\frac{\tilde{\cO}_{M^{\bG_m}}^\vir}{\widehat{\fe}_{\bT}(\bN^{>}_{\iota})}
    \end{equation}
	  and its analog in Chow hold.
	\end{enumerate} 
\end{theorem}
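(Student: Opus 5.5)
The plan is to reduce everything to the affine case of Lemma~\ref{sp:lemma:affine-symm-pb} by means of an equivariant Jouanolou device. Since $M$ is quasi-compact and separated with the $\bT$-equivariant resolution property, there is a $\bT$-equivariant affine bundle $a:A\to M$, i.e. a torsor under a $\bT$-equivariant vector bundle $V$, with $A$ an affine scheme.

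\textbf{Construction in part (a).} The composite $\pi\circ a:A\to\fM$ is smooth from an affine scheme, so Lemma~\ref{sp:lemma:affine-symm-pb} yields a CY4 obstruction theory $\widehat{\bE}_A$. It is oriented as in Definition~\ref{def:orpullback} and carries Oh--Thomas/Park classes $[\widehat{\cO}^{\vir}_A]$. Because $a$ is an affine bundle, flat pullback $a^*:G_0^\bT(M)\to G_0^\bT(A)$ is an isomorphism (homotopy invariance), and likewise in Chow. We then set
\[
[\widehat{\cO}^{\vir}_M]:=(a^*)^{-1}[\widehat{\cO}^{\vir}_A].
\]
To see this is independent of the device, we compare two devices $A,A'$ via the fibre product $A''=A\times_M A'$. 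This is again an affine bundle over both $A$ and $A'$, and it is affine.

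\textbf{Independence of the device.} On the composite $A''\to A\to\fM$, the functoriality of PVP diagrams from Section~\ref{sec:pvpinfty}, together with Lemma~\ref{lem:smoothcompositionPVPstart}, identifies the CY4 obstruction theory on $A''$ obtained directly with the one obtained by pulling back $\widehat{\bE}_A$ along the smooth map $A''\to A$. The uniqueness assertion in Lemma~\ref{sp:lemma:affine-symm-pb} is what makes this identification possible. Lemma~\ref{lem:functorialityorientations} shows that the orientations agree. We then need that Park's virtual pullback along a smooth map of algebraic spaces with CY4 pullback is flat pullback, namely $[\widehat{\cO}^{\vir}_{A''}]=p^*[\widehat{\cO}^{\vir}_A]$; this follows from functoriality of the square-root virtual pullback \cite{Park21}. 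Running the same argument for $A'$ finishes independence.

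\textbf{Fixed locus and part (b).} For the fixed-locus class, note that $A^{\bG_m}\to M^{\bG_m}$ is again an affine bundle, with fibres the fixed parts of the torsors. Apply Theorem~\ref{thm:eqvirloc} on $A$ with the orientation convention of Section~\ref{sp:sec:loc-setup}, and define $[\tilde{\cO}^{\vir}_{M^{\bG_m}}]$ by descending $[\widehat{\cO}^{\vir}_{A^{\bG_m}}]$ along the inverse of pullback. Independence follows from the same fibre-product argument, now applied to the fixed parts. For (b), if a PVP diagram exists on $M$ with $\bE_\pi\cong\Omega_\pi$, then by functoriality the pullback of $\widehat{\bE}_M$ along $a$ agrees with $\widehat{\bE}_A$, and the orientations agree by Lemma~\ref{lem:functorialityorientations}. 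Hence $a^*[\widehat{\cO}^{\vir}_M]=[\widehat{\cO}^{\vir}_A]$, and similarly on fixed loci, using that the fixed/moving splitting commutes with pullback.

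\textbf{Part (c): the localization formula.} We apply Theorem~\ref{thm:eqvirloc} on $A$. The moving part of $\widehat{\bE}_A^\vee$ on $A^{\bG_m}$ is, in K-theory, the moving part of
\[
\pi^*\bE_\fM^\vee+\Omega_{\pi\circ a}[-2]+T_{\pi\circ a},
\]
and $\Omega_{\pi\circ a}=a^*\Omega_\pi+\Omega_a$ with $\Omega_a=V^\vee$. The contributions of $V$ form a self-dual pair $V^{\mathrm{mov}}\oplus V^{\mathrm{mov},\vee}[-2]$, so they contribute $\widehat{\fe}_\bT$ of the positive-weight part of $V\oplus V^\vee$. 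On the other side, the affine-bundle normal bundle of $A^{\bG_m}\subset A|_{M^{\bG_m}}$ is $V^{\mathrm{mov}}$. We therefore must check that pushforward and pullback intertwine these extra Euler factors. The relation to verify is
\[
\iota^A_*(a^{\bG_m})^*(x)=a^*\iota_*\bigl(x\cdot \widehat{\fe}(\text{extra})\bigr)
\]
up to the identification of $(V^{\mathrm{mov}})^{>}\oplus(V^{\mathrm{mov},\vee})^{>}$ with $V^{\mathrm{mov}}$. This reduces to the standard statement that the square-root Euler class of the positive part of $W\oplus W^\vee$ equals $\fe(W)$ up to the sign and determinant factor fixed by the orientation $o(\bN)$, which is exactly what the conventions in Section~\ref{sp:sec:weight-splitting-orientation} are designed to absorb.

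I expect the main obstacle to be this orientation and sign bookkeeping in (c), together with proving that the derived identifications on $A''$ are canonical enough (via the $\infty$-categorical uniqueness) that the two classes agree on the nose rather than up to sign.
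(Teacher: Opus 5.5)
Your overall route is the paper's: an equivariant Jouanolou device, CY4 pullback to the affine $A$ via Lemma~\ref{sp:lemma:affine-symm-pb}, comparison through $A\times_M A'$, and localization on $A$. However, there is a genuine gap: the normalizations are wrong, and the step you use for both independence and (b) is false in K-theory.

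\textbf{The smooth pullback step.} The K-theoretic CY4 pullback along a smooth map $p$ is not flat pullback. Park's functoriality \cite[Theorem B.3]{Park21} gives $[\widehat{\cO}^{\vir}_{A''}]=p^*[\widehat{\cO}^{\vir}_A]\otimes\det(\Omega_p)^{\frac12}$. With your definition $[\widehat{\cO}^{\vir}_M]:=(a^*)^{-1}[\widehat{\cO}^{\vir}_A]$, the two devices give classes $x_A,x_{A'}$ related by $x_A=x_{A'}\otimes\det(V^\vee)^{\frac12}\det(V'^\vee)^{-\frac12}$. Here $V,V'$ are the vector bundles underlying the two affine bundles, and one uses that $\Omega_{A''\to A}$ is pulled back from $\Omega_{a'}$. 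This factor is a nontrivial equivariant line bundle in general, so your class depends on the device. Likewise in (b) one actually has $[\widehat{\cO}^{\vir}_A]=a^*[\widehat{\cO}^{\vir}_M]\otimes\det(\Omega_a)^{\frac12}$. Your class is therefore the Oh--Thomas class of $\widehat{\bE}_M$ twisted by $\det(V^\vee)^{\frac12}$, not that class itself. The definition must be $(a^*)^{-1}\big([\widehat{\cO}^{\vir}_A]\otimes\det(\Omega_a)^{-\frac12}\big)$. The fibre-product comparison then closes up using $\det\Omega_{a\circ c}=c^*\det\Omega_a\otimes\det\Omega_c$. In Chow this first issue disappears.

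\textbf{The fixed-locus class.} Here the same twist is missing, and in addition a sign, neither of which is absorbed by the orientation conventions. Let $B=a^{-1}(M^{\bG_m})$, let $N_\alpha$ be the normal bundle of $A^{\bG_m}$ in $B$ (so $\Omega_a|^m_{A^{\bG_m}}\cong N_\alpha^\vee$), and set $n^<_\alpha=\rk N_\alpha^<$. The extra piece of $\bN_{\iota_A}$ is $N_\alpha[2]+N_\alpha^\vee$, whose positive part is $N_\alpha^>+(N_\alpha^<)^\vee$. One has $\widehat{\fe}_{\bT}(N_\alpha^>+(N_\alpha^<)^\vee)=(-1)^{n_\alpha^<}\fe_{\bT}(N_\alpha)\det(N_\alpha)^{\frac12}$. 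By contrast, flat base change $a^*\iota_*=\beta_*b^*$ together with localization on $B$ only produces $\fe_{\bT}(N_\alpha)$. So for (c) to hold you must descend $(-1)^{n_\alpha^<}[\tilde{\cO}^{\vir}_{A^{\bG_m}}]\otimes\det(\Omega_{a^{\bG_m}})^{-\frac12}$, with the weight-splitting orientation on $\widehat{\bE}_A^f$, rather than the Oh--Thomas class of $A^{\bG_m}$ itself.

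The parity of $n^<_\alpha$ depends on the device. For example, replacing $A$ by $\tot(a^*L)$ for a line bundle $L$ of negative $\bG_m$-weight raises $n^<_\alpha$ by one. Hence your un-normalized fixed class changes sign under a change of device.

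This is also why independence on fixed loci is not ``the same fibre-product argument''. Along $A''\to A$, the PVP-induced orientation on $\widehat{\bE}^f_{A''}$ differs from the weight-splitting one by $(-1)^{n^<}$ of the \emph{other} device's normal bundle, and only the normalized classes agree. Similarly, the second half of (b) needs the explicit computation that the orientation on $\widehat{\bE}^f_A$ induced from $\widehat{\bE}_M|^f_{M^{\bG_m}}$ along $a^{\bG_m}$ differs from the weight-splitting one by exactly $(-1)^{n^<_\alpha}$. The fact that fixed/moving splitting commutes with pullback does not supply this.
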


\begin{remark}
  In the proof of the wall-crossing formula, we encounter master spaces $M$ where $M^{\bG_m}$ can be equipped with a CY$4$ virtual pullback class from \eqref{eq:OvirJouanolou}. In this case, the construction of the classes $\big[\widehat{\mathcal{O}}_{M^\bGm}^{\vir}\big]$ and $\big[\tilde{\cO}_{M^{\bG_m}}^\vir\big]$ differs, which is why we chose different notation. We study their relation in our situations of interest explicitly in Proposition~\ref{prop:comparison} below.
\end{remark}

\subsubsection{}
\label{sec:OvirJouanoloudef}
\begin{proof}
  Let us begin by assuming the situation in Theorem \ref{sp:thm:symm-pullback-localization} (b). We then consider a ($\bT$-equivariant) Jouanolou device $a:A\to M$, the existence of which is guaranteed by the resolution property for algebraic spaces (see e.g.,  \cite[\S 2.5.3]{KLT23}, \cite{Jong}). We construct a PVP diagram along $a: A\to M$ from $\widehat{\bE}_M$ using Lemma~\ref{sp:lemma:affine-symm-pb} so that the resulting CY4 obstruction theory $\widehat{\bE}_A$ on $A$ produces a class $[\widehat{\mathcal{O}}_A^{\vir}]$. Due to \cite[Theorem B.3]{Park21}, it satisfies 
  \begin{equation}\label{sp:eq:jsp-vir-class}
    [\widehat{\cO}_M^{\vir}]:= (a^*)^{-1} \left(\big[\widehat{\mathcal{O}}_A^{\vir}\big] \otimes \left(\det \Omega_{ a}\right)^{-\frac{1}{2}} \right)\ \in G^{\bT}_0\big(M,\bZ[2^{-1}]\big)\,.
  \end{equation}
  
If we no longer require the existence of a PVP diagram along $\pi$, we can instead apply Lemma~\ref{sp:lemma:affine-symm-pb} to the smooth map $\pi_a\coloneqq \pi\circ a$ to construct a CY$4$ pullback obstruction theory on $A$. We now simply turn \eqref{sp:eq:jsp-vir-class} into the definition of $[\widehat{\cO}_M^{\vir}]$ which may not exist otherwise. 
\subsubsection{}
Let us assume again that the assumption of Theorem \ref{sp:thm:symm-pullback-localization} (b) holds. By Lemma \ref{lem:smoothcompositionPVPstart}, there is an essentially unique starting diagram \eqref{eq:startingpointPVPfunctoriality}.  As explained in \S \ref{sec:pvpinfty}, we may use \cite[Theorem A.18]{Bo25} to compose the PVP diagrams along $\pi$ and $a$ with their upper two rows given by 
$$
  \begin{tikzcd}[column sep=small]
    \wt{\bF}_{\pi}^\vee[2] \ar{d} \ar{r} & \widehat{\bE}_M \ar{d} \ar{r} & \Omega_{\pi} \ar[equals]{d} \ar{r}{+1} & {}  \\
    \pi^*\bE_{\fM} \ar{r} & \wt{\bF}_{\pi} \ar{r} & \Omega_{\pi} \ar{r}{+1} & {}   
  \end{tikzcd}\qquad \textnormal{and}\qquad  \begin{tikzcd}[column sep=small]
    \tilde{\bF}_{a}^\vee[2] \ar{d} \ar{r} & \widehat{\bE}_{A} \ar{d} \ar{r} & \Omega_{a} \ar[equals]{d} \ar{r}{+1} & {}  \\
    \widehat{\bE}_{M} \ar{r} & \tilde{\bF}_{a} \ar{r} & \Omega_{a} \ar{r}{+1} & {}   
  \end{tikzcd}\,,
$$
respectively. By the uniqueness in Lemma \ref{sp:lemma:affine-symm-pb}, the resulting obstruction theory $\widehat{\bE}_{A}$ coincides with the one constructed using CY4 pullback along $\pi_a$.  Therefore the two constructions of $[\widehat{\mathcal{O}}_M^{\vir}]$ from \S \ref{sec:OvirJouanoloudef} produce the same class and the first part of Theorem \ref{sp:thm:symm-pullback-localization} (b) holds.
\subsubsection{}\label{sp:sec:ind-of-choices}
  Before proving the desired localization formula for this class, we discuss its independence of choices. Specifically, let $a_1:A_1\to M$ and $a_2:A_2\to M$ be two different Jouanolou devices. To show that
  \begin{equation}
  \label{eq:OvirJouanolouqual}
    (a_1^*)^{-1} \left(\big[\widehat{\cO}_{A_1}^{\vir}\big] \otimes \left(\det \Omega_{a_1}\right)^{-\frac{1}{2}} \right) = (a_2^*)^{-1} \left(\big[\widehat{\cO}_{A_2}^{\vir}\big] \otimes \left(\det \Omega_{a_2}\right)^{-\frac{1}{2}} \right),
  \end{equation}
  i.e., the two definitions of $[\widehat{\cO}_M^{\vir}]$  coincide, we consider the fiber product
  \begin{equation}
  \label{eq:A_1A_2A_3}
    \begin{tikzcd}
      A_3\arrow[d,"c_2"]\arrow[r,"c_1"]\arrow[dr,"a_3"]& A_1\arrow[d,"a_1"]\\
      A_2\arrow[r,"a_2"]&M.
    \end{tikzcd}
  \end{equation}
  
  Now $A_3$ is another Jouanolou device over $A_1$, $A_2$, and $M$. This allows us to apply Lemma~\ref{lem:smoothcompositionPVPstart} and Lemma Lemma~\ref{sp:lemma:affine-symm-pb} to both $\pi_{a_3}\coloneqq \pi\circ a_3$ and $c_i$ to get compatible (in the sense of \eqref{eq:startingpointPVPfunctoriality}) self dual diagrams
  \begin{equation*}
    \begin{tikzcd}
      \arrow[d]\Omega_{\pi_{a_3}}[-1]\arrow[r]&\pi^*_{a_3}(\bE_\fM)\arrow[d] & \arrow[d]\Omega_{c_i}[-1]\arrow[r]&c_i^*(\widehat{\bE}_{A_i})\arrow[d] \\
      0\arrow[r]&\Omega_{\pi_{a_3}}^\vee[3], & 0\arrow[r]&\Omega_{c_i}^\vee[3].
      \end{tikzcd}
  \end{equation*}
  By combining functoriality from \cite[Theorem A.18.ii)]{Bo25} with uniqueness of \eqref{eq:Parkconst}, we see that they yield the same CY$4$ obstruction theory. The induced orientations are identified by Lemma \ref{lem:functorialityorientations}. Then \cite[Theorem B.3]{Park21} implies that
  \begin{equation*}
    \big[\widehat{\cO}^{\vir}_{A_3}\big] = c_i^* \big[\widehat{\cO}^{\vir}_{A_i}\big]\otimes \det(\Omega_{c_i})^{\frac{1}{2}}.
  \end{equation*}
  Using this for $i=1,2$, we can compute
  \begin{align*}
    (a_3^*)^{-1}\left(\big[\widehat{\cO}^{\vir}_{A_3}\big]\otimes \det(\Omega_{a_3})^{-\frac{1}{2}}\right)
    =& (a_i^*)^{-1}(c_i^*)^{-1}\Big(c_i^* \big[\widehat{\cO}^{\vir}_{A_i}\big]\otimes \det(\Omega_{c_i})^{\frac{1}{2}} \otimes \det(\Omega_{a_i\circ c_i})^{-\frac{1}{2}}\Big) \\
    =& (a_i^*)^{-1}\Big(\big[\widehat{\cO}^{\vir}_{A_i}\big]\otimes \det(\Omega_{a_i})^{-\frac{1}{2}}\Big) \, ,
  \end{align*}
  which proves \eqref{eq:OvirJouanolouqual}.

\subsubsection{}\label{sp:sec:localization-class}
In order to prove the localization formula for the virtual class defined in \eqref{sp:eq:jsp-vir-class}, take the $\bT$-equivariant Jouanolou device $a:A\to M$ used to construct $\widehat{\bE}_A$ on $A$ as in Lemma~\ref{sp:lemma:affine-symm-pb}. By equivariance, we have an induced morphism of fixed loci $a^{\bG_m}:A^{\bG_m}\to M^{\bG_m}$ which is itself a $\sT$-equivariant Jouanolou device. Set $B:= a^{-1}(M^{\bG_m})\xrightarrow{b}M^{\bG_m}$ and consider the commutative diagram
\begin{equation}\label{eq:equivariant-jouanolou-cd}
    \begin{tikzcd}
		A^\bGm\ar[hookrightarrow]{r}{\alpha}\ar[bend left]{rr}{\iota_A}\ar{dr}[swap]{a^\bGm} & B \ar{d}{b}\ar[hookrightarrow]{r}{\beta} &A\ar{d}[swap]{a}\ar[bend left=40]{dd}{\pi_a}\\
		\,&M^\bGm\ar[hookrightarrow]{r}{\iota}& M\ar{d}[swap]{\pi} \\
		\, & & \fM\,,
	\end{tikzcd}
\end{equation}
where $\alpha:A^{\bG_m}\hookrightarrow B$ and $\iota_A:A^{\bG_m}\hookrightarrow A$ are the inclusions of the fixed-point locus into $B$ and $A$, respectively. Using Lemma~\ref{sp:lemma:affine-symm-pb}, we construct the CY$4$ pullback of $\bE_\fM$ along $\pi_a$ to obtain the PVP diagram \eqref{eq:Parkconst}
\begin{equation}\label{eq:localization-pvp}
  \begin{tikzcd}[column sep=small]
    \bF_{\pi_a}^\vee[2] \ar{d} \ar{r} & \widehat{\bE}_{A} \ar{d} \ar{r} & \Omega_{\pi_a} \ar[equals]{d} \ar{r}{+1} & {}  \\
    \pi_a^*\bE_{\fM} \ar{r} & \bF_{\pi_a} \ar{r} & \Omega_{\pi_a} \ar{r}{+1} & {}   
  \end{tikzcd}
\end{equation}
defining the obstruction theory $\widehat{\bE}_A$. A minor modification of the functoriality of PVP diagrams from \S \ref{sec:pvpinfty} allows us to construct $\widehat{\bE}_A$ in two steps instead. We first construct a PVP diagram \eqref{eq:pvp-diagram}
\begin{equation}
\label{eq:tildeEEMdiagram}
  \begin{tikzcd}[column sep=small]
    \tilde{\bF}_{\pi_a}^\vee[2] \ar{d} \ar{r} & \tilde{\bE}_{M}^A \ar{d} \ar{r} & a^*\Omega_{\pi} \ar[equals]{d} \ar{r}{+1} & {}  \\
    \pi_a^*\bE_{\fM} \ar{r} & \tilde{\bF}_{\pi_a} \ar{r} & a^*\Omega_{\pi} \ar{r}{+1} & {}   
  \end{tikzcd}\qquad \textnormal{from}\qquad 
  \begin{tikzcd}
    \arrow[d]a^*\Omega_{\pi}[-1]\arrow[r]&\pi_a^*(\bE_\fM)\arrow[d]\\
    0\arrow[r]&a^*\Omega_{\pi}^\vee[3].
  \end{tikzcd}
\end{equation}
Then, one can combine Lemma \ref{lem:smoothcompositionPVPstart} with the distinguished triangle
$$
a^*\Omega_{\pi} \to \Omega_{\pi_a} \to \Omega_{a} \xrightarrow{+1}
$$
to construct the natural map $\Omega_a[-1]\to \widetilde{\bE}_{M}^A$ (see \cite[(A.35)]{Bo25}). This gives us $\widehat{\bE}_A$ in \eqref{eq:localization-pvp} from
\begin{equation}\label{sp:eq:two-step-localization-pvp}
  \begin{tikzcd}[column sep=small]
    \tilde{\bF}_{a}^\vee[2] \ar{d} \ar{r} & \widehat{\bE}_{A} \ar{d} \ar{r} & \Omega_{a} \ar[equals]{d} \ar{r}{+1} & {}  \\
    \tilde{\bE}_{M}^A \ar{r} & \tilde{\bF}_{a} \ar{r} & \Omega_{a} \ar{r}{+1} & {}   
  \end{tikzcd}\qquad \textnormal{from}\qquad
  \begin{tikzcd}
    \arrow[d]\Omega_{a}[-1]\arrow[r]&\widetilde{\bE}_{M}^A\arrow[d]\\
    0\arrow[r]&\Omega_{a}^\vee[3]
  \end{tikzcd}
\end{equation}
using functoriality. We want to study the fixed and moving parts of \eqref{eq:localization-pvp} pulled back to $A^\bGm$. Taking the fixed part gives us an obstruction theory $\widehat{\bE}_A^f\coloneqq \widehat{\bE}_A|_{A^{\bGm}}^f$ on $A^\bGm$, which we now want to induce an orientation on using \eqref{eq:EEonMGGmtrivialization}.

\subsubsection{}\label{sp:sec:weight-splitting-orientation}
Because the square in \eqref{eq:equivariant-jouanolou-cd} is cartesian, we see that $\Omega_a|_{A^{\bG_m}} = \alpha^*\Omega_b$. Moreover, there is a distinguished triangle
$$
N_{\alpha}^\vee\to \alpha^*\Omega_b\to \Omega_{a^{\bG_m}}\xrightarrow{+1}\,.
$$
Taking its moving and fixed parts, we find that 
\begin{equation}
\label{eq:fixedmovingOmega-a}
\Omega_a|^m_{A^{\bG_m}}\cong N^\vee_{\alpha}\,,\qquad\textnormal{and}\qquad\Omega_a|^f_{A^{\bG_m}}\cong \Omega_{a^{\bG_m}}\,.
\end{equation}
Set $\overline{\bN}_{\iota} = \widetilde{\bE}_M^A|^m_{A^{\bG_m}}$, then taking the moving parts of \eqref{sp:eq:two-step-localization-pvp} restricted to $A^\bGm$ produces 
\begin{equation}\label{eq:localization-pvp-normal-bundle-N-alpha}
  \begin{tikzcd}
    \overline{\bF}^\vee[2] \ar{d} \ar{r} & \bN_{\iota_A} \ar{d} \ar{r} & N_{\alpha}^\vee \ar[equals]{d} \ar{r}{+1} & {} \\
    \overline{\bN}_\iota \ar{r} & \overline{\bF} \ar{r} & N_{\alpha}^\vee \ar{r}{+1} & {.}
  \end{tikzcd}
\end{equation}
By construction, the K-theory class of $\overline{\bN}_\iota$ is
\begin{equation}\label{sp:eq:lifted-Ni-k-class}
  \big[\overline{\bN}_\iota\big]=\big[(a^{\bG_m})^*\bN_{\iota}[2]\big]
\end{equation}
where $\bN_\iota[2]=\left(\pi^*\bE\oplus \Omega_{\pi}\oplus T_{\pi}[2]\right)|_{M^{\bG_m}}^m$ as defined in the statement of Theorem~\ref{sp:thm:symm-pullback-localization}(b) above. Note that the localization formula only depends on the K-theory class of $\bN$, and the shift $[2]$ above hence doesn't affect the formula. Matching the convention for $\bN_\iota$, we split $\bN_{\iota_A}$ into $\bN_{\iota_A}^>$ and $\bN_{\iota_A}^<$, containing positive and negative $\bG_m$-weights, respectively. Splitting \eqref{eq:localization-pvp-normal-bundle-N-alpha} into positive and negative weights, we see that our convention corresponds to choosing the positive weights of both $N_\alpha[2]$ and $N_\alpha^\vee$ to be part of $\bN_{\iota_A}^>$. This orientation in turn induces an orientation on $\widehat{\bE}_{A}^f$ using \eqref{eq:EEonMGGmtrivialization}, allowing us to define the associated Oh--Thomas class $\big[\tilde{\mathcal{O}}_{A^{\bG_m}}^{\vir}\big]$.	We then set
\begin{equation*}
  \big[\tilde{\mathcal{O}}_{M^{\bG_m}}^{\vir}\big]\coloneqq (b^*)^{-1} \alpha_*\left(\frac{[\tilde{\mathcal{O}}_{A^{\bG_m}}^{\vir}]}{\fe_{\bT}(N_{\alpha}^>+(N_{\alpha}^<)^\vee)}\otimes (\det \Omega_{a^{\bG_m}})^{-\frac{1}{2}}\right)\in G^{\bT}_{0}(M^{\bG_m})_{\loc}.
\end{equation*}
Note here that $\fe_{\bT}(N_{\alpha}^>+(N_{\alpha}^<)^\vee)=(-1)^{n_\alpha^<}\fe_{\bT}(N_{\alpha})$ by properties of the Euler class, where $n_\alpha^<\coloneqq \rk(N_\alpha^<)$. We note here that
\begin{equation}\label{sp:eq:tilde-class-equation-M-fixed-pt-locus}
  (a^\bGm)^*[\tilde{\mathcal{O}}_{M^\bGm}^{\vir}] \otimes (\det \Omega_{a^\bGm})^{\frac{1}{2}} = (-1)^{n_\alpha^<}[\tilde{\mathcal{O}}_{A^\bGm}^{\vir}].
\end{equation}
To see this, one just needs to use that $a^\bGm = b\circ \alpha$ and $\alpha^*\alpha_* = \otimes \fe_{\bT}(N_{\alpha})$.
\subsubsection{}
Suppose now that the condition of Theorem~\ref{sp:thm:symm-pullback-localization}~(b) is satisfied, so that \eqref{eq:tildeEEMdiagram} is $a^*(-)$ of a PVP diagram on $M$ producing $\wh{\bE}_M$. Taking the fixed part of \eqref{sp:eq:two-step-localization-pvp}, we obtain 
$$
 \begin{tikzcd}[column sep=small]
    \tilde{\bF}_{a^{\bG_m}}^\vee[2] \ar{d} \ar{r} & \widehat{\bE}^f_{A} \ar{d} \ar{r} & \Omega_{a^{\bG_m}} \ar[equals]{d} \ar{r}{+1} & {}  \\
    (a^{\bG_m})^*\wh{\bE}^f_{M} \ar{r} & \tilde{\bF}_{a^{\bG_m}} \ar{r} & \Omega_{a^{\bG_m}} \ar{r}{+1} & {}   
  \end{tikzcd}
$$
where $\wh{\bE}^f_{M}:=\wh{\bE}_{M}|^f_{M^{\bG_m}}$ with the orientation induced by \eqref{eq:EEonMGGmtrivialization}. Using  $\wh{\bE}^f_{M}$, we now directly construct $[\tilde{\mathcal{O}}_{M^\bGm}^{\vir}] $.

From \ref{sp:sec:weight-splitting-orientation} we see that the orientation on $\widehat{\bE}^f_{A}$ induced by \eqref{sp:eq:pvp-orientation} differs from the one obtained from applying \eqref{eq:EEonMGGmtrivialization} to $\wh{\bE}_{A}$ precisely by the sign $(-1)^{n^<_{\alpha}}$. Thus we have proved \eqref{sp:eq:tilde-class-equation-M-fixed-pt-locus} using \cite[Theorem B.3]{Park21} which identifies the two constructions of $[\tilde{\mathcal{O}}_{M^\bGm}^{\vir}] $.
\subsubsection{}
\label{sec:tildeOindependence}
To prove that $\big[\tilde{\mathcal{O}}_{M^{\bG_m}}^{\vir}\big]$ is independent of the choice of $a:A\to M$, consider the Jouanolou devices  $a_1:A_1\to M$, $a_2:A_2\to M$, and $a_3:A_3\to M$ from \eqref{eq:A_1A_2A_3}. Taking further fiber products $B_3=B_1\times_{M^\bGm}B_2$ and $A_3^\bGm=A_1^\bGm\times_{M^\bGm}A_2^\bGm$, we obtain the following diagram 
\begin{equation*}
  \begin{tikzcd}
    A_3^\bGm\ar[r,hookrightarrow,"\alpha_3"]\ar[d,"c_i^\bGm"]\ar[ddr,bend right=85,looseness=1.3,swap,"a_3^\bGm"] & B_3\ar[r,hookrightarrow,"\beta_3"]\ar[d,"c_i^B"] & A_3\ar[d,"c_i"]\ar[dd,bend left=40,"a_3"] \\
    A_i^\bGm\ar[r,hookrightarrow,"\alpha_i"]\ar[dr,"a_i^\bGm"] & B_i\ar[r,hookrightarrow,"\beta_i"]\ar[d,"b_i"] & A_i\ar[d,"a_i"] \\
    & M^\bGm\ar[r,hookrightarrow,"\iota"] & M\ar[d,"\pi"] \\
    & & \fM
  \end{tikzcd}
\end{equation*}
where only the two squares on the right are Cartesian. Again, as in Section~\ref{sp:sec:ind-of-choices}, using \cite[Theorem A.18.ii)]{Bo25}, we find that the CY$4$ pullback of $\widehat{\bE}_{A_i}$ along $c_i$ gives us the same CY$4$ obstruction theory as $\widehat{\bE}_{A_3}$. Thus, restricting to $A_3^\bGm$ and taking fixed parts tells us that $\widehat{\bE}_{A_3}^f$ and the CY$4$ pullback of $\widehat{\bE}_{A_i}^f$ along $c_i^\bGm$ coincide as CY$4$ obstruction theories except for possibly the chosen orientations. To compute the difference, we have to compare the orientations of the corresponding moving parts. The orientation of $\bN_{\iota_{A_3}}$ used in defining $\big[\tilde{\mathcal{O}}_{A_3^{\bG_m}}^{\vir}\big]$ is the one obtained by splitting \eqref{eq:localization-pvp-normal-bundle-N-alpha} for $A_3$ and $\alpha_3$ into positive and negative weight parts. In particular, the pieces $N_{\alpha}$ are also split this way. 

On the other hand, if we consider the PVP diagram for the CY4 pullback of $\widehat{\bE}_{A_i}$ along $c_i$, the moving part of its restriction to $A^{\bG_m}_3$ is given by
\begin{equation}\label{sp:eq:normal-compatibility-pvp}
  \begin{tikzcd}
    \bF_{c_i}^\vee|_{A_3^\bGm}^m[2] \ar{d} \ar{r} & \bN_{\iota_{A_3}} \ar{d} \ar{r} & \Omega_{c_i}|_{A_3^\bGm}^m \ar[equals]{d} \ar{r}{+1} & {} \\
    (c_i^\bGm)^*\bN_{\iota_{A_i}} \ar{r} & \bF_{c_i}|_{A_3^\bGm}^m \ar{r} & \Omega_{c_i}|_{A_3^\bGm}^m \ar{r}{+1} & {.}
  \end{tikzcd}
\end{equation}
 Note that $\Omega_{c_i} = c^*_j\Omega_{a_j}$ where $j\neq i$ is the other index, so by \eqref{eq:fixedmovingOmega-a}, we conclude that
\begin{equation*}
  \Omega_{c_i}|_{A_3^\bGm}^m\cong (c_j^\bGm)^* \Omega_{a_j}|_{A_j^\bGm}^m\cong (c_j^\bGm)^*N_{\alpha_j}^\vee\,.
\end{equation*}
We consider the orientation of $\bN_{\iota_{A_3}}$ determined by \eqref{eq:localization-pvp-normal-bundle-N-alpha} and induce another orientation of $\bN_{\iota_{A_3}}$ by applying \eqref{sp:eq:pvp-orientation} to \eqref{sp:eq:normal-compatibility-pvp}. This makes all of $(c_j^\bGm)^*N_{\alpha_j}^\vee$ positive. Write $\bar{\bN}_{\iota,k}$ for the bottom-left corner of  \eqref{eq:localization-pvp-normal-bundle-N-alpha} constructed on $A_k$. Then $\bar{\bN}_{\iota,3} \cong c^*_i \bar{\bN}_{\iota,i}$, and their orientations are identified. Using that $N_{\alpha_3} = (c^{\bG_m}_1)^*N_{\alpha_1}\oplus (c^{\bG_m}_2)^*N_{\alpha_2}$, we see that the current orientation differs from the one for $\big[\tilde{\mathcal{O}}_{A_3^{\bG_m}}^{\vir}\big]$ by the sign $(-1)^{n_{\alpha_j}^<}$, using the same procedure as in Section~\ref{sp:sec:compatibility-normal-orientation-comparison}. By \cite[Theorem B.3]{Park21}, we hence get
\begin{equation*}
  \big[\tilde{\cO}^{\vir}_{A_3^\bGm}\big] = (-1)^{n_{\alpha_j}^<}(c_i^\bGm)^* \big[\tilde{\cO}^{\vir}_{A_i^\bGm}\big]\otimes \det(\Omega_{c_i^\bGm})^{\frac{1}{2}}\,.
\end{equation*}
Inserting this equation into \eqref{sp:eq:tilde-class-equation-M-fixed-pt-locus}, we obtain
\begin{align*}
  &(-1)^{n_{\alpha_3}^<}\big((a_3^\bGm)^*\big)^{-1}\left(\big[\tilde{\cO}^{\vir}_{A_3^\bGm}\big]\otimes \det\big(\Omega_{a_3^\bGm}\big)^{-\frac{1}{2}}\right)\\
  \cong& (-1)^{n_{\alpha_3}^<+n_{\alpha_j}^<} \big((a_3^\bGm)^*\big)^{-1}\left((c_i^\bGm)^* \big[\tilde{\cO}^{\vir}_{A_i^\bGm}\big]\otimes \det(\Omega_{c_i^\bGm})^{\frac{1}{2}}\otimes \det\big(\Omega_{a_3^\bGm}\big)^{-\frac{1}{2}}\right)\\
  \cong& (-1)^{n_{\alpha_i}^<} \big((a_i^\bGm)^*\big)^{-1}\big((c_i^\bGm)^*\big)^{-1}\left((c_i^\bGm)^* \big[\tilde{\cO}^{\vir}_{A_i^\bGm}\big]\otimes (c_i^\bGm)^*\det(\Omega_{a_i^\bGm})^{-\frac{1}{2}}\right)\\
  \cong& (-1)^{n_{\alpha_i}^<} \big((a_i^\bGm)^*\big)^{-1}\left(\big[\tilde{\cO}^{\vir}_{A_i^\bGm}\big]\otimes \det(\Omega_{a_i^\bGm})^{-\frac{1}{2}}\right)\,.
\end{align*}
Hence, $\big[\tilde{\mathcal{O}}_{M^{\bG_m}}^{\vir}\big]$ is independent of the chosen Jouanolou device by \eqref{sp:eq:tilde-class-equation-M-fixed-pt-locus}.

\subsubsection{}
To show that 
\begin{equation}\label{eq:affinelocalizationtext}
  [\widehat{\mathcal{O}}_M^{\vir}] = \iota_* \frac{[\tilde{\mathcal{O}}_{M^{\bG_m}}^{\vir}]}{\widehat{\fe}_{\bT}\big(\bN^{>}_\iota\big) },
\end{equation}
we first focus on the right-hand side and compute
\begin{align*}
    \frac{[\tilde{\mathcal{O}}_{M^{\bG_m}}^{\vir}]}{\widehat{\fe}_{\bT}\big(\bN^{>}_\iota\big) }& = (-1)^{n_\alpha^<}(b^{*})^{-1}\alpha_*\left(\frac{[\tilde{\mathcal{O}}_{A^{\bG_m}}^{\vir}]}{\widehat{\fe}_{\bT}\big((a^{\bG_m})^*\bN^{>}_{\iota}\big)\fe_{\bT}(N_{\alpha})}\otimes (\det \Omega_{a^{\bG_m}})^{-\frac{1}{2}}\right)\\
    & = (-1)^{n_\alpha^<}(b^{*})^{-1}\left(\alpha_*\left(\frac{[\tilde{\mathcal{O}}_{A^{\bG_m}}^{\vir}]}{\widehat{\fe}_{\bT}\big((a^{\bG_m})^*\bN^{>}_{\iota}\big)\widehat{\fe}_{\bT}(N_{\alpha})}\right)\otimes (\det \Omega_{b})^{-\frac{1}{2}}\right)\\
    & = (b^{*})^{-1}\left(\alpha_*\left(\frac{[\tilde{\mathcal{O}}_{A^{\bG_m}}^{\vir}]}{\widehat{\fe}_{\bT}\big(\bN^{>}_{\iota_A}\big)}\right)\otimes (\det \Omega_{b})^{-\frac{1}{2}}\right).
\end{align*}
The second equality follows from $\det(\Omega_{a^{\bG_m}})^{-\frac{1}{2}} = \det(\Omega_{b}|_{A^{\bG_m}})^{-\frac{1}{2}}\det(N_{\alpha})^{-\frac{1}{2}}$, and the last one uses the construction of $\bN_{\iota_A}$ in \eqref{eq:localization-pvp-normal-bundle-N-alpha} and \eqref{sp:eq:lifted-Ni-k-class} above, as well as $\widehat{\fe}_{\bT}(N_{\alpha}^>+(N_{\alpha}^<)^\vee)=(-1)^{n_\alpha^<}\widehat{\fe}_{\bT}(N_{\alpha})$. Now apply $a^*\iota_*$ to the above and use flat base-change $\iota_* (b^{*})^{-1}=(a^*)^{-1}\beta_*$ to get 
\begin{equation*}
  (\iota_A)_*\left(\frac{[\tilde{\mathcal{O}}_{A^{\bG_m}}^{\vir}]}{\widehat{\fe}_{\bT}\big(\bN^{>}_{\iota_A}\big)}\right)\otimes(\det \Omega_{a})^{-\frac{1}{2}}.
\end{equation*}
Applying $a^*$ to the left-hand side of \eqref{eq:affinelocalizationtext} yields $[\widehat{\mathcal{O}}_A^{\vir}]\otimes (\det \Omega_a)^{-\frac{1}{2}}$ by construction. Hence, the claim reduces to the localization formula
\begin{equation*}
  [\widehat{\mathcal{O}}_A^{\vir} ] = (\iota_A)_* \frac{[\tilde{\mathcal{O}}_{A^{\bG_m}}^{\vir}]}{\widehat{\fe}\big(\bN^{>}_{\iota_A}\big) }
\end{equation*}
from Theorem~\ref{thm:eqvirloc}.
\end{proof}

\subsection{Compatibility}
\subsubsection{}
In the proof of the wall-crossing formula, we will encounter situations where a master space $M$ with a $\bG_m$-action has components $Z$ of the fixed-point locus, so that both $M$ and $Z$ have natural smooth maps to potentially different stacks with compatible CY4 obstruction theories. In these cases, we want to have a mechanism to compare the induced virtual classes, including their orientations.

\begin{proposition}\label{prop:comparison}
	Let $\fM$ and $\fN$ be algebraic $\bT$-stacks with trivial $\bG_m$-action and $\psi:\fN\to \fM$ a $\bT$-equivariant morphism. For a smooth  $\bT$-equivariant map $\pi:M\to\fM$ from a quasi-compact, separated algebraic  space $M$ consider the commutative diagram
  \begin{equation*}
    \begin{tikzcd}
      Z \ar{d}{\nu} \ar[hookrightarrow]{r}{\iota} & M \ar[loop right]{r}{\bG_m} \ar{d}{\pi} \\
      \fN\ar{r}{\psi} & \fM,
    \end{tikzcd}
  \end{equation*}
  where $\iota$ is a closed embedding of connected components of a $\bG_m$-fixed point locus and $\nu$ is another $\bT$-equivariant smooth morphism. We again assume that $M$ has the $\bT$-equivariant resolution property. 
  
  Let $\bE_\fM$, $\bE_{\fN}$ be $\bT$-equivariant CY$4$ obstruction theories on $\fM$ and $\fN$, respectively. Suppose that the composition
  \begin{equation}\label{sp:comp-ass:cotangent-isom}
    \Omega_\pi|_Z^f \to \iota^*\Omega_\pi \to \Omega_\nu
  \end{equation}
  is an isomorphism. Further, we require the existence of a morphism $\delta:\psi^*(\bE_\fM)\to \bE_\fN$ making
  \begin{equation}\label{sp:comp-ass:bE-isom}
    \pi^*(\bE_\fM)|_Z^f \to \iota^*\pi^*(\bE_\fM) \xrightarrow{\sim} \nu^*\psi^*(\bE_\fM) \xrightarrow{\nu^*(\delta)} \nu^*(\bE_\fN)
  \end{equation}
  into an isomorphism of symmetric complexes\footnote{We usually deal with the case where $\bE_\fN$ is a summand of $\psi^*\bE_\fM$ and the pairing on $\bE_\fN$ is a restriction of the one on $\psi^*\bE_\fM$. In this case, we only need to check that \eqref{sp:comp-ass:bE-isom} is an isomorphism, since the symmetry is preserved automatically.} in the sense of Definition~\ref{sp:def:obstruction-theory} up to orientations. Under the isomorphism \eqref{sp:comp-ass:bE-isom}, the orientation on $\pi^*(\bE_\fM)|_Z^f$ induced by splitting $\pi^*\bE_\fM|_Z^m$ into positive and negative weight parts and using \eqref{eq:EEonMGGmtrivialization}, differs from the orientation of $\nu^*(\bE_{\fN})$ by a fixed sign $\varepsilon_Z$.
  
  In Theorem~\ref{sp:thm:symm-pullback-localization}, we constructed two virtual classes for $Z$:
  \begin{itemize}
  \item $\big[\widehat{\mathcal{O}}_Z^{\vir}\big]$ obtained by applying \eqref{sp:eq:jsp-vir-class} to $\nu$; and
  \item $\big[\tilde{\mathcal{O}}_{Z}^{\vir}\big]$ from Section~\ref{sp:sec:weight-splitting-orientation}.
  \end{itemize}
  We have the following equality of virtual classes:
	\begin{equation}\label{sp:eq:master-space-vir-class-comp-sign}
    \big[\widehat{\mathcal{O}}_Z^{\vir}\big] = \varepsilon_Z(-1)^{\omega_\pi^>}\big[\tilde{\mathcal{O}}_{Z}^{\vir}\big],
  \end{equation}
  where $\omega_\pi^>\coloneqq \rk(\Omega_\pi|_Z^>)$.
\end{proposition}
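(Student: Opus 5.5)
We reduce to a statement on a Jouanolou device, where both classes are Oh--Thomas classes of honest CY4 obstruction theories on an affine scheme. Choose a $\bT$-equivariant Jouanolou device $a:A\to M$ and form, as in \eqref{eq:equivariant-jouanolou-cd}, the restriction $B=a^{-1}(Z)\xrightarrow{b}Z$ together with $A_Z:=A^{\bG_m}\cap B$ (the components of $A^{\bG_m}$ over $Z$) and $\alpha:A_Z\hookrightarrow B$. Then $b$ is a Jouanolou device for $Z$, so by the independence statement of Theorem~\ref{sp:thm:symm-pullback-localization}(a) we may compute $[\widehat{\cO}_Z^{\vir}]$ using $b$ and the smooth map $\nu\circ b$. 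Since $a^{\bG_m}=b\circ\alpha$ restricted to $A_Z$ is also an affine bundle over $Z$, it is itself a Jouanolou device for $Z$. By \eqref{sp:eq:tilde-class-equation-M-fixed-pt-locus}, $[\tilde\cO_Z^{\vir}]$ is then determined by $(-1)^{n_\alpha^<}[\tilde\cO_{A_Z}^{\vir}]$. We would therefore rather compute $[\widehat{\cO}_Z^{\vir}]$ also via the device $a^{\bG_m}:A_Z\to Z$; this is legitimate by independence of choices. It reduces \eqref{sp:eq:master-space-vir-class-comp-sign} to comparing two CY4 obstruction theories on $A_Z$:
\begin{itemize}
\item $\widehat{\bE}_{A}|^f_{A_Z}$, with the orientation coming from the weight splitting of $\bN_{\iota_A}$ as in \S\ref{sp:sec:weight-splitting-orientation};
\item the CY4 pullback $\widehat{\bE}'_{A_Z}$ of $\bE_\fN$ along $\nu\circ a^{\bG_m}$ from Lemma~\ref{sp:lemma:affine-symm-pb}, with the orientation of Definition~\ref{def:orpullback}.
\end{itemize}

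To identify these, take fixed parts of the PVP diagram \eqref{eq:localization-pvp} restricted to $A_Z$. Its rows become
\[
\pi_a^*\bE_\fM|^f\to \bF^f\to \Omega_{\pi_a}|^f_{A_Z}.
\]
Using \eqref{eq:fixedmovingOmega-a} and \eqref{sp:comp-ass:cotangent-isom}, we get $\Omega_{\pi_a}|^f_{A_Z}\cong\Omega_{\nu\circ a^{\bG_m}}$, and \eqref{sp:comp-ass:bE-isom} identifies $\pi_a^*\bE_\fM|^f$ with $(\nu a^{\bG_m})^*\bE_\fN$ as symmetric complexes. The fixed part of an $\infty$-PVP diagram is again an $\infty$-PVP diagram (weight decomposition is exact and compatible with duality). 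So the fixed part is a PVP diagram for $\nu\circ a^{\bG_m}$ starting from $\bE_\fN$, with $\bE_\pi=\Omega$. By the essential uniqueness in Lemma~\ref{sp:lemma:affine-symm-pb} (the relevant Hom spaces on the affine $A_Z$ are highly connected), it agrees with the diagram defining $\widehat{\bE}'_{A_Z}$. The underlying obstruction theories therefore coincide, the cone isotropy is inherited, and \cite[Theorem B.3]{Park21} together with the twists by $\det\Omega^{1/2}$ reduces everything to comparing orientations.

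The main obstacle, and the step we would carry out last, is the sign bookkeeping. Decompose $\det\widehat{\bE}_A|_{A_Z}$ via the $\epsilon$-isomorphisms of \eqref{sp:eq:pvp-orientation} into three pieces:
\begin{itemize}
\item the $\det\pi_a^*\bE_\fM$ piece, whose moving part is split by weights, with defect $\varepsilon_Z$ by hypothesis;
\item the $\det\Omega_{\pi_a}\det\Omega_{\pi_a}^\vee[2]$ piece;
\item the moving part $\bN_{\iota_A}$.
\end{itemize}
For $\Omega_{\pi_a}|^m=a^*\Omega_\pi|^m\oplus N_\alpha^\vee$, the PVP orientation $o(\bE_\pi)$ pairs $\Omega^m$ with $\Omega^{m,\vee}[2]$, i.e.\ declares all of $\Omega^m$ ``positive''. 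The weight-splitting convention instead takes $\Omega^>$ and $(\Omega^<)^\vee[2]$ as the positive half. Swapping each negative-weight line from $\Omega^<$ to its dual costs a sign $-1$ per rank, as in the computation of $\fe(N^>+(N^<)^\vee)=(-1)^{n^<}\fe(N)$.

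This produces $(-1)^{\rk(\Omega_\pi|_Z^<)+n_\alpha^<}$. The $N_\alpha$ part cancels against the $(-1)^{n_\alpha^<}$ in \eqref{sp:eq:tilde-class-equation-M-fixed-pt-locus}, and the convention in \S\ref{sp:sec:weight-splitting-orientation} already treats $N_\alpha$ this way. The remaining sign is $(-1)^{\rk\Omega_\pi|^<_Z}$. Since $\Omega_\pi|_Z$ is a vector bundle, the symmetric role of $\Omega_\pi$ and $T_\pi[2]$ in $\bN_\iota$ lets us trade this for $(-1)^{\omega_\pi^>}$. We would verify this last parity by comparing with \S\ref{sp:sec:compatibility-normal-orientation-comparison}, checking carefully which of $\Omega^>$ or $\Omega^<$ gets flipped under the stated convention. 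If it comes out as $\omega_\pi^<$, the discrepancy should be absorbed by $\rk\Omega_\pi^m$ being even up to the fixed convention, so we would check this on a single $\bG_m$-weight line first.
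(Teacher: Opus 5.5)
Your reduction is the same as the paper's: pass to an equivariant Jouanolou device and compute $[\widehat{\cO}_Z^{\vir}]$ via $a^{\bG_m}\colon A_Z\to Z$. Then identify $\widehat{\bE}_A|^f_{A_Z}$ with the CY4 pullback of $\bE_\fN$ along $\nu\circ a^{\bG_m}$, by taking fixed parts and using the uniqueness in Lemma~\ref{sp:lemma:affine-symm-pb}. This reduces everything to an orientation comparison on $\bN_{\iota_A}$.

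The gap is in that last step, which is the actual content of the proposition: your convention is reversed. In Definition~\ref{def:orpullback}, $o(\bE_\pi)$ takes values in $\det(\bE_\pi^\vee[2])\det(\bE_\pi)$. In the convention of \eqref{eq:oNgeq} the first factor plays the role of $\bN^>$. So the PVP orientation puts the dual side, built from $(a^*\Omega_\pi|^m_{A_Z})^\vee[2]$ and $N_\alpha[2]$, in the positive slot, and all of $\Omega_{\pi_a}|^m_{A_Z}$ in the negative slot — not the other way round.

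The weight splitting has positive half built from $a^*\Omega_\pi|^>_{A_Z}$, $(a^*\Omega_\pi|^<_{A_Z})^\vee[2]$, $N_\alpha^>[2]$ and $(N_\alpha^<)^\vee$. Against it, the only pieces on the wrong side are $a^*\Omega_\pi|^>_{A_Z}$ (positive weight, second slot) and $N_\alpha^<[2]$ (negative weight, first slot). The discrepancy is therefore $(-1)^{\omega_\pi^>+n_\alpha^<}$, times $\varepsilon_Z$ from the $\bE$-part. This is exactly the comparison of the two highlighted boxes in \S\ref{sp:sec:compatibility-normal-orientation-comparison}. The $n_\alpha^<$ then cancels against \eqref{sp:eq:tilde-class-equation-M-fixed-pt-locus}, giving \eqref{sp:eq:master-space-vir-class-comp-sign} directly.

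Your count is also internally inconsistent. If PVP really made all of $\Omega_{\pi_a}|^m$ positive, the summand $N_\alpha^\vee$ would cost $(-1)^{n_\alpha^>}$, not $(-1)^{n_\alpha^<}$. It would then no longer cancel against \eqref{sp:eq:tilde-class-equation-M-fixed-pt-locus}.

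The fallback of trading $(-1)^{\omega_\pi^<}$ for $(-1)^{\omega_\pi^>}$ does not work. Nothing forces $\rk(\Omega_\pi|^m_Z)=\omega_\pi^>+\omega_\pi^<$ to be even; there is no symmetry between $\Omega_\pi$ and $T_\pi[2]$ that fixes this parity. In the paper's own applications $\Omega_\pi|^m_Z$ is a single line bundle. By Proposition~\ref{prop:master-space-fixed-loci}, $(\omega_\pi^>,\omega_\pi^<)=(0,1)$ on $\bM_-$ and $(1,0)$ on $\bM_+$.

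This asymmetry is exactly what produces the relative sign between the $\tau^{s_-}$ and $\tau^{s_+}$ terms in Theorem~\ref{thm:horizontal-flag-wc}, and likewise between the $\bbLambda_{\rho_4=0}$ and $\bbLambda_{\rho_3=0}$ contributions. A sign $(-1)^{\omega_\pi^<}$ would give a genuinely different, wrong wall-crossing formula, so it cannot be absorbed as a convention. The correct parity has to come from placing $\bE_\pi^\vee[2]$ and $\bE_\pi$ correctly in \eqref{sp:eq:pvp-orientation}.
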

\begin{remark}
    Note that the extra sign $(-1)^{\omega^{>}_\pi}$ in \eqref{sp:eq:master-space-vir-class-comp-sign} is needed to recover the correct wall-crossing formula as in the presence of obstruction theories on master spaces, this sign coincides with the one computed in \cite[Lemma 8.4]{Bo25}. It follows from Theorem \ref{sp:thm:symm-pullback-localization} (b) that \eqref{sp:eq:master-space-vir-class-comp-sign} holds also in the presense of CY4 obstruction theories thus recovering the aforementioned result.
\end{remark}
\subsubsection{}

\begin{proof}
Let $a:A\to M$ be a $\bG_m$-equivariant Jouanolou device. As in \cite[\S 2.5.3]{KLT23} and \eqref{eq:equivariant-jouanolou-cd}, this gives us a commutative diagram
\begin{equation}\label{sp:eq:eq-jouanolou-diagram-Z}
	\begin{tikzcd}
		A_Z\ar[hookrightarrow]{r}{\alpha}\ar[bend left]{rr}{\iota_A}\ar{dr}{a_Z} & B_Z \ar{d}{b_Z}\ar[hookrightarrow]{r}{\beta} &A\ar{d}{a}\\
		\,&Z\ar[hookrightarrow]{r}{\iota}\ar{d}{\nu}& M\ar{d}{\pi} \\
		\,&\fN \ar{r}{\psi} & \fM
	\end{tikzcd}
\end{equation}
with a cartesian top-right square and $A_Z$ the $\bG_m$-fixed point locus of $B_Z$. As shown in Section~\ref{sp:sec:ind-of-choices}, the class $\big[\widehat{\mathcal{O}}_Z^{\vir}\big]$ is independent of the choice of a Jouanolou device, so we may assume it is constructed using $\nu_a\coloneqq \nu\circ a_Z$. Consider now the upper two rows of the PVP diagrams \eqref{eq:pvp-diagram} for $\pi_{a}\coloneqq \pi\circ a$ and $\nu_a$:
\begin{equation}\label{eq:localization-pvp-Z}
  \begin{tikzcd}[column sep=small]
    \bF_{\pi_a}^\vee[2] \ar{d} \ar{r} & \widehat{\bE}_{A} \ar{d} \ar{r} & \Omega_{\pi_a} \ar[equals]{d} \ar{r}{+1} & {} 
    & \bF_{\nu_a}^\vee[2] \ar{d} \ar{r} & \widehat{\bE}_{A_Z} \ar{d} \ar{r} & \Omega_{\nu_a} \ar[equals]{d} \ar{r}{+1} & {}
    \\
    \pi_a^*\bE_{\fM} \ar{r} & \bF_{\pi_a} \ar{r} & \Omega_{\pi_a} \ar{r}{+1} & {,} 
    & \nu_a^*\bE_{\fN} \ar{r} & \bF_{\nu_a} \ar{r} & \Omega_{\nu_a} \ar{r}{+1} & {.} 
  \end{tikzcd}
\end{equation}
The obstruction theory $\widehat{\bE}_{A_Z}$ comes with a natural orientation from \eqref{sp:eq:pvp-orientation} applied to the second diagram of \eqref{eq:localization-pvp-Z}, which makes it a CY$4$ obstruction theory and hence gives us the associated Oh--Thomas class $\big[\widehat{\mathcal{O}}_{A_Z}^{\vir}\big]$.	This is then used to define the class of interest
\begin{equation}\label{sp:eq:hat-class-equation}
  \big[\widehat{\mathcal{O}}_{Z}^{\vir}\big]:= \big(a_Z^*\big)^{-1} \left([\widehat{\mathcal{O}}_{A_Z}^{\vir}]\otimes \det (\Omega_{a_Z})^{-\frac{1}{2}}\right)\in G^{\bT}_{0}\big(Z, \bZ[2^{-1}]\big).
\end{equation}
By section~\ref{sp:sec:ind-of-choices}, this class $[\widehat{\mathcal{O}}_{Z}^{\vir}]$ is independent of the choice of $A$.

\subsubsection{}\label{sp:sec:compatibility-Ef-setup}
Recall our construction of $\widehat{\bE}_A$ in two steps in Section~\ref{sp:sec:localization-class}. Consider the second diagrams in \eqref{eq:tildeEEMdiagram} and \eqref{sp:eq:two-step-localization-pvp}, which combine to produce $\widehat{\bE}_A$. Restricting to $A_Z$, taking fixed parts, and using \eqref{sp:comp-ass:cotangent-isom}, \eqref{sp:comp-ass:bE-isom}, and \eqref{eq:fixedmovingOmega-a} we obtain
\begin{equation*}
 \begin{tikzcd}
    \arrow[d]a_Z^*\Omega_{\nu}[-1]\arrow[r]&\nu_a^*(\bE_\fN)\arrow[d]\\
    0\arrow[r]&a_Z^*\Omega_{\nu}^\vee[3] , 
  \end{tikzcd}\qquad\textnormal{and}\qquad 
  \begin{tikzcd}
    \arrow[d]\Omega_{a_Z}[-1]\arrow[r]&\widetilde{\bE}_{A_Z}^f\coloneqq \widetilde{\bE}_M^A|_{A_Z}^f\arrow[d]\\
    0\arrow[r]&\Omega_{a_Z}^\vee[3].
  \end{tikzcd}
\end{equation*}
 By functoriality \cite[Theorem A.18.ii)]{Bo25} and uniqueness from Lemma \ref{sp:lemma:affine-symm-pb}, this gives us an isomorphism
$$
\widehat{\bE}_{A}^f:=\widehat{\bE}_{A}|^f_{A_Z}\cong \widehat{\bE}_{A_Z}
$$
where $\widehat{\bE}_{A_Z}$ is obtained by applying Lemma \ref{sp:lemma:affine-symm-pb} to $\nu_a$. This is an isomorphism of CY4 obstruction theories up to orientations.
\subsubsection{}
Next, we will study the difference between the two orientations. For this, take the PVP orientations of $\widehat{\bE}_{A}$ and $\widehat{\bE}_{A_Z}$, obtained by applying \eqref{sp:eq:pvp-orientation} to the left-hand side and the right-hand side of \eqref{eq:localization-pvp-Z} respectively. Inserting them into \eqref{eq:EEonMGGmtrivialization} yields an orientation $o_{\mathrm{PVP}}(\bN_{\iota_A})$ on $\bN_{\iota_A}$.

Now recall from Section~\ref{sp:sec:weight-splitting-orientation}, that we considered another orientation on $\bN_{\iota_A}$ obtained by splitting \eqref{eq:localization-pvp-normal-bundle-N-alpha} into positive and negative $\bGm$-weight parts. We will call it $o_{\mathrm{w}}(\bN_{\iota_A})$. For the same orientation of $\widehat{\bE}_A$ as before, it induces another orientation of $\widehat{\bE}_{A_Z}$ using \eqref{sp:eq:pvp-orientation}.

The resulting orientation on $\widehat{\bE}_A^f$ allowed us to define the associated Oh--Thomas class $\big[\tilde{\mathcal{O}}_{A_Z}^{\vir}\big]$, and we set
\begin{equation*}
  \big[\tilde{\mathcal{O}}_{Z}^{\vir}\big]= (-1)^{n_\alpha^<}(b_Z^*)^{-1} \alpha_*\left(\frac{[\tilde{\mathcal{O}}_{A_Z}^{\vir}]}{\fe_{\bT}(N_{\alpha})}\otimes (\det \Omega_{a_Z})^{-\frac{1}{2}}\right)\in G^{\bT}_{0}(Z)_{\loc}.
\end{equation*}
Writing out \eqref{sp:eq:tilde-class-equation-M-fixed-pt-locus} for the component $Z$ we have
\begin{equation}\label{sp:eq:tilde-class-equation}
  a_Z^*[\tilde{\mathcal{O}}_{Z}^{\vir}] \otimes (\det \Omega_{a_Z})^{\frac{1}{2}} = (-1)^{n_\alpha^<}[\tilde{\mathcal{O}}_{A_Z}^{\vir}].
\end{equation} 
Comparing \eqref{sp:eq:tilde-class-equation} to \eqref{sp:eq:hat-class-equation}, it suffices to find the appropriate sign relating $ (-1)^{n_\alpha^<}[\tilde{\mathcal{O}}_{A_Z}^{\vir}]$ to $[\widehat{\mathcal{O}}_{A_Z}^{\vir}]$. This sign is determined by the difference of orientations on $\widehat{\bE}_A^f$, constructed using \eqref{eq:EEonMGGmtrivialization}, and hence the difference between $o_{\PVP}(\bN_{\iota_A})$ and $o_{\mathrm{w}}(\bN_{\iota_A})$.

\subsubsection{}
\label{sp:sec:compatibility-normal-orientation-comparison}
To understand $o_{\mathrm{w}}(\bN_{\iota_A})$ we need to split  \eqref{eq:localization-pvp-normal-bundle-N-alpha} into positive and negative weights. Unraveling the definition of $\overline{\bN}_\iota$ using \eqref{eq:tildeEEMdiagram}, we then obtain
\begin{equation}\label{sp:eq:bN-weights-decomposition}
  \begin{tikzcd}[row sep=0pt]
    \left(\overline{\bF}^\lessgtr\right)^\vee[2] \ar[r] & \bN_{\iota_A}^\gtrless \ar[r] & \left(N_\alpha^\lessgtr\right)^\vee\ar[r,"+1"] & {}\\
    N_\alpha^\gtrless[2] \ar[r] & \left(\overline{\bF}^\lessgtr\right)^\vee[2] \ar[r] & \left(\overline{\bN}_\iota^\lessgtr\right)^\vee[2]\ar[r,"+1"] & {}\\
    \left(a^*\Omega_\pi|_{A_Z}^\lessgtr\right)^\vee[2] \ar[r] & \left(\overline{\bN}_\iota^\lessgtr\right)^\vee[2] \ar[r] & \tilde{\bF}_{\pi_a}|_{A_Z}^\gtrless\ar[r,"+1"] & {}\\
    \pi_a^*\bE_\fM|_{A_Z}^\gtrless \ar[r] & \tilde{\bF}_{\pi_a}|_{A_Z}^\gtrless \ar[r] & a^*\Omega_\pi|_{A_Z}^\gtrless\ar[r,"+1"] & {.}
  \end{tikzcd}
\end{equation}
This is used to express $o_{\mathrm{w}}(\bN_{\iota_A})$ in explicit factors in the lower half of the diagram below. In the upper half, we use \eqref{eq:localization-pvp-normal-bundle-N-alpha} and \eqref{eq:tildeEEMdiagram} to describe $o_{\mathrm{PVP}}(\bN_{\iota_A})$ in explicit factors. This allows us to write out both orientations explicitly in the diagram below and compare signs by checking up to which sign the diagram commutes\footnote{A similar diagram for the specific set-up used in \S\ref{sec:horizontalflagWC}, when obstruction theories are present, appeared in \cite[(8.8)]{Bo25}}.
\begin{equation*}
  \begin{tikzcd}[column sep=huge]
    & \begin{array}{c}
      \det(N_\alpha)\det(a^*\Omega_\pi^\vee|_{A_Z}^m)\det(\pi_a^*\bE_\fM|_{A_Z}^m)  \\
     \colorbox{lightorange}{\(\displaystyle \det(a^*\Omega_\pi|_{A_Z}^m)\det(N_\alpha^\vee)\)}
    \end{array} \ar[ddr,bend left,"o_{\mathrm{PVP}}(\bN_{\iota_A})"{pos=0.3},"\eqref{eq:localization-pvp-normal-bundle-N-alpha}\& \eqref{eq:tildeEEMdiagram}"'{pos=0.3}] & \\
    & \begin{array}{c}
      \det(N_\alpha^>)\det(N_\alpha^<)\det((a^*\Omega_\pi|_{A_Z}^<)^\vee)\det((a^*\Omega_\pi|_{A_Z}^>)^\vee)\\
      \det(\pi_a^*\bE_\fM|_{A_Z}^>)\det(\pi_a^*\bE_\fM|_{A_Z}^<)\colorbox{lightorange}{\(\displaystyle \det(a^*\Omega_\pi|_{A_Z}^>)\)}\\
      \colorbox{lightorange}{\(\displaystyle\det(a^*\Omega_\pi|_{A_Z}^<)\det((N_\alpha^<)^\vee)\det((N_\alpha^>)^\vee)\)}
    \end{array}\arrow[dr, dash, draw=blue, line width=3pt, opacity=0.5]\ar[dr,line width=0.75pt,swap,"\Xi^\mathrm{PVP}"]\arrow[u, dash, draw=orange, line width=3pt, opacity=0.5]\ar[u,line width=0.75pt,"\eqref{eq:oNgeq}","\gamma_{\mathrm{PVP}}"'] & \\
    \cO \arrow[uur, bend left, dash, draw=orange, line width=3pt, opacity=0.5]\ar[uur, bend left, line width=0.75pt, "o_{\mathrm{PVP}}(\bN_{\iota_A})"{pos=0.7}]\ar[ur,dash,draw=orange,line width=2pt,shift left=1pt,opacity=0.6]\ar[ur,dash,draw=blue,line width=2pt,shift right=1pt,opacity=0.6]\ar[ur,line width=0.75pt,swap,"\eqref{eq:oNgeq}"]\arrow[dr, dash, draw=blue, line width=3pt, opacity=0.5]\ar[dr,line width=0.75pt,"\Xi^\mathrm{w}_1"]\ar[ddr, bend right, swap, "o_{\mathrm{w}}(\bN_{\iota_A})"{pos=0.4}] & & \det(\bN_{\iota_A})\\
    & \begin{array}{c}
      \det(N_\alpha^>)\det((a^*\Omega_\pi|_{A_Z}^<)^\vee)\det(\pi_a^*\bE_\fM|_{A_Z}^>)\\
      \det(a^*\Omega_\pi|_{A_Z}^>)\det((N_\alpha^<)^\vee)\colorbox{lightblue}{\(\displaystyle \det(N_\alpha^<)\det((a^*\Omega_\pi|_{A_Z}^>)^\vee)\)}\\
      \det(\pi_a^*\bE_\fM|_{A_Z}^<)\colorbox{lightblue}{\(\displaystyle \det(a^*\Omega_\pi|_{A_Z}^<)\det((N_\alpha^>)^\vee)\)}
    \end{array}\arrow[ur, dash, draw=blue, line width=3pt, opacity=0.5]\ar[ur,line width=0.75pt,"\Xi^\mathrm{w}_2"]& \\
    & \det(\bN_{\iota_A}^>)\det(\bN_{\iota_A}^<)\ar[u,"\eqref{sp:eq:bN-weights-decomposition}",,"\gamma_{\mathrm{w}}"']\ar[uur, bend right, "\eqref{eq:oNgeq}"{pos=0.7}, "o_{\mathrm{w}}(\bN_{\iota_A})"'{pos=0.65}] & 
  \end{tikzcd}
\end{equation*}
Here $\Xi^{\mathrm{PVP}}$ (resp. $\Xi_i^{\mathrm{w}}$) are defined from $o_{\mathrm{PVP}}(\bN_{\iota_A})$ (resp. $o_{\mathrm{w}}(\bN_{\iota_A})$) via the isomorphisms $\gamma_{\mathrm{PVP}}$ (resp. $\gamma_{\mathrm{w}}$). That also means that the diagram commutes by construction with the exception of the top-left {\color{orange} orange triangle}, which commutes only up to $\varepsilon_Z$ by pulling back the orientation assumption along $a_Z$, and the {\color{blue} blue square} in the middle which, by commutation properties of determinants, only commutes up to the sign 
\begin{equation*}
  (-1)^{n_\alpha^<(n_\alpha^<+\omega_\pi^>+e^>+\omega_\pi^<)+\omega_\pi^>(n_\alpha^<+\omega_\pi^>+e^>)+e^<(\omega_\pi^>+n_\alpha^<)+n_\alpha^<\omega_\pi^<}=(-1)^{n_\alpha^<+\omega_\pi^>},
\end{equation*}
where $n_\alpha^\gtrless=\rk(N_\alpha^\gtrless)$, $\omega_\pi^\gtrless=\rk\big(a^*\Omega_\pi|_{A_Z}^\gtrless\big)=\rk(\Omega_\pi|_{Z}^\gtrless)$, and $e^\gtrless=\rk(\pi_a^*\bE_\fM|_{A_Z}^\gtrless)$, and the equality uses evenness of $\bE_\fM$. Alternatively, since reordering the positive parts and then doing so for the negative parts does not introduce any signs, we may focus on the contents of the \colorbox{lightorange}{orange box} compared to \colorbox{lightblue}{blue box} independent of their order. This shows that the square diagram commutes up to the sign $(-1)^{n_\alpha^<+\omega_\pi^>}$.

We hence get $[\widehat{\mathcal{O}}_{A_Z}^{\vir}]=\varepsilon_Z(-1)^{n_\alpha^<+\omega_\pi^>}[\tilde{\mathcal{O}}_{A_Z}^{\vir}]$, which yields
\begin{equation*}
  [\widehat{\mathcal{O}}_{Z}^{\vir}]=\varepsilon_Z(-1)^{\omega_\pi^>}[\tilde{\mathcal{O}}_{Z}^{\vir}]
\end{equation*}
by \eqref{sp:eq:hat-class-equation} and \eqref{sp:eq:tilde-class-equation}.

\end{proof}

\subsection{Flag Pushforward Formula}
\subsubsection{}
In this section, we only work $\sT$-equivariantly. Thus, we for example omit the $\bG_m$-equivariance in the situation of Theorem \ref{sp:thm:symm-pullback-localization} when constructing $[\widehat{\mathcal{O}}_{M}^{\vir}]$ from \eqref{eq:OvirJouanolou}. Below, we will encounter situations where there is an additional flag-bundle $p: N\to M$, and one needs to compare the classes of $N$ and $M$ constructed using Theorem \ref{sp:thm:symm-pullback-localization}. The next proposition addresses a slightly more general situation by providing the expected formula. 
\begin{proposition}
\label{prop:smoothpushforward}
    Let $\fM$ be an algebraic $\sT$-stack and $M,N$ quasi-compact, separated algebraic $\sT$-spaces satisfying $\sT$-equivariant resolution property. Let
    $$
    \begin{tikzcd}[column sep=5pt]
N\arrow[dr,"\rho"']\arrow[rr,"p"]&& M\arrow[dl,"\pi"]\\
&\fM&
    \end{tikzcd}
    $$
  be a $\sT$-equivariant commutative diagram where all morphisms are smooth and and $p$ is projective.  Suppose that $\fM$ has a $\sT$-equivariant CY4 obstruction theory $\bE_{\fM}$, so that Theorem \ref{sp:thm:symm-pullback-localization} applied to $\pi$ and $\rho$ gives rise to the classes 
  $$  [\widehat{\mathcal{O}}_{M}^{\vir}]\in G^{\sT}_0\big(M,\bZ[2^{-1}]\big)\qquad\textnormal{and}\qquad [\widehat{\mathcal{O}}_{N}^{\vir}]\in G^{\sT}_0\big(M,\bZ[2^{-1}]\big)\,,
  $$
  respectively. They satisfy 
  $$
  p^*[\widehat{\mathcal{O}}_{M}^{\vir}] \otimes \det(\Omega_p)^{\frac{1}{2}}= [\widehat{\mathcal{O}}_{N}^{\vir}]\,.
  $$

\end{proposition}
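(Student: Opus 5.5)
The plan is to reduce everything to affine Jouanolou devices and compare the two CY4 pullback constructions there, using functoriality of PVP diagrams and Park's smooth pullback formula \cite[Theorem B.3]{Park21}, exactly as in §\ref{sp:sec:ind-of-choices}.

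Choose a $\sT$-equivariant Jouanolou device $a:A\to M$. Since $p$ is smooth and projective and $N$ has the resolution property, form the fibre product $N_A := N\times_M A$ with projections $p_A:N_A\to A$ and $a_N:N_A\to N$. Then $a_N$ is an affine bundle torsor and $N_A$ has the resolution property, though it need not be affine. Take a further Jouanolou device $c:A'\to N_A$, with $A'$ affine, and set $a' := a_N\circ c$. Since a composition of Jouanolou devices is a Jouanolou device, $a':A'\to N$ is one. By the independence statement of Theorem~\ref{sp:thm:symm-pullback-localization}(a), we may compute $[\widehat{\cO}_N^{\vir}]$ using $a'$ and $[\widehat{\cO}_M^{\vir}]$ using $a$.

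Next consider the smooth map $q := p_A\circ c:A'\to A$ between affine schemes. Lemma~\ref{sp:lemma:affine-symm-pb}, applied to $q$ and to the CY4 obstruction theory $\widehat{\bE}_A$ obtained from $\pi_a$, produces a CY4 pullback $\widehat{\bE}'_{A'}$. The sequence $A'\to A\to\fM$ satisfies Lemma~\ref{lem:smoothcompositionPVPstart}, so the starting diagram \eqref{eq:startingpointPVPfunctoriality} exists essentially uniquely. Functoriality \cite[Theorem A.18]{Bo25}, together with the uniqueness part of Lemma~\ref{sp:lemma:affine-symm-pb}, then identifies $\widehat{\bE}'_{A'}$ with the CY4 pullback along $\pi\circ a\circ q=\rho\circ a'$. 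Lemma~\ref{lem:functorialityorientations} identifies the orientations. Park's formula then gives
\[
[\widehat{\cO}^{\vir}_{A'}] = q^*[\widehat{\cO}^{\vir}_A]\otimes\det(\Omega_q)^{\frac12}.
\]

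The remaining step is bookkeeping with determinants. From $a\circ q = p\circ a'$ and the exact sequences of relative cotangent bundles for smooth maps we get
\[
\det\Omega_q\otimes\det\Omega_a|_{A'} \cong \det\Omega_{a'}\otimes\det\Omega_p|_{A'}.
\]
Pulling back the defining identity $a^*[\widehat{\cO}^{\vir}_M]=[\widehat{\cO}^{\vir}_A]\otimes\det(\Omega_a)^{-1/2}$ along $q$ and substituting yields
\[
(a')^*\big(p^*[\widehat{\cO}_M^{\vir}]\otimes\det(\Omega_p)^{\frac12}\big) = [\widehat{\cO}_{A'}^{\vir}]\otimes\det(\Omega_{a'})^{-\frac12}.
\]
The right-hand side equals $(a')^*[\widehat{\cO}_N^{\vir}]$. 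Since $(a')^*$ is an isomorphism on $G_0^\sT$ by homotopy invariance, the claim follows.

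I expect the main obstacle to be the second step: matching the two-step PVP diagram ($\pi_a$, then $q$) with the one-step diagram along $\rho\circ a'$ as $\infty$-diagrams, including orientations, and making sure half-determinant square roots are treated consistently. Projectivity of $p$ is not needed for this identity itself; it is presumably there so that $p_*$ is defined for the flag pushforward applications.
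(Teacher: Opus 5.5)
Your proposal is correct and is essentially the paper's proof. The paper also pulls the Jouanolou device $A\to M$ back along $p$ and takes a further Jouanolou device over the non-affine total space. It identifies the CY4 pullback along the composite map to $A$ with the one along $\rho\circ a'$, using Lemma~\ref{lem:smoothcompositionPVPstart}, \cite[Theorem A.18]{Bo25}, uniqueness in Lemma~\ref{sp:lemma:affine-symm-pb}, and Lemma~\ref{lem:functorialityorientations} for orientations, then applies \cite[Theorem B.3]{Park21} and finishes with the same determinant bookkeeping.
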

\begin{remark}
    Note that if $M$ admits a CY4 obstruction theory $\wh{\bE}_M$ coming from a PVP diagram \eqref{eq:pvp-diagram} with $\phi_{\pi}:\bE_{\pi}\to \Omega_{\pi}$ being an isomorphism, then Theorem \ref{sp:thm:symm-pullback-localization} (b) implies that $[\widehat{\mathcal{O}}_{N}^{\vir}]$ is constructed using $\wh{\bE}_M$. This also applies in the setting of the corollary below and will be used to relate the invariants $\sz_\alpha^{\Fr}(\tau)$ defined in Theorem \ref{thm:sst-invariants} to the ones defined in \cite[Definition 5.13]{Bo25} in the presence of Joyce--Song obstruction theories. This is precisely the situation from Definition \cite[Definition~5.3.a)]{Bo25} summarized in Example~\ref{ex:CY4JScategories}.
\end{remark}
\subsubsection{}
As already mentioned, we are mainly interested in situations when $p$ is a flag bundle over $M$. In this case, one immediately concludes the following.
\begin{corollary}
\label{cor:flag-pushforward}
  In the situation of Proposition \ref{prop:smoothpushforward}, let $p$ be a flag bundle with fibers of constant Euler characteristic $d$. The classes $[\widehat{\mathcal{O}}_{M}^{\vir}]$ and $[\widehat{\mathcal{O}}_{N}^{\vir}]$ satisfy 
$$
p_*\left([\widehat{\mathcal{O}}_{N}^{\vir}]\otimes \widehat{\fe}_{\sT}(T_p)\right)=d\cdot[\widehat{\mathcal{O}}_{M}^{\vir}]
$$
for any $\cK\in K^0_{\sT}(M)$.  
\end{corollary}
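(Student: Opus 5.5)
The plan is to deduce the corollary directly from Proposition~\ref{prop:smoothpushforward} via the projection formula. Substituting $[\widehat{\mathcal{O}}_{N}^{\vir}] = p^*[\widehat{\mathcal{O}}_{M}^{\vir}]\otimes\det(\Omega_p)^{\frac12}$ into the left-hand side gives
\[
p_*\Big(p^*[\widehat{\mathcal{O}}_{M}^{\vir}]\otimes \det(\Omega_p)^{\frac12}\otimes\widehat{\fe}_{\sT}(T_p)\Big) = [\widehat{\mathcal{O}}_{M}^{\vir}]\otimes p_*\Big(\det(\Omega_p)^{\frac12}\otimes\widehat{\fe}_{\sT}(T_p)\Big).
\]
This uses the projection formula for the projective smooth morphism $p$. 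The formula is valid because $p$ is flat and proper and the second factor is a perfect class in $K^0_\sT(N)$, so $p^*$ on $G_0$ agrees with flat pullback.

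It therefore suffices to show $p_*\big(\det(\Omega_p)^{\frac12}\otimes\widehat{\fe}_{\sT}(T_p)\big)=d\cdot\cO_M$ in $K^0_\sT(M)[2^{-1}]$. By definition $\widehat{\fe}_{\sT}(T_p)=\fe(T_p)\otimes\det(T_p)^{\frac12}$, and $\det(T_p)^{\frac12}\otimes\det(\Omega_p)^{\frac12}=\cO$. The claim thus reduces to $p_*\fe(T_p)=p_*\Lambda_{-1}(\Omega_p)=\sum_i(-1)^i\, Rp_*\Omega^i_p$. For a flag bundle, the relative Hodge cohomology is concentrated in degrees $(i,i)$ and is spanned by algebraic cycle classes. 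Hence $R^jp_*\Omega^i_p=0$ for $i\neq j$, and $R^ip_*\Omega_p^i$ is a trivial bundle of rank $h^{i,i}$, with trivial $\sT$-action since the Schubert classes are $\sT$-invariant.

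Putting these together gives $\sum_i(-1)^i(-1)^i h^{i,i}=\chi_{\mathrm{top}}(\text{fiber})=d$, so the sum equals $d\cdot\cO_M$. This is the Hirzebruch $\chi_{-1}$-genus statement done in families.

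The main obstacle is making the fiberwise Hodge vanishing and triviality hold globally and $\sT$-equivariantly, rather than only on fibers. I would handle this by writing the flag bundle as an iterated projective bundle $\bP(V)\to M$ with $V$ a $\sT$-equivariant vector bundle. For such a projective bundle, the Euler sequence $0\to\Omega_p\to p^*V^\vee(-1)\to\cO\to0$ gives $\Lambda_{-1}(\Omega_p)\cdot(1-1)=\Lambda_{-1}(p^*V^\vee(-1))$ formally. More directly, Bott's formula gives $Rp_*\Omega^i_p=\cO_M[-i]$ canonically, and hence $p_*\Lambda_{-1}(\Omega_p)=\rk V=\chi(\bP^{\rk V-1})$. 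The full flag case then follows by induction, since Euler characteristics multiply along the tower. The $\bZ[2^{-1}]$ coefficients cause no difficulty because the square roots cancel exactly. The stray phrase ``for any $\cK$'' in the statement can be accommodated by first tensoring with $p^*\cK$ and applying the projection formula once more.
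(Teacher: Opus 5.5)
Your proposal is correct and follows the paper's proof: substitute Proposition~\ref{prop:smoothpushforward}, cancel $\det(\Omega_p)^{\frac12}$ against the square root in $\widehat{\fe}_{\sT}(T_p)$, apply the projection formula, and use $p_*\fe(T_p)=d\cdot[\cO_M]$. The only difference is that the paper cites this last identity without proof, whereas you justify it via relative Hodge cohomology, using Bott's formula along an iterated projective-bundle tower; that tower covers the projective and full flag bundles actually used later in the paper.
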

\begin{proof}
    Using Proposition \ref{prop:smoothpushforward}, this follows from the projection formula and 
    $
    p_*\big(\fe(T_p)\big) = d\cdot [\cO_M].
    $
\end{proof}

\subsubsection{}
\begin{proof}[Proof of Proposition~\ref{prop:smoothpushforward}]
We begin by setting up our Jouanolou devices. Take an equivariant Jouanolou device $a:A\to M$, and pull it back along $p$ to get an affine bundle $p^*A\to N$ an induced smooth and proper map $p^*A\to A$. Since the total space of $p^*A$ is not affine, we additionally take a Jouanolou device $C\to p^*A$ above it to obtain
\begin{equation}
\label{eq:compatibleJouanolousmooth}
   \begin{tikzcd}[column sep=5pt]
      C\ar[d]\ar[dd,bend right=50,"c"']\ar[drr,"p_A"] & & \\
      p^*A\arrow[d]\arrow[rr]&&A\arrow[d,"a"]   \\
      N\arrow[dr,"\rho"']\arrow[rr,"p"]&& M\arrow[dl,"\pi"]\\
      &\fM\, ,&
    \end{tikzcd}  
\end{equation}
where, by construction, $a$ and $c$ are Jouanolou devices and the map $p_A$ is smooth and quasi-projective. Since the classes $[\widehat{\mathcal{O}}_{M}^{\vir}]$ and $[\widehat{\mathcal{O}}_{N}^{\vir}]$ are independent of the choice of Jouanolou device by Theorem~\ref{sp:thm:symm-pullback-localization}, we may use $A$ and $C$ respectively to construct them.

\subsubsection{}  
Recall that the obstruction theories $\widehat{\bE}_A$ and $\widehat{\bE}_C$ defining the classes $[\widehat{\mathcal{O}}_{M}^{\vir}]$ and $[\widehat{\mathcal{O}}_{N}^{\vir}]$ are constructed by applying  Lemma~\ref{sp:lemma:affine-symm-pb} along $\pi_a\coloneqq \pi\circ a$ and $\rho_c\coloneqq \rho\circ c$ in \eqref{eq:compatibleJouanolousmooth}, respectively. Since all the maps in question are smooth, we have \eqref{eq:startingpointPVPfunctoriality} by Lemma \ref{sp:lemma:affine-symm-pb}. This allows us to use functoriality from \cite[Theorem A.18.ii)]{Bo25} which combined with uniqueness in Lemma \ref{sp:lemma:affine-symm-pb} shows that $\widehat{\bE}_C$ is recovered from the self-dual diagram 
$$
 \begin{tikzcd}
    \arrow[d]\Omega_{p_A}[-1]\arrow[r]&p^*_A(\widehat{\bE}_A)\arrow[d]\\
    0\arrow[r]&\Omega_{p_A}^\vee[3]
  \end{tikzcd}
$$
 constructed by using that $C$ is affine. The resulting two orientations coincide by Lemma \ref{lem:functorialityorientations}. By \cite[Theorem B.3]{Park21}, we then obtain
$$
p_A^*[\widehat{\mathcal{O}}_{A}^{\vir}] \otimes \det(\Omega_{p_A})^{\frac{1}{2}} = [\widehat{\mathcal{O}}_{C}^{\vir}]\,.
$$
Plugging \eqref{sp:eq:jsp-vir-class} into the left-hand side of this equality, we get
\begin{align*}
  &p_A^*\Big(a^*[\widehat{\mathcal{O}}_{M}^{\vir}] \otimes \det(\Omega_a)^{\frac{1}{2}}\Big)\otimes \det(\Omega_{p_A})^{\frac{1}{2}}  \\
  \cong \,&(p\circ c)^*[\widehat{\mathcal{O}}_{A}^{\vir}] \otimes \det(\Omega_{p\circ c})^{\frac{1}{2}}\\
 \cong \,& c^*\Big(p^*[\widehat{\mathcal{O}}_{M}^{\vir}] \otimes \det(\Omega_p)^{\frac{1}{2}}\Big)\otimes \det(\Omega_{c})^{\frac{1}{2}}\,.
\end{align*}
Comparing with the right-hand side, which is equal to $c^* \big[\widehat{\cO}^{\vir}_{N}\big]\otimes \det(\Omega_{c})^{\frac{1}{2}}$ by \eqref{sp:eq:jsp-vir-class}, we conclude the claim of the proposition.
\end{proof}

\section{Background}
\label{sec:background}
\subsection{CY4 Category $\cat{A}$}
\label{sec:CY4-category}
\subsubsection{}
The precise notion of an abelian Calabi--Yau four category $\cat{A}$ which gives rise to CY4 obstruction theories is explained in \cite[Definition 3.19]{Bo25} summarizing \cite{BD1, BDII}. Briefly, it is a heart of a homotopy category of a right CY4 dg-category $\cat{D}_{\operatorname{lper}}$ which consists of locally perfect objects in a pre-triangulated left CY4 dg-category $\cat{D}$ in the sense of \cite[§2.3]{BD1}. The condition that $\cat{A}\subset H^0(\cat{D}_{\operatorname{lper}})$ and $\cat{D}_{\operatorname{lper}}$ is right CY4 is exactly what implies Serre duality
$$
\RHom_{\cat{A}}(E,F)\simeq\RHom_{\cat{A}}(F,E)^\vee[-4]
$$
for any  $E,F \in \cat{A}$. 

 Let  $\fM_{\cat{A}}$ and $\fM_{\cat{D}}$ be the moduli stacks of $\cat{A}$ and $\cat{D}$, respectively.\footnote{These were defined as higher stacks or derived stacks in \cite{TV}.} We will always additionally assume that that the natural embedding
$$
\begin{tikzcd}
	i_{\cat{A}}: \fM_{\cat{A}}\arrow[r,hookrightarrow]& \fM_{\cat{D}}
\end{tikzcd}
$$
is open. Then the left CY4 structure on $\cat{D}$ ensures by \cite[Main theorem]{BDII} that $\fM_{\cat{A}}$ can be enriched to a $-2$-shifted symplectic stack and thus carries a CY4 obstruction theory. 
\subsubsection{}
\label{sec:assab}
\begin{definition}
\label{def:categoryA}
Let $\cat{A}$ be a noetherian CY4 abelian category. Suppose that there is an action of a torus $\sT$ on $\cat{A}$ compatible with the CY4 structure. By definition, there is an $\sT$-equivariant CY4 obstruction theory $\bE$ on $\fM_{\cat{A}}$
Choose a quotient $K^0_e(\cat{A})\twoheadrightarrow  \bar{K}(\cat{A})$ such that the Euler pairing on $\cat{A}$ induces a morphisms $\chi: \bar{K}(\cat{A})\times \bar{K}(\cat{A})\to \bZ$. The image of $C_e(\cat{A})$ in $ \bar{K}(\cat{A})$ will be denoted by $ \bar{C}(\cat{A})$. The next few points collect some notation for structures of $\fM_{\cat{A}}$.
\begin{enumerate}[label =(\alph*)]
    \item There are maps
\begin{equation}
\label{eq:murho}
\begin{tikzcd}
\Phi:\fM_{\cat{A}}\times \fM_{\cat{A}}\arrow[r] &\fM_{\cat{A}}\end{tikzcd}\,,\qquad\begin{tikzcd} \Psi: B\bG_m\times\fM_{\cat{A}}\arrow[r]& \fM_{\cat{A}}\end{tikzcd}\end{equation}
    corresponding respectively to taking direct sums and rescaling automorphisms of objects. They are restrictions of the corresponding maps on $\fM_{\cat{D}}$. The first map is $\sT$-equivariant with respect to the diagonal action on the source. The action $\Psi$ commutes with the action of $\sT$.
    \item  We will denote the \textit{rigidification} (see \cite[App. A]{AOV} or \cite{Romagny}) of $\fM_{\cat{A}}$ with respect to the action $\Psi$ by
    $$
   \fM^{\pl}_{\cat{A}} := \fM_{\cat{A}}\fatslash B\bG_m\,.
    $$
    It admits the induced $\sT$-action and the natural $\sT$-equivariant $B\bG_m$-torsor $\Pi:\fM_{\cat{A}}\to \fM^{\rig}_{\cat{A}}$.
   \item Consider the diagonal action of $\sT$ on $\fM_{\cat{A}}\times \fM_{\cat{A}}$ and the $\sT$-equivariant universal objects $\cE_{\cat{A}}, \cF_{\cat{A}}$ of the first and second copy, respectively. Set 
   $$
\mExt_{\cat{A}}=\RHom_{\fM_{\cat{A}}\times \fM_{\cat{A}}}(\cE_{\cat{A}},\cF_{\cat{A}})
   $$
   defined for any $\cat{A}$ as explained, for example, in \cite[Example 3.21]{Bo25}. Vertex algebras are then constructed using the $\sT$-equivariant complex
  \begin{equation} 
  \label{eq:ThetaofA}
\Theta_{\cat{A}} := \mExt_{\cat{A}}^\vee[-1]\,.
   \end{equation}
    \item Fix a set $\scE(\cat{A})\subset  \bar{K}(\cat{A})$ of \textit{emergent classes}\footnote{There are called \textit{permissible classes} in \cite[Assumption 1.13(c)]{Joyce2021} and \cite[\S 5.2.1]{KLT25}.} that will satisfy assumptions specified in Section~\ref{ass:sec:invariants-assumptions}. For each $\alpha\in \scE(\cat{A})$, we will denote by $\fM_{\alpha}$ the corresponding open and closed substack of $\fM_{\cat{A}}$ consisting of objects of class $\alpha$. Given a perfect complex or a K-theory class on $\fM_{\cat{A}}$, we will denote its restriction to $\fM_{\alpha}$ by appending a subscript $(-)_{\alpha}$. More generally, this will apply to products of $\fM_{\cat{A}}$ where a restriction for each of the factors will be denoted by an additional subscript. 
 \end{enumerate}
\end{definition}
\subsubsection{}

The obstruction theory on $\fM_{\cat{A}}$ is
\begin{equation}
\label{eq:EE=Delta-Theta}
\bE = \Delta^*\Theta_{\cat{A}}\,.
\end{equation}
It induces the \textit{rigidified CY4 obstruction theory} $\bE^{\rig}$ on $\fM^{\rig}_{\cat{A}}$ by \cite[(127)]{BKP}\footnote{Compare this to \cite[Lemma 2.5.5]{KLT25} in the CY$3$ setting.} which describes a PVP diagram \eqref{eq:pvp-diagram} along $\Pi:\fM_{\cat{A}}\to \fM^{\rig}_{\cat{A}}$ with the upper two rows given by 
\begin{equation}
\label{eq:EE-rigidified}
 \begin{tikzcd}[column sep=small]
   \bF_{\Pi}^\vee[2] \ar{d} \ar{r} & \bE \ar{d} \ar{r} & \cO_{\fM_{\cat{A}}}[-1] \ar[equals]{d} \ar{r}{+1} & {}  \\
  \bE^{\rig} \ar{r} & \bF_{\Pi}\arrow[r] & \cO_{\fM_{\cat{A}}}[-1]\ar{r}{+1} & {}   
  \end{tikzcd}\,.
\end{equation}
\subsubsection{}
The CY$4$ pullback virtual class constructions in Section~\ref{sec:sym-pullback}, used in the proof of the wall-crossing formula, \`a priori depend on whether we pull back the obstruction theory on $\fM_{\cat{A}}$ or its rigidified obstruction theory on $\fM_{\cat{A}}^\pl$. The following result however tells us that the constructions using $\bE^{\rig}$ and $\bE$ coincide.\footnote{A more general statement appeared in \cite[Lemma A.19]{Bo25}.}
\begin{lemma}
\label{lem:Mrig-and-M-no-difference}
    Suppose that we are in the situation of Theorem \ref{sp:thm:symm-pullback-localization} where  $\fM = \fM_{\cat{A}}$ has the above obstruction theory. Then the classes $[\widehat{\mathcal{O}}_{M}^{\vir}]$ and $ [\tilde{\mathcal{O}}_{M^{\bG_m}}^{\vir}]$ constructed using Theorem \ref{sp:thm:symm-pullback-localization} (a) are equal to the ones we would obtain by applying the same construction to 
    $$
    \Pi\circ \pi: M\to \fM^{\rig}
    $$
    and the CY4 obstruction theory $\bE^{\rig}$.
\end{lemma}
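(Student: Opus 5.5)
The plan is to reduce both constructions to a comparison on a single Jouanolou device, and then use the functoriality of PVP diagrams. Fix a $\bT$-equivariant Jouanolou device $a\colon A\to M$. By Theorem~\ref{sp:thm:symm-pullback-localization}(a) both classes are independent of this choice, so the same $A$ may be used for the two maps. On the $\fM_{\cat{A}}$ side, Lemma~\ref{sp:lemma:affine-symm-pb} applied to $\pi_a=\pi\circ a$ gives $\widehat{\bE}_A$. On the rigidified side, it applied to $\Pi\circ\pi_a$ gives $\widehat{\bE}^{\rig}_A$. By \eqref{sp:eq:jsp-vir-class}, both $[\widehat{\cO}_M^{\vir}]$ classes are $(a^*)^{-1}(\,\cdot\,\otimes\det(\Omega_a)^{-1/2})$ applied to the Oh--Thomas class on $A$. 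So it suffices to show that $\widehat{\bE}_A\cong\widehat{\bE}^{\rig}_A$ as oriented CY4 obstruction theories, and then to invoke \cite[Theorem B.3]{Park21}.

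For this identification, view $\bE$ itself as the CY4 pullback of $\bE^{\rig}$ along the smooth morphism $\Pi$, via the PVP diagram \eqref{eq:EE-rigidified}. Here $\bE_\Pi=\cO[-1]$ is quasi-isomorphic to $\bL_\Pi$, since $\Pi$ is a $B\bG_m$-torsor. The factorization $A\to M\to\fM_{\cat{A}}\xrightarrow{\Pi}\fM^{\rig}_{\cat{A}}$ consists of smooth morphisms. Lemma~\ref{lem:smoothcompositionPVPstart} therefore gives an essentially unique starting diagram \eqref{eq:startingpointPVPfunctoriality} for the composition $\Pi\circ\pi_a=\Pi\circ\pi\circ a$, with $\bE$ in the middle. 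Functoriality \cite[Theorem A.18.ii)]{Bo25}, as recalled in \S\ref{sec:pvpinfty}, composes the PVP diagram \eqref{eq:EE-rigidified} with the one along $\pi_a$. By the uniqueness part of Lemma~\ref{sp:lemma:affine-symm-pb}, applicable since $A$ is affine, the result agrees with the diagram produced directly along $\Pi\circ\pi_a$. This gives $\widehat{\bE}_A\cong\widehat{\bE}^{\rig}_A$.

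The orientations agree by Lemma~\ref{lem:functorialityorientations}, provided the orientation on $\bE$ is the one induced from that on $\bE^{\rig}$ through \eqref{sp:eq:pvp-orientation}. I expect this to be part of how $o(\bE^{\rig})$ is fixed in \cite{BKP}. If instead $o(\bE^{\rig})$ is defined from $o(\bE)$, the same relation holds tautologically. This completes the $[\widehat{\cO}_M^{\vir}]$ statement.

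For $[\tilde{\cO}_{M^{\bG_m}}^{\vir}]$, I run the same comparison after restricting to $A^{\bG_m}$ and taking fixed and moving parts, following \S\ref{sp:sec:localization-class}--\S\ref{sp:sec:weight-splitting-orientation}. The two-step constructions \eqref{eq:tildeEEMdiagram}--\eqref{sp:eq:two-step-localization-pvp} for the two stacks differ only by the extra term $\cO[-1]$ coming from $\Omega_\Pi$. This term has $\bG_m$-weight zero, because $\fM$ carries trivial $\bG_m$-action. So the moving parts $\bar{\bN}_\iota$ coincide, the normal bundles $N_\alpha$ are the same, and the weight-splitting orientations $o_{\mathrm{w}}$ agree. \textbf{The main obstacle is this sign bookkeeping}: one must check that passing the rank-zero (in K-theory, self-dual pair $\cO[-1]\oplus\cO[1]$) fixed piece through the determinant isomorphisms $\epsilon$ introduces no sign relative to the moving parts. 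I would handle it with a diagram comparison as in \S\ref{sp:sec:compatibility-normal-orientation-comparison}, where every commutation sign involves an exponent multiplied by the rank of $\cO[-1]\oplus\cO[1]$ fixed pieces, which is even, so the sign is $+1$. Alternatively, I would apply Proposition~\ref{prop:comparison} with $\psi=\id$ and $\varepsilon_Z=1$ to both constructions and compare.
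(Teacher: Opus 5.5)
Your argument for $[\widehat{\cO}_M^{\vir}]$ is the paper's proof. It uses one Jouanolou device, Lemma~\ref{lem:smoothcompositionPVPstart} for $A\to\fM_{\cat{A}}\xrightarrow{\Pi}\fM^{\rig}_{\cat{A}}$, functoriality of PVP diagrams (\S\ref{sec:pvpinfty}) with the uniqueness in Lemma~\ref{sp:lemma:affine-symm-pb}, and Lemma~\ref{lem:functorialityorientations} for the orientations.

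For $[\tilde{\cO}_{M^{\bGm}}^{\vir}]$, the ``main obstacle'' you name does not exist; the paper just observes that the identification on $A$ already implies the claim.

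\begin{itemize}
\item The orientation $o_{\mathrm{w}}$ of \S\ref{sp:sec:weight-splitting-orientation} is defined intrinsically, via \eqref{eq:oNgeq}, from the $\bGm$-weight splitting of $\bN_{\iota_A}=(\widehat{\bE}_A|^m_{A^{\bGm}})^\vee$.
\item The equivariant functoriality gives a $\bT$-equivariant isomorphism $\widehat{\bE}_A\cong\widehat{\bE}^{\rig}_A$ of oriented symmetric obstruction theories. This identifies the fixed parts together with their orientations \eqref{eq:EEonMGGmtrivialization}, hence the classes $[\tilde{\cO}^{\vir}_{A^{\bGm}}]$.
\item The remaining ingredients $N_\alpha$, $b$ and $\det\Omega_{a^{\bGm}}$ depend only on $a\colon A\to M$.
\end{itemize}

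The $\cO[-1]$ discrepancy between $\bE$ and $\bE^{\rig}$ is already absorbed in the isomorphism on $A$. So there are no extra fixed pieces to push through determinant isomorphisms, and your parity heuristic is unnecessary; as stated, it is also not an argument.

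The fallback via Proposition~\ref{prop:comparison} does not fit in general either. That proposition compares $\widehat{\cO}_Z$ with $\tilde{\cO}_Z$, and it needs a smooth $\nu\colon Z\to\fN$ satisfying \eqref{sp:comp-ass:cotangent-isom}, which a general $M^{\bGm}$ need not admit.
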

\begin{proof}
    Consider the diagram 
    $$
     \begin{tikzcd}
     A\arrow[r,"\pi_a"] \arrow[rr,bend left = 50, "\Pi\circ \pi_a"]&\fM_{\cat{A}} \arrow[r, "\Pi"]&\fM^{\rig}_{\cat{A}}
 \end{tikzcd}\,,
    $$
where $a:A\to M$ is the Jouanolou device from \eqref{sec:OvirJouanoloudef}. Applying Lemma \ref{sp:lemma:affine-symm-pb} to $\pi_a$ and $\Pi\circ\pi_a$ we obtain the corresponding PVP diagrams with two different obstruction theories $\wh{\bE}_A$ and $\wh{\bE}^{\rig}_A$ on $A$, respectively. Because $\pi_a$ and $\Pi$ are smooth, we can apply Lemma \ref{lem:smoothcompositionPVPstart} to obtain \eqref{eq:startingpointPVPfunctoriality}. Therefore, the functoriality of PVP diagrams from \cite[Theorem A.18.ii)]{Bo25} can be used to identify $\wh{\bE}_A = \wh{\bE}^{\rig}_A$ including their orientations as in Lemma \ref{lem:functorialityorientations}. By the constructions in §\ref{sec:OvirJouanoloudef} and §\ref{sp:sec:weight-splitting-orientation}, this already implies the statement.
\end{proof}
\subsubsection{}
For future uses, we will also note down the definition of a group 2-cocycles. They are needed for the construction of vertex algebras and appear from comparing orientations on different components of $\fM_{\cat{A}}$.

\begin{definition}
\label{def:group-2-cocycle}
    We will say that a map 
    $$
    \varepsilon\colon K^0(\cat{A})\times K^0(\cat{A})\to \{-1,+1\}
    $$
    is a \textit{group 2-cocycle }if it satisfies 
        \begin{align*}
\label{eq:epsidentity}
        \varepsilon_{\alpha,\beta}& =(-1)^{\chi(\alpha,\beta) + \chi(\al,\al)\chi(\beta,\beta)}\varepsilon_{\beta,\alpha}\\
        \varepsilon_{\alpha,\beta}\varepsilon_{\alpha+\beta,\gamma}&=\varepsilon_{\beta,\gamma}\varepsilon_{\alpha,\beta+\gamma}  \,,\qquad  \varepsilon_{\alpha,0} = \varepsilon_{0,\alpha}=1
    \numberthis
\end{align*}
for all $\al,\beta,\gamma\in K^0(\cat{A})$.
\end{definition}

\subsection{Auxiliary Framed Stacks}
\label{sec:auxiliary-stacks}

\subsubsection{}

\begin{definition}\label{bg:def:framing-functor}
  A {\it framing functor} for the abelian category $\cat{A}$ is a
  $\bC$-linear $\sT$-equivariant exact functor
  \[ \Fr\colon \cat{A}^{\Fr} \to \cat{Rep}_\bC(\sT) \]
  on a full exact $\sT$-invariant subcategory $\cat{A}^{\Fr} \subset
  \cat{A}$, closed under isomorphisms in $\cat{A}$ and direct summands
  in $\cat{A}$ (i.e. if $E, F \in \cat{A}$ with $E \oplus F \in
  \cat{A}^{\Fr}$, then $E, F \in \cat{A}^{\Fr}$), such that:
  \begin{enumerate}[label=(\alph*)]
  \item the moduli substack $\fM^{\Fr}_\alpha \subset \fM_\alpha$, of
    objects in $\cat{A}^{\Fr}$ of class $\alpha$, is open, and $\Fr$
    induces morphisms of moduli stacks
    \begin{align*}
      \fM^{\Fr}_\alpha &\to \bigsqcup_{d \ge 0} [\pt/\GL(d)] \\
      \fM^{\Fr,\pl}_\alpha &\to \bigsqcup_{d \ge 0} [\pt/\PGL(d)];
    \end{align*}
  \item $\Hom(E, E) \to \Hom(\Fr(E), \Fr(E))$ is injective for all $E
    \in \cat{A}^{\Fr}$, so $\Fr(E) \neq 0$ for $E \neq 0$;
  \item The dimension of the underlying vector space $\fr(E) \coloneqq \dim \Fr(E)$ depends only on the class
    $\alpha$ of $E \in \cat{A}^{\Fr}$.
  \end{enumerate}
  We usually write $\fr(\alpha)$ instead of $\fr(E)$.
\end{definition}
\subsubsection{}

\begin{definition} \label{def:auxiliary-stack}
  Let $Q$ be an acyclic quiver with edges $Q_1$ and vertices $Q_0 =
  Q_0^f \sqcup Q_0^o$ split into {\it ordinary} vertices
  $\blackbullet\in Q_0^o$ and {\it framing} vertices $\blacksquare \in
  Q_0^f$, such that ordinary vertices have no outgoing arrows. Given
  an abelian category $\cat{A}$ and a tuple
  \[ \vec\Fr \coloneqq (\Fr_v)_{v \in Q_0^o} \]
  of framing functors for $\cat{A}$, let
  ${\cat{A}}^{Q(\vec\Fr)}$ be the exact category of triples $(E,
  \vec V, \vec\rho)$ where:
  \begin{itemize}
  \item $E \in \cat{A}^{\vec\Fr} \coloneqq \bigcap_{v \in Q_0^o}
    \cat{A}^{\Fr_v}$;
  \item $\vec V = (V_v)_{v\in Q_0^f}$ are finite-dimensional vector
    spaces; set $V_v\coloneqq \Fr_v(E)$ for $v\in Q_0^o$;
  \item $\vec \rho = (\rho_e)_{e\in Q_1}$ are morphisms $\rho_e\colon
    V_{t(e)}\to V_{h(e)}$, where $t$ and $h$ denote tail and head.
  \end{itemize}
  A morphism $f\colon (E, \vec V, \vec\rho) \to (E', \vec V',
  \vec\rho')$ in $\cat{A}^{Q(\vec\Fr)}$ is given by a morphism $E \to
  E'$ in $\cat{A}$, which induces linear maps $f_v\colon \Fr_v(E) \to
  \Fr_v(E')$ for $v \in Q_0^o$, along with linear maps $f_v\colon V_v
  \to V'_v$ for $v \in Q_0^f$, such that $f_{h(e)} \circ \rho_e =
\rho'_e \circ f_{t(e)}$ for all $e \in Q_1$. The group $\bC^\times$
  of scaling automorphisms acts diagonally on $(V_v)_{v \in Q_0}$.
\end{definition}
\subsubsection{}
\begin{example}
\label{ex:CY4JScategories}
A particular construction of a framing functor was discussed in \cite[Definition 5.3]{Bo25}. One starts with a triangulated CY4 category $\wt{\cat{B}}^{\vec\Fr}$ (see \cite[Definition 3.19.i)]{Bo25}), which contains \textit{spherical objects} $(1,0)_{v}$ for all $v\in Q^o_0$ and $\cat{A}^{\vec\Fr}$ as a full subcategory. The objects $(1,0)_{v}$ must additionally satisfy  
\begin{itemize}
    \item $\Ext^{i}_{\wt{\cat{B}}^{\vec\Fr}}\big((1,0)_v, E\big)=0$ for all $i\neq 1$ and $E\in \cat{A}^{\vec\Fr}$ of class $\al\in \scE(\cat{A})$; and
    \item $\Fr_v(E):=\Ext^{i}_{\wt{\cat{B}}^{\vec\Fr}}\big((1,0)_v, E\big)$ satisfies the condition of Definition \ref{bg:def:framing-functor} (b) and (c) for every  $E\in \cat{A}^{\vec\Fr}$ of class $\al\in \scE(\cat{A})$.
\end{itemize}

One then considers the full exact sub-categories 
$$
\cat{B}^{\Fr_v} = \big\langle (1,0)_v, \cat{A}^{\vec\Fr}\big\rangle_{\operatorname{ex}}
$$
generated by $(1,0)_v$ and $\cat{A}^{\vec\Fr}$. One additionally assumes that each object $P\in \cat{B}^{\Fr_v}$ fits into an essentially unique exact triple 
$$
 \begin{tikzcd}
          E\arrow[r]&P\arrow[r]&(1,0)_v\otimes V
      \end{tikzcd}
$$
and the morphisms $\Hom_{\cat{B}^{\Fr_v}}(P_1,P_2)$ are equivalent to morphisms of their triples.
\end{example}
\subsubsection{}\label{bg:sec:aux-stacks}
A triple $(E, \vec V, \vec\rho)$ has class $(\alpha, \vec d)$ where
$\alpha$ is the class of $E$ and $\vec d \coloneqq (\dim V_v)_{v \in
  Q_0^f}$. Let
\[ \fM^{Q(\vec\Fr)} = \bigsqcup_{\alpha,\vec d} \fM^{Q(\vec\Fr)}_{\alpha,\vec d} \]
be the moduli stack of objects in $\cat{A}^{Q(\vec\Fr)}$. Clearly the
$\sT$-action lifts from $\fM$ to $\fM^{Q(\vec\Fr)}$. Let
$(\cV_v)_{v \in Q_0}$ be the universal bundles for $(V_v)_{v \in
  Q_0}$, and in particular let $(\cFr_v(\cE))_{v \in Q_0^o}$ denote
the universal bundles for $(\Fr_v(E))_{v \in Q_0^o}$. Let
\[ \pi_{\fM^{\vec\Fr}_\alpha}\colon \fM^{Q(\vec\Fr)}_{\alpha,\vec d} \to \fM^{\vec\Fr}_\alpha \]
be the forgetful map to the moduli stack $\fM^{\vec\Fr}_\alpha$ of
objects in $\cat{A}^{\vec\Fr}$ with class $\alpha$ and $\pi_{\fM^{\vec\Fr,\rig}_\alpha}$ its rigidification. Then we have
\[ \fM^{Q(\vec\Fr)}_{\alpha,\vec d} = \tot\bigg(\bigoplus_{e \in Q_1} \cV_{t(e)}^\vee \otimes \cV_{h(e)}\bigg) \xrightarrow{f_Q} \fM^{\vec\Fr}_{\alpha} \times \prod_{v \in Q_0^f} [\pt/\GL(d_v)], \]
so $\pi_{\fM^{\vec\Fr}_\alpha}$ is smooth. Define the bilinear element\footnote{They were denoted by $\Theta^\vee_{\pi_{\fM^{\vec\Fr}_\alpha}}$ in \cite[Corollary 5.8]{Bo25}}.
\begin{equation} \label{eq:framed-stack-forgetful-map-cotangent}
  \bF^{Q(\vec\Fr)} \coloneqq \sum_{e \in Q_1} \cV^\vee_{t(e)} \boxtimes \cV_{h(e)} - \sum_{v \in Q_0^f} \cV^\vee_v \boxtimes \cV_v \in K_\sT^\circ\left(\fM^{Q(\vec\Fr)} \times \fM^{Q(\vec\Fr)}\right).
\end{equation}
Write $p_1$ and $p_2$ for the projections of $\fM^{\vec\Fr}_{\alpha} \times \prod_{v \in Q_0^f} [\pt/\GL(d_v)]$ to $\fM^{\vec\Fr}_{\alpha}$ and $\prod_{v \in Q_0^f} [\pt/\GL(d_v)]$ respectively. By writing the cotangent complex triangle for the composition $\pi_{\fM^{\vec\Fr}_\alpha}=p\circ f_Q$, and using $T_{f_Q}\cong f_Q^*\big(\bigoplus_{e \in Q_1} \cV_{t(e)}^\vee \otimes \cV_{h(e)}\big)$ and $f_Q^*T_{p_1}\cong \bigoplus_v \End(\cV_v)[1]$ using \cite[Ex. 9.10]{HalpernLeistner2020ModernModuli}, we see that the restriction of $\bF^{Q(\vec\Fr)}$ to the diagonal is the relative
tangent complex of $\pi_{\fM^{\vec\Fr}_\alpha}$. When working with specific underlying classes $\gamma$, $\delta$, and dimension vectors $\vec f$, $\vec g$, we also write $\bF^{Q}_{\gamma\delta}(\vec f,\vec g)$ to record this information.
\subsubsection{}
\begin{example}
\label{ex:JSobstheoryframing}
   We return to Example \ref{ex:CY4JScategories}. Set $\bar{\fN}^{\vec\Fr}:=\fM_{\bar{\cat{B}}^{\vec\Fr}}$ and $\fN^{\Fr_v}:=\fM_{\cat{B}^{\Fr_v}}$ for the corresponding moduli stacks. As in \cite[Definition 5.3.a)]{Bo25}, we then require that the natural embedding 
   $$
\fN^{\Fr_v}\hookrightarrow \bar{\fN}^{\vec\Fr}
$$
is open. Since $\wt{\fM}^{Q^{\JS}(\Fr_v)}\cong \fN^{\Fr_v}$ and $\bar{\cat{B}}^{\vec\Fr}$ is CY4 in the sense of \cite[Definition 3.19.i)]{Bo25}, there is a CY4 obstruction theory on $ \bar{\fN}^{\vec\Fr}$ restricting to a CY4 obstruction theory on $\wt{\fM}^{Q^{\JS}(\Fr_v)}$. However, in this situation, we will usually replace $\wt{\fM}^{Q^{\JS}(\Fr_v)}$ by $\fN^{\Fr_v}$ using their equivalence, and we will work with its induced CY4 obstruction theory $\bF_{\fN^{\Fr_v}}$. It is determined by the diagram (see \cite[5.17]{Bo25})
\begin{equation}
\label{eq:FFJSobstructiontheory}
    \begin{tikzcd}
  \arrow[d]C(\delta)[-1]\arrow[r]&\arrow[d]\big(\wt{\bF}_{\fN^{\Fr_v}}\big)^\vee[2]\arrow[r]&\arrow[d]\bF_{\fN^{\Fr_v}}\\
  \arrow[d,"{\delta}"']\bL_{\pi_{\fM^{\Fr}}}[-1]\arrow[r,"\psi"]&\arrow[d,"{\psi^\vee[2]}"]\bE\arrow[r]&\arrow[d]\wt{\bF}_{\fN^{\Fr_v}}\\
  \cV\otimes \cV^*\otimes \bH\arrow[r,"{\delta^\vee[2]}"]&{\bL_{\pi_{\fM^{\Fr}}}^\vee[3]}\arrow[r]&{C\big({\delta}\big)^\vee[3]}
    \end{tikzcd}\,,
\end{equation}
   where each column and row is a distinguished triangle and $\bH = \Ext^{\bullet}\big((1,0)_v,(1,0)_v\big)$.
\end{example}
\subsubsection{}
\label{sec:puresheavesJSframing}
As in Example \ref{ex:sheavesandreps0}, choose $\cat{A}= \Coh(X)$ with $X$ satisfying \eqref{eq:H3ZZ_2vanishing}. Choose $\bar{K}(\cat{A})$ as explained there and $\scE(\cat{A})$ to be the set of $\alpha$ represented by pure sheaves of fixed rank. 

In this situation, it is common to construct $\Fr$ following \cite[Example 5.4.i)]{Bo25} for a given ample divisor $D_v$ on $X$. The framing is naturally formulated in the situation of Example \ref{ex:CY4JScategories} after setting 
$$\wt{\cat{B}}^{\vec\Fr} = D^b(X)\qquad\text{and}\qquad (1,0)_v = \cO_X(-D_v)[1]\,.$$
 The category $\cat{A}^{\Fr_v}$ will consist of all sheaves $E$ in $\Coh(X)$ which satisfy the condition 
$$
H^i\big(E(D_v)\big) = 0\qquad\text{for } i\neq 0\,.
$$
So the framing functor is given by 
$$
\Fr_v(E) = H^0\big(E(D_v)\big)\,.
$$
By Lemma \cite[Lemma~6.1]{Bo25}, this data satisfies the necessary conditions from Example \ref{ex:CY4JScategories} and \ref{ex:JSobstheoryframing}. In particular, the invariants from Theorem \ref{thm:sst-invariants} can be defined using the obstruction theory \eqref{eq:FFJSobstructiontheory} without relying on the Jouanolou devices in Theorem \ref{sp:thm:symm-pullback-localization}. 
\subsubsection{}
Consider now our auxiliary framed stacks from Section~\ref{sec:auxiliary-stacks}. Under the conditions below, these have canonical "de-rigidification" morphisms and hence satisfy the conditions in Lemma~\ref{lem:k-homology-pl}.

\begin{definition}[``De-rigidification''] \label{def:de-rigidification}
  Suppose $\vec d$ is a dimension vector with $d_i = 1$ for some $i$.
  Then the rigidification map
  \[ \Pi_{\alpha,\vec d}^\pl\colon \fM_{\alpha,\vec d}^{Q(\vec\Fr)} \to \fM_{\alpha,\vec d}^{Q(\vec\Fr),\pl} \]
  is a {\it trivial} $\bC^\times$-gerbe because the group of scaling
  automorphisms may be identified with $\Aut(V_i) = \bC^\times$. In
  other words, rigidification has a non-canonical description as the
  choice of an isomorphism $\bC \xrightarrow{\sim} V_i$, and it has a
  section
  \[ I_{\alpha,\vec d}\colon \fM^{Q(\vec\Fr),\pl}_{\alpha,\vec d} \to \fM^{Q(\vec\Fr)}_{\alpha,\vec d} \]
  given by forgetting this isomorphism. We often write omit the
  subscript on $I_{\alpha,\vec d}$ and just write $I$ when there is no
  ambiguity.
\end{definition}

\subsection{Homology Theories and Lie Algebra Structure}
\label{sec:homology-theories-LA}
\subsubsection{}
The wall-crossing formula central to this paper is proved using master space localization and the technical tools developed in Section~\ref{sec:sym-pullback}. These techniques are robust and hence, the wall-crossing formula is fundamentally agnostic to the homology theory used, as long as certain operations are available. In the following sections, we outline two theories, proposed for this purpose by the third-named and first-named author, respectively. Before we do so, for the reader interested primarily in the wall-crossing formula for invariants, we give an overview of the operations used in the formulation and the proof of wall-crossing. This is by no means a complete list of axioms, but should allow the reader to understand the following sections on semistable invariants and wall-crossing without diving into the technical details of homology theories.

\subsubsection{}
\label{sec:equivariant-hom-notation}
We work in a homology (or K-homology) theory for algebraic stacks, which, in this section and the wall-crossing proofs, we denote by $H(-)$ for simplicity. Our proofs work in both $H(-)$ and an appropriate equivariant localized version, which we denote by $H^\sT(-)_\loc$. For generality, we express the wall-crossing formulae and prove them using $H^\sT(-)_\loc$. The resulting Lie algebras, recalled in \S\ref{sec:LA-quotient} below, will be labeled by $L(-)_\loc$. 

In the sections below, we present explicit candidates: see $\bfK_\circ^\sT(-)_\loc$ of Definition~\ref{bg:def:k-homology-concrete-theory}, $H_*^\sT(-)_\loc$ of Section~\ref{sec:Khan-bivariant}, and $K_0^\sT(-)_\loc$ of Section~\ref{sec:homology-to-khomology}. Note that the latter two theories are originally defined before localizing. 

\subsubsection{}\label{bg:sec:universal-invariants}
The first thing we need in this homology theory is a way to define invariants. Specifically, let $M$ be a quasi-compact and separated algebraic space with a $\sT$-action, such that $M^\sT$ is a proper algebraic space with the resolution property. If $M$ is equipped with a virtual class $[\hat\cO_M^\vir]$, we want to be able to use this to define a {\it universal enumerative invariant}
\begin{equation*}
    \sZ_M \in H^\sT(M)_\loc,
\end{equation*}
which describes integration over the virtual class. This is realized in Definition~\ref{bg:def:univ-enum-inv} and Definition~\ref{def:bivariant-hom-invariants} for the respective theories.

Sometimes, we need to consider $\bT:=\bG_m\times \sT$-equivariant theories for a fixed $\sT$. In this case, we will write $H^{\bT}(-)$ for the generic $\bT$-equivariant homology theory.

\subsubsection{}
In our case, the algebraic spaces $M$ above will be semistable loci with no strictly semistable objects. Thus there will exist natural open embeddings $j:M\hookrightarrow \fM^\pl_{\cat{A}}$. To compare invariants of different spaces $M$, we need to be able to push them forward to a common homology group. For this, we require \emph{arbitrary} pushforwards in our homology theory $H^\sT(-)_\loc$, so that we can define
\begin{equation}
\label{eq:universal-inv-general}
  j_*\sZ_M\in H^\sT(\fM^\pl)_\loc.
\end{equation} 

\subsubsection{}
\label{sec:LA-quotient}
The wall-crossing formulae will live in Lie algebras $L(\fM)_\loc$, and they will be expressed in terms of the corresponding Lie brackets. We give a rough overview of these Lie algebras. For the abelian categories we consider, the homology $H^\sT(\fM)_\loc$ of their moduli stacks comes equipped with a vertex algebra-like structure consisting among other things of a translation operator $D(z):H^\sT(\fM)_\loc\to H^\sT(\fM)_\loc\pseries{1-z}$, and a state-field correspondence $Y(-,z)(-)$. These structures are made explicit for the homology theories concretely considered in this paper in Theorem~\ref{thm:mVOA-monoidal-stack} and Section~\ref{bg:sec:va-and-t-deformations}, respectively.

A specific quotient of $H^\sT(\fM)_\loc$ described in Definition \ref{def:pl-groups} and §\ref{sec:Lie-algebras}, respectively, is the associated Lie algebra $L(\fM)_\loc$, which we primarily work in. Note that the latter construction first defines a non-localized Lie algebra and then localizes in §\ref{sec:localized-LA}. In order to compare the semistable invariants defined in $L(\fM)_\loc$ in Section~\ref{sec:sst-inv} with the invariants $j_*Z_M$ already defined in $H^\sT(\fM^\pl)_\loc$, we need a morphism
\begin{equation*}
  L(\fM)_\loc\to H^\sT(\fM^\pl)_\loc.
\end{equation*}
In operational K-homology, this map is defined in Lemma~\ref{lem:k-homology-pl}. For $H^{\sT}_*(-)$ and $K^{\sT}_0(-)$ this map is an isomorphism as recalled in Proposition~\ref{prop:quotient-by-T}, so the lifts along it are canonical. We will abuse notation and denote them by $Z_M\in L(\fM)_\loc$.

\subsubsection{}
\label{sec:LA-formula+residue}
We describe the Lie algebra here for the two stacks, which we consider in the proof of the wall-crossing formula. The first is the moduli stack $\fM_{\cat{A}}=\sqcup_\alpha \fM_\alpha$ of our underlying CY$4$ category, where $\alpha$ ranges over the emergent classes. Taking $\Theta_{\alpha\beta}$ to be the restrictions of $\Theta_{\cat{A}}$ from \eqref{eq:ThetaofA} to the components $\fM_\alpha\times\fM_\beta$, the Lie bracket is
\begin{equation*}
  [-,-] = \varepsilon_{\alpha\beta}\cdot\rho_z \left\{ \Phi_* (D(z)\times\id) \left(\frac{(-\boxtimes-)}{\fe_{\bT}(z\Theta_{\alpha\beta})}\right)\right\}
\end{equation*}
for entries in $L(\fM_\alpha)_\loc$ and $L(\fM_\beta)_\loc$ respectively. Here, $\varepsilon_{\alpha\beta}$ are signs satisfying \eqref{eq:epsidentity} and appearing from comparing orientations in Section~\ref{sec:epsilonorientations}, $\rho_z$ is a residue map in $z$ defined in \cite[Def. 3.1.6]{Liu2022} and \eqref{eq:LiefromVA}, respectively, $\Phi$ is the direct sum morphism from \eqref{eq:murho}, and we write $\frac{(-\boxtimes-)}{\fe_{\bT}(z\Theta_{\alpha\beta})}$ for $(-\boxtimes-)\cap\frac{1}{\fe_{\bT}(z\Theta_{\alpha\beta})}$, where $\boxtimes$ and $\cap$ are the equivariant Künneth and cap product operations expected from our homology theory, and $\fe_{\bT}$ is the appropriate $\bT$-equivariant Euler class. In this case $z$ is the weight of an additional torus $\bG_m$ acting trivially. For formulas in K-theory $\fe_{\bT}(-)$ becomes  $\hat{\fe}_{\bT}(-)$ defined in \S\ref{sec:localization-formula}. 

In case we take the Lie bracket of two universal enumerative invariants $\sZ_M$ and $\sZ_N$ as above, this expression should be roughly read as follows. It describes all virtual integrals over $M\times N$ (via $\boxtimes$), tensored by $\frac{1}{\fe_\bT(z\Theta_{\alpha\beta})}$ (via cap product), and by any class pulled back via the direct sum map $\Phi$ composed with $\Psi\times \id$ (via $\Phi_* (D(z)\times\id)$). At the end we take residues and multiply by signs.

\subsubsection{}
\begin{remark}
\begin{enumerate}[label=\roman*)]
    \item  In the case of the homology theories constructed in §\ref{sec:equivariant-homology-bivariant}, the above operations are additionally continuous with respect to the (graded) linear topology on $H^{\sT}_*(-)$ and $K^{\sT}_0(-)$. This is why we use completed tensor products $(-\wh{\otimes}-)$ there to denote this additional property. Still, there always exist natural maps $(-\otimes-)\to (-\wh{\otimes}-)$, so a reader not interested in the precise formulation may ignore this topological refinement. 
\item When $\sT = \{1\}$, $H^{\sT}_*(\fM_{\cat{A}})$ is precisely the homology used in \cite[§2.2]{GJT} due to the non-equivariant limit property in §\ref{sec:Khan-bivariant}. In particular, we get the vertex algebra which was used in \cite[§4.4]{GJT} to formulate the CY4 wall-crossing conjecture.
\end{enumerate}
\end{remark}
\subsubsection{}
\label{sec:flag-VA+LA}
The other situation considered during the proof of wall-crossing are the auxiliary moduli stacks $\fM^{\bar{Q}(r)}_{\alpha,\vec d}$, where $\alpha$ runs over all emergent classes, and $\vec d$ over all possible dimension vectors of the quiver $\bar{Q}(r)$ defined in \eqref{wc:eq:flag-quiver}. Following \cite[Definition 8.5]{Bo25}, one introduces the class
\begin{equation}
\label{eq:Theta-flag}
  \Theta_{(\beta,\vec e)}^{(\gamma,\vec f)}\coloneqq \pi^*\Theta_{\beta\gamma}+(12)^*\bF^{\bar{Q}(r)}_{\gamma\beta}(\vec f,\vec e)+\bF_{\beta\gamma}^{\bar{Q}(r)}(\vec e,\vec f)^\vee,
\end{equation}
where $\pi$ is the forgetful map and $\bF_{\beta\gamma}^{\bar{Q}(r)}(\vec e,\vec f)$ is the class from Section~\ref{bg:sec:aux-stacks}. Replacing the signs $\varepsilon_{\al,\be}$ by $  \varepsilon_{(\beta,\vec e)}^{(\gamma,\vec f)}$ from Definition \ref{def:framed-epsilons} the Lie bracket is given by
\begin{equation}\label{bg:eq:aux-lie-bracket}
  [-,-] = \varepsilon_{(\beta,\vec e)}^{(\gamma,\vec f)}\cdot \rho_z\left\{ \Phi_* \circ (D(z)\times\id) \frac{(-\boxtimes-)}{\fe_\bT\left(z\Theta_{(\beta,\vec e)}^{(\gamma,\vec f)}\right)}\right\}
\end{equation}
for entries in $L(\fM^{\bar{Q}(r)}_{\beta,\vec e})_\loc$ and $L(\fM^{\bar{Q}(r)}_{\gamma,\vec f})_\loc$, respectively. For K-theory, one should again use $\hat{\fe}_{\bT}(-)$ instead.

\subsection{Operational K-Homology}
\subsubsection{}

\begin{definition}[{\cite[\S 2.2]{Liu2022}}] \label{def:operational-k-homology}
  Recall that we write $K_\sT^\circ(\fX) \coloneqq
  K_0(D_{\cat{Perf},\sT}(\fX))$. Let $\bK^\sT_\circ(\fX)$ be the
  $\bk_{\sT}$-module which consists of all collections
  \[ \phi\coloneqq \{K^\circ_\sT(\fX \times S) \xrightarrow{\phi_S} K^\circ_\sT(S)\}_S \]
  of homomorphisms of $K^\circ_\sT(S)$-modules, for all $S \in
  \cat{Art}_\sT$ which we call the {\it base}, which obey the
  following axioms.
  \begin{enumerate}[label = (\alph*)]
  \item (Naturality) For any morphism $h\colon S \to S'$ in
    $\cat{Art}_\sT$, there is a commutative square
    \[ \begin{tikzcd}
        K^\circ_{\sT}(\fX \times S') \ar{r}{(\id \times h)^*} \ar{d}{\phi_{S'}} & K^\circ_{\sT}(\fX \times S) \ar{d}{\phi_S} \\
        K^\circ_{\sT}(S') \ar{r}{h^*} & K^\circ_{\sT}(S).
      \end{tikzcd} \]
  \item (Equivariant localization) There exists a proper algebraic
    space $\fF_\phi$ with the resolution property,
    equipped with the trivial $\sT$-action, and a map $\fix_\phi\colon
    \fF_\phi \to \fX$ in $\cat{Art}_\sT$, such that $\phi$ factors as
    \[ \begin{tikzcd}
        K_{\sT}^\circ(\fX \times S) \ar{rr}{\phi_S} \ar{dr}[swap]{(\fix_\phi \times \id)^*} && K_{\sT}^\circ(S) \\
        & K_{\sT}^\circ(\fF_\phi \times S) \ar{ur}[swap]{\phi_S^{\sT}}
      \end{tikzcd} \]
    for homomorphisms $\{\phi_S^{\sT}\}_S$ of
    $K_{\sT}^\circ(S)$-modules which themselves satisfy all other
    axioms, i.e. forming an element $\phi^\sT \in \bK_\circ^\sT(\fF)$.
  \item (Finiteness) for any proper algebraic space $S$ with the
    resolution property, equipped with the trivial $\sT$-action,
    \begin{equation} \label{eq:operational-k-homology-finiteness-condition}
      \phi_S^\sT\left(I^\circ(\fF \times S)^{\otimes N}\right) = 0, \qquad \forall N \gg 0,
    \end{equation}
    where $I^\circ(\fF_\phi \times S) \subset K^\circ(\fF_\phi \times
    S) \subset K^\circ_\sT(\fF_\phi \times S)$ is the {\it
      non-equivariant} augmentation ideal of rank-$0$ elements.
  \end{enumerate}
  The sum $\phi + \psi$ of two elements $\phi, \psi \in
  \bK_\circ^\sT(\fX)$ still satisfies the equivariant localization and
  finiteness axioms by setting $\fF_{\phi + \psi} \coloneqq \fF_\phi
  \sqcup \fF_\psi$.
  
  Similarly, define the {\it localized} groups
  $\bK_\circ^\sT(\fX)_{\loc}$ by replacing all groups
  $K_{\sT}^\circ(-)$ with the localized groups
  $K_{\sT}^\circ(-)_{\loc}$.
\end{definition}

\subsubsection{}
\label{sec:k-homology-shorthand}

To prevent notational clutter, and for clarity, we generally write
formulas involving elements $\phi$ in a K-homology group
$\bK_\circ^\sT(\fX)$ in terms of just the functional $\phi_S$ for $S =
\pt$. It will always be clear how to extend the formula to $\phi_S$ for
general $S$. For instance, in \eqref{eq:k-homology-pushforward} below,
the definition $(f_*\phi)(\cE) \coloneqq \phi(f^*\cE)$ means that
\[ f_*\phi \coloneqq \{(f_*\phi)_S\colon K^\circ_\sT(\fY \times S) \to K^\circ_\sT(S)\} \in \bK^\sT_\circ(\fY) \]
is defined by $(f_*\phi)_S(\cE) \coloneqq \phi_S((f \times \id)^*\cE)$
for every base $S$.

\subsubsection{}
\label{sec:k-homology-properties}

We list some properties of $\bK_\circ^\sT(-)$, mostly inherited from
$K_\sT^\circ(-)$.
\begin{itemize}
\item Tensor product on $K^\circ_\sT(\fX)$ induces a cap product
  \begin{equation} \label{eq:k-homology-cap}
    \cap\colon \bK^\sT_\circ(\fX) \otimes K^\circ_\sT(\fX) \to \bK^\sT_\circ(\fX), \qquad (\phi \cap \cF)(\cE) \coloneqq \phi(\cE \otimes \cF).
  \end{equation}
  This is well-defined since $I^\circ(\fF_\phi)$ is an ideal.

\item A $\sT$-equivariant morphism $f\colon \fX \to \fY$ induces a
  functorial pushforward
  \begin{equation} \label{eq:k-homology-pushforward}
    f_*\colon \bK^\sT_\circ(\fX) \to \bK^\sT_\circ(\fY), \qquad (f_*\phi)(\cE) \coloneqq \phi(f^*\cE)
  \end{equation}
  which satisfies the {\it push-pull} formula
  \[ f_*(\phi) \cap E = f_*(\phi \cap f^*E), \qquad \phi \in \bK_\circ^\sT(\fX), \; E \in K^\circ_\sT(\fY). \]

\item A proper and representable $\sT$-equivariant morphism $f\colon
  \fX \to \fY$ of finite Tor-amplitude induces a functorial pullback
  \begin{equation} \label{eq:k-homology-pullback}
    f^*\colon \bK^\sT_\circ(\fY) \to \bK^\sT_\circ(\fX), \qquad (f^*\phi)(\cE) \coloneqq \phi(f_*\cE),
  \end{equation}
  which respects cap product, i.e.
  \[ f^*\phi \cap f^*E = f^*(\phi \cap E), \qquad \phi \in \bK_\circ^\sT(\fY), \; E \in K^\circ_\sT(\fY). \]

\item There is an external tensor product
  \begin{equation} \label{eq:k-homology-boxtimes}
    \boxtimes\colon \bK_\circ^\sT(\fX) \otimes \bK_\circ^\sT(\fY) \to \bK_\circ^\sT(\fX \times \fY), \qquad (\phi \boxtimes \psi)_S \coloneqq \psi_S \circ \phi_{\fY \times S}.
  \end{equation}
\end{itemize}
All these operations are homomorphisms of $\bk_\sT$-modules.

\subsubsection{}

\begin{definition} \label{def:supported-on}
  An element $\phi \in \bK_\circ^\sT(\fX)_\loc$ is {\it supported on}
  a $\sT$-invariant substack $\iota\colon \fX' \hookrightarrow \fX$ if
  there exists $\phi' \in \bK_\circ^\sT(\fX')$ such that
  \[ \iota_* \phi' = \phi. \]
  As for usual homology classes or for sheaves, we often omit
  $\iota_*$ and simply write $\phi' = \phi$.
\end{definition}

\subsubsection{}

While Definition~\ref{def:operational-k-homology} provides a
reasonably minimal set of axioms that an operational K-homology theory
should satisfy, in practice it is convenient to choose an explicit
construction of operational K-homology groups $\bfK_\circ^\sT(-)$ that
satisfy these axioms. For this paper, we moreover require an
operational K-homology theory $\bfK_\circ^\sT(-)$ which is {\it
  commutative}, meaning that for all $\fX, \fX'$,
\[ \phi \boxtimes \psi = \sigma_*(\psi \boxtimes \phi), \quad \forall \phi \in \bfK_\circ^\sT(\fX), \, \psi \in \bfK_\circ^\sT(\fX'), \]
where $\sigma$ swaps the two factors. This commutativity is not a
consequence of the axioms in
Definition~\ref{def:operational-k-homology}. Below we provide a
construction of such a commutative operational theory. As explained in
\cite[\S 2.2]{KLT25}, this construction may be adapted in a
straightforward way to other cohomology theories, such as Chow, by
replacing $G_0^\sT(-)$ and $K^\circ_\sT(-)$ appropriately.

\subsubsection{}

\begin{definition}\label{bg:def:k-homology-concrete-theory}
  Consider triples $\phi = (Z_\phi, \fix_\phi, \cF_\phi)$ where:
  \begin{enumerate}[label = (\roman*)]
  \item $Z_\phi$ is a proper algebraic space with the resolution
    property;
  \item $\fix_\phi\colon Z_\phi \to \fX$ is a $\sT$-equivariant
    morphism for the trivial $\sT$-action on $Z_\phi$;
  \item $\cF_\phi \in G_0^\sT(Z_\phi)_{\loc}$ is a G-theory element.
  \end{enumerate}
  Given a base $S \in \cat{Art}_\sT$, a triple $\phi = (Z_\phi,
  \fix_\phi, \cF_\phi)$ defines homomorphisms
  \begin{align}
    \phi_S\colon K^\circ_\sT(\fX \times S)_{\loc} &\to K^\circ_\sT(S)_{\loc} \nonumber \\
    \cE &\mapsto (\pi_S)_*(\pi_Z^*\cF_\phi \otimes (\fix_\phi \times \id)^*(\cE)) \label{eq:actual-k-homology-elements}
  \end{align}
  where $\pi_Z$ and $\pi_S$ are the projections from $Z_\phi \times S$
  to the $Z_\phi$ and $S$ factors respectively. Using (i), since both
  $\pi_Z^*\cF_\phi$ and $(\fix_\phi \times \id)^*\cE$ are flat with
  respect to $\pi_S$, and $\pi_S$ is proper, the output of $\phi_S$
  indeed defines an element of $K^\circ_\sT(S)_{\loc}$. Some
  straightforward base change formulas show that $\{\phi_S\}_S$
  defines an element of $\bK_\circ^\sT(\fX)_{\loc}$; in particular,
  the finiteness axiom follows from \cite[Lemma 2.1.6]{KLT25}.

  Hence define the $\bk_{\sT,\loc}$-submodule $\bfK_\circ^\sT(-)_\loc
  \subset \bK_\circ^\sT(-)_\loc$ generated by all elements $\phi =
  \{\phi_S\}_S$ associated via \eqref{eq:actual-k-homology-elements}
  to any triple $(Z_\phi, \fix_\phi, \cF_\phi)$. It is straightforward
  to check that $\bfK_\circ^\sT(-)_\loc$ is closed under the cap
  product \eqref{eq:k-homology-cap}, the pullback
  \eqref{eq:k-homology-pullback}, and the external tensor product
  \eqref{eq:k-homology-boxtimes}, and is additionally a commutative
  K-homology theory \cite[Proposition 2.2.7]{KLT25}.
\end{definition}

\subsubsection{}

It is convenient to continue using the shorthand in
\S\ref{sec:k-homology-shorthand} for elements of
$\bfK_\circ^\sT(\fX)_{\loc}$. Namely, we view an element as an operator
\[ \phi = \chi\left(Z_\phi, \cF_\phi \otimes \fix_\phi^*(-)\right)\colon K_\sT^\circ(\fX)_{\loc} \to \bk_{\sT,\loc} \]
which behaves well with respect to base change. For instance, given
another element $\psi = \chi(Z_\psi, \cF_\psi \otimes \fix_\psi^*(-))
\in \bfK_\circ^\sT(\fY)_{\loc}$,
\begin{equation} \label{eq:universal-invariants-tensor-product}
  \phi \boxtimes \psi = \chi\left(Z_\phi \times Z_\psi, \left(\cF_\phi \boxtimes \cF_\psi\right) \otimes \left(\fix_\phi \times \fix_\psi\right)^*(-)\right).
\end{equation}
It is easy to see that $\bfK_\circ^\sT(\pt) = \bk_{\sT,\loc}$, the
trivial $\bk_{\sT,\loc}$-module with generator $\id = \chi(\pt, -)$.

\subsubsection{}

\begin{example} \label{ex:k-homology-BGm}
  Let $\fX = [\pt/\bC^\times]$. Then for every integer $k > 0$, there
  are natural projection morphisms
  \[ \fix_k\colon \bP^k = (\bC^{k+1} \setminus \{0\}) / \bC^\times \to [\pt/\bC^\times], \]
  which may be used to define elements
  \[ \xi^k \coloneqq (-1)^k \cdot \chi\left(\bP^k, \cO_{\bP^k}(-k) \otimes \fix_k^*(-)\right) \in \bfK_\circ^\sT([\pt/\bC^\times])_{\loc}. \]
  (The sign $(-1)^k$ is a convenient normalization.) Write
  $K^\circ([\pt/\bC^\times]) = \bZ[s^{\pm}]$. Then $\fix_k^*(s^n) =
  \cO_{\bP^k}(n)$, so
  \[ \xi^k(s^n) = (-1)^k \binom{n}{k}. \]
  Clearly these formulas can be made $\sT$-equivariant for the trivial
  $\sT$-action on $[\pt/\bC^\times]$. Thus
  \[ \bfK_\circ^\sT([\pt/\bC^\times])_{\loc} \supset \bk_{\sT,\loc}[\xi]. \]
\end{example}

\subsubsection{}
\label{sec:universal-invariants-shorthand}

\begin{definition}\label{bg:def:univ-enum-inv}
  Let $M$ be a quasi-compact and separated algebraic space with an
  $\sT$-action and a virtual class $[\hat\cO_M^\vir]\in
  K_{\sT}^\circ(M,\bZ[2^{-1}])$, constructed using either a CY$4$
  obstruction theory or the techniques in
  Section~\ref{sec:sym-pullback}. Furthermore, suppose that $M^\sT$ is
  a proper algebraic space with the resolution property. Define the
  {\it universal (operational) invariant}
  \begin{equation} \label{eq:univ-enum-inv}
    \begin{aligned}
      \sZ_M
      &\coloneqq \chi\left(M, \hat\cO_M^\vir \otimes -\right) \\
      &\coloneqq \chi\left(M^{\sT}, \frac{\tilde{\cO}^{\vir}_{M^{\sT}}}{\widehat{\fe}_{\sT}(\bN^{>}_{\iota})} \otimes \iota^*(-)\right) \in \bfK_\circ^{\sT}(M)_\loc
    \end{aligned}
  \end{equation}
  where the first line is an abbreviation for the second using \eqref{sp:eq:pullback-localization-formula} in Theorem~\ref{sp:thm:symm-pullback-localization}(c), and we are using the shorthand notation above.

  The typical setting is that $j\colon M \hookrightarrow
  \fM_{\cat{A}}^{\rig}$ is a $\sT$-invariant open locus and we use
  $j^*\bE^{\rig}$ as the CY4 obstruction theory on $M$. In this case,
  we will be interested in $j_*\sZ_M \in
  \bfK_\circ^{\sT}(\fM^{\pl}_{\cat{A}})_{\loc}$.
\end{definition}

\subsection{Multiplicative Vertex Algebra Structure on Operational K-Homology}
\subsubsection{}
\begin{definition} \label{def:graded-monoidal-stack}
  A {\it graded monoidal $\sT$-stack with symmetric bilinear element} is the
  data of:
  \begin{enumerate}[label = (\roman*)]
  \item an Artin stack $\fM = \bigsqcup_\alpha \fM_\alpha$, where
    $\alpha$ ranges over an additive monoid $\sA$ and the torus $\sT$ acts
    on each $\fM_\alpha$;
  \item for every $\alpha$ and $\beta$, $\sT$-equivariant morphisms
    \begin{align*}
      &\Phi_{\alpha,\beta}\colon \fM_\alpha \times \fM_\beta \to \fM_{\alpha+\beta}, \\
      &\Psi_\alpha \colon [\pt/\bC^\times] \times \fM_\alpha \to \fM_\alpha,
    \end{align*}
    and elements $\Theta_{\alpha,\beta} \in K_\sT^\circ(\fM_\alpha \times
    \fM_\beta)$.
  \item A bilinear integer-valued form $\chi(-,-)$ on $\sA$, and signs
  \begin{equation*}
    \varepsilon:\sA\times\sA\to \pm 1,
  \end{equation*} 
  forming a group 2-cocycle in the sense of Definition~\ref{def:group-2-cocycle} for $\chi(-,-)$.
  \end{enumerate}
  This data must satisfy the following axioms, for all $\alpha$ and
  $\beta$.
  \begin{enumerate}[label = (\alph*)]
  \item (Identity) $\fM_0 = *$ consists of a single point.
  \item ($\Phi$ is monoidal) $\Phi_{0,\alpha} = \id = \Phi_{\alpha,0}$
    and $\Phi_{\alpha,\beta+\gamma} \circ (\id \times
    \Phi_{\beta,\gamma} \times \id) = \Phi_{\alpha+\beta,\gamma} \circ
    (\Phi_{\alpha,\beta} \times \id)$.
  \item ($\Psi$ is $[\pt/\bC^\times]$-action) Define $\Omega\colon
    [\pt/\bC^\times]^2 \to [\pt/\bC^\times]$ using multiplication on
    on $\bC^\times$. Then
    \begin{align*}
      \Psi_{\alpha+\beta} \circ (\id \times \Phi_{\alpha,\beta}) &= \Phi_{\alpha,\beta} \circ (\Psi_\alpha^{(12)}, \Psi_\alpha^{(13)}) \\
      \Psi_\alpha \circ (\id \times \Psi_\alpha) &= \Psi_\alpha \circ (\Omega \times \id)
    \end{align*}
    where superscripts $(ij)$ mean to act on the $i$-th and $j$-th
    factors.
  \item ($\Theta_{\alpha,\beta}$ is bilinear) Let $\cL_{B\bG_m}
    \in G_0^\sT(B\bGm)$ be the weight-$1$ representation.
    Then
    \begin{equation} \label{eq:mVOA-bilinear-complex-conditions}
      \begin{alignedat}{2}
        (\Phi_{\alpha,\beta} \times \id)^*(\Theta_{\alpha+\beta,\gamma}) &= \Theta_{\alpha,\gamma} \oplus \Theta_{\beta,\gamma}, \qquad & (\Psi_\alpha \times \id)^*(\Theta_{\alpha,\beta}) &= \cL^\vee \boxtimes \Theta_{\alpha,\beta}, \\
        (\id \times \Phi_{\beta,\gamma})^*(\Theta_{\alpha,\beta+\gamma}) &= \Theta_{\alpha,\beta} \oplus \Theta_{\alpha,\gamma}, \qquad & (\id \times \Psi_\beta)^*(\Theta_{\alpha,\beta}) &= \cL \boxtimes \Theta_{\alpha,\beta}.
      \end{alignedat}
    \end{equation}
    Some obvious pullbacks along projections to various factors have
    been omitted.
  \item ($\Theta_{\alpha,\beta}$ is symmetric) We say the bilinear elements $\Theta$ are \textit{symmetric} if, for $(12)\colon
    \fM_\beta \times \fM_\alpha \to \fM_\alpha \times \fM_\beta$ swapping
    the two factors, we have
    \[ \Theta_{\beta,\alpha} = (12)^*\Theta_{\alpha,\beta}^\vee. \]
  \end{enumerate}
  A {\it morphism} $f\colon (\fM, \Phi, \Psi, \Theta) \to (\fM', \Phi',
  \Psi', \Theta')$ of two graded monoidal stacks with (symmetric)
  bilinear elements is a collection of morphisms $f_\alpha\colon
  \fM_\alpha \to \fM'_\alpha$ in $\cat{Art}_\sT$, for every $\alpha$,
  such that
  \[ \Phi'_{\alpha,\beta} \circ (f_\alpha \times f_\beta) = f_{\alpha+\beta} \Phi_{\alpha,\beta}, \qquad \Psi'_\alpha \circ f_\alpha = f_\alpha \circ \Psi_\alpha, \qquad (f_\alpha \times f_\beta)^*\Theta_{\alpha,\beta} = \Theta'_{\alpha,\beta}. \]
\end{definition}

\subsubsection{}

\begin{definition} \label{def:degree-map}
  Let the rigidification $\Pi\colon \fX \to \fX^\pl$ be a $\sT$-equivariant
  $\bC^\times$-gerbe. Then there is a natural $\sT$-equivariant map
  \[ \Psi\colon [\pt/\bC^\times] \times \fX \to \fX \]
  given by $(\pt, x) \mapsto x$ on $\bC$-points and $(\lambda, f)
  \mapsto (\lambda \cdot \id_x) \circ f$ on the associated stabilizer
  groups. It induces a {\it degree operator}
  \[ z^{\deg}\colon K_\sT^\circ(\fX) \xrightarrow{\Psi^*} K_\sT^\circ([\pt/\bC^\times] \times \fX) \cong K_\sT^\circ(\fX)[z^\pm] \]
  where we identify $K_\sT^\circ(B\bGm) =
  G_0^\sT(B\bGm) = \bk_\sT[z^\pm]$. If $\fX = \prod_i \fX_i$
  is a product of such $\bGm$-gerbes, we write $z^{\deg_i}$ for
  the operator defined by acting on the $i$-th factor by $\Psi^*$.
  Note that, by construction,
  \begin{equation} \label{eq:degree-map-pl-stack}
    z^{\deg} \Pi^*\cE = \Pi^*\cE, \qquad \cE \in K_\sT^\circ(\fX^\pl).
  \end{equation}
\end{definition}

\subsubsection{}

\begin{theorem}[{\cite[Theorem 3.3.5]{Liu2022}}] \label{thm:mVOA-monoidal-stack}
  Let $\fM = \bigsqcup_\alpha \fM_\alpha$ be a graded monoidal
  $\sT$-stack with symmetric bilinear elements
  $\Theta_{\alpha,\beta}$. Then
  \[ \bfK^{\sT}_\circ(\fM)_\loc \coloneqq \bigoplus_\alpha \bfK^{\sT}_\circ(\fM_\alpha)_\loc \]
  has the structure of a \emph{$\sT$-equivariant multiplicative vertex
  algebra} \cite[\S 3.2.1]{Liu2022}.
  \begin{itemize}
  \item The vacuum $\vac \in \bfK_\circ^{\sT}(\fM_0)_\loc$ is given by
    the identity map $\bk_{\sT} \to \bk_{\sT}$.

  \item The translation operator is
    \[ D(z)\phi \coloneqq \sum_{k \ge 0} (1-z)^k \Psi_*\left(\xi^k \boxtimes \phi\right) \in \bfK_\circ^{\sT}(\fM)_\loc\pseries{1-z}, \]
    where $\xi^k \in \bfK_\circ^{\sT}(B\bGm)_{\loc}$ are the elements
    from Example~\ref{ex:k-homology-BGm}. Explicitly, $(D(z)\phi)(\cE)
    = \phi(z^{\deg} \cE)$ where $z^{\deg}$ is the degree operator
    associated to $\Psi$ (Definition~\ref{def:degree-map}).

  \item The vertex product is given on $\phi \in
    \bfK^{\sT}_\circ(\fM_\alpha)_\loc$ and $\psi \in
    \bfK^{\sT}_\circ(\fM_\beta)_\loc$ by\footnote{The resolution property of $Z_\phi$ (and $Z_\psi$) in Definition~\ref{bg:def:k-homology-concrete-theory}(i) ensures that $\hat{\fe}_\sT(z\Theta_{\alpha\beta})$, once pulled back to $Z_\phi\times Z_\psi$, is well-defined in this expression, using \cite[Def. 2.1.10]{Liu2022}.}
    \begin{equation} \label{eq:monoidal-stack-vertex-product}
      \begin{aligned}
        Y(\phi, z) \psi
        &\coloneqq \varepsilon_{\alpha\beta} \cdot (\Phi_{\alpha,\beta})_* (D(z) \times \id) \left((\phi \boxtimes \psi)\cap\frac{1}{\hat{\fe}_\sT(z\Theta_{\alpha\beta})}\right) \\
        K^\circ_\sT(\fM_{\alpha+\beta}) \ni \cE
        &\mapsto \varepsilon_{\alpha\beta} \cdot (\phi \boxtimes \psi)\left(\frac{1}{\hat{\fe}_\sT(z\Theta_{\alpha\beta})} \otimes z^{\deg_1} \Phi_{\alpha,\beta}^* \cE \right).
      \end{aligned}
    \end{equation}
  \end{itemize}
  Furthermore, this construction is functorial: if $f\colon \fM \to
  \fM'$ is a morphism of graded monoidal $\sT$-stacks with symmetric
  bilinear elements, then
  \[ f_*\colon \bfK_\circ^{\sT}(\fM) \to \bfK_\circ^{\sT}(\fM') \]
  is a morphism of $\sT$-equivariant multiplicative vertex
  algebras.
\end{theorem}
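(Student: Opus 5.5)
This is Liu's theorem, cited as \cite[Theorem 3.3.5]{Liu2022}, so the proof is a verification of the axioms of a $\sT$-equivariant multiplicative vertex algebra from \cite[\S 3.2.1]{Liu2022}. The setting is the concrete theory $\bfK^{\sT}_\circ(-)_\loc$ of Definition~\ref{bg:def:k-homology-concrete-theory}, using the data of Definition~\ref{def:graded-monoidal-stack}. Throughout I work with the shorthand of \S\ref{sec:k-homology-shorthand}: every identity is checked on a test class $\cE\in K^\circ_\sT(\fM)$ over $S=\pt$. Base change compatibility in $S$ then follows formally from the naturality axiom.

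The first step is to check that the operations are well defined. For $D(z)$, the identity $(D(z)\phi)(\cE)=\phi(z^{\deg}\cE)$ follows by expanding $z^{\deg}\cE=\Psi^*\cE\in K^\circ_\sT(\fX)[z^\pm]$ in powers of $(1-z)$. One then uses $\xi^k(s^n)=(-1)^k\binom{n}{k}$ from Example~\ref{ex:k-homology-BGm}, together with $z^n=\sum_k\binom{n}{k}(z-1)^k$. The finiteness axiom gives convergence in $(1-z)$. For $Y(\phi,z)\psi$, I pull $\Theta_{\alpha\beta}$ back to $Z_\phi\times Z_\psi$, where the resolution property holds. There $\hat\fe_\sT(z\Theta_{\alpha\beta})$ is invertible after localization and expansion, by \cite[Def. 2.1.10]{Liu2022}. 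Hence the triple $\big(Z_\phi\times Z_\psi,\ \Phi\circ(\fix_\phi\times\fix_\psi),\ \ldots\big)$ again defines an element, now with coefficients in Laurent series in $z$.

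The second step verifies the axioms in turn.
\begin{itemize}
\item \emph{Vacuum.} This uses $\fM_0=\pt$, $\Theta_{0,\beta}=0$, $\varepsilon_{0,\beta}=1$ and $\Phi_{0,\beta}=\id$.
\item \emph{Translation covariance.} I need $D(z)Y(\phi,w)=Y(\phi,zw)D(z)$ together with the corresponding relation for the derivative. Both reduce to axiom (c), which makes $\Psi$ compatible with $\Phi$, combined with the bilinearity relation $(\Psi\times\id)^*\Theta=\cL^\vee\boxtimes\Theta$. The latter converts $z^{\deg_1}$ acting on $\hat\fe(w\Theta)$ into a shift of the variable $w$.
\item \emph{Skew-symmetry.} I use commutativity of $\boxtimes$, the symmetry $\Theta_{\beta\alpha}=(12)^*\Theta_{\alpha\beta}^\vee$, and the identity $\hat\fe(z^{-1}\Theta^\vee)=(-1)^{\rk\Theta}\hat\fe(z\Theta)$, where $\rk\Theta_{\alpha\beta}=\chi(\alpha,\beta)$ up to parity. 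The resulting sign is exactly absorbed by the cocycle relation $\varepsilon_{\alpha\beta}=(-1)^{\chi(\alpha,\beta)+\chi(\alpha,\alpha)\chi(\beta,\beta)}\varepsilon_{\beta\alpha}$.
\end{itemize}

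The main obstacle is weak associativity (the Borcherds/Jacobi identity). On a test class, both $Y(\phi,z)Y(\psi,w)\chi$ and $Y(Y(\phi,z/w)\psi,w)\chi$ evaluate to
$$(\phi\boxtimes\psi\boxtimes\chi)\Big(\big[\hat\fe(z\Theta_{12})\,\hat\fe(z\Theta_{13})\,\hat\fe(w\Theta_{23})\big]^{-1}z^{\deg_1}w^{\deg_2}\Phi^*\cE\Big)$$
times $\varepsilon_{\alpha\beta}\varepsilon_{\alpha+\beta,\gamma}=\varepsilon_{\beta\gamma}\varepsilon_{\alpha,\beta+\gamma}$. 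This uses monoidality (b), the additivity relations in \eqref{eq:mVOA-bilinear-complex-conditions}, and $w^{\deg}$ acting on $\Theta$. The remaining point is to show that these expressions are expansions of a single rational function in $z,w$ in the respective domains $|z|>|w|$, $|w|>|z|$ and $|w|>|z-w|$. My first attempt here is to reduce, via the splitting principle on the resolution-property spaces $Z_\bullet$, to Euler classes of line bundles, where rationality is explicit. The finiteness axiom then guarantees that only finitely many poles occur. Finally, functoriality is immediate: a morphism $f$ preserves $\Phi$, $\Psi$ and $\Theta$, so $f_*$ commutes with each formula by the push-pull formula.
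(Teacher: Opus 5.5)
The paper gives no proof of this statement. It imports it from \cite[Theorem 3.3.5]{Liu2022}, and a direct verification of the axioms on test classes, as you propose, is the right approach. But the step you single out as the crux fails as written. The common value you give for $Y(\phi,z)Y(\psi,w)\chi$ and $Y(Y(\phi,z/w)\psi,w)\chi$ is wrong. In the iterate, the inner product inserts $\hat{\fe}(\tfrac{z}{w}\Theta_{12})^{-1}$ and nothing afterwards acts on it, so $\Theta_{12}$ must enter through $z/w$, not $z$. On the other side this factor arises only because the inner $w^{\deg_2}$ rescales the outer $\hat{\fe}(z\Theta_{12})$ via bilinearity.

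The direction of that rescaling matters, and with (d) as you quote it, $(\Psi\times\id)^*\Theta=\cL^\vee\boxtimes\Theta$, the checks do not close:
\begin{itemize}
\item For $Y(D(z)\phi,w)\psi$, the operator $z^{\deg_1}$ sends $\hat{\fe}(w\Theta)$ to $\hat{\fe}(\tfrac{w}{z}\Theta)$ but sends $w^{\deg_1}$ to $(zw)^{\deg_1}$, so you do not obtain $Y(\phi,zw)\psi$.
\item In the associativity check you get $\hat{\fe}(zw\,\Theta_{12})$ against $\hat{\fe}(\tfrac{z}{w}\Theta_{12})$, and $\hat{\fe}(z\Theta_{13})$ against $\hat{\fe}(\tfrac{w^2}{z}\Theta_{13})$.
\end{itemize}
You need $z^{\deg_1}\Theta_{\alpha\beta}=z\Theta_{\alpha\beta}$ and $w^{\deg_2}\Theta_{\alpha\beta}=w^{-1}\Theta_{\alpha\beta}$. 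This is what $\Theta=\mExt^\vee[-1]$ satisfies when $\Psi$ scales the universal object with weight one; equivalently, keep (d) and put $\hat{\fe}(z^{-1}\Theta)$ in $Y$. With that convention, monoidality, axiom (c), the additivity relations in (d) and the cocycle identity show that both iterates equal
\[
\varepsilon_{\beta\gamma}\varepsilon_{\alpha,\beta+\gamma}\,(\phi\boxtimes\psi\boxtimes\chi)\Big(\big[\hat{\fe}(\tfrac{z}{w}\Theta_{12})\,\hat{\fe}(z\Theta_{13})\,\hat{\fe}(w\Theta_{23})\big]^{-1}\otimes z^{\deg_1}w^{\deg_2}\Phi^*\cE\Big).
\]

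The analytic half is also set in the wrong framework. The domains $|z|>|w|$, $|w|>|z|$, $|w|>|z-w|$ are those of an additive vertex algebra. Here $Y(\phi,z)\psi$, evaluated on any $\cE$, is a rational function of $z$. This is exactly what the residue $\rho_{K,z}$, built from expansions at $z=0$ and $z=\infty$, presupposes. So rationality must be established already when you define $Y$; a ``Laurent series in $z$'' is not enough. Associativity is then just the equality of the two rational functions in $z,w$ displayed above, whose denominators are products of factors $1-t^\mu z$, $1-t^\mu w$ and $1-t^\mu z/w$.

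No splitting principle is needed for rationality. On $Z_\phi\times Z_\psi$, write $\Theta=V-W$ and split by $\sT$-weights. Then $\hat{\fe}$ of a bundle is a Laurent polynomial in $z^{1/2}$. After factoring out the scalar $\hat{\fe}(zt^\mu)^{\rk}$, its inverse is a finite geometric series, because $\phi\boxtimes\psi$ annihilates a power of the augmentation ideal (the finiteness axiom, \cite[Lemma 2.1.6]{KLT25}).

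Finally, your skew-symmetry (or locality) check uses two inputs that are not among the axioms of Definition~\ref{def:graded-monoidal-stack}:
\begin{itemize}
\item $\Phi_{\beta,\alpha}\circ\sigma\simeq\Phi_{\alpha,\beta}$;
\item $\rk\Theta_{\alpha\beta}\equiv\chi(\alpha,\beta)\pmod 2$.
\end{itemize}
Even granting them, the cocycle relation leaves the sign $(-1)^{\chi(\alpha,\alpha)\chi(\beta,\beta)}$, which nothing in your argument accounts for. It vanishes only because only even classes are used ($\bar{K}(\cat{A})$ is a quotient of $K^0_e(\cat{A})$), and you need to say so rather than claim the sign is ``exactly absorbed''.
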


\subsubsection{}
We apply this mainly to two situations:
\begin{itemize}
  \item The first is the situation in \S\ref{sec:LA-formula+residue}, where we consider the moduli stack $\fM_{\cat{A}}$ of our underlying CY$4$ category, ranging over the monoid of emergent classes. In this case, the symmetric bilinear elements $\Theta_{\alpha\beta}$ are the restrictions of $\Theta_{\cat{A}}$ from \eqref{eq:ThetaofA} to the components $\fM_\alpha\times\fM_\beta$. The morphisms $\Phi_{\alpha,\beta}$ and $\Psi_\alpha$ are the ones of \eqref{eq:murho}. The signs $\varepsilon_{\alpha\beta}$ are the orientation comparison signs discussed in Section~\ref{sec:epsilonorientations}.
  \item The second is the situation in \S\ref{sec:flag-VA+LA}, where we consider the moduli stacks $\fM^{\bar{Q}(r)}_{\alpha,\vec d}$, where $\alpha$ runs over the monoid of emergent classes, and $\vec d$ over all possible dimension vectors of the quiver $\bar{Q}(r)$ defined in \eqref{wc:eq:flag-quiver}. The bilinear elements are given by \eqref{eq:Theta-flag}, and the signs are the $\varepsilon_{(\beta,\vec e)}^{(\gamma,\vec f)}$ of Definition~\ref{def:framed-epsilons}.
\end{itemize}

\subsection{Lie-Algebra Structure on Operational K-Homology}

\subsubsection{}

Associated to the above vertex algebra is a Lie algebra on the
following $\bk_{\sT}$-module.

\begin{definition} \label{def:pl-groups}
  Given a $\sT$-equivariant multiplicative vertex algebra $(V, \vac,
  D, Y)$, let $\im(1 - D(z)) \subset V$ denote the
  $\bk_\sT$-submodule generated by the coefficients of
  \[ (1 - D(z))a \in V\pseries*{1-z} \]
  for all $a \in V$, and define
  \[ V^\pl \coloneqq V / \im(1 - D(z)). \]
\end{definition}

\subsubsection{}

\begin{lemma}[{\cite[\S 2.3.10]{KLT25}}] \label{lem:pl-group-functoriality}
  Let $f\colon \fM \to \fM'$ be a morphism of graded monoidal
  $\sT$-stacks.
  \begin{enumerate}[label = (\roman*)]
  \item The degree operator $z^{\deg}$ commutes with $f^*\colon
    K^\circ_\sT(\fM') \to K^\circ_\sT(\fM)$.
  \item The translation operator $D(z)$ commutes with $f_*\colon
    \bfK_\circ^\sT(\fM) \to \bfK_\circ^\sT(\fM')$.
  \end{enumerate}
  Consequently $f_*$ induces a map $f_*\colon \bfK_\circ^\sT(\fM)^\pl
  \to \bfK_\circ^\sT(\fM')^\pl$ of $\bk_\sT$-modules.
\end{lemma}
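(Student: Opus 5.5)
The plan is to unwind both statements into the explicit formulas of Definition~\ref{def:operational-k-homology} and Theorem~\ref{thm:mVOA-monoidal-stack}. Everything is stated by evaluating on perfect complexes, and we use the shorthand of \S\ref{sec:k-homology-shorthand}, with the same argument applying for each base $S$.

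For (i), recall from Definition~\ref{def:graded-monoidal-stack} that $f$ commutes with the $[\pt/\bC^\times]$-actions, i.e.\ $\Psi'_\alpha\circ(\id\times f_\alpha) = f_\alpha\circ\Psi_\alpha$. Apply $K^\circ_\sT$-pullback to this equality. The left side gives $(\id\times f)^*\Psi'^*\cE$ and the right side gives $\Psi^*f^*\cE$. We then identify $K^\circ_\sT([\pt/\bC^\times]\times\fX)\cong K^\circ_\sT(\fX)[z^{\pm}]$ via the Künneth decomposition. This identification is compatible with $(\id\times f)^*$, since $f^*$ acts coefficientwise in $z$. It follows that $z^{\deg}f^*\cE = f^*z^{\deg}\cE$. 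The only point to check is that the Künneth identification for $B\bG_m\times\fX$ is natural in $\fX$. This holds because it is given by decomposing into $\bG_m$-weights of a trivially acting gerbe, so it suffices to recall this.

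For (ii), use the explicit description $(D(z)\phi)(\cE) = \phi(z^{\deg}\cE)$ from Theorem~\ref{thm:mVOA-monoidal-stack}. Then
\[(f_*D(z)\phi)(\cE) = (D(z)\phi)(f^*\cE) = \phi(z^{\deg}f^*\cE) = \phi(f^*z^{\deg}\cE) = (D(z)f_*\phi)(\cE),\]
using (i) in the middle step. There is one subtlety, which I expect to be the only real obstacle. The identity $(D(z)\phi)(\cE)=\phi(z^{\deg}\cE)$ holds as an equality in $\bk_{\sT,\loc}\pseries{1-z}$, so $\phi$ has to be extended $z$-linearly and continuously. For this, expand $z^{\deg}\cE$ in powers of $(1-z)$ using the elements $\xi^k$ of Example~\ref{ex:k-homology-BGm}. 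Finiteness, axiom (c) of Definition~\ref{def:operational-k-homology}, ensures that each coefficient of $(1-z)^k$ is computed by finitely many terms. Alternatively, one can verify the identity directly from the defining formula $D(z)\phi=\sum_k(1-z)^k\Psi_*(\xi^k\boxtimes\phi)$, using $f_*\Psi_*=\Psi'_*(\id\times f)_*$ and the fact that pushforward is compatible with $\boxtimes$. This version avoids any analytic issue: it compares the two sides coefficientwise in $(1-z)^k$.

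The consequence then follows formally. By (ii), $f_*$ maps each coefficient of $(1-D(z))a$ to the corresponding coefficient of $(1-D(z))f_*a$. Hence $f_*$ sends $\im(1-D(z))$ into $\im(1-D'(z))$ and descends to a $\bk_\sT$-linear map on the quotients $V^\pl$ of Definition~\ref{def:pl-groups}. This also works after localization, since $f_*$ is $\bk_\sT$-linear.
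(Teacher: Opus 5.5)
Your proof is correct and follows the standard argument behind the cited \cite[\S 2.3.10]{KLT25}; the paper itself does not reprove the lemma. Part (i) is the pullback of $\Psi'\circ(\id\times f)=f\circ\Psi$ under the weight decomposition $K^\circ_\sT([\pt/\bC^\times]\times\fX)\cong K^\circ_\sT(\fX)[z^{\pm}]$, part (ii) follows from (i) and $(D(z)\phi)(\cE)=\phi(z^{\deg}\cE)$ (or directly from $f_*\Psi_*=\Psi'_*(\id\times f)_*$ and compatibility of $\boxtimes$ with pushforward), and the descent to the quotients is formal. The appeal to the finiteness axiom is unnecessary and can be dropped: $z^{\deg}\cE$ is a Laurent polynomial in $z$, so every $(1-z)$-coefficient of $\phi(z^{\deg}\cE)$ is already a finite sum.
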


\subsubsection{}

\begin{lemma}[{\cite[\S 2.3.9]{KLT25}}] \label{lem:k-homology-pl}
  The morphism $\Pi_\alpha^\pl\colon \fM_\alpha \to \fM_\alpha^\pl$
  induces
  \[ (\Pi_\alpha^\pl)_*\colon \bfK_\circ^{\sT}(\fM_\alpha)^\pl \to \bfK_\circ^{\sT}(\fM_\alpha^\pl). \]
  If $\Pi_\alpha^\pl$ admits a section $I_\alpha$, then this is an
  isomorphism of $\bk_\sT$-modules with inverse $(I_\alpha)_*$.
\end{lemma}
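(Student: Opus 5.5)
The plan has two parts. First I show that $(\Pi^\pl_\alpha)_*$ kills $\im(1-D(z))$, so that it descends to the quotient. Then, when a section $I_\alpha$ exists, I show that $(I_\alpha)_*(\Pi^\pl_\alpha)_*$ agrees with the identity modulo $\im(1-D(z))$, with the explicit error term coming from the elements $\xi^k$ of Example~\ref{ex:k-homology-BGm}.

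\textbf{Descent.} Using the explicit description $(D(z)\phi)(\cE)=\phi(z^{\deg}\cE)$ from Theorem~\ref{thm:mVOA-monoidal-stack}, I compute for $\cE\in K^\circ_\sT(\fM^\pl_\alpha)$:
\[
(\Pi_*D(z)\phi)(\cE)=\phi(z^{\deg}\Pi^*\cE)=\phi(\Pi^*\cE)=(\Pi_*\phi)(\cE),
\]
where the middle equality is \eqref{eq:degree-map-pl-stack}. This holds for every base $S$ by the same computation. Hence every coefficient of $(1-D(z))\phi$ lies in $\ker\Pi_*$, and $\Pi_*$ factors through $\bfK_\circ^\sT(\fM_\alpha)^\pl$.

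\textbf{Section, first composite.} Assume a section $I=I_\alpha$ exists. Since $\Pi\circ I=\id$, I get $\Pi_*I_*=\id$ on $\bfK_\circ^\sT(\fM^\pl_\alpha)$. It remains to prove $\phi-I_*\Pi_*\phi\in\im(1-D(z))$ for every $\phi$.

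\textbf{Section, second composite.} The section trivializes the gerbe, giving $\fM_\alpha\simeq B\bG_m\times\fM^\pl_\alpha$ compatibly with $\Psi$. Under this identification $I\circ\Pi$ is $(\pt\to B\bG_m)\circ(B\bG_m\to\pt)$ on the first factor. Let $s$ denote the tautological weight-one line bundle on the $B\bG_m$ factor, i.e.\ the universal $\cV_i$ in the framed setting of Definition~\ref{def:de-rigidification}. Every class is then a finite sum $\cE=\sum_n s^n e_n$ with $e_n$ pulled back from $\fM^\pl_\alpha$. The degree operator acts by $z^{\deg}(s^ne_n)=z^ns^ne_n$, and correspondingly
\[
\Psi_*(\xi^k\boxtimes\psi)(s^ne)=(-1)^k\tbinom{n}{k}\psi(s^ne).
\]
On the other hand $(I_*\Pi_*\phi)(s^ne)=\phi(e)$. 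I write
\[
\phi(e)=\phi\big(s^{-n}\cdot s^ne\big),\qquad s^{-n}=\big(1-(1-s^{-1})\big)^n=\sum_k(-1)^k\tbinom nk(1-s^{-1})^k.
\]
Setting $\psi_k:=\phi\cap(1-s^{-1})^k$, this gives
\[
I_*\Pi_*\phi=\sum_{k\ge0}\Psi_*(\xi^k\boxtimes\psi_k)=\phi+\sum_{k\ge1}\Psi_*(\xi^k\boxtimes\psi_k).
\]
Each $\Psi_*(\xi^k\boxtimes\psi)$ with $k\ge1$ is, up to sign, the $(1-z)^k$-coefficient of $(1-D(z))\psi$. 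Hence the difference lies in $\im(1-D(z))$.

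\textbf{Expected obstacle.} The main point to check carefully is that this sum is actually finite, i.e.\ an honest element of the submodule rather than a formal series. I would deduce this from the finiteness axiom \eqref{eq:operational-k-homology-finiteness-condition}: $1-s^{-1}$ has rank zero, so after pulling back to the proper space $Z_\phi$ of the triple representing $\phi$ (Definition~\ref{bg:def:k-homology-concrete-theory}), $\psi_k=0$ for $k\gg0$. A secondary check is the compatibility of the sign and weight conventions of $\xi^k$, $\Psi$ and $z^{\deg}$, so that the binomial identity above matches the coefficients of $D(z)$ exactly, uniformly in the base $S$.
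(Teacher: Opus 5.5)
Your descent step and the identity $\Pi_*I_*=\id$ are fine. The step you flagged as the expected obstacle is where the argument fails. The finiteness axiom \eqref{eq:operational-k-homology-finiteness-condition} only controls the \emph{non-equivariant} augmentation ideal. On a connected component of $Z_\phi$, where $\sT$ acts trivially, $\fix_\phi^*s$ is a $\sT$-equivariant line bundle $\mu\otimes L_0$, with $\mu$ a character of $\sT$ and $L_0$ non-equivariant. Nothing forces $\mu=1$: a $\sT$-equivariant map $\pt\to B\bGm$ is precisely a character, as in Example~\ref{ex:equivariantpushforw}. If $\mu\neq 1$, then $1-\fix_\phi^*s^{-1}=(1-\mu^{-1})+\mu^{-1}(1-L_0^{-1})$ is a unit of $\bk_{\sT,\loc}$ plus a nilpotent, so $\psi_k\neq 0$ for every $k$. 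Consequently $\sum_{k\geq 1}\Psi_*(\xi^k\boxtimes\psi_k)$ is not a finite combination of generators of $\im(1-D(z))$. For $n<0$ the expansion $s^{-n}=\sum_k(-1)^k\binom nk(1-s^{-1})^k$ does not converge in $\bk_{\sT,\loc}$ at all. Your argument is complete exactly for $\phi$ in the span of triples with $\mu=1$ on every component of $Z_\phi$, in particular for trivial $\sT$.

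This cannot be repaired with the definitions as stated. Take $\fM_\alpha=B\bGm$, $\fM_\alpha^\pl=\pt$, and $\phi=(\pt,\fix_t,\cO_\pt)$ with $\fix_t^*s=t$ a nontrivial character. Then $(\phi-I_*\Pi_*\phi)(s^n)=t^n-1$. On the other hand, $\im(1-D(z))$ is spanned by the elements $\Psi_*(\xi^k\boxtimes a)$ with $k\geq 1$, which send $s^n\mapsto(-1)^k\binom nk\, a(s^n)$. For every $a$, the function $n\mapsto a(s^n)$ is an exponential polynomial $\sum_\mu \mu^n p_\mu(n)$, by Snapper's theorem on each component of $Z_a$. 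Hence every $b\in\im(1-D(z))$ satisfies $b(s^n)=\sum_\mu\mu^n q_\mu(n)$ with all $q_\mu(0)=0$. The functions $n\mapsto \mu^n n^j$ are linearly independent over the fraction field of $\bk_\sT$, so $t^n-1$ is not of this form. Therefore $\phi\neq I_*\Pi_*\phi$ in $\bfK_\circ^\sT(B\bGm)^\pl$, although both map to $1$ under $\Pi_*$. For nontrivial $\sT$, $(\Pi^\pl_\alpha)_*$ is only a split surjection.

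What your computation does prove is $\phi\equiv I_*\Pi_*\phi$ modulo $\im(1-D(z))$ for $\phi$ in the $\mu=1$ span. That span contains the image of $I_*$ and all classes $I_*j_*\sZ_M$ the paper actually uses. For the full statement one must change the quotient. For example, one could also divide by $a-\Psi_*(\delta_\mu\boxtimes a)$ with $\delta_\mu=(\pt,\fix_\mu,\cO_\pt)$; this reduces every triple to one with $\mu=1$ before your argument is run.
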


Note that $\Pi_\alpha^\pl$ admits a section if and only if it is a
trivial $[\pt/\bC^\times]$-bundle, i.e. $\fM_\alpha = \fM_\alpha^\pl
\times [\pt/\bC^\times]$ \cite[Lemma 3.21]{Laumon2000}.

This lemma will be how we relate abstract invariants in
$K_\circ^{\sT}(\fM_\alpha)^\pl_{\loc}$ with the enumerative invariants
in $K_\circ^{\sT}(\fM_\alpha^\pl)_{\loc}$.

\subsubsection{}

\begin{theorem}[{\cite[Theorem 3.2.13]{Liu2022}}] \label{thm:mVOA-monoidal-stack-lie-algebra}
  The multiplicative vertex algebra structure on
  $\bfK_\circ^{\sT}(\fM)$ induces a Lie algebra structure on
  $\bfK_\circ^{\sT}(\fM)^\pl$, with Lie bracket given by
  \begin{align} 
    [\phi, \psi](\cE)
    &\coloneqq \rho_{K,z} \left(Y(\tilde\phi, z)\tilde \psi\right)(\cE) \nonumber \\
    &= \varepsilon_{\alpha\beta} \cdot \rho_{K,z} \left[ (\tilde\phi \boxtimes \tilde\psi)\left(\frac{1}{\hat{\fe}_\sT(z\Theta_{\alpha\beta})}) \otimes z^{\deg_1} \Phi_{\alpha,\beta}^*\cE\right)\right] \label{eq:monoidal-stack-lie-bracket}
  \end{align}
  for $\phi \in \bfK_\circ^{\sT}(\fM_\alpha)^\pl$ and $\psi \in
  \bfK_\circ^{\sT}(\fM_\beta)^\pl$, and $\tilde\phi \in
  \bfK_\circ^{\sT}(\fM_\alpha)$ and $\tilde\psi \in
  \bfK_\circ^{\sT}(\fM_\beta)$ are any lifts of $\phi$ and $\psi$
  respectively. Here, $\rho_{K,z}$ is the K-theoretic residue map of
  \cite[Def. 3.1.6]{Liu2022}, given by
  \[ \rho_{K,z}(f) \coloneqq z^0 \text{ coefficient of } (f_- - f_+) \]
  where $f_+$ and $f_-$ are the formal series expansions of the
  rational function $f$ around $z=0$ and $z=\infty$ respectively.
\end{theorem}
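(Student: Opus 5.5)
The plan follows Borcherds' construction of a Lie algebra from a vertex algebra, transported to the multiplicative, equivariant and localized setting. There are three things to prove about the formula \eqref{eq:monoidal-stack-lie-bracket}: that it is independent of the chosen lifts $\tilde\phi,\tilde\psi$, so that it descends to $V^\pl=\bfK_\circ^{\sT}(\fM)^\pl$; that its value lands in $V^\pl$ independently of choices; and that the descended bracket is antisymmetric and satisfies the Jacobi identity. I would first set up the basic identities of the multiplicative vertex algebra from Theorem~\ref{thm:mVOA-monoidal-stack}. These are the translation covariance identities
$$Y(D(w)\phi,z)\psi = Y(\phi,zw)\psi,\qquad D(w)Y(\phi,z)\psi = Y(\phi,zw)D(w)\psi,$$
both read as identities of rational functions in $z,w$. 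Both follow directly from the explicit formula \eqref{eq:monoidal-stack-vertex-product}. One uses $(D(w)\phi)(\cE)=\phi(w^{\deg}\cE)$, together with the bilinearity axioms \eqref{eq:mVOA-bilinear-complex-conditions}. In particular $(\Psi\times\id)^*\Theta=\cL^\vee\boxtimes\Theta$ turns $z\Theta$ into $zw\Theta$ after applying $w^{\deg_1}$, and $\Phi$ intertwines $\Psi$ with the diagonal action.

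For well-definedness, suppose $\tilde\phi$ is replaced by $\tilde\phi+(1-D(w))a$. The change in $Y(\tilde\phi,z)\tilde\psi$ is then $Y(a,z)\psi - Y(a,zw)\psi$. Extracting $w$-coefficients in the expansion in $1-w$, the residue $\rho_{K,z}$ of every coefficient of $(1-w)^k$ with $k\ge1$ vanishes. This is because $\rho_{K,z}$ is the $z^0$-coefficient of $f_--f_+$, and the expansions at $0$ and $\infty$ are compatible with $z\mapsto zw$, which is the multiplicative analogue of "residue of a total derivative is zero". Next, suppose $\tilde\psi$ is replaced by $\tilde\psi+(1-D(w))b$. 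Here I would use skew-symmetry (below) to reduce to the previous case. Alternatively, one can use the second translation identity: it shows that $Y(\phi,z)(1-D(w))b$ equals $(1-D(w))(\cdots)$ plus a term of the previous form, and the first part is zero in $V^\pl$.

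For antisymmetry I would prove the skew-symmetry formula
$$Y(\psi,z)\phi = \varepsilon\cdot(12)\text{-swap} = D(z)\,Y(\phi,z^{-1})\psi.$$
This uses commutativity of $\bfK_\circ^\sT$, the symmetry axiom $\Theta_{\beta\alpha}=(12)^*\Theta_{\alpha\beta}^\vee$, and the identity $\hat{\fe}(z^{-1}\Theta^\vee)=(-1)^{\rk}\hat{\fe}(z\Theta)$ for symmetrized Euler classes. The resulting sign $(-1)^{\chi(\alpha,\beta)}$ is absorbed by the cocycle relation \eqref{eq:epsidentity}. Applying $\rho_{K,z}$, noting that $z\mapsto z^{-1}$ exchanges $f_+$ and $f_-$, and that $D(z)$ acts trivially modulo $\im(1-D)$, gives $[\psi,\phi]=-[\phi,\psi]$.

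The Jacobi identity is the main obstacle. It requires a Borcherds/locality identity for triple products: the rational functions $(\phi\boxtimes\psi\boxtimes\chi)\big(\hat{\fe}(z\Theta_{12})^{-1}\hat{\fe}(w\Theta_{13})^{-1}\hat{\fe}((w/z)\Theta_{23})^{-1}\otimes\cdots\big)$ must agree under the different expansions, up to the signs $\varepsilon$. The key input is associativity of $\Phi$ together with bilinearity of $\Theta$, which rewrites $Y(\phi,z)Y(\psi,w)\chi$ and $Y(Y(\phi,z/w)\psi,w)\chi$ as expansions of the same rational function in different regions. The Jacobi identity then follows from the identity $\rho_z\rho_w - \rho_w\rho_z = \rho_w\rho_{z/w}$ for multiplicative residues on rational functions with poles only along $z,w,z/w=$ roots of unity times torus characters. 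I expect the finiteness axiom and the localization in $\bk_{\sT,\loc}$ to be what guarantees rationality here. I would therefore first verify that each functional is a genuine rational function, using that $Z_\phi$ is proper with trivial $\sT$-action.
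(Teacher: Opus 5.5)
The paper gives no proof of this statement. It imports Liu's abstract result that any multiplicative vertex algebra carries the bracket $\rho_{K,z}Y(\cdot,z)\cdot$ on $V/\im(1-D(z))$, with the axioms for $\bfK_\circ^\sT(\fM)_\loc$ supplied by Theorem~\ref{thm:mVOA-monoidal-stack}. Your Borcherds-type plan is the right shape. The well-definedness step is fine: for $k\ge1$ the $(1-w)^k$-coefficient of $f(zw)$ is $\binom{z\partial_z}{k}f$, which has no $z^0$-term in either expansion.

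\textbf{The gap is in the Jacobi step, which you rightly flag as the crux.} Write $\rho_z=\rho_{K,z}$. The identity $\rho_z\rho_w-\rho_w\rho_z=\rho_w\rho_{z/w}$ is false for full K-theoretic residues of rational functions with poles along $z=t^\mu$, $w=t^\nu$, $z/w=t^\lambda$. Computed on $G(u,w)=F(uw,w)$, the $u$-residue also collects the poles $u=t^\mu/w$ coming from $\Theta_{\alpha\gamma}$. For example, take $F=\frac{z}{z-a}\cdot\frac{w}{w-b}$. Then $\rho_z\rho_wF=\rho_w\rho_zF=1$, so the left side is $0$, while $\rho_w\rho_uG=1$.

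What is missing is that in an iterated bracket the inner residue is taken \emph{before} the interaction with the third element is inserted.
\begin{itemize}
\item $[[\phi,\psi],\chi]=\rho_wY(x,w)\chi$ with $x=\rho_uY(\phi,u)\psi$, so it sees only the poles of $\hat{\fe}(u\Theta_{\alpha\beta})^{-1}$.
\item The inner residue of $[\phi,[\psi,\chi]]$ sees only the poles of $\hat{\fe}(w\Theta_{\beta\gamma})^{-1}$. The $\Theta_{\alpha\beta}$-factor is expanded in $|z|\gg|w|$ or $|z|\ll|w|$, according to whether the outer residue expands at $z=\infty$ or at $z=0$.
\end{itemize}

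The correct bookkeeping is region by region. Let $[R]$ be the $z^0w^0$-coefficient of the expansion of the common rational function $F$ in a region $R$. Let $R_1,\dots,R_6$ be $|z|\gg|w|\gg1$, $|w|\gg|z|\gg1$, $|z|\gg1\gg|w|$, $|w|\gg1\gg|z|$, $1\gg|z|\gg|w|$, $1\gg|w|\gg|z|$, where $1$ stands for all $\sT$-characters. Then
\begin{itemize}
\item $[\phi,[\psi,\chi]]=[R_1]-[R_3]-[R_4]+[R_6]$,
\item $[\psi,[\phi,\chi]]=[R_2]-[R_3]-[R_4]+[R_5]$,
\item $[[\phi,\psi],\chi]=[R_1]-[R_2]-[R_5]+[R_6]$.
\end{itemize}
Jacobi is exactly this cancellation. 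It still requires checking that $Y(\psi,w)Y(\phi,z)\chi$ produces the same $F$. That check uses commutativity of $\boxtimes$, symmetry of $\Theta$, and the cocycle identities of Definition~\ref{def:group-2-cocycle} on even classes.

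Your displayed triple function, carrying $z,w,w/z$ on the pairs $12,13,23$, is that of $Y(Y(\phi,z)\psi,w/z)\chi$. The one for $Y(\phi,z)Y(\psi,w)\chi$ carries $z/w,z,w$.

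\textbf{A smaller point in the antisymmetry step.} There $D(z)$ and the residue share the variable $z$. On classes with negative $\Psi$-weights, $D(z)=\sum_k(1-z)^kD_k$ has infinitely many nonzero terms, so you cannot just drop the $k\ge1$ terms under $\rho_z$. Argue weight by weight instead.
\begin{itemize}
\item Decompose $\cE=\sum_n\cE_n$ into $\Psi$-weights, and let $g_m$ be the $z^m$-coefficient of $F_--F_+$ for $F(z)=Y(\phi,z^{-1})\psi$. Then $\rho_z\big(D(z)F\big)(\cE)=\sum_n g_{-n}(\cE_n)$.
\item The coefficients of $(1-D(w))h$ are $\pm h\circ\binom{\deg}{k}$. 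Hence $h\circ p(\deg)\equiv p(0)\,h$ modulo $\im(1-D)$ for every polynomial $p$.
\item Interpolate the weight projectors on the finitely many weights occurring on $Z_\phi\times Z_\psi$. This gives $\rho_z(D(z)F)\equiv g_0=\rho_zF$ modulo $\im(1-D)$.
\end{itemize}
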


\subsection{Equivariant Homology Theories from Bivariant Theories}
\label{sec:equivariant-homology-bivariant} 
We will first briefly recall the construction of equivariant homologies of stacks from bivariant theories as explained in \cite{BB1} and summarized in \cite[Appendix B]{Bo25}. We use the term homology loosely here implying that we are dealing with some generalized (complex-oriented) cohomology theories.
\subsubsection{}
 Bivariant theories used in the present form were introduced in \cite[§2.2]{FMP}. We recall their definition, except that we allow  all morphisms here instead of restricting to some confined class of them. The same applies for independent squares.

Let $\Art$ be the category of Artin stacks and Artin morphisms between them as defined, for example, in \cite{TV,  LurieDAG, KhanEH}. In this subsection, all morphisms and diagrams are assumed to be in $\Art$, and we fix a ring $R$.
\begin{definition}
\label{def:bivariant}
    A bivariant theory for $\Art$ is a way to assign to each morphism  $\fX\xrightarrow{f}\fY$ in $\Art$ a graded $R$-module
    $
    \bB_*\big(\fX\xrightarrow{f}\fY\big)
    $
   together with the following graded $R$-morphisms:
    \begin{itemize}
\item (\textit{Composition Product}) For any pair of maps $\fX\xrightarrow{f}\fY\xrightarrow{g}\fZ$, there is a morphism
$$
\begin{tikzcd}
m: \bB_*\big(\fX\xrightarrow{f}\fY\big)\otimes  \bB_*\big(\fY\xrightarrow{g}\fZ\big)\arrow[r]& \bB_*\big(\fX\xrightarrow{g\circ f}\fZ\big)\,.
\end{tikzcd}
$$
Sometimes, we will simply write $m(a,b) = a\cdot b$ for composable elements.
\item (\textit{Base-Change Pullback}) 
Given a fiber-product diagram \[\begin{tikzcd}
  \fX' \ar{r}{}\arrow[d,"{f'}"'] & \fX\ar{d}{f} \\
  \fY' \arrow[r,"g"'] & \fY
\end{tikzcd}\,,\]a natural map $\begin{tikzcd}
 g^*:\bB_*\big(\fX\xrightarrow{f}\fY\big)\arrow[r]& \bB_*\big(\fX'\xrightarrow{f'}\fY'\big)
 \end{tikzcd}$
is given.
\item (\textit{Relative Pushforward}) For a commutative diagram 
$$
\begin{tikzcd}[column sep=small]
\fX'\arrow[dr,"{f'}"']\arrow[rr,"g"]&&\fX\arrow[dl,"f"]\\
   & \fY&
\end{tikzcd}\,,
$$
one has a natural morphism
$
\begin{tikzcd}
  g_*: \bB_*\big(\fX'\xrightarrow{f'}\fY\big) \arrow[r]&\bB_*\big(\fX\xrightarrow{f}\fY\big)
\end{tikzcd}\,.
$
\end{itemize}
These need to satisfy expected compatibilities noted down in \cite[§2.2]{FMP} which we don't recall here.
\end{definition}
Just as in \cite[§2.3]{FMP}, one can conclude that 
$$
\sE^*(\fX) := \bB_{-*}\big(\fX\xrightarrow{\id_{\fX}}\fX\big)\,,\qquad \sE_*(\fX) := \bB_*\big(\fX\to\pt\big)
$$
give rise to compatible cohomology and homology theories. In \cite{BB1}, two new assumptions are introduced when $\bQ\subset R$. They are noted down in \cite[§B.1]{Bo25}, so we only recall their suggestive names here:
\begin{itemize}
\item (\textit{Rational Künneth Isomorphism})
\item (\textit{Rational Triviality for $B\bZ_n$-Torsors})
\end{itemize}
These assumptions are needed when constructing Lie algebras from deformed vertex algebras on the equivariant homology of rigidified stacks.
\subsubsection{Equivariant homology}
Here, we will recall the limit construction used in \cite[Appendix B]{Bo25}. For representation-theoretic purposes, a more conceptual formulation naturally leads to the use of pro-systems, as explained in \cite{BB1}.

Let $\sT$ be an algebraic torus acting on $\fX$ in $\Art$. As in \cite[§2.1]{EdGrRR}, consider a good ind-system $\{E_i\sT\}_{i\in I}$ of open subsets of $\sT$-representations with free $\sT$-action and transition maps being $\sT$-equivariant embeddings. We will write
$$
B_i\sT = E_i\sT /\sT \,,\qquad [\fX/\sT]_i:=\fX\times_\sT E_i\sT\,,
$$
which leads to a natural cartesian diagram
\begin{equation}
\label{eq:XiGfiberprod}
\begin{tikzcd}[column sep=small]
   \arrow[d][\fX/\sT]_i\arrow[r]&{[\fX/\sT]}_j\arrow[d]\\
   B_i\sT\arrow[r]&B_{j}\sT
\end{tikzcd}
\end{equation}
for every transition map $E_i\sT\to E_j\sT$.

\begin{definition}[{\cite{BB1},\cite[Definition B.2]{Bo25}}]
\label{def:eqhom}
 For a fixed bivariant theory $\bB_*$, define the \textit{$\sT$-equivariant homology} $\sE^{\sT}_*(\fX)$ of $\fX$ as the graded limit
$$\sE^\sT_*(\fX):= \varprojlim_{i\in I} \bB_*\big([\fX/\sT]_i\to B_i\sT\big)$$
where transition morphisms are base change pullbacks along \eqref{eq:XiGfiberprod}. The \textit{$\sT$-equivariant cohomology} is defined by
\begin{equation}
\label{eq:defcoh}
\sE^*_\sT(\fX):=\varprojlim_{i\in I} \sE^*\big([\fX/\sT]_i\big)\,.
\end{equation}
Both $\sE^\sT_*(\fX)$ and $\sE^*_\sT(\fX)$ are considered as \textit{graded topological $R$-modules} for the graded limit topology. They are also naturally topological  $\sE^*_\sT(\pt) = \varprojlim \sE^*(B_i\sT)$ modules.
\end{definition}

\subsubsection{Operations on equivariant homology}
\label{sec:equivariantOPs}
In \cite{BB1}, we define all the necessary operations level-wise on the pro-system by checking that they are compatible with transition maps. We will only recall their construction for each level. In what follows, $\fX$ and $\fY$ are $\sT$-stacks in $\Art$ and all morphisms are $\sT$-equivariant morphisms in $\Art$.  
\begin{itemize}
\item (\textit{Pullbacks and Pushforwards})
For a morphism $f:\fX\to \fY$ one defines the $\sT$-equivariant pullback $f^*:\sE_{\sT}^*(\fY)\to \sE_{\sT}^*(\fX)$ as the limit of pullbacks along $[f/\sT]_i:[\fX/\sT]_i\to [\fY/\sT]_i$. One also constructs the pushforward
$
f_*:\sE_*^{\sT}(\fX)\to \sE_*^{\sT}(\fY)
$
using
$$
\begin{tikzcd}
([f/\sT]_i)_*:\bB_*\big([\fX/\sT]_i\to B_i\sT)\arrow[r]& \bB_*\big([\fY/\sT]_i\to B_i\sT)\,.
\end{tikzcd}
$$
\item (\textit{Cap Product})
The cap product
\begin{equation}
\label{eq:capproduct}
\begin{tikzcd}
\sE^*_{\sT}(\fX)\widehat{\otimes}_{\sE^*_\sT(\pt)}\sE^\sT_*(\fX)\arrow[r,"\cap"]&\sE^\sT_*(\fX)
\end{tikzcd}
\end{equation}
is itself obtained as a limit. Levelwise, this map acts on $\alpha_i\in \sE^*\big([\fX/\sT]_i\big)$, $a_i\in \bB_*\big([\fX/\sT]_i\to B_i\sT\big)$ by
$
\begin{tikzcd}
\alpha\otimes a_i\arrow[r,mapsto,"\cap"]&\alpha_i\cdot a_i\in \bB_*\big([\fX/\sT]_i\to B_i\sT\big).
\end{tikzcd}
$
\item (\textit{Equivariant Künneth Morphism})
Consider the diagonal action on $\fX\times \fY$. One constructs a morphism
\begin{equation}
\label{eq:equivariant-Kunneth}
\begin{tikzcd}
\sE^\sT_*(\fX)\widehat{\otimes}_{\sE^*_\sT(\pt)} \sE^\sT_*(\fY)\arrow[r,"\boxtimes_{\sT}"]&\sE^{\sT}_*(\fX\times \fY)\,.
\end{tikzcd}
\end{equation}
using the fiber-product diagram
$$
\begin{tikzcd}[column sep=small]
{\big[(\fX\times \fY)/\sT\big]_i}\arrow[d,]\arrow[r,]&{[\fY/\sT]_i}\arrow[d, "y_i"]\\
{[\fX/\sT]_i}\arrow[r,"x_i"']&B_i\sT
\end{tikzcd}\,.
$$
Taking $a_i\in \bB_{|a|}\big([\fX/\sT]_i\to B_i\sT\big)$ and $b_i\in \bB_{|b|}\big([\fY/\sT]_i\to B_i\sT\big)$, the map acts level-wise by 
$
\begin{tikzcd}
a_i\otimes b_i\arrow[r,mapsto,"\boxtimes_{\sT}"]&y_i^*(a_i)\cdot b_i = (-1) ^{|a||b|}x_i^*(b_i)\cdot a_i.
\end{tikzcd}
$
\item (\textit{Reduction to Subgroups}) 
For a subtorus $\sS\subset \sT$, one can also construct the reduction morphism
\begin{equation}
\label{eq:reduction}
\begin{tikzcd}[column sep = small]
\sE^{\sT}_*(\fX)\arrow[r,"\red^{\sS\subset \sT}_{\fX}"]&[1cm]\sE^{\sS}_*(\fX)\,,
\end{tikzcd}
\end{equation}
which we will not spell out here.
\end{itemize}
\subsubsection{}
\label{sec:Khan-bivariant}
Here, we recall the definition of relative chains from \cite{KhanEH}. Taking homology gives rise to a bivariant theory as shown in \cite{BB1}.

For a (higher) Artin stack $\fX$ denote by $\Sh(\fX)$ the derived category of sheaves of $R$-modules on $\fX$. The corresponding 6-functor formalism for $\Art$ is discussed at length in \cite{KhanEH, KhanVFC, KhanLV}, and the constituent functors are recalled briefly in \cite{BB1, Bo25}. Here, we will only need the adjoint pair
 \begin{equation}
 \label{eq:shriekpushpulladj}
        \begin{tikzcd}
            \Sh(\fX)\arrow[r, shift left=1ex, "f_!"{name=G}] & \Sh(\fY)\arrow[l, shift left=.5ex, "f^!"{name=F}]
            \arrow[phantom, from=F, to=G, , "\scriptscriptstyle\boldsymbol{\bot}"]
        \end{tikzcd}
 \end{equation}
 which exists for any $f:\fX\to \fY$ in $\Art$.\footnote{We do not mention when a functor is derived because this is automatic here.} 
 \begin{definition}[\cite{KhanEH}]
 \label{def:relhom}
Let $f:\fX\to \fY$ be a morphism in $\Art$. The \textit{relative chains over $\fY$} with coefficients in $R$ are
        $$
        C^{/\fY}_{\bullet}(\fX):= f_! f^!(\un{R}_{\fY})
        $$
        where we view the expression on the right-hand side as a chain complex by reversing the grading. The bivariant theory is defined by 
        \begin{equation}
        \label{eq:HoverS}
        \bB_*(\fX\to\fY):=H_{*}\Big(C_{\bullet}^{/\fY}(\fX)\Big)\,.
        \end{equation}
\end{definition}
 \begin{proposition}[\cite{BB1}]
    Definition \ref{def:relhom} constructs a bivariant theory with rational Künneth isomorphisms and satisfying rational triviality for $B\bZ_n$-torsors. 
\end{proposition}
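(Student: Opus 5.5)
The statement to prove is the proposition from \cite{BB1}. It says that the relative chains $\bB_*(\fX\to\fY)=H_*\big(f_!f^!\un{R}_{\fY}\big)$ of Definition~\ref{def:relhom} form a bivariant theory. It also says this theory satisfies the rational Künneth isomorphism and rational triviality for $B\bZ_n$-torsors. I would build the three bivariant operations directly from the six-functor formalism on $\Art$ of \cite{KhanEH}, using base change $g^*f_!\simeq f'_!g'^*$, the projection formula, and the adjunction \eqref{eq:shriekpushpulladj}.

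\emph{Constructing the operations.} I would first rewrite the group as
\[
\bB_k(\fX\xrightarrow{f}\fY)\cong \Hom_{\Sh(\fX)}\big(\un{R}_{\fX},f^!\un{R}_{\fY}[-k]\big),
\]
using the adjunction and the fact that $f^*\un R=\un R$. With this description the operations are as follows.
\begin{itemize}
\item The composition product of $a\colon \un R_\fX\to f^!\un R_\fY$ and $b\colon \un R_\fY\to g^!\un R_\fZ$ is $f^!(b)\circ a$, composed with $f^!g^!\simeq (gf)^!$.
\item Base-change pullback along $g$ uses the natural transformation $g'^*f^!\to f'^!g^*$, which comes from base change by adjunction.
\item For $g\colon \fX'\to\fX$ over $\fY$, the relative pushforward composes $f'^!=g^!f^!$ with the counit $g_!g^!\to\id$, obtained by adjunction.
\end{itemize}
The compatibility axioms of \cite[§2.2]{FMP} would then follow from the coherence of the six-functor formalism, namely functoriality of $(-)^!$ and the compatibility of the base change and exchange transformations with composition. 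This part is bookkeeping.

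\emph{Rational Künneth isomorphism.} I would reduce to the product map $C^{/\fS}_\bullet(\fX)\otimes C^{/\fS}_\bullet(\fY)\to C^{/\fS}_\bullet(\fX\times_\fS\fY)$, which is given by the external product and the Künneth formula for $f_!$. Over $R\supset\bQ$, I would use smooth descent to reduce to schemes or algebraic spaces, where Künneth for $!$-pushforward holds. For stacks the relevant limits are over Čech nerves of smooth atlases, and I would use rationality so that homology commutes with these limits degreewise. This uses the Milnor exact sequence with Mittag-Leffler, and finite type keeps everything degreewise finite.

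\emph{Rational triviality for $B\bZ_n$-torsors.} For a $B\bZ_n$-torsor $p\colon\fX\to\fX'$ I would show that $p_!p^!\un R\to\un R$ is an isomorphism rationally, up to the factor $n$. Locally such a torsor is $B\bZ_n\times U\to U$. There $p$ is proper and smooth of relative dimension zero, so $p^!=p^*$, and $p_*\un R$ is the group cohomology of $\bZ_n$, which is $R$ when $n$ is invertible. This local statement then glues by descent.

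I expect the main obstacle to be the Künneth step, specifically controlling the convergence of the descent limits for non-quasi-compact stacks locally of finite type.
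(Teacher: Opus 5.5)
The paper does not prove this statement; it imports it from \cite{BB1}, so I am judging your argument on its own terms. There is a genuine gap at the very first step, and it propagates through the rest.

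\textbf{The reformulation is wrong.} Your rewriting $\bB_k(\fX\xrightarrow{f}\fY)\cong\Hom_{\Sh(\fX)}(\un{R}_\fX,f^!\un{R}_\fY[-k])$ is false. Taking global sections on $\fY$, Definition~\ref{def:relhom} gives $\Hom_{\Sh(\fY)}(\un{R}_\fY,f_!f^!\un{R}_\fY[-k])$. The adjunction \eqref{eq:shriekpushpulladj} cannot be applied here, because $f_!$ sits in the second argument. Your group is $(f^*\dashv f_*)$-adjoint to $\Gamma(\fY,f_*f^!\un{R}_\fY)$, which is the Borel--Moore (Fulton--MacPherson) bivariant theory. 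It agrees with relative chains only for proper $f$.
\begin{itemize}
\item For $\bA^1\to\pt$ it gives $H^{BM}_2(\bA^1)=R$ instead of $H_0(\bA^1)=R$.
\item For $B\bG_m\to\pt$ it is concentrated in degrees $\le -2$, whereas the paper needs $H_*(B\bG_m)=R[p]$ (Example~\ref{ex:equivariantpushforw}) to define $D(z)$.
\end{itemize}
In your model, pushing $a'\colon\un{R}_{\fX'}\to g^!f^!\un{R}_\fY$ forward along $g$ needs a natural map $\un{R}_\fX\to g_!\un{R}_{\fX'}$. Such a map exists only for proper $g$; for an open immersion the natural map goes the other way. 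So the pushforwards along arbitrary morphisms that the statement is about ($j$, $\Phi$, $\Psi$, $B\bG_m\to\pt$) do not exist there, and your product $f^!(b)\circ a$ is the product of that other theory. The counit $g_!g^!\to\id$ you invoke only makes sense in the $f_!f^!$-model. There the product must be built differently:
\begin{itemize}
\item tensor $a\colon\un{R}_\fY\to f_!f^!\un{R}_\fY[-k]$ with $K=g^!\un{R}_\fZ$;
\item use the projection formula and the canonical map $f^!\un{R}_\fY\otimes f^*K\to f^!K$ to get $g^!\un{R}_\fZ\to f_!f^!g^!\un{R}_\fZ[-k]$;
\item apply $g_!$ and precompose with $b$.
\end{itemize}
Your base change, via $g^*f_!\simeq f'_!g'^*$ and $g'^*f^!\to f'^!g^*$, is fine in the correct model.

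\textbf{The K\"unneth step inherits the same confusion.} In the $f_!f^!$-model, chains on a stack are a \emph{co}limit. For a smooth atlas with \v{C}ech nerve $U_\bullet$ one has $C_\bullet(\fX)\simeq\operatorname{colim}_{\Delta^{\mathrm{op}}}C_\bullet(U_n)$, and for non-quasi-compact $\fX$ there is a further filtered colimit over quasi-compact opens. Three facts then make the chain-level K\"unneth formula over a point follow formally from the case of algebraic spaces:
\begin{itemize}
\item $\otimes$ preserves colimits in each variable;
\item $\Delta^{\mathrm{op}}$ is sifted;
\item $U_\bullet\times V_\bullet$ is the \v{C}ech nerve of an atlas of $\fX\times\fY$.
\end{itemize}
No Milnor sequences or convergence issues arise, and rationality would not by itself make homology commute with limits anyway. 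The hypothesis $\bQ\subset R$ enters only when passing to homology, to kill Tor terms, via $C_\bullet(-;R)\simeq C_\bullet(-;\bQ)\otimes_\bQ R$.

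The relative map over a general base $\mathfrak{S}$ that you propose to study is not an isomorphism. For $i\colon\pt\hookrightarrow\bA^1$ one has $\bB_{-2}(\pt\to\bA^1)\cong R$, but the product of two such classes lands in $\bB_{-4}(\pt\to\bA^1)=0$. So the K\"unneth property you prove must be the absolute one, or a version with one factor over a point (cf.\ Example~\ref{ex:trivialThom}).

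\textbf{A minor point on the torsor step.} You should compute $p_!$ itself, as the left adjoint of $p^!=p^*$ (group homology of $\bZ_n$), rather than $p_*$. Rationally both give $R$ in degree $0$, so that step survives.
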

We will denote the resulting equivariant (co)homology theories by $H^*_{\sT}(-),H^{\sT}_*(-)$. They have the following additional properties.
\begin{enumerate}[label=\roman*)]
\item (\textit{Non-Equivariant Limit}) By \cite[Proposition 2.8]{KhanEH}, both $H^{\sT}_*(\fX)$ and $H_{\sT}^*(\fX)$ become the usual Betti homology and cohomology of stacks as in \cite[Definition 4.2]{gross}, \cite[Example 2.4.3 (c)]{Joycehall} when $\sT=\{1\}$. 
\item (\textit{Completeness of Cohomology}) By \cite[Lemma 3.3]{KhanEH}, there is a natural isomorphism
$$
H^*\big([\fX/\sT]\big)\cong H^*_{\sT}(\fX)\,.
$$
\item (\textit{Equivariant Chern Character})
Let $\PPerf_r$ be the stack of rank $r$ perfect complexes of $\bC$-vector spaces. By i), there is an isomorphism
$$
H^*(\PPerf_r) =  R[ \ch_1,\ch_2,\cdots ]\,,
$$
where $\ch_i$ is the $i$'th Chern character of the universal rank $r$ complex on $\PPerf_r$. Let $\cE$ be a $\sT$-equivariant rank $r$ perfect complex on $\fX$, then it descends to $[\cE/\sT]$ on $[\fX/\sT]$ with the corresponding natural morphism $p_{[\cE/\sT]}:[\fX/\sT]\to \PPerf_r$. One defines the $i$'th equivariant Chern character of $\cE$ to be\footnote{Clearly $\PPerf_r$ is not in $\Art$. However, we only need cohomology $H^*(-)$ for this construction which is defined by taking hypercohomologies of constant sheaves.} 
$$
\ch^{\sT}_i(\cE):=p_{[\cE/\sT]}^*(\ch_i)\in H_{\sT}^*(\fX)\,.
$$
This extends in an obvious way to classes in $K^0_{\sT}(\fX)$.
\item (\textit{Cycle-Class Map})
Let $X$ be a proper algebraic space with a $\sT$-action. As explained in \cite[§B.5.v)]{Bo25}, there is a natural cycle-class map
\begin{equation}
\label{eq:cycle-class-map}
\begin{tikzcd}A_{*}^{\sT}(X)\arrow[r,"\operatorname{cl}"]&H^{\sT}_{2 *}(X)\end{tikzcd}\,.
\end{equation}
\end{enumerate}
\subsubsection{}
\label{sec:homology-to-khomology}
In \cite{BB1}, a full-fledged equivariant K-homology will be worked out. However, it will be also explained there that any equivariant K-homology that is compatible with Joyce's bicharacter construction in the sense of \cite{Borcherds-quantum} is not much different from the corresponding equivariant homology theory. Thus, we define the completion functor $\Phi$ on $\bZ$-graded $R$-modules by
$$
\Phi(M_*) = \varprojlim_{a\to-\infty}\bigoplus_{i\geq a}M_{2i}\,.
$$
Our new bivariant theory on $\Art$ is now given by  
$
\bK_0(\fX\to \fY):=\Phi \big(\bB_{*}(\fX\to \fY)\big).\footnote{The topology on $\bK_0(\fX\to \fY)$ is taken to be the discrete one not the limit topology.}
$ It is considered to be ungraded or, equivalently, degree 0, and we always assume $\bQ\subset R$. Note that $\Phi$ is naturally lax-monoidal: there exists a natural transformation $\Phi(M_*)\otimes_R\Phi(N_*)\to\Phi(M_*\otimes N_*)$. This implies that this theory inherits composition products, base-change pullbacks, and relative pushforwards.
\begin{proposition}[{\cite{BB1}}]
\label{prop:K-theory-wellbehaved}
 The bivariant theory axioms from \cite[§2.2]{FMP} and rational triviality for $B\bZ_n$-torsors are satisfied by $\bK_0$. Applying definition \ref{def:eqhom}
 to $\bK_0$ we set $
 \hat{K}_{\sT}^0(\fX) :=E_{\sT}^0(\fX)
 $ and $K_0^{\sT}(\fX) := E^{\sT}_0(\fX)$. Then §\ref{sec:equivariantOPs} constructs all operations on $\hat{K}_{\sT}^0(\fX),K_0^{\sT}(\fX)$.

Let $\fX\in \Art$ and $X$ be a quasi-compact locally finite algebraic space. Then there are  natural isomorphisms 
    \begin{equation}
    \label{eq:completed-explicit-example}
    K^{\sT}_0(X)\cong \prod_{i\in \bZ} H^{\sT}_{2i}(X)\qquad \textnormal{and}\qquad \hat{K}_{\sT}^0(\fX)\cong \prod_{i\geq 0} H^{2i}\big([\fX/\sT]\big)
    \end{equation}
    of $ \hat{K}_{\sT}^0(\pt)$-modules. \end{proposition}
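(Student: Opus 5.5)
The proof splits into a formal part, showing that $\Phi$ preserves the bivariant structure, and a computational part, identifying $\Phi$ with a product in the two cases listed.

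\textbf{Formal part.} First I will record how $\Phi$ interacts with the operations of $\bB_*$. All three bivariant operations of Definition~\ref{def:bivariant} are graded maps:
\begin{itemize}
\item base-change pullback and relative pushforward have degree $0$;
\item the composition product is additive in degrees.
\end{itemize}
Since $\Phi$ is a functor on graded $R$-modules that only sees even degrees, and the product of two even-degree classes is again even, the lax-monoidal structure $\Phi(M_*)\otimes_R\Phi(N_*)\to\Phi(M_*\otimes N_*)$ transports these operations to $\bK_0$. Each compatibility axiom of \cite[§2.2]{FMP} is an equality between composites of such operations. It therefore survives applying $\Phi$, with one caveat: graded signs $(-1)^{|a||b|}$ could appear, for instance in the commutativity of products. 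These signs become trivial because only even degrees survive.

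Rational triviality for $B\bZ_n$-torsors, as recalled in \cite[§B.1]{Bo25}, asserts that certain pullback or pushforward maps become isomorphisms rationally. These are degreewise statements in $\bB_*$, and $\Phi$ carries degreewise isomorphisms to isomorphisms. With the operations in hand, the level-wise constructions of §\ref{sec:equivariantOPs} apply verbatim to $\bK_0$ in place of $\bB_*$. They remain compatible with the transition maps because $\Phi$ is functorial.

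\textbf{Homology isomorphism.} For the identifications, I write $K_0^{\sT}(X)=\varprojlim_i \Phi\big(\bB_*([X/\sT]_i\to B_i\sT)\big)$. The key input is a boundedness statement:
\begin{equation*}
\bB_k\big([X/\sT]_i\to B_i\sT\big)=0\qquad\text{for } k>2\dim X,
\end{equation*}
uniformly in $i$. I would prove this as follows. Since $X$ is a quasi-compact, locally finite algebraic space, $[X/\sT]_i\to B_i\sT$ is representable of relative dimension at most $\dim X$. The relative chains $f_!f^!\underline{R}$ then have cohomological amplitude bounded by the relative dimension. This follows by dévissage on a stratification of $X$ by smooth schemes, using the Poincaré duality isomorphism $f^!\cong f^*[2d]$ for smooth $f$ of relative dimension $d$.

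Given this bound, the direct sum $\bigoplus_{j\ge a}M_{2j}$ is a finite sum, hence equal to the finite product over $a\le j\le \dim X$. Consequently $\Phi(M_*)=\prod_j M_{2j}$. Inverse limits commute with products, so
\begin{equation*}
\varprojlim_i\prod_j\bB_{2j}(\cdots)=\prod_j\varprojlim_i \bB_{2j}(\cdots)=\prod_j H^{\sT}_{2j}(X),
\end{equation*}
where the last step is Definition~\ref{def:eqhom} read degreewise.

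\textbf{Cohomology isomorphism.} This case is analogous but needs no hypothesis on $\fX$. The relevant groups are $\sE^*([\fX/\sT]_i)$, placed in homological degree $-*$, which vanish in positive homological degree. Hence $\bigoplus_{j\ge a}$ is again a finite sum over $a\le j\le 0$, and $\Phi$ yields $\prod_{j\ge0}H^{2j}([\fX/\sT]_i)$. Commuting the limit with the product gives $\prod_{j\ge 0}\varprojlim_i H^{2j}([\fX/\sT]_i)$. Completeness of cohomology (property ii)) then identifies this with $\prod_{j\ge0}H^{2j}([\fX/\sT])$.

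For both isomorphisms I must also check $\hat{K}^0_{\sT}(\pt)$-linearity. This holds because the cap product was defined level-wise and commutes with the above identifications degree by degree.

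\textbf{Main obstacle.} I expect the uniform degree bound on relative chains over $B_i\sT$ to be the only non-formal step. I would first try to reduce it, via the 6-functor formalism of \cite{KhanEH}, to the classical vanishing of Borel--Moore homology of $X$ above $2\dim X$, using smooth base change along $E_i\sT\to B_i\sT$.
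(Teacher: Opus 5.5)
Your argument is correct and follows the paper's route: a bound $\bB_{2p}([X/\sT]_i\to B_i\sT)=0$ for $p>\dim X$, uniform in $i$, turns $\Phi$ into a product, limits commute with products, and the cohomology case follows because cohomology vanishes in negative degrees (the paper only cites \cite{BB1} for the formal bivariant part). The one difference is how you get the bound. The paper uses Poincar\'e duality on the smooth base to write $\bB_{2p}([X/\sT]_i\to B_i\sT)\cong H_{2(p+n_i)}([X/\sT]_i)$ and then uses vanishing of homology of the $(\dim X+n_i)$-dimensional space $[X/\sT]_i$, while your bound $f_!f^!\underline{R}\in D^{\ge -2\dim X}$ works directly over the base. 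One small correction: since relative chains use $f_!$, the fibres compute ordinary rather than Borel--Moore homology, though both vanish above $2\dim X$.
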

\begin{proof}
   Here we only construct the isomorphisms \eqref{eq:completed-explicit-example}. Because $X$ is quasi-compact and locally finite type, it is finite dimensional. Let $d$ be its highest dimension and $n_i$ the dimension of $B_i\sT$. Then 
   $$
   \bB_{2p}\big([X/\sT]_i\to B_i\sT\big)\cong H_{2(p+n_i)}\big([X/\sT]_i\big) 
   $$
   is zero for $p> d$. In particular, we see that 
   $$
    \bK_0\big([X/\sT]_i\to B_i\sT\big) = \prod_{p\leq d}   \bB_{2p}\big([X/\sT]_i\to B_i\sT\big)\,.
   $$
   Since taking limits commutes and each factor on the right converges to $H^{\sT}_{2p}(X)$, we obtain the isomorphisms. The second isomorphism follows by the same argument because cohomology is supported in non-negative degrees.
\end{proof}
Using the above, we will replace any $\Theta\in K^0_{\sT}(\fX)$ by its total equivariant Chern character $\ch(\Theta)\in \wh{K}^0_{\sT}(\fX)$. We will omit writing $\ch$ in this case.
\subsubsection{}
Here, we recall the main explicit example which include quiver moduli stacks and fixed point moduli stacks -- see \cite[Example B.8.2)]{Bo25} and \cite[Definition 4.6]{Bo25}, respectively.
\begin{example}[{\cite[Example B.8]{Bo25}}]
\label{ex:trivialThom}
Consider the situation when $\sT =\bC^*$ as it is easy to extrapolate to any finite-dimensional torus and $\fX$ with trivial $\sT$-action. Let $t=e^u$ be the K-theoretic equivariant weight generating $\wh{K}^0_{\sT}(\pt)\cong [[(1-t)]]$ and assume that $\bQ\subset R$. the first-named author showed that
$$
H^{\sT}_*\big(\fX\big)\cong \varprojlim_n  H_*(\fX)\otimes_R R[u]/u^{n+1}=:H_*(\fX)\llbracket u\rrbracket^{\gr}
$$
where the limit is a graded one. The same argument also implies that
$$K^{\sT}_0(\fX)\cong K_0(\fX)\llbracket (1-t)\rrbracket\,.$$
\end{example}
We also discuss what pushforwards of fixed points may look like along their $\sT$-equivariant maps to stacks.
\begin{example}[{\cite[Example B.9]{Bo25}}]
\label{ex:equivariantpushforw}
    Consider the stack $B\bG_m$ the homology of which is given by $H_*(B\bG_m) \cong H_*(\bP^{\infty})\cong R[p]$ where $p^n =[\bP^n]\in H_{2n}(\bP^{\infty})$.
    For the trivial $\sT=\bC^*$-action this gives
    $
    H^{\sT}_*(B\bG_m) = R[p]\llbracket u \rrbracket^{\gr}.
    $ Let $\phi: \pt\to B\bG_m$ be the $\sT$-equivariant morphism such that $\phi^*(\cL)= e^u$ for the universal line bundle $\cL$ on $B\bG_m$. Let $1\in R[u] = H^{\sT}_{*}(\pt)$ be the point class. Then the first-named author showed that
$$
\phi_*(1) = \sum_{n\geq 0}p^nu^n\in R[p]\llbracket u \rrbracket^{\gr}\,.
$$
\end{example}
\subsubsection{}
We finish by discussing virtual invariants in $H^{\sT}_*(\fX)_{\loc}$ and $K^{\sT}_0(\fX)_{\loc}$ where $R=\bQ$. Here localizing means inverting all homogeneous polynomials in $H^*_{\sT}(\pt)$.
\begin{definition}
\label{def:bivariant-hom-invariants}
    Consider the situation in Definition \ref{bg:def:univ-enum-inv} where $j:M\to \fM^{\pl}_{\cat{A}}$ is again an open embedding. Then $j^*\bE^{\rig}$ is a CY4 obstruction theory on $M$, so we have virtual classes
    $$
    [M]^{\vir}\in H^{\sT}_*(M)_{\loc}\qquad\textnormal{and}\qquad [\wh{\cO}^{\vir}_M]\in G_0^{\sT}(M, \bZ[2^{-1}])\,,
    $$
    where the first class was already explained in \cite[Example 4.9]{Bo25} and gives rise to 
    $$ \langle M\rangle:= j_* [M]^{\vir}\in H_*^{\sT}(\fM^{\pl}_{\cat{A}})_{\loc}\,.$$
Consider the composition of the natural maps
$$
\begin{tikzcd}
   { G^{\sT}_0(M,\bZ[2^{-1}])}\arrow[r,"\tau^{\sT}"]& \prod_i A^{\sT}_i(M,\bQ)\arrow[r]&\Big(\prod_{i} A^{\sT}_i(M,\bQ)\Big)_{\loc}\arrow[r,"\operatorname{cl}|_{M^{\sT}}"]& K^{\sT}_0(M)_{\loc}
\end{tikzcd}
$$
where $\tau^{\sT}$ was constructed in \cite{EdGrRR}, and $\operatorname{cl}|_{M^{\sT}}$ is induced by the commutative diagram
$$
\begin{tikzcd}
\Bigl(\prod_i A_i^{\sT}(M,\bQ)\Bigr)_{\loc}
  \arrow[r, "{\operatorname{cl}|_{M^{\sT}}}"]
&
\Bigl(\prod_i H_{2i}^{\sT}(M)\Bigr)_{\loc}
  \mathrlap{\;\cong K_0^{\sT}(M)_{\loc}}
\\
\Bigl(\prod_i A_i^{\sT}(M^{\sT},\bQ)\Bigr)_{\loc}
  \arrow[u, "\sim" {rotate=90, yshift=1ex, xshift= 1ex}]
  \arrow[r, "\operatorname{cl}"]
&
\Bigl(\prod_i H_{2i}^{\sT}(M^{\sT})\Bigr)_{\loc}
  \arrow[u, "\sim" {rotate=90, yshift=-1ex, xshift= 1ex}]
\end{tikzcd}
$$
 Applying this composition to $[\wh{\cO}^{\vir}_M]$, we get 
$$
Z_M \in K^{\sT}_0(M)_{\loc}\qquad \textnormal{and}\qquad 
j_*Z_M\in K^{\sT}_0(\fM^{\pl}_{\cat{A}})_{\loc}\,.
$$
\end{definition}
\subsection{Vertex Algebras and Their $\sT$-deformations}\label{bg:sec:va-and-t-deformations}
\subsubsection{}
We begin by recalling Joyce's vertex algebra in the CY4 setting as noted down in \cite[§3.2]{Joycehall} and \cite[§4.4]{GJT}. More precisely, we consider its $\sT$-deformation on $H^{\sT}_*(\fM_{\cat{A}})$ as constructed in \cite[§4.3]{Bo25} which recovers the non-equivariant version by the non-equivariant limit property in §\ref{sec:Khan-bivariant}. The axioms of $\sT$-deformations will appear and be checked in \cite{Bo26} (see also \cite[p. 33]{BoVA-conference-talk}). Unlike these references, we write everything multiplicatively here.

In what follows $\vdim = \rk(\bE): \fM_{\cat{A}}\to \bZ$ is a locally constant function determined by the obstruction theory \eqref{eq:EE=Delta-Theta}. It therefore allows us to shift homological degrees on each connected component of $\fM_{\cat{A}}$. For a local version of the following definition constructed on the fixed-point stack $\fM^{\sT}_{\cat{A}}$ see \cite[Definition 4.6, Definition 4.7]{Bo25}.
\begin{definition}[{\cite[Definition 4.8]{Bo25}, \cite[§3.2]{Joycehall}, \cite[§4.4]{GJT}}]
\label{Def:VAglobal}
    The \textit{$\sT$-deformation of vertex algebras} on 
    $$V_{*}:= H^{\sT}_{*+\vdim}(\fM_{\cat{A}})$$
    is determined by the data $\big(V_{*}, \ket{0}, D, Y\big)$ given as follows:
\begin{enumerate}
    \item Using the inclusion $0\colon*\to \fM_{\cat{A}}$ of the point corresponding to the zero object, set
    $$
    \ket{0}:=0_*(*)\in H^{\sT}_0(\fM_{\cat{A}})\,.
    $$
    \item Let $\Psi_*: H^{\sT}_*(B\bG_m)\hat{\otimes}_{H^*_{\sT}(\pt)} H^{\sT}_*(\fM_{\cat{A}})\to  H^{\sT}_*(\fM_{\cat{A}})$ be the push-forward along the second map in \eqref{eq:murho} after applying \eqref{eq:equivariant-Kunneth}. By Example \ref{ex:trivialThom}, we can identify the source of $\Psi_*$ with  $H_*(B\bG_m)\hat{\otimes}_R H^{\sT}_*(\fM_{\cat{A}})$. Using
    $\Hom_R\big(H_*(B\bG_m),R\big) =\wh{K}^0(B\bG_m) = R\llbracket (1-z)\rrbracket$, where $z$ is the class of the universal $\cL$ on $B\bG_m$, we get the map
    $$
    D(z) : H^{\sT}_*(\fM_{\cat{A}})\to  H^{\sT}_*(\fM_{\cat{A}})\llbracket (1-z)\rrbracket\,.
    $$
    By the same construction, the $B\bG_m$-action on the first factor of $\fM_{\cat{A}}\times \fM_{\cat{A}}$ produces the operator $D(z)\otimes \id$ acting on $H^{\sT}_*\big(\fM_{\cat{A}}\times \fM_{\cat{A}}\big)$.
    \item Let  $
    \varepsilon : K^0(\cat{A})\times K^0(\cat{A})\to \{-1,+1\}
    $ be a group 2-cocycle from Definition \ref{def:group-2-cocycle} and set $\bT:=\bG_m\times \sT$. Define an $H_{\sT}^*(\pt)$-bilinear $\sT$-deformations of state-field correspondences $Y: V_*\hat{\otimes}_{H_{\sT}^*(\pt)} V_*\to V_* \llbracket (1-z)^{\pm 1}\rrbracket$ by 
    \begin{align*}
    Y(v,z)w &= (-1)^{a\chi(\beta,\beta)}\varepsilon_{\alpha,\beta}\, \Phi_\ast\left((D(z)\otimes \textnormal{id})\frac{v\boxtimes_{\sT} w}{\fe_{\bT}(z\Theta_{\cat{A}})}\right)
    \end{align*}
   for any $v\in H^{\sT}_{a}(\fM_\alpha)$ and $w\in H^{\sT}_{*}(\fM_{\beta})$ where $\al,\beta\in K^0(\cat{A})$. Here $\fe_{\bT}(z\Theta_{\cat{A}})^{-1}$ is always expanded to include negative powers of $(1-z)$. 
\end{enumerate}
\end{definition}
If $z=e^s$, then $\fe_{\bT}(z\Theta_{\cat{A}}) = s^{\rk(\Theta_{\cat{A}})}c_{s^{-1}}(\Theta_{\cat{A}})$. To obtain the above expansion in $(1-z)$ one substitutes $s^{-1} = (1-z)^{-1}F(1-z)$ where $F$ is an explicit power-series with constant term $-1$, thus invertible. 
\subsubsection{}
That Definition \ref{Def:VAglobal} gives a graded vertex algebra when $\sT =\{1\}$ was proved in \cite[Theorem 3.14]{Joycehall}. Let 
$$V_{i,*}:= \bB_{*+\vdim}\big([\fM_{\cat{A}}/\sT]_i\to B_i\sT\big)$$
be one of the terms used in Definition \ref{def:eqhom} to define the equivariant homology $V_*$. Then one of the conditions on a $\sT$-deformation of vertex algebras introduced in \cite{Bo26, BB1}\footnote{See also \cite[p. 33]{BoVA-conference-talk}.} requires that each $V_{i,*}$ is a graded vertex algebra and the transition morphism between them are morphisms of graded vertex algebras. In the present case, this is satisfied essentially by \cite[Theorem 3.14]{Joycehall}. This axiom on its own is a generalization of the usual notion of deformations of vertex algebras as introduced in \cite[§5]{HaiLi}, however the actual definition of $\sT$-deformations requires an additional condition to be checked. 
\begin{theorem}[{\cite{Bo26, BB1}}]
\label{thm:homology-T-deformed-VA}
    The data $\big(V_{*}, \ket{0}, D, Y\big)$ from Definition \ref{Def:VAglobal} gives rise to a $\sT$-deformation of to algebras.
\end{theorem}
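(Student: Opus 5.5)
The plan is to reduce the statement to the non-equivariant case. I would use the pro-system description of $V_*$ from Definition~\ref{def:eqhom}, working level-wise on the terms $V_{i,*}=\bB_{*+\vdim}\big([\fM_{\cat{A}}/\sT]_i\to B_i\sT\big)$, and then check that the remaining $\sT$-deformation condition passes to the limit.

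\textbf{Step 1: each level is a graded vertex algebra.} For each $i$, the stack $[\fM_{\cat{A}}/\sT]_i$ over $B_i\sT$ carries:
\begin{itemize}
\item the descended direct sum map $\Phi$ and the descended $B\bG_m$-action $\Psi$ from \eqref{eq:murho};
\item the descent of $\Theta_{\cat{A}}$;
\item the sign cocycle $\varepsilon$.
\end{itemize}
On the relative homology $\bB_*(-\to B_i\sT)$, the formulas of Definition~\ref{Def:VAglobal} therefore make sense verbatim. I would rerun the proof of \cite[Theorem 3.14]{Joycehall} relative to the smooth base $B_i\sT$. The ingredients are:
\begin{itemize}
\item associativity and commutativity of $\Phi$ and its compatibility with $\Psi$, as in Definition~\ref{def:graded-monoidal-stack};
\item bilinearity \eqref{eq:mVOA-bilinear-complex-conditions} and symmetry of $\Theta_{\cat{A}}$, which follow from Serre duality $\RHom(E,F)\simeq\RHom(F,E)^\vee[-4]$;
\item the cocycle identity \eqref{eq:epsidentity}.
\end{itemize}
The relative Künneth morphism \eqref{eq:equivariant-Kunneth} replaces the absolute one. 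Rational Künneth and rational triviality for $B\bZ_n$-torsors are what make the $D(z)$ and locality computations go through.

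\textbf{Step 2: transition maps are vertex algebra morphisms.} The maps $V_{j,*}\to V_{i,*}$ are base-change pullbacks along the cartesian squares \eqref{eq:XiGfiberprod}. They commute with all the structure because of the bivariant compatibilities in \cite[§2.2]{FMP}:
\begin{itemize}
\item base change commutes with pushforward, giving compatibility with $\Phi_*$ and $\Psi_*$;
\item base change commutes with products and caps, giving compatibility with $\boxtimes$ and with Euler classes of pulled-back bundles.
\end{itemize}
Hence $V_*=\varprojlim V_{i,*}$ is a limit of graded vertex algebras. The expansion of $\fe_{\bT}(z\Theta)^{-1}$ in $(1-z)$ is compatible levelwise, using the substitution $s^{-1}=(1-z)^{-1}F(1-z)$.

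\textbf{Step 3 (main obstacle): the additional $\sT$-deformation axiom.} This is the extra condition from \cite{Bo26, BB1}, presumably continuity and $H^*_\sT(\pt)$-linearity of $Y$ with respect to the graded limit topology, together with the correct non-equivariant limit. My approach would be:
\begin{itemize}
\item show that $Y$ is $\sE^*_\sT(\pt)$-bilinear and continuous for the completed tensor product, using that cap product and Künneth are defined as limits (§\ref{sec:equivariantOPs});
\item use the non-equivariant limit property of §\ref{sec:Khan-bivariant} to identify the specialization at $\sT=\{1\}$ with Joyce's vertex algebra.
\end{itemize}
The delicate point is that the truncations $B_i\sT$ are not compatible with inverting equivariant parameters. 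I would avoid localization entirely here: everything is stated on unlocalized $V_*$, and only $(1-z)$-expansions appear, which are handled degreewise using the grading.
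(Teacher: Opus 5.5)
Your Steps~1 and~2 coincide with everything the paper says about this result, which it only cites from \cite{Bo26, BB1}. The text before the theorem notes that one axiom of a $\sT$-deformation is that each $V_{i,*}$ is a graded vertex algebra and the transition maps are morphisms of graded vertex algebras. It says this holds essentially by \cite[Theorem 3.14]{Joycehall} over $B_i\sT$, with all operations defined levelwise and compatibly with base change (\S\ref{sec:equivariantOPs}). That is your levelwise argument plus your compatibility check. One correction: rational triviality for $B\bZ_n$-torsors plays no role in the levelwise locality argument. The paper introduces it for the Lie algebra on rigidified stacks (Proposition~\ref{prop:quotient-by-T}).

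The gap is Step~3. The paper stresses that the levelwise condition alone only generalizes Li's deformations of vertex algebras. It says a $\sT$-deformation requires an \emph{additional} condition and defers that check to \cite{Bo26, BB1}. You do not identify this condition, and what you propose to verify does not supply it:
\begin{itemize}
\item $H^*_\sT(\pt)$-bilinearity of $Y$ on $V_*\hat{\otimes}V_*$ is part of the data in Definition~\ref{Def:VAglobal}.
\item Continuity in the limit topology is automatic, because every operation is a limit of compatible levelwise operations.
\item Recovering Joyce's vertex algebra is stated in the paper after the theorem, as a consequence of applying $\red^{\sS\subset\sT}$.
\end{itemize}
Checking these adds nothing beyond Steps~1 and~2.

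What is genuinely new on $V_*$ is that $Y(v,z)w$ can have infinitely many negative powers of $(1-z)$. At level $i$ only finitely many survive: $\bB_q\big([\fM_{\cat{A}}\times\fM_{\cat{A}}/\sT]_i\to B_i\sT\big)=0$ for $q<-2\dim B_i\sT$, so high-degree Chern classes of $\Theta_{\cat{A}}$ cap a fixed class to zero. This bound disappears in the limit, which is exactly why $Y$ lands in $V_*\llbracket(1-z)^{\pm1}\rrbracket$ and $V_*$ is not itself a vertex algebra. An axiom that is not implied by the levelwise one can only concern the limit, for instance this behaviour, and must be checked on $V_*$ directly. Your plan contains no such check.

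So your argument reaches the same point as the paper's own discussion. As a proof of the theorem, however, Step~3 is a placeholder rather than an argument.
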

If $\sS=\{1\}$ and we apply $\red^{\sS\subset \sT}_{\fX}$ from \eqref{eq:reduction} to $V_*$ and its operations, we recover Joyce's vertex algebras as the non-equivariant limit. Thus our construction is truly a deformation of his.
\subsubsection{}
\label{sec:Khomology-Tdeformation}
We now discuss the K-theoretic version of the above construction and use equivariant K-homology from §\ref{sec:homology-to-khomology}. For degree reasons, we restrict to connected components of $\fM_{\cat{A}}$ corresponding to even classes $K^0_{e}(\cat{A})$ only.

Let $V^K_0:= K^{\sT}_0(\fM_{\cat{A}})$, then we construct the data $(V^K_0,\ket{0},D,Y^K)$ below:
\begin{enumerate}
    \item The vacuum $\ket{0}$ is constructed in the same way.
    \item Using Example \ref{ex:trivialThom}, there a natural inclusion $K_0(B\bG_m)\hookrightarrow K^{\sT}_0(B\bG_m)$, composing it with $\Psi_*$ from before, we get a map
    $
    K_0(B\bG_m)\otimes_R K^{\sT}_0(\fM_{\cat{A}})\to K^{\sT}_0(\fM_{\cat{A}})
    $
    inducing
      $$
    D(z) : K^{\sT}_0(\fM_{\cat{A}})\to  K^{\sT}_0(\fM_{\cat{A}})\llbracket (1-z)\rrbracket\,.
    $$
    \item The $\sT$-deformation of state-field correspondences is given in the same way except that we use $\hat{\fe}_{\bT}$ instead of the usual equivariant Euler-class:
     \begin{align*}
    Y^K(v,z)w &= \varepsilon_{\alpha,\beta}\, \Phi_\ast\left((D(z)\otimes \textnormal{id})\frac{v\boxtimes_{\sT} w}{\hat{\fe}_{\bT}(z\Theta_{\cat{A}})}\right)\,.
    \end{align*}
    We still expand $\hat{\fe}_{\bT}(z\Theta_{\cat{A}})^{-1}$ in negative powers of $(1-z)$ so that it is an element of $K^0_{\sT}(\fM_{\cat{A}}\times \fM_{\cat{A}})\llbracket(1-z)^{-1} ,(1-z)\rrbracket[z^{-\frac{1}{2}}, z^{\frac{1}{2}}]$.
\end{enumerate}
Explicitly, this expansion can be written in terms of the Connor-Floyd classes via
$$
\fe_{\bT}(z\Theta_{\cat{A}})^{-1} = (1-z^{-1})^{-\rk(\Theta_{\cat{A}})}\sum_{k\geq 0}\fc_k(-\Theta_{\cat{A}})(1-z^{-1})^{-k}
$$
where $(1-z^{-1}) = -z^{-1}(1-z)$ and $z^{-1}$ is expressed as an invertible power-series in $(1-z)$.
\begin{theorem}[{\cite{BB1}}]
The data $(V^K_0,\ket{0},D,Y^K)$ determines a $\sT$-deformation of vertex algebras. This in particular implies that $\bK_0\big([\fM_{\cat{A}}/\sT]_i\to B_i\sT\big)$ form a pro-system of vertex algebras.
\end{theorem}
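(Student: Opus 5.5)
The strategy is to reduce to the homological statement of Theorem~\ref{thm:homology-T-deformed-VA} via the completion functor $\Phi$, level by level on the pro-system. Fix a level $i$ of the ind-system $\{E_i\sT\}$ and set $V^K_{i,0}:=\bK_0\big([\fM_{\cat{A}}/\sT]_i\to B_i\sT\big)=\Phi\big(V_{i,*}\big)$ (up to the $\vdim$ shift, which we only use on even classes so that $\Phi$ sees even degrees consistently). By Proposition~\ref{prop:K-theory-wellbehaved}, all operations entering the definition of $(V^K_0,\ket{0},D,Y^K)$ are obtained from the corresponding bivariant operations of $\bB_*$ by applying the lax-monoidal $\Phi$. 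So the first step is to check that at each level $Y^K$ is the image under $\Phi$ of a twisted version of $Y$. Concretely, we replace the Chern character of $\hat{\fe}_{\bT}(z\Theta_{\cat{A}})^{-1}$ by $\fe_{\bT}(z\Theta_{\cat A})^{-1}$ times a Todd-type multiplicative correction $\operatorname{td}(z\Theta_{\cat A})\ch(\det(z\Theta_{\cat{A}})^{1/2})$, using $z=e^s$ and the non-equivariant Grothendieck--Riemann--Roch identification implicit in $\Phi$.

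The second step is to identify the correction term as a bilinear/multiplicative modification that does not spoil the vertex algebra axioms. By the bilinearity axioms \eqref{eq:mVOA-bilinear-complex-conditions} and the symmetry $\Theta_{\beta,\alpha}=(12)^*\Theta_{\alpha,\beta}^\vee$, the ratio
\[
\frac{\hat{\fe}_{\bT}(z\Theta_{\alpha\beta})}{\fe_{\bT}(z\Theta_{\alpha\beta})}
\]
is the exponential of a bilinear expression in the Chern characters of the universal objects. This is exactly a Borcherds-type bicharacter twist in the sense of \cite{Borcherds-quantum}: the data is twisted by a bicharacter, and such twists preserve locality, skew-symmetry and associativity. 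Because the evenness of classes makes $\rk\Theta_{\alpha\beta}=\chi(\alpha,\beta)$ compatible with the $z^{\pm 1/2}$ ambiguity, the sign $(-1)^{a\chi(\beta,\beta)}$ in the homological $Y$ disappears after restricting to even degrees. Together with $\varepsilon$ being a group 2-cocycle, this gives that each $V^K_{i,0}$ is a (multiplicative) vertex algebra, in the sense of Theorem~\ref{thm:homology-T-deformed-VA} transported by $\Phi$.

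The third step is compatibility of transitions. The transition maps are base-change pullbacks along \eqref{eq:XiGfiberprod}. These commute with $\Phi_*$, $\Psi_*$, $\boxtimes_\sT$ and cap products with pulled-back classes, and $\Phi$ is functorial, so the transitions are vertex algebra morphisms. This gives the pro-system claim. Finally, we check the extra $\sT$-deformation axiom (the continuity/limit condition of \cite{Bo26,BB1}) by passing to the limit via \eqref{eq:completed-explicit-example} and the corresponding statement for $V_*$.

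The main obstacle is the second step. One has to make the expansion of $\hat{\fe}_{\bT}(z\Theta)^{-1}$ in $(1-z)^{\pm1}$ and the half-determinant $z^{\pm1/2}$ genuinely compatible with $\Phi$-completion, so that the bicharacter twist converges in $\bK_0$ at each finite level. Without this, it is only a formal identity. I would first handle this on finite-dimensional approximations $[\fM/\sT]_i$ restricted to quasi-compact opens, where the Chern character is nilpotent up to degree bounds, and then pass to the limit.
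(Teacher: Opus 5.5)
Your overall route is the one the paper points to. The paper gives no argument beyond citing \cite{BB1}, but it explains that $\bK_0=\Phi(\bB_*)$ is used precisely because a K-homology compatible with Joyce's bicharacter construction in the sense of \cite{Borcherds-quantum} is essentially completed homology. So $Y^K$ should be the homological $Y$ with a modified bicharacter.

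\textbf{The gap is in your second step.} The modification is computed incorrectly, and the one property of the twist that really needs checking is not checked. No Grothendieck--Riemann--Roch is involved: pushforwards, $\boxtimes$ and $D(z)$ on $\bK_0$ are just $\Phi$ of the homological ones, and K-theory enters only through $\ch$ of the inserted class. The comparison is therefore the identity
\[
\ch\big(\hat{\fe}(E)\big)=\prod_i\big(e^{x_i/2}-e^{-x_i/2}\big)=\fe(E)\,\widehat{A}(E)^{-1},\qquad \widehat{A}(E)=\operatorname{td}(E)\,\ch(\det E)^{-1/2},
\]
so the twist is $\widehat{A}_{\bT}(z\Theta)$, not $\operatorname{td}(z\Theta)\ch(\det(z\Theta)^{1/2})$.

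Bilinearity, which you do check, makes the twist a bicharacter. But for the twisted fields to stay skew-symmetric and local, the twist must also be symmetric under $(\alpha,\beta,z)\mapsto(\beta,\alpha,z^{-1})$.
\begin{itemize}
\item For $\widehat{A}$ this holds. $\log\widehat{A}(E)$ involves only $\ch_{2k}(E)$, so $\widehat{A}(E^\vee)=\widehat{A}(E)$. Also $(12)^*\Theta_{\beta\alpha}=\Theta_{\alpha\beta}^\vee$ gives $z^{-1}(12)^*\Theta_{\beta\alpha}=(z\Theta_{\alpha\beta})^\vee$.
\item Your factor $f$ instead satisfies $f(E)=\ch(\det E)^{2}f(E^\vee)$, so with it skew-symmetry fails.
\end{itemize}
This evenness is exactly what the $\det(-)^{1/2}$ in $\hat{\fe}$ provides, and your argument must state and use it.

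By contrast, the convergence issue you call the main obstacle is harmless. At each level the relevant homology is bounded below, so capping with a power series in Chern classes leaves finitely many non-zero terms in each coefficient.

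What is missing instead:
\begin{itemize}
\item Writing Joyce's $Y$ in Borcherds' form needs the diagonal coproduct on homology, i.e.\ the rational K\"unneth isomorphism of \cite{BB1}. This comes with super-signs, since even classes may have odd K\"unneth components.
\item You need compatibility of the twist with $D(z)$. This holds because the twist depends only on $z\Theta$ and $(\Psi\times\id)^*\Theta=\cL^\vee\boxtimes\Theta$, but it should be said.
\item Your final step invokes an additional $\sT$-deformation axiom that you never formulate. As written it is a placeholder, not an argument.
\end{itemize}

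Finally, your explanation of the signs is off. $(-1)^{a\chi(\beta,\beta)}$ is trivial simply because $\chi(\beta,\beta)$ is even. $\chi(\alpha,\beta)$ itself may be odd, e.g.\ $\chi(\cO_X,\cO_p)=1$ on a strict Calabi--Yau fourfold. The resulting sign $(-1)^{\rk\Theta_{\alpha\beta}}$ from $\hat{\fe}(E^\vee)=(-1)^{\rk E}\hat{\fe}(E)$ is absorbed by the cocycle relation for $\varepsilon$, exactly as in homology.
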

Both this theorem and Theorem \ref{thm:homology-T-deformed-VA} apply to the setting in §\ref{sec:flag-VA+LA} giving $\sT$-deformations of vertex algebras on the corresponding equivariant homology of $\fM^{\bar{Q}(r)}_{\al,\vec d}$.
\subsubsection{}
\label{sec:Lie-algebras}
Let $z=e^u$ in the above. Define the operators $T$ on $V_*$ and $V^K_0$ by requiring that $e^{uT} = D(z)$. This is called a \textit{translation operator}. We introduce the quotients 
$$
L_*:= V_{*+2}/T V_*\qquad\textnormal{and}\qquad L^K_0 := V^K_0/TV^K_0
$$
For the presently considered equivariant homology theories, these quotients have a nice geometric interpretation. 
\begin{proposition}[{\cite[Proposition B.7]{Bo25},\cite{BB1}}]
\label{prop:quotient-by-T}
     Let $\alpha\in K^0(\cat{A})$ be such that for some $\beta\in K^0(\cat{A})$, one has
$
\chi(\beta,\alpha)\neq 0$ and $\fM^{\sT}_{\beta}\neq  \emptyset$.
Then, there are natural isomorphisms  $$
    H^{\sT}_*\big(\fM_{\alpha}\big)/T H^{\sT}_{*-2}\big(\fM_{\alpha}\big)\cong H^{\sT}_*\big(\fM^{\pl}_{\alpha}\big)\qquad \textnormal{and}\qquad K^{\sT}_0\big(\fM_{\alpha}\big)/TK^{\sT}_0\big(\fM_{\alpha})\cong H^{\sT}_*\big(\fM^{\pl}_{\alpha}\big)
    $$ whenever additionally $\bQ\subset R$. 
\end{proposition}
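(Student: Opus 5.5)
The plan is to reduce the statement to the case of a trivial $B\bG_m$-gerbe, and then remove the triviality by a Gysin-type argument. First, I construct the map. The pushforward $\Pi_*\colon H^{\sT}_*(\fM_\alpha)\to H^{\sT}_*(\fM^{\pl}_\alpha)$ kills $T H^{\sT}_{*-2}(\fM_\alpha)$. By construction, $D(z)$ is induced by $\Psi_*$ along $B\bG_m\times\fM_\alpha\to\fM_\alpha$. Since $\Pi\circ\Psi=\Pi\circ\mathrm{pr}_2$, the composite $\Pi_*\circ\Psi_*(p^n\boxtimes v)$ vanishes for $n\geq 1$: it factors through $\mathrm{pr}_{2*}$, and $p^n$ pushes forward to zero under $B\bG_m\to\pt$ for $n\geq1$. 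Hence $\Pi_*D(z)=\Pi_*$, and so $\Pi_* T=0$. This gives the induced map $\bar\Pi_*$ on the quotient, after the degree shift by $2$ built into $L_*$. The same argument works for $K^{\sT}_0$, either directly via the completion functor $\Phi$ of §\ref{sec:homology-to-khomology} or via the isomorphisms \eqref{eq:completed-explicit-example}.

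Next I prove that $\bar\Pi_*$ is an isomorphism, using the hypothesis to trivialize the gerbe cohomologically. Choose $\beta$ with $\chi(\beta,\alpha)\neq0$ and a $\sT$-fixed point $F\in\fM^{\sT}_\beta$. Restricting $\Theta_{\cat{A}}$ (or $\mExt_{\cat{A}}$) to $\{F\}\times\fM_\alpha$ gives a $\sT$-equivariant perfect complex $\cL_F$ on $\fM_\alpha$. By the bilinearity \eqref{eq:mVOA-bilinear-complex-conditions}, $B\bG_m$ acts on $\cL_F$ with weight $\pm\chi(\beta,\alpha)=:\pm k\neq0$. Then $\xi:=c_1(\det\cL_F)/k\in H^2_{\sT}(\fM_\alpha;\bQ)$ restricts on each fiber $B\bG_m$ of $\Pi$ to the generator of $H^2(B\bG_m)$; this is where $\bQ\subset R$ is needed. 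The Leray--Hirsch theorem for the $B\bG_m$-gerbe $\Pi$, carried out levelwise on $[\fM_\alpha/\sT]_i\to B_i\sT$ in the bivariant theory and then passed to the limit, gives a splitting
\[
H^{\sT}_*(\fM_\alpha)\cong H_*(B\bG_m)\hat\otimes H^{\sT}_*(\fM^{\pl}_\alpha).
\]
Under this splitting, $\Psi_*$ acts on the first factor by the coproduct-type action $p^m\cdot p^n=\binom{m+n}{m}p^{m+n}$. The operator $T$ is then, up to an invertible rescaling, the shift $p^n\mapsto (n+1)p^{n+1}$, which is injective with cokernel spanned by $p^0\otimes H^{\sT}_*(\fM^{\pl}_\alpha)$. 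Finally, $\Pi_*$ restricted to $p^0\otimes(-)$ is the identity, so $\bar\Pi_*$ is an isomorphism. The K-theoretic case follows by applying $\Phi$ and \eqref{eq:completed-explicit-example}, because $T$ and $\Pi_*$ are compatible with the completion; I identify the target with $K^{\sT}_0(\fM^{\pl}_\alpha)\cong H^{\sT}_*(\fM^{\pl}_\alpha)$ after completion.

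The main obstacle is the Leray--Hirsch step: making the splitting hold for Artin stacks in the bivariant/limit framework, uniformly in the level $i$ and compatibly with the transition maps. It must also commute with $\Psi_*$, so that the formula for $T$ is correct. My first attempt would be to pass to the $\bG_m$-torsor $\cP\to\fM_\alpha$ associated to $\det\cL_F$, rescaled by $k$ via a $\mu_k$-gerbe. Its rational triviality is exactly the ``rational triviality for $B\bZ_n$-torsors'' axiom. Over $\cP$ the gerbe becomes trivial, $\fM_\alpha\times_{\fM^{\pl}_\alpha}\cP\simeq B\bG_m\times\cP'$, so the splitting reduces to the rational Künneth isomorphism.
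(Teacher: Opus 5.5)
Your proposal is correct and is the standard argument: the paper gives no proof of its own, citing \cite[Proposition B.7]{Bo25} and \cite{BB1}, but it names exactly the rational K\"unneth isomorphism and rational triviality for $B\bZ_n$-torsors as the inputs needed for the Lie algebras on rigidified stacks, and your final reduction is the right way to use them: the frame bundle $\mathcal{P}$ of $\det\cL_F$ is a $\mu_{|k|}$-gerbe over $\fM^{\pl}_\alpha$, the pullback of $\Pi$ to $\mathcal{P}$ has the tautological section $\mathcal{P}\to\fM_\alpha$ and is therefore trivial compatibly with $\Psi$ (by $\Psi\circ(\id\times\Psi)=\Psi\circ(\Omega\times\id)$), and so $T$ becomes $p^n\mapsto(n+1)p^{n+1}$ on the $H_*(B\bG_m)$ factor. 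Two small corrections: by \eqref{eq:mVOA-bilinear-complex-conditions} the complex $\cL_F$ itself has $B\bG_m$-weight $\pm1$, and it is $\det\cL_F$ that has weight $\pm\rk\cL_F=\pm\chi(\beta,\alpha)$, which is what your class $\xi$ actually uses; and the right-hand side of the K-theoretic isomorphism should be read as $K^{\sT}_0(\fM^{\pl}_\alpha)$, which is the completion of $H^{\sT}_*(\fM^{\pl}_\alpha)$ and not something isomorphic to $H^{\sT}_*(\fM^{\pl}_\alpha)$.
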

Moreover, both $L_*$ and $L^K_0$ carry a Lie algebras structure determined by
\begin{equation}
\label{eq:LiefromVA}
\big[\bar{v},\bar{w}\big] = -\bar{[(1-z)^{-1}]\Big\{z^{-1}Y(v,z)w\Big\}}\,,
\end{equation}
where $\bar{(-)}$ denotes the projection to the quotient and we use the above expansions in (possibly negative) powers of $(1-z)$. More generally, we define $\rho_z$ here by taking the coefficient of $(1-z)^{-1}$ in such an expansion. In terms of the variable $s$ in $z=e^s$ this would simply correspond to taking 
$$
\big[\bar{v},\bar{w}\big] = \bar{[s^{-1}]\Big\{Y(v,z)w\Big\}}\,,
$$

\subsubsection{}
\label{sec:localized-LA}
We can now localize all of our $H_{\sT}^*(\pt)$-modules. Let us write $V_{\loc,*}$ and $V^K_{\loc, 0}$ for the resulting $\sT$-deformations of vertex algebras on $ H^{\sT}_{*+\vdim}(\fM_{\cat{A}})_{\loc}$ and $K^{\sT}_0(\fM_{\cat{A}})_{\loc}$, respectively. The associated Lie algebras will be denoted by $L_{\loc,*}$ and $L^K_{\loc,0}$.

Suppose that in Definition \ref{def:bivariant-hom-invariants}, the map $j:M\to \fM^{\pl}_{\cat{A}}$ factors through $\fM^{\pl}_{\al}$ where $\al\in K^0(\cat{A})$ satisfies the condition of Proposition \ref{prop:quotient-by-T}. Then the proposition gives us elements 
$$
\langle M\rangle \in L_{\loc,*}\qquad\textnormal{and}\qquad j_*Z_M\in L^K_{\loc,0}\,.
$$

\section{Assumptions}
\label{sec:assumptions-all}
\subsection{Orientation Assumptions}
\subsubsection{}
\label{sec:epsilonorientations}
Recall that an orientation of $\fM_{\cat{A}}$ is an isomorphism $ \un{\bC}\xrightarrow{o} \det\big(\bE\big)$ satisfying  condition \eqref{eq:EEorient} with respect to the Serre duality induced by the CY4 structure. For each $\alpha\in \scE(\cat{A})$, we will denote the restriction of $o$ to $\fM_{\alpha}$ by $o_{\alpha}$. A compatibility of the orientations $o_{\alpha}$ under direct sums was observed in \cite{JTU, Joycehall}. However, the conventions used there would clash with those applied to equivariant localization in \cite{Oh2023} and those set in \cite[§3.3]{Bo25} (see \cite[Remark 4.3]{Bo25}), which is why we use the construction in \cite[§4.1]{Bo25} instead. It produces an isomorphism comparing orientations under taking direct sums.

Suppose that orientations $o_{\al},o_{\beta},$ and $o_{\al+\be}$ exist. The comparison determines a locally constant function $$\varepsilon_{\alpha,\beta}:\fM_{\al}\times \fM_{\beta}\to \{-1,+1\}$$
such that
\begin{equation}\label{as:eq:orientation-sum-discrepancy}
    o_{\al}\boxtimes o_{\beta} =\varepsilon_{\alpha,\beta}\mu^* o_{\al +\be }
\end{equation} 
holds. Whenever they are defined, these signs satisfy the properties of a group 2 cocycle from \eqref{eq:epsidentity} for classes $\al,\beta,\gamma\in \scE(\cat{A})$.

The first assumption on $\cat{A}$  is orientability with further restriction on $\varepsilon_{\al,\beta}$.
\begin{assumption}
\label{ass:orientation}
We assume that $\fM_{\alpha}$ are orientable for each $\alpha\in \scE(\cat{A})$.  Furthermore, there should be a uniform way of choosing orientations $o_{\alpha}$
    on each stack $\fM_{\alpha}$ independent of the connected components of $\fM_{\alpha}$. Such a choice of $\{o_{\alpha}\}_{\alpha\in \scE(\cat{A})}$ should lead to $\varepsilon_{\alpha,\beta}$ that are constant for any $\alpha,\beta\in \scE(\cat{A})$. 
    \end{assumption}
    \subsubsection{}
    \begin{example}
        \label{ex:sheavesandreps0}
    The main example we consider here is $\cat{A}=\Coh(X)$ for a Calabi--Yau fourfold $X$ -- see \cite[Example 4.5.1)]{Bo25}.
     \begin{enumerate}
         \item  If $X$ is not proper, then one needs to restrict the description to compactly supported sheaves by replacing $\Coh(X)$ with $\Coh_{\cs}(X)$. Its moduli stack will be denoted by $\fM_X$. In both cases, orientations have been proved in \cite{CGJ,bojko} by reducing to gauge-theoretic orientations. Recently, a correction of the proof of the existence of gauge-theoretic orientations has appeared in \cite{JU}. This puts the restriction \cite[($\ast$)]{JU} on $H^3(X,\bZ)$ in the compact case, which impacts the original statements in  \cite{CGJ, bojko}. The sufficient condition for orientations to exist as shown in \cite{bojko,karpov-thimm} using \cite{JU}, is expressed in terms of the compactly supported Steenrod square operation $\operatorname{Sq}^2_{\cs}: H^3_{\cs}(X,\bZ_2)\to  H^5_{\cs}(X,\bZ_2)$. It requires for any $\sv\in H^3_{\cs}(X,\bZ)$ and its image $\bar{\sv}\in H^3_{\cs}(X,\bZ_2)$ that 
         \begin{equation}
         \label{eq:H3ZZ_2vanishing}
\int_X \bar{\sv}\cup \operatorname{Sq}^2_{\cs}(\bar{\sv}) = 0\,.
         \end{equation}
         In particular, this is always satisfied when $H^3_{\cs}(X,\bZ_2) = 0$.
         
       Once orientations are chosen, the compatibility required in Assumption \ref{ass:orientation} follows from \cite[Theorem 1.15.(c)]{CGJ} and  \cite[Theorem 5.4]{bojko} as long as the natural map 
         $$
         \begin{tikzcd}
         K^0\big(\Coh_{\cs,e}(X)\big)\arrow[r]& K^0_{\cs,e}(X)
         \end{tikzcd}
         $$
         factors through the chosen quotient $\bar{K}(\Coh_{\cs}(X))$.\footnote{For the non-compact case, we additionally need $H^3(X,\bZ_2) = 0 = H^3(X,\bZ_2)$ as explained in \cite[Theorem 1.7]{bojko}} Here, the additional subscript $e$ picks out the classes $\al$ with $\chi(\al,\al)$ even.
     \end{enumerate} 
    \end{example}
\subsubsection{}
Recall the framed stacks from §\ref{sec:auxiliary-stacks} for a given quiver $Q$ as in Definition \ref{def:auxiliary-stack}. Following \cite[Lemma 8.4]{Bo25}, we introduce additional signs for $(\al,\vec d), (\beta,\vec e)\in \scE(\cat{A})\times \bZ^{Q^f_0}$ which are used for wall-crossing that takes place in $\fM^{Q(\vec\Fr)}$ (see §\ref{sec:horizontalflagWC}). 
\begin{definition}[{\cite[(8.9)]{Bo25}}]
\label{def:framed-epsilons}
    Let $\bF_{\al\beta}^{Q(\vec \Fr)}(\vec d,\vec e)$ be the restriction of \eqref{eq:framed-stack-forgetful-map-cotangent} to $\fM^{Q(\vec\Fr)}_{\alpha,\vec d}\times \fM^{Q(\vec\Fr)}_{\be,\vec e}$. Then using the signs from Assumption \ref{ass:orientation}, we define
    $$\varepsilon_{(\al,\vec d)}^{(\be, \vec e)}\coloneqq (-1)^{\rk\big(\bF_{\alpha\beta}^{Q(\vec \Fr)}(\vec d,\vec e)\big)}\varepsilon_{\al,\be}\,.$$
\end{definition}
 \subsection{Stability Assumptions}
    \subsubsection{}
    \label{sec:weak-stability}
In the current work, we choose to work in the largest possible generality when it comes to stability conditions. Thus, we use Joyce's \textit{weak stability conditions} from \cite{JoyceIII} that impose almost no restrictions. Recall that such a stability condition $\tau$ is determined by a map 
$$
\begin{tikzcd}
\tau: \bar{C}(\cat{A})\arrow[r]&S
\end{tikzcd}
$$
where $(S,\leq)$ is a totally ordered set. For it to be called a weak stability condition, it should satisfy 
$$
\tau(\al_1)\leq \tau(\al)\leq \tau(\al_2)\qquad\text{or}\qquad \tau(\al_1)\geq \tau(\al)\geq \tau(\al_2)
$$
whenever $\al=\al_1 + \al_2$ for $\al_1,\al_2\in \bar{C}(\cat{A})$. In this case, an object $E\in \cat{A}$ is said to be   $\tau$-semistable if for any short exact sequence 
$$  
0\to E_1\to E\to E_2\to 0
$$
in $\cat{A}$, one has $\tau(E_1)\leq \tau(E_2)$. If the strict inequality always holds, then $E$ is said to be $\tau$-stable. We will denote by
$$\fM_{\al}(\tau)\subset \fM_{\cat{A}}$$
the substacks consisting of $\tau$-semistable objects of class $\al\in \bar{K}(\cat{A})$. We fix a connected set $W$ of such weak stability conditions for a fixed $S$ with $W$ being a finite-dimensional manifold. The last condition is not necessary but makes formulating assumptions later cleaner. 
\subsubsection{}
\label{sec:W-space-of-stability-cons}
    For a fixed $\al\in \scE(\cat{A})$, we require that the map
    $$
    	W\ni\tau \mapsto \tau(\al)\in S
    $$
 is continuous with respect to the order topology on $S$. For any $\al,\beta\in \scE(\cat{A})$, this ensures that 
    \begin{itemize}
    	\item the set $W_{\al<\be} = \{\tau \in W \colon \tau(\al)<\tau(\be) \}$ is open,
    	\item the set $W_{\al=\be} = \{\tau \in W \colon \tau(\al)=\tau(\be) \}$ is closed.
    \end{itemize}
    In the cases one usually considers, the sets $W_{\al=\beta}$ are finite unions of real codimension 1 loci in $W$ or the entirety of $W$ (e.g. when $\beta =n\al$).  We will state more general assumptions below. 
    
   Let $\vec{\alpha} = (\alpha_1,\ldots,\alpha_n)$ for $\alpha_i\in \bar{C}(\cat{A})$ satisfy $\sum_{i=1}^n \al_i= \al$. Then we will say that it is a partition of $\alpha\in \bar{C}(\cat{A})$ which we will denote by $$\vec{\alpha} \vdash_{\cat{A}} \alpha\,.$$
   To write down enumerative wall-crossing formulae, one needs to make sure that there are finitely many non-zero coefficients  $\tilde{U}(\vec{\al};\tau,\tau')$ in the sum in Theorem \ref{sec:general-wall-crossing-intro}. For a fixed $\al\in \scE(\cat{A})$, this requires the set of \footnote{Here $U(-,\tau,\tau')$ is the other related type of coefficients discussed in \cite[§3.2]{Joyce2021}. This finiteness condition is required for the general wall-crossing formula. However, as noted in \cite[\S 1.3.9, \S 1.3.9]{KLT25}, the existence of a rank function as below guarantees finiteness of the sum in the dominant wall-crossing formula.}
   \begin{equation}\label{eq:wc-index-set-finite}
    \left\{\vec\beta\vdash_{\cat{A}}\alpha\ \Bigg|\ \begin{subarray}{c}\exists\vec \alpha_j\vdash_{\cat{A}}\beta_j, \tau,\tau'\in W\text{, such that}\\\fM_{\alpha_{jk}}(\tau)\neq \emptyset,\, U(\vec \alpha_j;\tau,\tau')\neq 0\,\forall\, j,k\\\text{and }\tau'(\beta_1)=\cdots=\tau'(\beta_p)
    \end{subarray}
    \right\}
   \end{equation}
    to be finite. Note that this implies that the set of $(\al_1,\al_2)\vdash_{\cat{A}}\al$ such that for some $\tau\in W$ one has $\tau(\al_1)=\tau(\al) = \tau(\al_2)$ and $\fM_{\al_{1}}(\tau)\neq \emptyset\neq \fM_{\al_{2}}(\tau)$ is finite due to $U(\al_i;\tau,\tau) = 1$. Thus there are finitely many sets $W_{\al_1=\al_2}$ where objects in class $\al$ can be destabilized.
    \subsubsection{}
    \begin{definition}
    \label{def:framedstability}
      We fix $\tau\in W$ and a value $\phi\in S$. We then define the full exact subcategories
       \begin{equation}
    \label{eq:tau-semi-phi-fixed}   \cat{A}^{Q(\vec\Fr)}_{\tau,\phi}\subset \cat{A}^{Q(\vec\Fr)}\end{equation}
       consisting of objects $(E,
  \vec V, \vec\rho)$ such that $E$ is $\tau$-semistable and $\tau(E) =\phi$. For the purpose of writing stability conditions on $\cat{A}^{Q(\vec\Fr)}_{\tau,\phi}$, one replaces $\bar{K}(\cat{A})$  by 
  \begin{equation}
  \label{eq:barK-of-auxiliary}
\bar{K}\big(\cat{A}^{Q(\vec\Fr)}_{\tau,\phi}\big):= \bar{K}(\cat{A})\times \bZ^{Q^{f}_0}\,.\end{equation}
Let $\lambda: \bar{K}(\cat{A})\to \bR$ be a group homomorphism, $\rk_{\tau}:\scE(\cat{A})\to \bN$\footnote{For us, $\bN$ will always mean strictly positive integers.} a map as in Assumption \ref{ass:stab} (e), and $\vec{\delta}\in \bR^{Q^{f}_0}$ a vector. The stability condition $\tau^{\lambda}_{\vec\delta}$ with values in the extended real numbers $\bar{\bR}$ is determined by\footnote{Taking the subcategory of semistable objects with fixed slope, addresses an issue with \cite[(5.21)]{Joyce2021}, and makes the above definition a weak stability condition. See \cite[(89)]{KLT25} for a slightly different approach to the same problem.}
\begin{equation}
\label{eq:barphi}
\tau^{\lambda}_{\vec\delta}: \bar{K}\Big(\cat{A}^{Q(\vec\Fr)}_{\tau,\phi}\Big)\to\bar{\bR}\,,\qquad
\tau^{\lambda}_{\vec\delta}(\al,\vec{d}) =\begin{cases}\frac{\lambda(\al) + \vec{d}\cdot \vec{\delta}}{\rk_{\tau}(\al)}&\text{if }\al\neq 0\\
\infty &\text{if }\alpha=0,\ \vec d\cdot \vec \delta>0\\
-\infty &\text{if }\alpha=0,\ \vec d\cdot \vec \delta\leq 0\end{cases}
\,.
\end{equation}
    \end{definition}
    Fix $(\alpha,\vec{d})$ with $\al\in \scE(\cat{A})$ and $\tau(\al) = \phi$, the $\tau^{\lam}_{\vec\delta}$-semistable objects of this class form an open substack $\fM^{Q(\vec\Fr)}_{\al,\vec{d}}(\tau^{\lam}_{\vec{\delta}})\subset \fM^{Q(\vec\Fr)}_{\al,\vec{d}}.$ When there are no strictly $\tau^{\lambda}_{\vec\delta}$-semistable objects of class $(\alpha,\vec{d})$, we will write \begin{equation}
\label{eq:framedstablemodulispace}
M^{Q(\vec\Fr)}_{\al,\vec{d}}(\tau^{\lambda}_{\vec\delta})\subset\fM^{Q(\vec\Fr),\rig}_{\al,\vec{d}}
    \end{equation}
    for the resulting moduli spaces. These moduli spaces inherit the $\sT$-action from the moduli stack $\fM_{\cat{A}}$ with an additional $\bG_m$-action considered in the case of quivers \eqref{eq:Vshapedquiver} and \eqref{wc:fig:wc-ms-quiver} where it acts by rescaling an edge (represented by a purple arrow). 
    
\subsubsection{}\label{ass:sec:invariants-assumptions}
    The assumptions below guarantee the existence of the invariants $z_{\al}(\tau)\in L(\fM_{\cat{A}})_{\loc}$ from Theorem \ref{thm:sst-invariants} for all $\al\in \scE(\cat{A})$ and $\tau$. They are also used in the proof of wall-crossing in §\ref{sec:wall-crossing} for these invariants.
    
    \begin{assumption}
    	\label{ass:stab}
    	Let $W$ be the manifold of weak stability conditions fixed above. 
    	\begin{enumerate}[label =(\alph*)]
    		\item If $(\al_1,\al_2) \vdash_{\cat{A}}\al\in \scE(\cat{A})$ and $\tau(\al_1)=\tau(\al_2)$ for some $\tau\in W$, then $\al_1,\al_2\in \scE(\cat{A})$.
    		\item For each pair $\tau,\tau'\in W$, there is a continuous path $\gamma_{(-)}:[0,1]\to W$ between them, such that the open subset 
    		$$
    		\gamma_{\al<\beta} = \big\{t\in [0,1]\colon \gamma_t(\al)<\gamma_t(\beta)\big\}\subset \gamma_{[0,1]}
    		$$
    		is a finite union of connected components for each $\al,\beta\in \scE(\cat{A})$. 
    		
    		We will say that $\gamma_{(-)}$ satisfies (P), if for each set of data \eqref{eq:wc-index-set-finite} such that $\gamma_t(\beta_j)\neq \gamma_t(\al)$ for some $j\in\{1,\ldots,p\}$ and $t\in [0,1]$ the set 
\begin{equation}
\label{eq:betajequalsal}
\gamma_{\be_j=\al} = \big\{s\in [0,1]\colon \tau_s(\be_j)=\tau_s(\al)\text{ for  }\gamma_s \text{ and all }j=1,\ldots,p\big\}
\end{equation}
is finite.
    		\item The set of \eqref{eq:wc-index-set-finite} is finite for a fixed $\al\in \scE(\cat{A})$.
    		\item 
    		Suppose that (P) holds for a fixed path $\gamma_{(-)}$. Fix $\al\in \scE(\cat{A})$ and $t\in [0,1]$. Define the subset $B_{\al,t}\subset \scE(\cat{A})$ consisting of all $\beta$ such that 
    		\begin{equation}
    			\label{eq:Balt}
    			(\beta,\al-\be)\vdash_{\cat{A}}\al\,,\qquad \tau_{t}(\be) = \tau_{t}(\al-\beta)\,,\quad\text{and}\quad \cM^{\gamma_t}_{\be}\neq \emptyset \neq \cM^{\gamma_t}_{\al-\be}\,.
    		\end{equation}
    		If $t'\neq t$ and $t'\notin \gamma_{\be_j=\al}$ for all data \eqref{eq:wc-index-set-finite}, there exists a group homomorphism $\lambda^{t,t'}_{\al}: \bar{K}(\cat{A})\to \bR$ such that $\lambda^{t,t'}_{\al}(\al)=0$ and such that for all $\beta\in B_{\al,t}$ we have
    		\begin{equation}
    			\label{eq:lambdaiffphi}
    			\lambda^{t,t'}_{\al}(\beta)<\lambda^{t,t'}_{\al}(\al-\beta)  \qquad \iff\qquad \tau_{t'}(\beta)<\tau_{t'}(\al-\beta).
    		\end{equation}

    		If $\ga_{(-)}$ does not satisfy $(P)$, $\lambda^{t,t'}_\al$ should exist for all $t'\neq t$.
    		\item For each $\tau\in W$, there exists a \textit{rank function} $\rk_{\tau}: \scE(\cat{A})\to \bN$ satisfying $\rk_{\tau}(\al) = \rk_{\tau}(\al_1) + \rk_{\tau}(\al_2)$ whenever $(\al_1,\al_2)\vdash_{\cat{A}}\al \in \scE(\cat{A})$ and $\tau(\al_1) = \tau(\al_2)$.
    		\item Any $\tau\in W$ satisfies the Harder-Narasimhan property on $\cat{A}$.
    		\item For each $\alpha\in \scE(\cat{A})$ and $\tau\in W$, the substacks $\fM_{\al}(\tau)\subset \fM_{\cat{A}}$ are open and finite type. If additionally there are no strictly $\tau$-semistable objects of class $\alpha$, the rigidification $$M_{\al}(\tau) := \big(\fM_{\al}(\tau)\big)^{\rig}$$ is a quasi-compact separated algebraic space with proper fixed-point locus $\big(M_{\al}(\tau)\big)^{\sT}$ and the $\sT$-equivariant resolution property. 
            \item For each $\tau\in W$ and $\al\in \scE(\cat{A})$ there should exist a framing functor $\Fr$ as in Definition \ref{bg:def:framing-functor} such that $\fM_{\al}(\tau)\subset \fM^{\Fr}_{\al}$.
            \item Consider a path $\gamma$ given in (b) and fix $t\in [0,1]$. In the situation of Definition \ref{def:framedstability} set $\tau = \gamma_t$ and $\lambda = s\lambda^{t,t'}$ from (d) for some $s,t'\in [0,1]$. Let $\Fr$ be chosen as in (h). For any $(\al,\vec{d})\in \scE(\cat{A})\times\bZ^{Q^{f}_0}$ and $\vec{\delta}\in \bR^{Q^{f}_0}$ so that there are no strictly $\tau^{\lambda}_{\vec{\delta}}$-semistable objects of this class, we require that \eqref{eq:framedstablemodulispace} is a quasi-compact, separated algebraic space with $\bT:=\bG_m\times \sT$-equivariant resolution property. 

Let $\si:\sT\to \bT$ be an inclusion such that $\pi_1\circ \si = \id_{\sT} $ for the projection $\pi_1:  \bG_m\times \sT\to \sT$. Then the fixed point loci
$\Big(M^{Q(\vec{\Fr})}_{\al,\vec{d}}(\tau^{\lam}_{\vec{\delta}})\Big)^{\si(\sT)}$ are required to be proper.
    	\end{enumerate}
    \end{assumption}

    \subsection{Modification for Fixed Determinant Pairs}
    \subsubsection{}
   Here we summarize the modification of Assumption \ref{ass:stab} following \cite[§6.4]{Bo25}. Let $X$ be a projective Calabi-Yau fourfold and write $\cat{A} = \Coh(X)$. we fix here another heart $\cat{B}$ in $D^b(X)$ with the corresponding data of Definition \ref{def:categoryA} satisfying Assumption \ref{ass:orientation}.  Note that this implies that we have fixed some $\bar{K}(\cat{A}) = \bar{K}(B)$. We choose a space $W$ of stability conditions so that $\scE(\cat{B})$ satisfies Assumption \ref{ass:stab} (a). Here, we reformulate the rest of Assumption \ref{ass:stab} so that it can be applied to wall-crossing in $\cat{B}$ for stable pairs $O\xrightarrow{s}F$ where $O\in \Coh(X)$ and $F$ has positive codimension so lies in $\Coh_{<4}(X)$. Hence, we will use the fixed determinant moduli stack $\fM_{\cat{B},\det(O)}$ and its substacks which will always be labeled by attaching subscript $\det(O)$.   
\begin{assumption}
\label{ass:pairWC}
    Suppose that there is an object $O\in \Coh(X)$ and consider the abelian category $\cat{B}_O$ of triples $(V,F,s)$ where $V\in \cat{Vect}$, $F\in\Coh_{<4}(X)$ and $s: V\otimes O\to F$ is a morphism of sheaves.  Let $\fN_O$ be the moduli stack of $\cat{B}_O$. Suppose that there exists a space of weak stability conditions $W^p$ on $\cat{B}_O$ with a homeomorphism $(-)^p: W\to W^p$. Denote by $\fN_{d,\al}(\tau^p)\subset \fN_O$ the moduli substack of $\tau^p$-semistable $(V,F,s)$ with $\dim(V) = d$ and $ \al\in \bar{C}(\cat{A})$ the class of $F$. This data must satisfy the following:
    \begin{enumerate}[label=(\alph*)]
        \item For all $\beta\in \scE(\cat{B}),\tau\in W$, there exists $d \in\{0,1\}$ and $\al\in\bar{C}\big(\cat{A}\big)$ such that there are isomorphisms of stacks
        \begin{equation}
        \label{eq:pairtocomplexiso}
       \big(\fN_{d,\al}(\tau ^p)\big)^{\pl}\cong \begin{cases}
         \fM_{\be}(\tau)_{\det(O)}  &\text{if }d=1\,,\\
         \\
    \big(\fM_{\be}(\tau)\big)^{\pl}  &\text{if }d=0 \,,
        \end{cases}
        \end{equation}
        induced by the functor $\varsigma: \cat{B}_O\to \cat{B}$ mapping each $V\otimes O\xrightarrow{s}F$ to the corresponding complex in degrees $[-1,0]$. We denote the set of such $(d,\al)$ by $\scE(\cat{B}_O)$. 
        \item For a fixed $\tau\in W$ the restriction of $\varsigma$ to:
\begin{equation}
\label{eq:equivalence-cat-pairs}
\left\{\begin{array}{c}
   V\otimes O\to F: \tau^p\textnormal{-semistable}    \\
      \textnormal{of class }(d,\al)\in \scE(\cat{B}_O)
\end{array}\right\}\xlongrightarrow{\varsigma}\left\{\begin{array}{c}
I: \tau\textnormal{-semistable}    \\
      \textnormal{of class }\beta\in \scE(\cat{B})\\
       \textnormal{with determinant }\det(O) \textnormal{ if }d=1  
\end{array}\right\}
\end{equation}
is an equivalence of categories identifying the classes of exact triples contained in them.
        \item The space of stability conditions $W$ further satisfies Assumption \ref{ass:stab} (b) - (g) where the stacks $\fM_{\beta}(\tau)_{\det(O)},\fM_{\cat{B},\det(O)}$ are used instead whenever $d=1$ in \eqref{eq:pairtocomplexiso}. Moreover, if there are no strictly semistables in the algebraic space $M_{\beta}(\tau)_{\det(O)}=\fM_{\beta}(\tau)_{\det(O)}$ for $d=1$, we do not rigidify it. 
        \item For every $\beta\in\scE(\cat{B})$ assume that there is a framing functor $\Fr$ on  $\cat{B}_O$ so that $\fN_{d,\al}(\tau^p)\subset \fN^{\Fr}_{(d,\al)}$ for the corresponding $(d,\al)\in \scE(\cat{B}_O)$. 
\item Fix a quiver $Q$ as in Definition \ref{def:auxiliary-stack} and let $\vec\Fr$ be chosen as in (d) for a given $\beta\in \scE(\cat{B})$. For $\tau\in W$, the category \eqref{eq:tau-semi-phi-fixed} is now replaced by the full subcategory $  \cat{B}^{Q(\vec\Fr)}_{\tau,\phi}$ of objects $I$ with fixed phase $\tau(I)=\phi$ in either side of \eqref{eq:equivalence-cat-pairs}  together with the framing data determined by $Q(\vec\Fr)$. Choose a path $\gamma$ in $W$ and $t,s,t'\in [0,1]$ as in Assumption \ref{ass:stab} (i). Using $\lambda^{t,t'}$ selected for $\tau:=\gamma_t$ and $\gamma_{t'}$ in $W$, set $\lambda:=s\lambda^{t,t'}$ and use the stability \eqref{eq:barphi} on   $\cat{B}^{Q(\vec\Fr)}_{\tau,\phi}$ to construct \eqref{eq:framedstablemodulispace} for $(\beta,\vec{d})\in \scE(\cat{B})\times \bZ^{Q^{f}_0}$ whenever there are no strictly $\tau^{\lambda}_{\vec\delta}$-semistables. These algebraic spaces are required to be proper and to have the $\bG_m$-equivariant resolution property. 
    \end{enumerate}
\end{assumption}
\subsubsection{}
\begin{example}[{\cite[Example 6.11]{Bo25}}]
\label{ex:JS-wall-crossing-setup}
  Fix a $\tau\in W$ from Assumption \ref{ass:stab} for $\cat{A} = \Coh(X)$. Consider the categories from Example \ref{ex:CY4JScategories} where $(1,0) = \cO_X(-D)[1]$ as in §\ref{sec:puresheavesJSframing}.  This time, we only consider pairs $V\otimes \cO_X(-D)\to F$ where $F$ is $\tau$-semistable and $\tau(F)=\phi$ for a fixed value $\phi\in S$. We will also impose either $\dim(F)<4$ or that $X$ is a strict CY fourfold. This smaller category will be denoted by $\fB^{\phi}_{D}$ with its moduli stack $\fN^{\phi}_{D}$. We assume here that $\fM^{\sig}_{\al}$ satisfies Assumption \ref{ass:stab} (h) for this framing.
   
   Note that $H^i\big(F(D)\big)=0$ already holds for all $(V\otimes \cO_X(-D)\to F)\in \fB^{\phi}_{D}$. Thus we only need to choose another divisor $D_+$ such that $H^i\big(\cO_X(D_+ - D)\big) = 0$ for $i>0$. Then we define $\Fr$ from Assumption \ref{ass:pairWC}.d) by 
   \begin{equation}
   \label{eq:framing-JS-pairs}
   \Fr(V\otimes \cO_X(-D)\to F) = V\otimes H^0\big(\cO_X(D_+ - D)\big) \oplus H^0\big(F(D)\big).
   \end{equation}
   Then $\cat{B}^{Q(\Fr)}_{\tau,\phi}$ from Assumption \ref{ass:pairWC} (e) is defined by setting $O=\cO_X(-D)$   and starting from $\fB^{\phi}_{D}$ instead of \eqref{eq:tau-semi-phi-fixed}. Consider the family of stability conditions $\{\theta_t\}_{t\in [-1,1]}$ on $\fB^{\phi}_{D}$ with phases
    $$
  \psi_{t}(d,\al) = \begin{cases}
     t&\text{if}\quad d\neq 0\,,\\
     0&\text{if}\quad d=0\,.
    \end{cases}
    $$
    Note that the $\theta_{t}$ semistable objects in $\fB^{\phi}_{D}$ are given as follows:
\begin{itemize}
    \item for $t>0$ and the class $(1,\al)$, they are precisely the Joyce--Song stable pairs,
    \item for $t=0$, all objects of $\fB^{\phi}_{D}$ are semistable,
    \item for $t<0$, the only semistable objects have class $(d,0)$ or $(0,\al)$.
\end{itemize}
 Due to the arguments in \cite[§12.6, §12.7]{JoyceSong}, there is a CY4 obstruction theory on $\big(\fN^{\phi}_D\big)^{\rig}$ which corresponds to fixing determinants when $\dim(F)<4$. Thus we may treat this example as a special case of Assumption \ref{ass:pairWC}. See \cite[Appendix A]{Bo21} where the necessary conditions from Assumption \ref{ass:stab} are checked.
\end{example}

\section{Semistable Invariants}\label{sec:sst-inv}
\subsection{Review of the Setup}
  
\subsubsection{}
We begin by reviewing the method for defining invariants counting semistable objects. The idea to use framing to define invariants originates from \cite[\S 7.2.2]{mochizuki}. In the present form, the construction was introduced in \cite[\S9.1]{Joyce2021} in homology, which was used to formulate the equivariant CY4 version in \cite[\S5.3]{Bo25} when CY4 obstruction theories exist. The refinement of \cite[\S 9.1]{Joyce2021} to operational K-homology was presented in \cite[§4]{Liu2022} and \cite[\S 3.1]{KLT25}, the latter addressing CY3 theories\footnote{It also included some improvements. For example, \cite{Liu2022} dealt only with
  stability conditions for simplicity, whereas \cite{KLT25} allows
  $\tau$ to be a weak stability condition.}. Using the tools developed in \S \ref{sec:sym-pullback}, we now extend all of the above to produce the invariants in Theorem~\ref{thm:sst-invariants}.

Suppose that we are given $\cat{A}$ with its space of stability condition $W$ and $\scE(\cat{A})$ satisfying Assumption \ref{ass:orientation} and Assumption \ref{ass:stab}. Fix a weak stability $\tau\in W$, and $\alpha \in \scE(\cat{A})$. Using Assumption \ref{ass:stab} (h), choose a framing functor $\Fr$ as in Definition \ref{bg:def:framing-functor} satisfying $\fM_{\al}(\tau)\subset \fM^{\Fr}_{\al}$ and recall the rank function $\rk_\tau$ from
Assumption~\ref{ass:stab} (e). We will construct the semistable invariant
$\sz_\alpha(\tau)$ assuming inductively that the semistable invariants
$\sz_\beta(\tau)$ have already been constructed for $\beta$ such that
$\rk_{\tau}(\beta) < \rk_{\tau}(\alpha)$. The philosophy behind this approach was explained in \cite[\S5.3]{Bo25}.
\subsubsection{}

\begin{definition} \label{def:pair-invariant}
  Consider the quiver 
  \[ \begin{tikzcd}
   \QJS:=\quad \overset{V}{\blacksquare} \ar{r}{\rho} & \overset{\Fr(E)}{\blackbullet}
  \end{tikzcd}. \]
  Given a framing functor $\Fr$ as in Definition \ref{bg:def:framing-functor}, consider the auxiliary exact
  category $\cat{A}^{{\QJS}(\Fr)}$ parameterizing triples $(E, V,
  \rho)$ as labeled above. Such a triple has class $(\beta, \dim V)$
  where $\beta$ is the class of $E$. Let
  $\fM_{\beta,d}^{{\QJS}(\Fr)}$ denote the moduli stack, with
  canonical maps and commutative square
  \begin{equation} \label{eq:pairs-forgetful-rigidification-maps}
    \begin{tikzcd}
      \fM_{\beta,0}^{{\QJS}(\Fr)} & \fM_{\beta,1}^{{\QJS}(\Fr)} \ar[shift left]{r}{\Pi_{\beta,1}^\pl} \ar{d}{\pi_{\fM_\beta^{\Fr}}} & \fM_{\beta,1}^{{\QJS}(\Fr),\pl} \ar{d}{\pi_{\fM_\beta^{\Fr,\pl}}} \ar[shift left]{l}{I_{\beta,1}} \\
      \fM_\beta^{\Fr} \ar{u}{\iota^{\QJS}_\beta} & \fM_\beta^{\Fr} \ar{r}{\Pi_\beta^\pl} & \fM_\beta^{\Fr,\pl}.
    \end{tikzcd}
  \end{equation}
  Here $\iota^{\QJS}_\beta$ are the natural isomorphisms given by $E \mapsto
  (E, 0, 0)$. Both
  $\iota^{\QJS}$ and the forgetful maps $\pi_{\fM^{\Fr}}$ induce morphisms of
  graded monoidal $\sT$-stacks. The $I_{\beta,1}$ are the canonical
  de-rigidification maps from Definition \ref{def:de-rigidification}. 
\end{definition}

\subsubsection{}
\begin{remark}
    Note that $\bL_{\pi_{\fM^{\vec\Fr}_{\al}}}$ from \ref{bg:def:framing-functor} is in general not a vector bundle as its tor-amplitude is supported in $[0,1]$. This is why we should technically always take $\hat\fc_{\rk}\big(\bL_{\pi_{\fM^{\vec\Fr}_{\al}}})$ when acting with Chern classes. However, we will only need to evaluate its cap product on algebraic spaces $M$ where $$\Omega_{\pi_{\fM^{\vec\Fr}_{\al}}}:=\bL_{\pi_{\fM^{\vec\Fr}_{\al}}}|_M$$
    is a vector bundle. Thus, we will regularly abuse notation and write 
    $$
  \cap\, \hat{\fe}\big(\Omega_{\pi_{\fM^{\vec\Fr}_{\al}}}\big):= \cap\,\hat\fc_{\rk}\big(\bL_{\pi_{\fM^{\vec\Fr}_{\al}}})\,.
    $$
    This notation will also be used when restricting $\bL_{\pi_{\fM^{\vec\Fr}_{\al}}}$  itself to $M$.
\end{remark}

\subsubsection{}
Throughout the inductive construction of $\sz_\alpha(\tau)$, we will
only need to determine the (semi)stability of objects $(E, V, \rho)$
where $E$ is a $\tau$-semistable object whose class $\beta$  belongs to the finite set \cite[Lem. 9.1]{Joyce2021}
\[ R_\alpha \coloneqq \{\alpha\} \cup \{\beta \in \scE(\cat{A}) : \alpha-\beta\in \bar{C}(\cat{A}), \, \tau(\beta) = \tau(\alpha-\beta), \, \fM_{\beta}(\tau), \fM_{\alpha-\beta}(\tau)\neq \emptyset\}. \]
Recall the definition of the stability condition in \eqref{eq:barphi} applied to the quiver $Q^{\JS}$ and $\phi=\phi(\al)$ in the case when $\lambda = 0$ and $\mu > 0$. We will denote the resulting stability condition by $\tau^{\JS}$.


\begin{lemma}[{\cite[Example 5.7]{Bo25}, \cite[Definition 3.1.2, Lemma 3.1.3]{KLT25},\cite[Example 5.6]{Joyce2021}}] \label{lem:pairs-stability}
 For any $\beta
  \in R_\alpha$ and $\dim V = 1$, the following are equivalent:
  \begin{itemize}
  \item $(E, V, \rho)$ is $\tau^{\JS}$-stable;
  \item $(E, V, \rho)$ is $\tau^{\JS}$-semistable;
  \item $\rho \neq 0$ and there does not exist $0 \neq E' \subsetneq
    E$ with $\tau(E') = \tau(E/E')$, $E'$ and $E/E'$ being $\tau$-semistable, and $\rho(V) \subseteq \Fr(E')
    \subsetneq \Fr(E)$.\footnote{Note that due to the different set-up used in Definition \ref{def:framedstability}, this last point additionally required $E'$ and $E/E'$ to be semistable when compared to \cite[Lemma 3.1.3]{KLT25}. However, in all examples considered there and those we have in mind, the condition used in \cite{KLT25} already implies the semistability of these factors.}
  \end{itemize}
  In particular, if $E$ is $\tau$-stable then $(E, V, \rho)$ is
  $\tau^{\JS}$-(semi)stable if and only if $\rho \neq 0$.
\end{lemma}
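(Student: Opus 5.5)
The plan is to unwind the stability condition $\tau^{\JS}$ of \eqref{eq:barphi} on the category $\cat{A}^{\QJS(\Fr)}_{\tau,\phi}$ and check the weak-stability inequality of \S\ref{sec:weak-stability} against every short exact sequence of an object $(E,V,\rho)$ of class $(\beta,1)$. Here $\lambda=0$ and the single framing weight is $\mu>0$, so by \eqref{eq:barphi}:
\begin{itemize}
\item $\tau^{\JS}(\gamma,0)=0$ and $\tau^{\JS}(\gamma,1)=\mu/\rk_\tau(\gamma)$ for $\gamma\neq 0$;
\item $\tau^{\JS}(0,1)=\infty$.
\end{itemize}
Note also that a subobject of $(E,V,\rho)$ inside this category is a triple $(E',V',\rho|_{V'})$ with three properties. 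First, $E'\subset E$ such that $E'$ and $E/E'$ are $\tau$-semistable with $\tau(E')=\tau(E/E')=\phi$. Second, $V'\subset V$, so $V'=0$ or $V'=V$. Third, $\rho(V')\subseteq \Fr(E')$, where we use exactness of $\Fr$ to view $\Fr(E')\subseteq\Fr(E)$. Since $\beta\in R_\alpha$, Assumption~\ref{ass:stab}~(a) and (e) make the classes of $E'$ and $E/E'$ emergent, so $\rk_\tau$ is defined and additive on them.

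We run through the possible short exact sequences $0\to P'\to P\to P''\to 0$ with $P=(E,V,\rho)$, organized by the value of $\dim V'\in\{0,1\}$ and by whether $E'$ is $0$, proper, or all of $E$.
\begin{itemize}
\item $V'=0$, $E'\neq 0$: the subobject has value $0$ and the quotient $(E/E',V)$ has value $\mu/\rk_\tau(E/E')>0$, or $\infty$ if $E'=E$. The strict inequality holds, so these never destabilize.
\item $V'=V$, $0\neq E'\subsetneq E$: the subobject has value $\mu/\rk_\tau(E')>0$ and the quotient $(E/E',0)$ has value $0$. This violates even the non-strict inequality, so the object is not semistable. Such a subobject exists exactly when $\rho(V)\subseteq\Fr(E')$.
\item $V'=V$, $E'=0$: the subobject is $(0,V)$ with value $\infty$ and the quotient is $(E,0)$ with value $0$. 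This is again destabilizing, and it exists exactly when $\rho(V)\subseteq\Fr(0)=0$, i.e.\ when $\rho=0$.
\end{itemize}
In no case is there an equality between the value of the subobject and that of the quotient. Hence $\tau^{\JS}$-stability and $\tau^{\JS}$-semistability coincide, and both are equivalent to the third bullet. For the condition $\Fr(E')\subsetneq\Fr(E)$ in that bullet, we use exactness of $\Fr$ together with Definition~\ref{bg:def:framing-functor}~(b): since $E/E'\neq 0$, we get $\Fr(E/E')\neq 0$. This is where the requirement $\fM_\beta(\tau)\subset\fM_\beta^{\Fr}$ enters, since it ensures that $E'$ and $E/E'$ lie in $\cat{A}^{\Fr}$; closure under summands and the exactness of $\cat{A}^{\Fr}$ are what allow subobjects in the auxiliary category to be described this way.

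The final claim follows at once. If $E$ is $\tau$-stable, there is no $0\neq E'\subsetneq E$ with $\tau(E')=\tau(E/E')$, so the only remaining condition is $\rho\neq 0$. The main point needing care is that, because $\tau$ is only a weak stability condition, one must work inside the fixed-phase category $\cat{A}^{\QJS(\Fr)}_{\tau,\phi}$ of Definition~\ref{def:framedstability}. This is why the semistability of $E'$ and $E/E'$ appears explicitly in the third bullet. I would verify this bookkeeping against \cite[Lemma 3.1.3]{KLT25} rather than anything deeper; the rest is the routine inequality check above.
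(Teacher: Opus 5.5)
Your case analysis is correct and is the same direct check that underlies the cited \cite[Lemma 3.1.3]{KLT25} and \cite[Example 5.6]{Joyce2021}; the paper gives no separate proof. With $\lambda=0$ the subobject and quotient values are never equal, and the only destabilizing sequences are $(E',V)\subset(E,V)$ with $\rho(V)\subseteq\Fr(E')$, and $(0,V)\subset(E,V)$ when $\rho=0$.

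One small correction: the standing hypothesis is $\fM_\alpha(\tau)\subset\fM_\alpha^{\Fr}$, not a hypothesis for $\beta$. To get $E',E/E'\in\cat{A}^{\Fr}$, observe that $E'\oplus E/E'$ is $\tau$-semistable of class $\alpha$, after adding a semistable object of class $\alpha-\beta$ when $\beta\neq\alpha$. Then use that $\cat{A}^{\Fr}$ is closed under direct summands.
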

\subsubsection{}

Using Lemma~\ref{lem:pairs-stability},  Assumption~\ref{ass:stab} (i) gives us a quasi-compact, separated algebraic spaces 
$$
M^{\Fr}_{\al,1} (\tau^{\JS}):=M^{Q^{\JS}(\Fr)}_{\al,1} (\tau^{\JS})
$$
with the $\sT$-equivariant resolution property. We can therefore apply the construction in Theorem \ref{sp:thm:symm-pullback-localization} (a) to the smooth projection $\pi_{\fM_\alpha^{\Fr,\pl}}$ from \eqref{eq:pairs-forgetful-rigidification-maps} and the CY4 obstruction theory $\bE^{\pl}$ from \eqref{eq:EE-rigidified} to produce 
\begin{equation}
\label{eq:FRJS-class}
\big[\widehat{\mathcal{O}}_{M^{\Fr}_{\al,1} (\tau^{\JS})}^{\vir}\big]\in K^{\sT}_0\big(M^{\Fr}_{\al,1} (\tau^{\JS}),\bZ[2^{-1}]\big)\,.
\end{equation}
Recall from §\ref{sec:equivariant-hom-notation} that we use $H^\sT(-)_\loc$ to denote either of the equivariant theories $\bfK_\circ^\sT(-)$ of Definition~\ref{bg:def:k-homology-concrete-theory}, $H_*^\sT(-)$ of Section~\ref{sec:Khan-bivariant}, and $K_0^\sT(-)$ of Section~\ref{sec:homology-to-khomology}. Because the $\sT$-fixed point locus of $M^{\Fr}_{\al,1} (\tau^{\JS})$ is additionally assumed to be proper, Definition~\ref{bg:def:univ-enum-inv} or Definition \ref{def:bivariant-hom-invariants} give us a well-defined invariant $$\sZ_{\alpha,1}^{\Fr}(\tau^{\JS})\in
H^\sT\Big(M_{\alpha,1}^{{\QJS}(\Fr)}\Big)_{\loc}.$$ It is used to define\footnote{Note that Joyce's $\bar{\Upsilon}^k_{\al}(\tau)$ from \cite[Definition 9.2]{Joyce2021} and the $I_*\tilde{\sZ}^{\Fr}_{\alpha,1}(\tau^Q)$ in \cite[\S3.1.4]{KLT25} differ from our $\breve{\sZ}_{\alpha}^{\Fr}(\tau)$ because we divided out $\fr(\al)$. The actual semistable invariants $\sz_\alpha^{\Fr}(\tau)$ are unchanged, since \eqref{eq:sstable-def} contains the extra factors. The convention used here makes \eqref{eq:OmegaWC} match with the combinatorial procedure explained in \cite[\S8.1]{Bo25}.}
\begin{equation} \label{eq:pairs-invariant}
  \begin{aligned}
\fr(\al)\breve{\sZ}_{\alpha}^{\Fr}(\tau)
    &\coloneqq (\pi_{\fM_\alpha^{\Fr}})_*\left((I_{\alpha,1})_*\sZ_{\alpha,1}^{\Fr}(\tau^{\JS}) \cap\hat{\fe}(\Omega_{\pi_{\fM_\alpha^{\Fr}}}^\vee)\right)  \in H^\sT(\fM_\alpha)_{\loc}.
  \end{aligned}
\end{equation}
We denote its projection to the quotient $L(\fM_\alpha)_{\loc}$ from §\ref{sec:LA-quotient} in the same way.
\subsubsection{}
\begin{remark}
\label{rem:expecteddefinitionsame}
   Consider the situation in Example \ref{ex:JSobstheoryframing} which means that the obstruction theory \eqref{eq:FFJSobstructiontheory} is given. In 
\cite[Lemma 5.6]{Bo25}, it is shown that the projection $\pi_{\fM_\beta^{\Fr,\pl}}$ can be endowed with a PVP diagram \eqref{eq:pvp-diagram} where $\bE_{\pi_{\fM_\beta^{\Fr,\pl}}}\cong \bL_{\pi_{\fM_\beta^{\Fr,\pl}}}$. In particular, we are in the situation of Theorem \ref{sp:thm:symm-pullback-localization} (b) and the class $\sZ_{\alpha,1}^{\Fr}(\tau^{\QJS})$ is identified with the usual one as constructed in \cite[Definition 5.13.2)]{Bo25}. Since the defining relation of $\sz_\alpha^{\Fr}(\tau)$ in Theorem \ref{thm:sst-invariants-construction} below is the same as the one in \cite[(5.34)]{Bo25}, the more general invariants are also the same. Thus in this case, the definition is the expected one as proposed in \cite{Joyce2021}.
\end{remark}

\subsubsection{}

\begin{theorem}[Semistable invariants] \label{thm:sst-invariants-construction}
  Let $\tau\in W$ and $\alpha \in \scE(\cat A)$.  Choosing a
  framing functor $\Fr$ such that $\fM_{\alpha}(\tau)
  \subset \fM_{\alpha}^{\Fr,\pl}$, the equations
  \begin{equation}\label{eq:sstable-def}
   \fr(\al) \breve{\sZ}_{\alpha}^{\Fr}(\tau) = \sum_{\substack{  \vec{\alpha}\,\vdash_{\cat{A}} \alpha\,,\\ \forall i: \,\tau(\alpha_i) = \tau(\alpha)\\ \;\;\fM_{\alpha_i}(\tau) \neq \emptyset}} \frac{\fr(\alpha_1)}{n!} \left[\sz^{\Fr}_{\alpha_n}(\tau), \left[\cdots,\left[\sz^{\Fr}_{\alpha_2}(\tau), \sz^{\Fr}_{\alpha_1}(\tau)\right]\cdots\right]\right],
  \end{equation}
  in the Lie algebra
  $L(\fM_{\cat{A}})_{\loc}$ uniquely define elements
  \[ \sz_\alpha^{\Fr}(\tau) \in L(\fM_{\cat{A}})_{\loc} \,.\]
 The classes $\sz_\alpha^{\Fr}(\tau)$ are independent of $\Fr$ and satisfy all the properties listed
  in Theorem~\ref{thm:sst-invariants}.
\end{theorem}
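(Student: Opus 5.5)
Existence and uniqueness come from induction on $\rk_\tau(\alpha)$, following \cite[\S9.1]{Joyce2021} and \cite[\S3.1]{KLT25}. By Assumption~\ref{ass:stab}~(e), every term in \eqref{eq:sstable-def} with $n\geq 2$ only involves classes $\alpha_i$ with $\rk_\tau(\alpha_i)<\rk_\tau(\alpha)$. Assumption~\ref{ass:stab}~(a) puts all $\alpha_i\in\scE(\cat{A})$, and the finiteness condition (c) shows that the sum is finite. The $n=1$ term is $\fr(\alpha)\sz^{\Fr}_\alpha(\tau)$, and $\fr(\alpha)\neq 0$ by Definition~\ref{bg:def:framing-functor}~(b). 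So \eqref{eq:sstable-def} can be solved uniquely for $\sz^{\Fr}_\alpha(\tau)$, given that the left-hand side $\breve\sZ^{\Fr}_\alpha(\tau)$ is well-defined. That holds via \eqref{eq:FRJS-class}, Theorem~\ref{sp:thm:symm-pullback-localization}~(a) and Assumption~\ref{ass:stab}~(i).

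Properties (i)--(iii) then follow formally.
\begin{enumerate}
\item[(i)] Every term is supported on $\fM_\alpha(\tau)$, because $\Phi$ maps products of $\tau$-semistables of equal phase into $\fM_\alpha(\tau)$, and $M^{\Fr}_{\alpha,1}(\tau^{\JS})$ lies over $\fM_\alpha(\tau)$.
\item[(iii)] The construction depends only on $\fM_\beta(\tau)$ for $\beta\in R_\alpha$, by Lemma~\ref{lem:pairs-stability}.
\item[(ii)] Without strictly semistables, Lemma~\ref{lem:pairs-stability} identifies $M^{\Fr}_{\alpha,1}(\tau^{\JS})$ with the projective bundle $\bP(\Fr(\cE))\to M_\alpha(\tau)$, which has fibre Euler characteristic $\fr(\alpha)$. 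Proposition~\ref{prop:smoothpushforward} and Corollary~\ref{cor:flag-pushforward}, together with Lemma~\ref{lem:Mrig-and-M-no-difference} (to pass between $\bE$ and $\bE^{\rig}$), give $\Pi^\pl_*\fr(\alpha)\breve\sZ^{\Fr}_\alpha=\fr(\alpha)\sZ_{M_\alpha(\tau)}$, and this equals $\fr(\alpha)\Pi^\pl_*\sz_\alpha(\tau)$.
\end{enumerate}
Property (iv) is the defining equation \eqref{eq:sstable-def}, once independence of $\Fr$ is known.

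\textbf{Independence of $\Fr$ is the main step.} Given two framings $\Fr_1,\Fr_2$, I would use the auxiliary stacks of the quiver $Q\wedge Q$ (Definition~\ref{sst:def:wedge-quiver}). These carry two framing vertices $V_1\to\Fr_1(E)$ and $V_2\to\Fr_2(E)$, and the $\bG_m$ acts by rescaling one arrow. From them I would build a master space, following \cite[\S9.2]{Joyce2021} and \cite[\S3.2]{KLT25}, whose $\bG_m$-fixed components are:
\begin{itemize}
\item the $\Fr_1$-pair space and the $\Fr_2$-pair space, and
\item products of pair spaces for splittings $\alpha=\alpha_1+\alpha_2$ with $\rk_\tau(\alpha_i)<\rk_\tau(\alpha)$.
\end{itemize}
The master space is an algebraic space with a smooth map to $\fM_{\cat A}$, but it carries no CY4 obstruction theory. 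This is exactly where Theorem~\ref{sp:thm:symm-pullback-localization} is needed: it supplies $[\widehat\cO^{\vir}]$ and the localization formula \eqref{sp:eq:pullback-localization-formula}. Proposition~\ref{prop:comparison} then identifies each $[\tilde\cO^{\vir}_Z]$ with the Jouanolou CY4-pullback class of the corresponding smaller moduli space, up to the explicit sign $\varepsilon_Z(-1)^{\omega^>_\pi}$.

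Pushing forward to $\fM_{\cat A}$ and taking the residue in the $\bG_m$-weight, the total contribution vanishes because the master space has proper $\sT$-fixed locus. This yields a relation of the form
\[
\fr_2(\alpha)\breve\sZ^{\Fr_1}_\alpha-\fr_1(\alpha)\breve\sZ^{\Fr_2}_\alpha=\text{(Lie brackets of lower-rank invariants)}.
\]
Here the Euler-class factors on the product components reassemble into the Lie bracket \eqref{eq:monoidal-stack-lie-bracket}, and the signs match $\varepsilon_{\alpha,\beta}$ by Assumption~\ref{ass:orientation} and Definition~\ref{def:framed-epsilons}.

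Combining this relation with the inductive hypothesis (lower-rank $\sz_\beta$ are independent of framing) and the combinatorial identity of \cite[\S9.2]{Joyce2021}, one concludes $\sz^{\Fr_1}_\alpha=\sz^{\Fr_2}_\alpha$.

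I expect the hardest step to be the sign and orientation bookkeeping on the fixed loci. The Jouanolou-based classes only see $\Omega_\pi$ rather than an honest PVP diagram, so the sign $(-1)^{\omega^>_\pi}$ from Proposition~\ref{prop:comparison} has to combine with $\varepsilon_Z$ to give exactly $\varepsilon_{\alpha_1,\alpha_2}$. I would first check this in the case where CY4 obstruction theories exist (Remark~\ref{rem:expecteddefinitionsame}), comparing against \cite[Lemma 8.4]{Bo25}, and then argue that the general case agrees because both sides depend only on the same weight decompositions.

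For torsion-free sheaves on projective CY4 folds, I would alternatively reduce to the quantum Lefschetz argument of \cite[\S6.5]{Bo25}.
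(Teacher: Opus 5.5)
Your proposal follows essentially the same route as the paper: induction on $\rk_\tau$ for existence and uniqueness, then the master space $\bbLambda_\alpha$ of the quiver $Q\wedge Q$ with its class and localization formula from Theorem~\ref{sp:thm:symm-pullback-localization}, orientation comparison on the fixed loci via Proposition~\ref{prop:comparison}, vanishing of the total residue, and Joyce's combinatorial argument, with the quantum Lefschetz argument of \S\ref{sec:independencequantumLefschetz} as the alternative for torsion-free sheaves. When writing it out, note three points. First, $\{\rho_4=0\}$ and $\{\rho_3=0\}$ are $\bP^{\fr_2(\alpha)-1}$- and $\bP^{\fr_1(\alpha)-1}$-bundles over the two pair spaces, not the pair spaces themselves, so Corollary~\ref{cor:flag-pushforward} is needed together with the insertion $\hat{\fc}_{\rk}(G)\otimes(\scL_{31}^\vee\otimes\scL_{32})^{\frac{1}{2}}$, $G=\Omega^\vee_{\pi_{\fM^{\vec\Fr}_\alpha}}-\scL_{31}\otimes\scL_{32}$, which produces the factors $\hat{\fe}(\Omega^\vee)$ of \eqref{eq:pairs-invariant} and cancels, up to sign, the framing parts of $\bN^{>}$ on the product loci. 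Second, with the paper's normalization of $\breve{\sZ}$ both pair terms carry $\fr_1(\alpha)\fr_2(\alpha)$. Third, the relative minus sign is just $(-1)^{\omega^>_\pi}$ with $\omega^>_\pi=1$ on $\{\rho_3=0\}$, so no separate check against the case with genuine obstruction theories is needed.
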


The goal of this section is to prove
this theorem.

\subsection{Independence via Quantum Leftschetz}
\label{sec:independencequantumLefschetz}
\subsubsection{}
Here we recall the argument presented in \cite[\S1.4 and \S6.5]{Bo25} which deals with torsion-free sheaves on a projective Calabi-Yau fourfolds $X$. In this version, we for now assume that $X$ is strict, avoiding fixed determinant obstruction theories for higher rank sheaves. We set $\cat{A}= \Coh(X)$ with $X$ satisfying \cite[($\ast$)]{JU}. Choose the appropriate data as in §\ref{sec:puresheavesJSframing} such that $\scE(\cat{A})$ contains all $\alpha$ of positive rank. Recall, that this implies that the framing functor is given by 
$$
\Fr_v(E) = H^0\big(E(D_v)\big)
$$
for an ample divisor $D_v$, where the subscript $v$ labels different choices of such divisors.

For another ample divisor $H$ on $X$, we choose $\tau$ to be the Gieseker or slope stability with respect to it. To make the $D_v$ dependence explicit, we write
$$N^{\JS}_{D_v,\alpha}:=M^{\Fr_v}_{\al,1} (\tau^{\JS})\,.$$
The moduli space admits a virtual fundamental class $\big[\widehat{\cO}^{\vir}_{N^{\JS}_{D_v,\alpha}}\big]$ induced by the obstruction theory $\bF^{\JS}_{D_v,\alpha}$ which is obtained by rigidifying the obstruction theory \eqref{eq:FFJSobstructiontheory} as in \cite[(127)]{BKP} and restricting it to $N^{\JS}_{D_v,\alpha}$. For simplicity, we will write 
$$\sZ^{\JS}_{D_v,\alpha}:=\sZ_{\alpha,1}^{\Fr_v}(\tau^{\JS})\qquad \textnormal{and}\qquad \sz^{D_v}_{\alpha}(\tau):=\sz^{\Fr_v}_{\alpha}(\tau)$$ 
for the resulting invariants appearing in \eqref{eq:pairs-invariant} and \eqref{eq:sstable-def}.

\subsubsection{}
We now want to compare $N^{\JS}_{D_v,\alpha}$ and their classes $\big[\hat{\cO}^{\vir}_{N^{\JS}_{D_v,\alpha}}\big]$ for two different ample divisors $D_1$ and $D_2$ assuming that $\fM_{\al}(\tau)$ lies in $\fM^{\Fr_v}_{\al}$ for $v=1,2$. As in \cite[\S6.5]{Bo25}, we assume without loss of generality that $D :=D_2 - D_1$ is very ample so we can fix a section $s\in H^0\big(\cO_X(D)\big)$ such that $s^{-1}(0)= D$ is smooth. 

Let $\cF$ be the universal sheaf of class $\al$ on $X\times N^{\JS}_{D_2,\alpha}$ and $p:X\times N^{\JS}_{D_2,\al}\to N^{\JS}_{D_2,\al}$ the projection. In \cite[Lemma 6.13]{Bo25}, the first-named author proved that the map
 $$\big(\cO_X(-D_1)\xrightarrow{f} F\big)\mapsto \big(\cO_X(-D_2)\xrightarrow{f\circ s} F\big)$$
 induces a closed embedding
\begin{equation}
\label{eq:iotaNDi}
N^{\JS}_{D_1, \al}\xlongrightarrow{\iota} N^{\JS}_{D_2, \al}\,.
\end{equation}
Moreover, it is shown there that $N^{\JS}_{D_1, \al}$ is the vanishing locus of a natural section $\fv$ of the vector bundle
$$
\bV = Rp_*\Big(\cF(D_2)|_{D}\Big)\,.
$$
In \cite[Proposition 6.14]{Bo25}, a PVP diagram along $\iota$ is constructed with the upper two rows given by
$$
  \begin{tikzcd}[column sep=small]
    \wt{\bF}_{\iota}^\vee[2] \ar{d} \ar{r} & \bF^{\JS}_{D_1,\alpha}\ar{d} \ar{r} &  \bV^\vee[1] \ar[equals]{d} \ar{r}{+1} & {}  \\
   \iota^*\bF^{\JS}_{D_2,\alpha} \ar{r} & \wt{\bF}_{\iota} \ar{r} & \bV^\vee[1]\ar{r}{+1} & {}   
  \end{tikzcd}\,.
  $$
  Thus, it was shown there that 
  \begin{equation}
\label{eq:quantumLefschetzforJS}
\iota_*\Big(\big[\hat{\cO}^{\vir}_{N^{\JS}_{D_1, \al}}\big]\Big) = \big[\wh{\cO}^\vir_{N^{\JS}_{D_2, \al}}\big]\otimes \hat{\fe}(\bV) \,.
  \end{equation}
\subsubsection{}
Here we complete the argument which was proposed in \cite[\S6.5]{Bo25}. It is independent of the one in §\ref{sec:generalindependence} as it only uses the lift of the defining formula \eqref{eq:sstable-def} as explained in \cite[Lemma 5.12 (i)]{Bo25} and the proof of \cite[Theorem 6.12]{Bo25}. In Example \ref{ex:JS-wall-crossing-setup}, it is shown that this lift, called \textit{Joyce--Song pair wall-crossing}, is a particularly simple application of the general $\Fr_v$ dependent wall-crossing formula from \eqref{eq:Fr-dependent-wall-crossing}, where $\Fr_v$ is determined by $\cO_X(-D_v)$ on sheaves and by \eqref{eq:framing-JS-pairs} on pairs. It takes the form
\begin{equation}
\label{eq:JSwall-crossingformula}
\sZ^{\JS}_{D_v,\alpha}  = \sum_{\begin{subarray}{c}
        \vec{\alpha}\,\vdash_{\cat{A}} \alpha\,, \\
          \tau(\alpha_i) =\tau(\alpha) \end{subarray}}\frac{1}{n!}\Big[\sz^{D_v}_{\alpha_n}(\tau), \cdots \Big[\sz^{D_v}_{\alpha_1}(\tau),  e^{(1,0)}_{D_v}\Big]\cdots\Big]
\end{equation}
in the Lie algebra of $L\big(\fM^{\QJS(\Fr_v)}\big)$ from §\ref{sec:flag-VA+LA} where $e^{(0,1)}_{D_v}$ represents the point class of $\cO_X(-D_v)[1]$.  By \cite[Remark 1.3]{Bo25}, the Joyce--Song wall-crossing formula makes sense before it is proved that the invariants are independent of framing, so that this proof is not circular.

We will not repeat the detailed argument from the proof of \cite[Theorem 6.12]{Bo25} choosing to sketch it instead. One can extend the map $\iota$ from \eqref{eq:iotaNDi} to a map of stacks  $\fM_{\al,1}^{{\QJS}(\Fr_1),\pl}\to\fM_{\al,1}^{{\QJS}(\Fr_2),\pl}$. One can also define $\bV$ on the entire latter stack for. Then the following holds due to \eqref{eq:quantumLefschetzforJS}:
$$
(\iota)_*\Big(\textnormal{RHS of }\eqref{eq:JSwall-crossingformula}\textnormal{ for }D_1\Big) =\Big(\textnormal{RHS of }\eqref{eq:JSwall-crossingformula}\textnormal{ for }D_2\Big)\cap \hat{\fe}\big(\bV\big)\,.
$$
Due to \cite[§2.5]{GJT}, the right-hand side can be, up to minor technicalities explained in \cite[p. 84]{Bo25}, identified with the left-hand side except that all $\sz^{D_1}_{\alpha_i}$ are now replaced by $\sz^{D_2}_{\alpha_i}$. Thus by injectivity of the brackets $\Big[-,  e^{(1,0)}_{D_v}\Big]$ (see for example \cite[Lemma 5.12.ii)]{Bo25}), this implies that 
$$
\sz^{D_1}_{\alpha}(\tau) = \sz^{D_2}_{\alpha}(\tau)
$$
by induction.

\subsection{Independence for General Invariants}
\label{sec:generalindependence}
\subsubsection{}
\begin{definition}\label{sst:def:wedge-quiver}
  Let ${Q} \wedge {Q}$ denote the quiver
 \begin{equation}
 \label{eq:Vshapedquiver}
\includegraphics{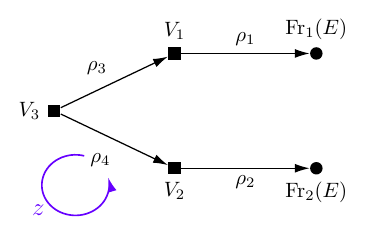}
\end{equation}

 Let $\Fr_1, \Fr_2$ be framing functors from Definition \ref{bg:def:framing-functor} such that
  \[ \fM_\alpha(\tau) \subset \fM_\alpha^{\Fr_1,\pl} \cap \fM_\alpha^{\Fr_2,\pl}\,. \]
 We onsider the auxiliary exact category $\cat{A}^{({Q} \wedge
    {Q})(\vec\Fr)}$ from Definition~\ref{def:auxiliary-stack} parameterizing triples $(E, \vec
  V, \vec \rho)$ as labeled above. For clarity, we will write
  ${Q}(\Fr_1) \wedge {Q}(\Fr_2)$ instead of $({Q} \wedge {Q})(\vec\Fr)$, and
  $\cFr_i(\cE_j)$ for the universal bundle of $\Fr_i(E_j)$. We also use the notation  $$\scL_{ij}
\coloneqq \cV_i^\vee \otimes \cV_j\,.$$
\end{definition}

\subsubsection{}

Let $\fM_{\alpha,\vec d}^{{Q}(\Fr_1) \wedge {Q}(\Fr_2)}$ be the
moduli stack of objects in $\cat{A}^{({Q} \wedge {Q})(\vec\Fr)}$.
It is equipped with a natural action of $\bG_m$ which
scales $\rho_4$ with its weight denoted by $z$ as represented in \eqref{eq:Vshapedquiver}. This clearly commutes with
the $\sT$-action so we have an induced action of $\bT:=\bG_m\times \sT$. Moreover, the $\bG_m$-action descends to an action
on the rigidified stack, and all forgetful maps are $\bG_m$-equivariant
for the trivial $\bG_m$-action on their targets.

For a fixed $\tau\in W$ we construct the stability condition from Definition~\ref{def:framedstability} for $\phi=\phi(\al)$, $\lambda = 0$, and $(\delta_1,\delta_2,\delta_3) = (\epsilon,\epsilon,1)$ where $0<\epsilon < 1/\rk_{\tau}(\al)$. We will denote it by $\tau^{\wedge}$. As shown in \cite[Definition~9.4]{Joyce2021}, there are no strictly $\tau^{\wedge}$-semistable objects of class
$
(\al,1,1,1)\,,
$
so we denote the resulting moduli spaces by 
  \[  \bbLambda_\alpha \coloneqq \fM^{{Q}(\Fr_1)\wedge {Q}(\Fr_2)}_{\alpha,(1,1,1)}(\tau^\wedge)\,. \]
  The $\bT$-action restricts to $ \bbLambda_{\al}$.
\subsubsection{}
 A similar statement used for proving wall-crossing is noted down in Proposition \ref{prop:master-space-fixed-loci} below. There we also represent the fixed-point loci and their $\bG_m$ weights diagrammatically. 
The virtual normal bundles $\bN_{\iota}$ from Theorem~\ref{sp:thm:symm-pullback-localization} (c) can be deduced in the same way as in \cite[Proposition~8.6]{Bo25}.
\begin{proposition}[{cf. \cite[Props. 9.5, 9.6]{Joyce2021},\cite[Proposition~7.17]{mochizuki}}] \label{prop:pairs-master-space}
 The $\bG_m$-fixed point locus of $\bbLambda_{\al}$ is a disjoint union of   the following pieces.
  \begin{enumerate}[label = (\roman*)]
  \item The embedding $\iota_{\rho_4=0}\colon \bbLambda_{\rho_4=0} \coloneqq
    \{\rho_4=0\} \hookrightarrow \bbLambda_\alpha$ has the virtual normal bundle  $$\bN_{\iota_{\rho_4=0}} = z\scL_{32}+z^{-1}\scL_{23}\,.$$
    By $\tau^{\wedge}$-stability, $\rho_1,
    \rho_2, \rho_3 \neq 0$. The forgetful map
    \[ \pi_{\rho_4=0}\colon \bbLambda_{\rho_4=0} \to M^{\Fr_1}_{\al,1} (\tau^{\JS}), \]
    which remembers only $(E,V_1,\rho_1)$, is a
    $\bP^{\fr_2(\alpha)-1}$-bundle.

  \item The embedding $\iota_{\rho_3=0}\colon \bbLambda_{\rho_3=0} \coloneqq
    \{\rho_3=0\} \hookrightarrow \bbLambda_\alpha$ has the virtual normal bundle  $$\bN_{\iota_{\rho_3=0}} = z^{-1} \scL_{31} \oplus z \scL_{13} \,.$$
    By $\tau^{\wedge}$-stability, $\rho_1,
    \rho_2, \rho_4 \neq 0$. The forgetful map
    \[ \pi_{\rho_3=0}\colon \bbLambda_{\rho_3=0} \to M^{\Fr_2}_{\al,1} (\tau^{\JS}), \]
    which remembers only $(E,V_2,\rho_2)$, is a
    $\bP^{\fr_1(\alpha)-1}$-bundle.

  \item \label{sst:it:complicated-locus} For each splitting $\alpha =
    \alpha_1 + \alpha_2$ with $\al_i\in R_{\al}$ for $i=1,2$, let
    \[ \iota_{\alpha_1,\alpha_2}\colon \bbLambda_{\alpha_1,\alpha_2} \coloneqq \{E = E_1 \oplus E_2, \; \rho_i\colon V_i \to \Fr_i(E_i) \subset \Fr_i(E) \text{ for } i = 1, 2\} \hookrightarrow \bbLambda_\alpha, \]
    where $\bG_m$ scales $V_3$, $V_1$, and $E_1$ with weight $z$.
    The positive weight part (see §\ref{sp:sec:loc-setup}) of the virtual normal bundle is
    \begin{equation} \label{eq:pairs-master-space-interaction-Nvir}
        \bN^{>}_{\iota_{\alpha_1,\alpha_2}} = z\left(  \cV_2^\vee \otimes \cFr_2(\cE_1) +\cV_1 \otimes \cFr_1(\cE_2)^\vee + \big(\pi_{\fM_{\al_1}^{\Fr_1}}\times \pi_{\fM_{\al_2}^{\Fr_2}}\big)^*\Theta_{\al_1\al_2} \right)
    \end{equation}
    where $\Theta$ was defined in \eqref{eq:ThetaofA}. By
    $\tau^{\wedge}$-stability, $\rho_1, \ldots, \rho_4 \neq 0$.
    The forgetful map
    \begin{equation} \label{eq:pairs-master-space-complicated-locus}
     \pi_{\al_1,\al_2}: \bbLambda_{\alpha_1,\alpha_2} \xrightarrow{\sim} M^{\Fr_1}_{\al_1,1} (\tau^{\JS}) \times M^{\Fr_2}_{\al_2,1} (\tau^{\JS}),
    \end{equation}
    whose $i$-th factor is given by remembering only $(E_i,V_i,\rho_i)$, is an isomorphism.
  \end{enumerate}
\end{proposition}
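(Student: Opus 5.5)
The plan is to classify $\bG_m$-fixed points of $\bbLambda_\alpha$ directly, following \cite[Props. 9.5, 9.6]{Joyce2021}, and then read off the normal bundles from the deformation complex as in \cite[Proposition~8.6]{Bo25}.

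\textbf{Step 1: classify fixed points.} A point $(E,\vec V,\vec\rho)$ with all $\dim V_i=1$ is $\bG_m$-fixed iff the rescaling of $\rho_4$ by $z$ can be absorbed by an automorphism. That is, there is a $\bG_m\to \Aut(E)\times\prod\GL(V_i)$ cocharacter, modulo overall scaling, compensating it. Consider the induced weight decomposition $E=\bigoplus_k E_k$, with weights on $V_1,V_2,V_3$. If $\rho_4=0$ or $\rho_3=0$, the fixed condition is automatic and we are in cases (i) or (ii).

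If both are nonzero, then $V_3$ and $V_1$ have weight differing by one from $V_2$ in the compensating direction. Put weight $1$ on $V_3$ and $V_1$, and weight $0$ on $V_2$. Then $\rho_1$ lands in the weight-$1$ part $E_1$ and $\rho_2$ in the weight-$0$ part $E_2$, so $E=E_1\oplus E_2$. Stability forbids other weight pieces, since those would be summands disjoint from the framing images. It also forces $\tau(E_1)=\tau(E_2)$ with both semistable, giving $\alpha_i\in R_\alpha$.

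Next I use $\tau^\wedge$-stability with $\delta=(\epsilon,\epsilon,1)$ and $\epsilon<1/\rk_\tau(\alpha)$ to show two things. First, $\rho_1,\rho_2,\rho_3$ are all nonzero in (i), and symmetrically in (ii). Second, in (iii) each $(E_i,V_i,\rho_i)$ is $\tau^{\JS}$-stable via Lemma~\ref{lem:pairs-stability}. The argument compares slopes of the subobjects generated by the images of $V_3$ under the constraint $\vec d\cdot\vec\delta$.

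\textbf{Step 2: identify the loci.} For (i) ($\rho_4=0$), $\rho_3$ fixes $V_3$ up to scaling, and $\rho_2\neq0$ is a line in $\Fr_2(E)$. Stability of the remaining data reduces to $\tau^{\JS}$-stability of $(E,V_1,\rho_1)$. The quiver data along $V_2\to\Fr_2(E)$ then gives a projectivization $\bP(\cFr_2(\cE))$, which is a $\bP^{\fr_2(\alpha)-1}$-bundle. Case (ii) is symmetric.

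For (iii), the inverse map of \eqref{eq:pairs-master-space-complicated-locus} is built directly. Given two JS pairs, set $E=E_1\oplus E_2$, take $V_3\cong V_1$ and $V_3\to V_2$ both nonzero, and normalize by automorphisms. Uniqueness up to automorphism is checked on points and families.

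\textbf{Step 3: normal bundles.} The virtual tangent complex of $\bbLambda_\alpha$ relative to $\fM_\alpha$ is $-\bF^{Q\wedge Q}$ from \eqref{eq:framed-stack-forgetful-map-cotangent}, plus the pulled-back obstruction theory $\Theta$ along the diagonal. I take moving parts with respect to the given weights.

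In (i), the $\rho_4$ edge term $\scL_{32}$ has weight $z$, and the dual term from Serre symmetry has weight $z^{-1}$. All other terms are fixed.

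In (iii), the moving parts are the edge and vertex cross terms $\cV_2^\vee\otimes\cFr_2(\cE_1)$ and $\cV_1\otimes\cFr_1(\cE_2)^\vee$. Among the cross $\Ext$ terms, $\Theta_{\alpha_1\alpha_2}$ carries weight $z$ and $\Theta_{\alpha_2\alpha_1}$ carries weight $z^{-1}$. Selecting the positive half yields \eqref{eq:pairs-master-space-interaction-Nvir}.

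The main obstacle is bookkeeping in (iii): cancelling the $\scL_{31},\scL_{32}$ edge terms against the vertex automorphism terms $\cV_i^\vee\otimes\cV_i$ after the weight twist. Here I would follow \cite[Proposition~8.6]{Bo25} term by term.
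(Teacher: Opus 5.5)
Your proposal is correct and takes the same route as the paper, which gives no separate argument: it takes the classification of the $\bG_m$-fixed loci from \cite[Props.~9.5, 9.6]{Joyce2021}, which is your Steps 1--2. It then reads off $\bN_\iota$ on each fixed component $Z$ as the moving part of $\big(\pi^*\bE\oplus\Omega_\pi\oplus T_\pi[2]\big)|_{Z}[-2]$ from Theorem~\ref{sp:thm:symm-pullback-localization}(c), exactly as in \cite[Proposition~8.6]{Bo25}, which is your Step 3.

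Three small points of precision:
\begin{itemize}
\item The relative tangent complex is $+\Delta^*\bF^{Q\wedge Q}$, not $-\bF^{Q\wedge Q}$.
\item Since $\bbLambda_\alpha$ has no CY4 obstruction theory of its own, your ``dual term from Serre symmetry'' is precisely the $\Omega_\pi$ summand of that formula.
\item On $\bbLambda_{\alpha_1,\alpha_2}$ the classes $\scL_{31}$, $\scL_{32}$ and $\scL_{ii}$ all have weight $0$, so they never enter $\bN$. Their cancellation is therefore not an obstacle for this proposition. It matters only afterwards, for hypothesis \eqref{sp:comp-ass:cotangent-isom} of Proposition~\ref{prop:comparison} and for the insertion $G$.
\end{itemize}
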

Our convention is to write $\bT$-equivariant sheaves on
$\bG_m$-fixed loci as a product of an explicit $\bG_m$-weight and a
$\sT$-equivariant sheaf. Obvious pullbacks are omitted.
\subsubsection{}
By Assumption~\ref{ass:stab} (i), $\bbLambda_\alpha$ is a quasi-compact, separated algebraic space with the $\bT$-equivariant resolution property.  We can therefore apply the construction in Theorem \ref{sp:thm:symm-pullback-localization} (a) to the smooth projection $\pi_{\fM_\alpha^{\vec\Fr,\pl}}$ from §\ref{eq:pairs-forgetful-rigidification-maps} and the CY4 obstruction theory $\bE^{\rig}$ to produce 
$$
\big[\widehat{\mathcal{O}}_{\bbLambda_{\al}}^{\vir}\big]\in G^{\bT}_0\big(\bbLambda_{\al},\bZ[2^{-1}]\big)\,.
$$
Let $\sZ_{\bbLambda_{\al}}\in H^{\bT}(\bbLambda_{\al})_{\loc}$ be the universal invariant as in §\ref{bg:sec:universal-invariants}.
For the insertion
\[ G \coloneqq \Omega_{\pi_{\fM_\alpha^{\vec\Fr}}}^\vee - \scL_{31} \otimes \scL_{32} \in K_{\bT}^\circ(\bbLambda_\alpha^\pl)\,, \]
we define the class 
\begin{equation} \label{eq:pairs-master-lhs}
(\pi_{\fM^{\Fr}_{\al}})_*\left( I_*\sZ_{\bbLambda_{\al}}\cap \hat{\fc}_{\rk}(G) \otimes (\scL_{31}^\vee \otimes \scL_{32})^{\frac{1}{2}}\right) \in H^\sT(\fM_\alpha)_{\loc}\,.
\end{equation}
Note that $\bG_m$ acts
trivially on $\fM_\alpha$, so we may apply the residue map $\rho_z$ from §\ref{sec:LA-formula+residue} to the above invariant.\footnote{In the case of the equivariant homology theories from §\ref{sec:equivariant-homology-bivariant}, this is possible due to Example \ref{ex:trivialThom}.} 
It follows from the properness in Assumption \ref{ass:stab} (i) that this total residue will vanish (see \cite[§8.4]{Bo25}, \cite[Lemma 2.6.4]{KLT25}). Ultimately, we will need to view \eqref{eq:pairs-master-lhs} as an element of $L(\fM_{\cat{A}})_{\loc}$ for which we use the same notation.
\subsubsection{}

On the fixed point locus $\bbLambda_{\rho_4=0}$, the line bundle $\scL_{31}$ carries
the non-zero section $\rho_3$, of trivial $\bG_m$-weight, and is
therefore $\bG_m$-equivariantly trivial. Similarly, $\scL_{32}$ carries
the section $\rho_4$ of $\bG_m$-weight $z$, so $\scL_{32}$ becomes
$z\cL$ for a $\sT$-equivariant line bundle $\cL$ of trivial
$\bG_m$-weight. Hence the first term in $G$ splits as
\[ \Omega_{\pi_{\fM_\alpha^{\vec\Fr}}}^\vee\Big|_{\bbLambda_{\rho_4=0}} = z\cL + \Omega_{\pi_{\rho_4=0}}^\vee + \pi_{\rho_4=0}^* \Omega_{\pi_{\fM_\alpha^{\Fr_1}}}^\vee, \]
and the second term in $G$ cancels the $z\cL$ term. Since
$\Omega_{\pi_{\rho_4=0}}$ is a vector bundle,
\[ \hat{\fc}_{\rk}(G)\Big|_{\bbLambda_{\rho_4=0}} = \hat{\fe}(\Omega_{\pi_{\rho_4=0}}) \otimes \pi_{\rho_4=0}^*\hat{\fe}(\Omega_{\pi_{\fM_\alpha^{\Fr_1}}}^\vee)\,. \]
 Thus, by
Theorem~\ref{sp:thm:symm-pullback-localization} and
Proposition~\ref{prop:pairs-master-space}, the contribution from
$\bbLambda_{\rho_4=0}$ to \eqref{eq:pairs-master-lhs} is
\begin{equation} \label{eq:pairs-master-term-1}
  (\pi_{\fM^{\Fr}_{\al}}\circ I \circ \iota_{\rho_4=0})_*\left(\frac{\wt{\sZ}_{\bbLambda_{\rho_4=0}}}{\hat{\fe}(z\cL)}\cap (z\cL)^{\frac{1}{2}} \otimes \hat{\fe}(\Omega_{\pi_{\rho_4=0}}) \otimes \pi_{\rho_4=0}^*\hat{\fe}(\Omega_{\pi_{\fM_\alpha^{\Fr_1}}}^\vee) \right)\,,
\end{equation}
where $\wt{\sZ}_{\bbLambda_{\rho_4=0}}$ is the universal invariant defined using $\big[\tilde{\cO}^{\vir}_{\bbLambda_{\al}}\big]$. Applying $\rho_z$,
the only contribution comes from
\[ \rho_z \frac{(z\cL)^{\frac{1}{2}}}{\hat{\fe}(z\cL)} = 1 \]
because all other terms have trivial $\bG_m$-weight. Thus
\eqref{eq:pairs-master-term-1} becomes
\[  (\pi_{\fM^{\Fr}_{\al}}\circ I \circ \iota_{\rho_4=0})_*\left(\wt{\sZ}_{\bbLambda_{\rho_4=0}}\cap \hat{\fe}(\Omega_{\pi_{\rho_4=0}}) \otimes \pi_{\rho_4=0}^*\hat{\fe}(\Omega_{\pi_{\fM_\alpha^{\Fr_1}}}^\vee) \right).\]
Now, the induced virtual cycle $\big[\tilde{\cO}^\vir_{\bbLambda_{\rho_4=0}}\big]$
 must be compared to the virtual
cycle $\big[\hat{\cO}^\vir_{\bbLambda_{\rho_4=0}}\big]$ on the same space which
arises by CY4 pullback along
$\pi_{\fM_\alpha^{\vec\Fr, {\pl}}} \circ \pi_{\rho_4=0}$. We apply
Proposition~\ref{prop:comparison} to $M = \bbLambda_\alpha$, $Z =
\bbLambda_{\rho_4=0}$, and $\fN = \fM = \fM_\alpha^{\pl}$ with the obvious
maps between them. The hypotheses are obviously satisfied, and clearly
$\Omega_{\pi_{\fM_\alpha^{\vec\Fr}}}^{>}|_{\bbLambda_{\rho_4=0}} = 0$. Thus
we conclude 
\[ \big[\tilde{\cO}^\vir_{\bbLambda_{\rho_4=0}}\big] = \big[\hat{\cO}^\vir_{\bbLambda_{\rho_4=0}}\big]\,. \]\

Finally, by Corollary~\ref{cor:flag-pushforward}
applied to the projective bundle $\pi_{\rho_4=0}$, along with the
projection formula, the result is
\[\fr_2(\alpha) (\pi_{\fM^{\Fr}_{\al}})_*I_*\left(\sZ_{\alpha,1}^{\Fr_1}(\tau^{\JS}) \cap \hat{\fe}\big(\Omega_{\pi_{\fM_\alpha^{\Fr_1}}}^\vee\big)\right), \]
which, by the defining equation \eqref{eq:pairs-invariant}, is exactly
$\fr_2(\alpha)\fr_1(\al) \breve \sZ_\alpha^{\Fr_1}(\tau)$.

\subsubsection{}

On the fixed point locus $\bbLambda_{\rho_3=0}$, the calculation is analogous.
Namely, $\scL_{31}$ restricts to become $z^{-1}\cL$ for some
$\sT$-equivariant line bundle $\cL$, while $\scL_{32}$ restricts to
the $\bG_m$-equivariantly trivial line bundle, and the contribution to
the residue comes from
\[ \rho_z^K \frac{(z\cL^\vee)^{\frac{1}{2}}}{\hat{\fe}(z\cL^\vee)} = 1. \]
In contrast, when applying Proposition~\ref{prop:comparison} to
compare virtual cycles, we obtain 
\[ \big[\tilde{\cO}^\vir_{\bbLambda_{\rho_3=0}}\big] = -\big[\hat{\cO}^\vir_{\bbLambda_{\rho_3=0}}\big] \]
because now $\Omega_{\pi_{\fM_\alpha^{\vec\Fr}}}^{>}|_{\bbLambda_{\rho_3=0}} = z\scL^\vee$ has rank
$1$. Hence the contribution from $\bbLambda_{\rho_3=0}$ to
\eqref{eq:pairs-master-lhs} is $-\fr_1(\alpha)\fr_2(\alpha) \breve
\sZ_\alpha^{\Fr_2}(\tau)$.

\subsubsection{}
\label{sec:pairs-master-3}

Finally, consider $\bbLambda_{\alpha_1,\alpha_2}$. We implicitly use the
isomorphism $\pi_{\alpha_1,\alpha_2}$ where appropriate. The first
term in $G$ restricts to
\begin{equation} \label{eq:pairs-master-3-relative-Tvir}
  \begin{aligned}
  \Omega_{\pi_{\fM_\alpha^{\vec\Fr}}}^\vee\Big|_{\bbLambda_{\al_1,\al_2}} = &  \Big(\scL_{31} + \scL_{32}
    + \cV_1^\vee \otimes \cFr_1(\cE_1) + z^{-1} \cV_1^\vee \otimes \cFr_1(\cE_2) \\
    &+ z\cV_2^\vee \otimes \cFr_2(\cE_1) + \cV_2^\vee \otimes \cFr_2(\cE_2)\Big) - \sum_{i=1}^3 \scL_{ii}\,.
  \end{aligned}
\end{equation}
Observing that $\scL_{31},\scL_{32}$ are isomorphic to $\cO$ and re-distributing $\scL_{ii}$ into $\Omega^\vee_{\pi_{\fM_{\alpha_i}^{\Fr_i}}}$ for $i=1,2$, we get
\[ \hat{\fc}_{\rk}(G)\Big|_{\bbLambda_{\alpha_1,\alpha_2}} = \hat{\fe}\left(z^{-1} \cV_1^\vee \otimes \cFr_1(\cE_2) \oplus z\cV_2^\vee \otimes \cFr_2(\cE_1)\right) \otimes \left(\hat{\fe}(\Omega_{\pi_{\fM_{\alpha_1}^{\Fr_1}}}^\vee) \boxtimes \hat{\fe}(\Omega_{\pi_{\fM_{\alpha_2}^{\Fr_2}}}^\vee)\right). \]
Comparing the first factor of the right hand side with $\hat{\fe}\big(\bN^{>}_{\iota_{\al_1,\al_2}}\big)$ from 
\eqref{eq:pairs-master-space-interaction-Nvir}, we see that the contribution
from $\bbLambda_{\alpha_1,\alpha_2}$ to \eqref{eq:pairs-master-lhs} is $\rho_z$ of
\begin{equation} \label{eq:pairs-master-term-3}
 (\pi_{\fM_\alpha^{\Fr}}\circ I\circ \iota_{\alpha_1,\alpha_2})_*\left( \frac{(-1)^{\fr_1(\alpha_2)} \tilde{\sZ}_{\bbLambda_{\alpha_1,\alpha_2}}}{\hat{\fe}_{\bT}(z\Theta_{\alpha_1\alpha_2})} \otimes \left(\hat{\fe}(\Omega_{\pi_{\fM_{\alpha_1}^{\Fr_1}}}^\vee) \boxtimes \hat{\fe}(\Omega_{\pi_{\fM_{\alpha_2}^{\Fr_2}}}^\vee)\right) \right)\,,
\end{equation}
where $\tilde{\sZ}_{\bbLambda_{\alpha_1,\alpha_2}}$ is the universal invariant defined using $\tilde{\cO}^\vir_{\bbLambda_{\alpha_1,\alpha_2}}$.
\subsubsection{}
We now use the isomorphism \eqref{eq:pairs-master-space-complicated-locus} and compare 
$\big[\wt{\cO}^\vir_{\bbLambda_{\al_1,\al_2}}\big]$ to 
$$
\Big[\wh{\cO}^\vir_{M^{\Fr_1}_{\al_1,1} (\tau^{\JS})}\big]\boxtimes\Big[\wh{\cO}^\vir_{M^{\Fr_2}_{\al_2,1} (\tau^{\JS})}\Big]\,.
$$
Both factors in this product are the classes from \eqref{eq:FRJS-class}. Even though, we originally applied Theorem \ref{sp:thm:symm-pullback-localization} to $\bE^{\rig}$ to construct all of these classes, we may instead work with $\bE$ on $\fM_{\cat{A}}$ due to Lemma \ref{lem:Mrig-and-M-no-difference}.  The classes $\Big[\wh{\cO}^\vir_{M^{\Fr_i}_{\al_i,1} (\tau^{\JS})}\big]$ can therefore be constructed as CY4 pullbacks along $\pi_{\fM^{\Fr}_{\al_i}}\circ I_{\al_i,1}$, for example.

 We can now apply Proposition~\ref{prop:comparison} to the commutative $\bT$-equivariant diagram
\[ \begin{tikzcd}[column sep=huge, row sep= large]
M^{\Fr_1}_{\al_1,1} (\tau^{\JS}) \times M^{\Fr_2}_{\al_2,1} (\tau^{\JS})\ar[hookrightarrow]{r}{\iota_{\alpha_1,\alpha_2}} \ar{d}[swap]{(\pi^{\Fr_1}_{\fM_{\alpha_1}} \times \pi^{\Fr_2}_{\fM_{\alpha_2}}) \circ (I_{\alpha_1,1} \times I_{\alpha_2,1})} & \bbLambda_\alpha \ar{d}{\pi_{\fM^{\vec\Fr}_\alpha} \circ I_{\alpha,1}} \\
    \fM_{\alpha_1} \times \fM_{\alpha_2} \ar{r}{\Phi_{\alpha_1,\alpha_2}} & \fM_\alpha
  \end{tikzcd} \]
The condition that \eqref{sp:comp-ass:bE-isom} is an isomorphism can be checked by looking at \eqref{eq:pairs-master-3-relative-Tvir} combined with an argument producing \cite[(8.29)]{Bo25}. For \eqref{sp:comp-ass:bE-isom}, we observe that \eqref{eq:EE=Delta-Theta} together with bilinearity of $\Theta$ implies
\begin{align*}
I^*\big(\pi_{\fM^{\vec\Fr}_{\al}}\big)^* \bE_{\al}|_{\bbLambda_{\al_1,\al_2}}  = &I^*\big(\pi_{\fM^{\Fr_1}_{\al_1}}\big)^*\bE_{\al_1}\oplus I^*\big(\pi_{\fM^{\Fr_2}_{\al_2}}\big)^*\bE_{\al_2}\oplus z\big(\pi_{\fM^{\Fr_1}_{\al_1}}\times \pi_{\fM^{\Fr_2}_{\al_2}}\big)^*\Theta_{\al_1\al_2}\\
&\oplus z^{-1}\big(\pi_{\fM^{\Fr_1}_{\al_1}}\times \pi_{\fM^{\Fr_2}_{\al_2}}\big)^*\sig^* \Theta_{\al_2\al_1}\,.
\end{align*}
This shows that $I^*\big(\pi_{\fM^{\vec\Fr}_{\al}}\big)^* \bE_{\al}|^f_{\bbLambda_{\al_1,\al_2}}$ is indeed isomorphic to the obstruction theory from $\fM_{\al_1}\times \fM_{\al_2}$. Since the above splitting is the same one as was used in \cite[(4.3)]{Bo25}, we deduce that the comparison of orientations \eqref{as:eq:orientation-sum-discrepancy} is identified with the one from \eqref{sp:comp-ass:bE-isom}, so $\varepsilon_Z  = \varepsilon_{\al_1,\al_2}$.

 From \eqref{eq:pairs-master-3-relative-Tvir} we see that
\[ \Omega_{\pi_{\fM_\alpha} \circ I_{\alpha,1}}^{>}\Big|_{\bbLambda_{\alpha_1,\alpha_2}} = z\cV_1 \otimes \cFr_2(\cE_1)^{\vee} \]
has rank $\fr_2(\alpha_1)$. Thus we conclude that
\[ \big[\tilde{\cO}^\vir_{\bbLambda_{\alpha_1,\alpha_2}}\big] = \varepsilon_{\alpha_1,\alpha_2} (-1)^{\fr_1(\alpha_2)} \big[\hat{\cO}^\vir_{\bbLambda_{\alpha_1,\alpha_2}}\big]. \]

Note that the $\bG_m$-action on the first factor of $\fM_{\al_1}\times\fM_{\al_2}$ rescales the automorphisms making $\Phi_{\al_1,\al_2}$ non-trivially $\bG_m$-equivariant. This is where the action of $D(z)$ from §\ref{sec:LA-quotient} acting on the first factor appears from. Putting it all together, the contribution
\eqref{eq:pairs-master-term-3} becomes $\rho_z$ of
\begin{align*}
  \varepsilon_{\alpha_1,\alpha_2}  &(\Phi_{\al_1,\al_2})_*\big(D(z)\otimes \id\big)\Big((\pi_{\fM_{\alpha_1}^{\Fr_1}})_*\big(I_*\sZ_{\alpha_1,1}^{\Fr_1}(\tau^{\JS}) \cap\hat{\fe}(\Omega_{\pi_{\fM_{\alpha_1}^{\Fr_1}}}^\vee)\big)\\
  &\boxtimes (\pi_{\fM_{\alpha_2}^{\Fr_2}})_*\big(I_*\sZ_{\alpha_2}^{\Fr_2}(\tau^{\JS}) \cap\hat{\fe}(\Omega_{\pi_{\fM_{\alpha_2}^{\Fr_2}}}^\vee)\big)\cap \hat{\fe}_{\bT}(z\Theta_{\al_1\al_2})^{-1}\Big)\,.
\end{align*}
Using the Lie bracket from §\ref{sec:LA-quotient} and the defining equation \eqref{eq:pairs-invariant}, this is equal to
$$  \fr(\al_1)\fr(\al_2)[\breve
\sZ_{\alpha_1}^{\Fr_1}(\tau^{\JS}), \breve
  \sZ_{\alpha_2}^{\Fr_2}(\tau^{\JS})]\,.$$
\subsubsection{}

Putting it all together, we have obtained the formula
\[ \breve \sZ_\alpha^{\Fr_2}(\tau^{\JS}) - \breve \sZ_\alpha^{\Fr_1}(\tau^{\JS})    =  \sum_{\substack{\alpha = \alpha_1+\alpha_2\\\al_1,\al_2\in R_{\al}}} \frac{\fr_1(\al_1)\fr_2(\al_2)}{\fr_1(\al)\fr_2(\al)}\cdot  [\breve \sZ_{\alpha_1}^{\Fr_1}(\tau^{\JS}), \breve \sZ_{\alpha_2}^{\Fr_2}(\tau^{\JS})]. \]
This is the equivalent of formula \cite[(9.51)]{Joyce2021} necessary for running the full argument in \cite[§9.3]{Joyce2021}.\footnote{One only needs to remember that we additionally factored out $\fr(\al)$ in \eqref{eq:pairs-invariant}.} See also \cite[§3.3]{KLT25}. Thus we have proved the general claim of Theorem \ref{thm:sst-invariants}.

\section{Wall-Crossing}
\label{sec:wall-crossing}
\subsection{Strategy}
\label{sec:strategy}
The master-space localization techniques used in this section to prove the wall-crossing formula go back to \cite{mochizuki}, and have been developed into a general wall-crossing framework for abelian categories in \cite{Joyce2021}. These methods have then been adapted in \cite{KLT25} for wall-crossing in general equivariant CY$3$ categories. For some (potentially equivariant) CY$4$ categories including representations of CY4 quivers and sheaves and pairs on local CY fourfolds, this proof appeared in  \cite{Bo25}. The present paper proves the general CY4 wall-crossing formula under the assumptions in Section~\ref{sec:assumptions-all}. We combine the machinery developed in the above papers with the techniques worked out in §\ref{sec:sym-pullback}, which allows us to run the familiar master space localization argument. This recovers the horizontal flag wall-crossing formula \eqref{eq:simple.flag-wall-crossing}. As explained in \cite[§8.1]{Bo25}, this formula together with its projection to $\fM^{\pl}_{\cat{A}}$ are sufficient to recover the general statement in Theorem \ref{thm:general-wall-crossing-intro}.

\subsubsection{}
\label{sec:piecing-wc-together}
First, using Assumptions~\ref{ass:stab} (b), (c), (e), and (f), as in \cite[\S8.1 (1)\&(4)]{Bo25} which summarizes and adapts \cite[\S11]{Joyce2021}, we can piece together the general wall-crossing formula between two stability conditions, by moving along a continuous path $\gamma_t$ of stability conditions chosen in \ref{ass:stab} (b). This consists of crossing dominant walls at finitely many points $t_i$, thus moving onto and off of a wall where additional objects may become destabilized. This is illustrated in the following figure representing the dominant wall-crossing at each wall $t_i$.
\begin{equation*}
    \vcenter{\hbox{\includegraphics[width=0.45\textwidth]{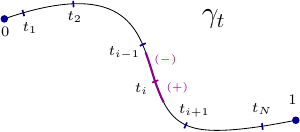}}}\label{wc:fig:piecingWCtogether}
\end{equation*}
Here, at each $t=t_i$, we fix $\mathring t$ in the segment ${\color{violet}(-)}$ and wall-cross onto the wall, and then fix $\mathring t$ in the segment ${\color{violet}(+)}$ to wall-cross again off the wall. To recover the general wall-crossing formula, we iterate this  procedure for every wall $t_i$ on the path between $\gamma_0$ and $\gamma_{1}$. 

\subsubsection{}\label{wc:sec:dominant-wc-strategy}
Fix the stability conditions $\tau := \gamma_{t_i}$ on the wall and $\mathring\tau:=\gamma_{\mathring t}$ off the wall for $t_i, \mathring t$ from §\ref{sec:piecing-wc-together}. Using Assumptions~\ref{ass:stab} (d), (e), (f), (g), (h), and (i) , as in \cite[\S8.1 (2)]{Bo25}, \cite[\S4.1]{KLT25}, and \cite[\S10]{Joyce2021}, we can lift dominant wall-crossing between $\tau$ and $\mathring\tau$ to the auxiliary flag moduli stacks constructed as in §\ref{bg:sec:aux-stacks} over $\fM_{\cat{A}}$ for the quiver 
$$\label{wc:eq:flag-quiver}
 \includegraphics{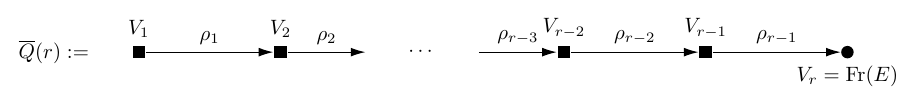}
$$
where the class $(\alpha,\vec{d})$ in \eqref{eq:barK-of-auxiliary} satisfies\footnote{This is a slightly more restrictive condition than the \textit{flag} condition in \cite[4.1.8]{KLT25}, but it is sufficient for our purposes.} 
\begin{equation}
\label{eq:flag-dimension-vector}
d_a \leq  d_{a+1}\leq d_{a}+1  \quad \text{for}\quad 1\leq a\leq r-2 \,,\qquad d_1 = 1\,,\qquad\textnormal{and}\qquad d_{r-1} = \fr(\al)-1\,.\end{equation}
 Using the stability conditions from \eqref{eq:barphi}, one refines the more complicated wall at each $t_i$ in §\ref{sec:piecing-wc-together} to a series of simple walls at $s_k\in (0,1)$ represented as horizontal flag wall-crossing in Figure \ref{fig:dominant-wc-strategy-1}.
\begin{figure}[!ht]
  \centering
  \begin{tikzpicture}
    \draw[->,thick] (-0.5,0)--(11.7,0) node[below]{$s$};
    \draw (0.8,0)--(0.8,.15);
    \draw (5.5,0)--(5.5,.15);
    \draw (10.2,0)--(10.2,.15);
    
    \node[above] at (0.8,0) {$s_1$};
    \node[above] at (2.5,0) {$\cdots$};
    \node[above] at (5.5,0) {$s_k$};
    \node[above] at (8.5,0) {$\cdots$};
    \node[above] at (10.2,0) {$s_p$};
    
    \node[vertex] at (-0.3,0) {};
    \node[below] at (-0.3,-0.1) {$0$};
    \node[vertex] at (11.2,0) {};
    \node[below] at (11.2,-0.1) {$1$};
    
    \node[above] at (-1,0) {$\sZ_{\alpha,\mathbf{d}}(\tau^0)$};
    \node[] at (-1,-3) {$\breve{\sZ}_{\alpha}(\tau)$};
    \node[above] at (12,0) {$\sZ_{\alpha,\mathbf{d}}(\tau^1)$};
    \node[] at (12,-3) {$\breve{\sZ}_{\alpha}(\mathring{\tau})$};
    \node[] at (-1,-5.5) {$\sz_{\alpha}(\tau)$};
    \node[] at (12,-5.5) {$\sz_{\alpha}(\mathring\tau)$};

    \node[below] at (4.5,0) {$\sZ_{\alpha,\mathbf{d}}(\tau^{s_-})$};
    \node[below] at (6.5,0) {$\sZ_{\alpha,\mathbf{d}}(\tau^{s_+})$};

    \node[] at (4.4,-3) {$\breve{\sZ}_{\alpha,\mathbf{d}}(\tau^{s_-})$};
    \node[] at (6.6,-3) {$\breve{\sZ}_{\alpha,\mathbf{d}}(\tau^{s_+})$};

    \draw[<-, thick, Green, text=darkgray] (4.5,-2.7)--(4.5,-0.6);
    \draw[<-, thick, Green, text=darkgray] (6.5,-2.7)--(6.5,-0.6);
    
    \draw[<->, thick, decorate, decoration=snake] (-1,-3.3)--(-1,-5.2);
    \draw[<->, thick, decorate, decoration=snake] (12,-3.3)--(12,-5.2);
    \draw[<->, thick, red, text=darkgray] (-0.4,-5.5)--node[above]{desired {\it dominant wall-crossing}} (11.4,-5.5);
    
    \draw[<-, thick, Green, text=darkgray] (-1,-2.7)--node[left,align=center]{flag\\push-\\forward}(-1,0.1);
    \draw[<-, thick, Green, text=darkgray] (12,-2.7)--node[right, align=center]{flag\\push-\\forward}(12,0.1);
    
    \node at (0.4,0.1) (sm1) {};
    \node at (1.2,0.1) (sp1) {};
    \draw[<->, thick, orange, text=darkgray, bend left=90, distance=22] (sm1) to (sp1);
    \node at (5.1,0.1) (sm2) {};
    \node at (5.9,0.1) (sp2) {};
    \draw[<->, thick, orange, text=darkgray, bend left=90, distance=22] (sm2) to (sp2);
    \node at (9.8,0.1) (sm3) {};
    \node at (10.6,0.1) (sp3) {};
    \draw[<->, thick, orange, text=darkgray, bend left=90, distance=22] (sm3) to (sp3);
    \draw[<->, thick, orange] (5.2,-3) to (5.8,-3);

    \node[] at (5.5,-2.55) {\S\ref{prop:OmegaWC}};
    \node[] at (6.5,0.5) {\eqref{eq:simple.flag-wall-crossing}};
    
    \node[above, text=darkgray] at (5.5,0.8) {horizontal flag wall-crossing};
    
    \draw[<->,thick,blue] (11.4,-5) to (11.4,-3.4) --node[below]{our path} (-0.4,-3.4) -- (-0.4,-5);
  \end{tikzpicture}
  \caption{}
  \label{fig:dominant-wc-strategy-1}
\end{figure} 
In Section~\ref{sec:horizontalflagWC}, we prove the \textit{simple flag wall-crossing} at each $s_k$ separately. The dominant wall-crossing formula can be recovered from the simple wall-crossings and a pushforward formula via a combinatorial procedure explained in \cite[\S 4]{KLT25}, following \cite[\S 10]{Joyce2021}. Instead, we follow \cite[§8.1 (3), §8.5]{Bo25} here, which shows that this may be simplified in our case by pushing the resulting expression forward to $\fM^{\pl}_{\cat{A}}$ at every simple wall-crossing step. This produces a wall-crossing formula between $\breve{\sZ}_{\alpha}^{\Fr}(\tau)$ and $\breve{\sZ}_{\alpha}^{\Fr}(\mathring{\tau})$  from \eqref{eq:pairs-invariant} which is a composition of the single wall-crossing steps in Proposition \ref{prop:OmegaWC}.\footnote{This approach is extracted from \cite[§10.5]{Joyce2021} and skips comparing the combinatorics in each step there.} This step, represented by the \textit{flag pushforward} arrows in Figure \ref{fig:dominant-wc-strategy-1}, is addressed in Section~\ref{sec:flagprojection} and is a direct application of Corollary \ref{cor:flag-pushforward}.
\subsubsection{}
    \begin{remark}
        Note that the only geometric wall-crossing in the above procedure is the horizontal flag wall-crossing that take place inside of $\cat{A}^{Q(\vec\Fr)}_{\tau,\phi(\al)}$. When it comes to wall-crossing for stable pairs with fixed-determinant obstruction theories, Assumption \ref{ass:pairWC} was set up in such a way that the corresponding replacement category $\cat{B}^{Q(\vec\Fr)}_{\tau,\phi(\beta)}$ does not distinguish between the left and the right-hand side of \eqref{eq:equivalence-cat-pairs}. In particular, one can still wall-cross with the stabilities $\tau$ in the heart $\cat{B}\subset D^b(X)$ and use the fixed-determinant obstruction theories, but one also has the necessary moduli spaces of flags constructed using the framing $\Fr$ from Assumption \ref{ass:pairWC} (d) as in \eqref{eq:framedstablemodulispace}. Thus, the same arguments as above still apply to prove the corresponding wall-crossing formula.
    \end{remark}

\subsection{Horizontal Flag Wall-Crossing}\label{sec:horizontalflagWC}
\subsubsection{}
\label{sec:simple-wall-set-up}
For a class $(\al,\vec d)$ of the quiver $\bar{Q}(r)$ as in \eqref{eq:flag-dimension-vector}, the stability $\tau^{\lambda}_{\vec{\delta}}$ used in the horizontal wall-crossing is given for $\vec{\delta}$ satisfying the condition
$$
1\gg \delta_1\gg \delta_2\gg \cdots  \gg \delta_{r-2}\gg \delta_{r-1} >0
$$
explained in more detail in \cite[(10.5)]{Joyce2021}. The group-homomorphism $\lambda: \bar{K}(\cat{A})\to \bR$ is given by $\lambda = s\lambda^{t,t'}_{\al}$ for $t,t'\in [0,1]$ as in §\ref{sec:piecing-wc-together} and $s\in [0,1]$, where $s$ is precisely the variable in the horizontal flag wall-crossing of Figure \ref{fig:dominant-wc-strategy-1}. We call the resulting stability simply $\tau^s$, omitting $\vec{\delta}$ as its role is to make the maps $\rho_{i}$ injective. In particular, we may always choose $\vec{d}$ so that 
\begin{equation}
\label{eq:injective-implies}
d_a = a\qquad \textnormal{for } 1\leq a\leq {r-1} \,.
\end{equation}

The result in \cite[Proposition 10.16]{Joyce2021} shows that the simple walls at $s_k$ in Section~\ref{wc:sec:dominant-wc-strategy} occur precisely when a $\tau^{s_k}$-semistable object splits into two $\tau^{s_k}$-semistables with classes on the right-hand side of
\begin{equation*}
  (\al,\vec d)=(\beta,\vec e)+(\gamma,\vec f)
\end{equation*}
also satisfying \eqref{eq:flag-dimension-vector}. Moreover, for a fixed $s_k$, the set 
\begin{equation}
\label{eq:splitting-pair-Jo}
J_k:= \Big\{\Big((\beta,\vec e), (\gamma,\vec f)\Big)\Big\}
\end{equation}
of such pairs is finite.

Setting\footnote{In Figure \ref{fig:dominant-wc-strategy-1}, we set $s_0=0$ and $s_{p+1} = 1$.}
\begin{equation}
\label{eq:definition-s-pm}
s^-\in (s_{o-1}, s_{o})\qquad \textnormal{and}\qquad s^+\in (s_{o}, s_{o+1})\,,
\end{equation}
we thus want to prove a wall-crossing formula relating the virtual invariant of $M_{\al,\vec d}^{\bar{Q}(r)}(\tau^{s_-})$ to the one of $M_{\al,\vec d}^{\bar{Q}(r)}(\tau^{s_+})$ in terms of the invariants of $M^{\bar{Q}(r)}_{\beta,\vec e}(\tau^{s_k})$ and $M^{\bar{Q}(r)}_{\gamma,\vec f}(\tau^{s_k})$. The corresponding master space is recalled below.

\subsubsection{}
\begin{definition}[{\cite[Definition 10.19]{Joyce2021}}]\label{wc:def:horizontal-master-space}
  Fix $(\al, \vec d)$ as in \eqref{eq:injective-implies} and let $s_k \in (0,1)$ for $o\in \{1,\cdots,p\}$ be as in §\ref{sec:simple-wall-set-up}. Let $\{\vec e, \vec f\}$ be ordered so that $\vec e > \vec f$ in lexicographical ordering. Let $b$ be the smallest index such that $f_b > 0$, and consider\footnote{Note that, unlike \cite{Bo25} and the present work, \cite{Joyce2021, KLT25} consider the quiver with $\rho_{-1}$ from the smallest index $a$ such that $e_a>0$, but since that means the first vector spaces are zero-dimensional, and semistable invariants can be defined using $\vec 1$ framings of any length following \cite[\S4.3]{KLT25}, \cite[(5.14)]{Bo25}, this ends up being equivalent, and the computation is unchanged.} the quiver $\widehat{Q}(r)$ given by
  \begin{equation}
\vcenter{\hbox{\includegraphics[width=0.9\textwidth]{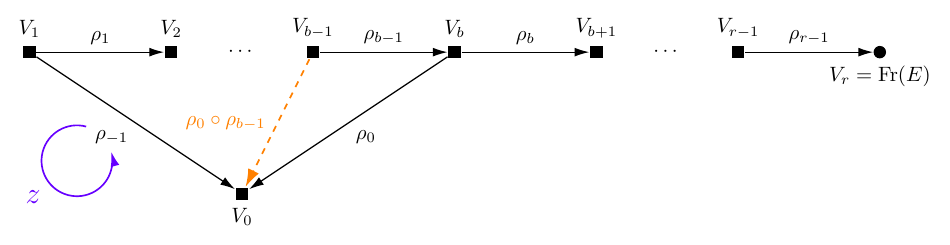}}}\label{wc:fig:wc-ms-quiver}
  \end{equation}
  where the dotted arrow is not an edge but rather the relation $\rho_0 \circ \rho_{b-1} = 0$. Explicitly, let
  \[ \fM^{\hat{Q}(r)}_{\al,\hat{\vec d}} \coloneqq \left\{\left[(E,\vec V, \vec \rho)\right] : E \in \fM_{\al}, \; \dim \vec V = \hat{\vec d}\right\} \]
  be the moduli stack of triples. We only work with $\hat{\vec d}
  \coloneqq (1, \vec d)$, i.e. $\dim V_0 = 1$. On the resulting auxiliary moduli stack, consider the stability condition $\hat{\tau}:=\tau^{\lambda}_{\hat{\vec\delta}}$ where $\lambda=s_k\lambda^{t,t'}$ and $\hat{\vec\delta} = (-\epsilon, \vec\delta)$ for a sufficiently small $\epsilon > 0$ as in \cite[§10.6]{Joyce2021}. Since the dotted arrow is a relation, rather than a map, we take as the \textit{master space} the closed substack
  \[ \bM \subset \fM^{\hat{Q}(r)}_{\al,\hat{\vec d}}(\hat{\tau}) \]
  of the semistable locus, where $\rho_0 \circ \rho_{b-1} = 0$.
\end{definition}

\subsubsection{}
Since $d_b = d_{b-1} +1$ and all semistable objects have injective morphisms $\rho_j$, see \cite[Prop. 10.3]{Joyce2021} and \cite[4.1.12]{KLT25}, this is the substack where $\rho_0$ factors through a morphism $V_b/V_{b-1} \to V_0$. We can hence describe the master space $\bM$ as an open subspace
\begin{equation*}
  \bM\mathring{\subset} \tot\bigg(\big(\bigoplus_{i=1}^{r-1}\cV_i^\vee\otimes\cV_{i+1}\big)\oplus \left(\cV_1^\vee\otimes\cV_0\right)\oplus\left((\cV_b/\cV_{b-1})^\vee\otimes\cV_0\right)\bigg) \xrightarrow{f_\bM} \fM^{\Fr,\pl}_{\alpha} \times \prod_{v \in \hat{Q}_0^f} [\pt/\GL(\hat{d}_v)].
\end{equation*}
We write $p_1$ and $p_2$ for the projections of $\fM^{\Fr,\pl}_{\alpha} \times \prod_{v} [\pt/\GL(d_v)]$ to $\fM^{\Fr,\pl}_{\alpha}$ and $\prod_{v} [\pt/\GL(d_v)]$ respectively, and 
\begin{equation*}
  \pi:\bM\to \fM_\alpha^{\Fr,\pl}
\end{equation*}
for the smooth projection. Using the vector bundle presentation of $\bM$ above, the argument in Section~\ref{bg:sec:aux-stacks} then shows that the relative cotangent complex of $\pi$ is
\begin{equation}\label{wc:eq:master-space-relative-cotangent}
  \bL_\pi \cong \left[\bigg(\bigoplus_{i=1}^{r-1}\cV_i\otimes\cV_{i+1}^\vee\bigg)\oplus \left(\cV_1\otimes\cV_0^\vee\right)\oplus\left((\cV_b/\cV_{b-1})\otimes\cV_0^\vee\right)\to\bigg(\bigoplus_{v}\cV_v\otimes\cV_v^\vee\bigg)\right],
\end{equation}
which we will use in the proof of Proposition~\ref{prop:master-space-fixed-loci} below.

\subsubsection{}
The $\bGm$-action on $\bM$ is induced by scaling the map $\rho_{-1}$ with weight denoted by $z$ as represented by the purple loop in \eqref{wc:fig:wc-ms-quiver}. Since there is an underlying $\sT$-action, we write $\bT\coloneqq \bGm\times\sT$ and work $\bT$-equivariantly on $\bM$.

By Assumption \ref{ass:stab} (i), the master space is a quasi-compact, separated algebraic space with $\bT= \bG_m\times \sT$-equivariant resolution property. By applying Theorem~\ref{sp:thm:symm-pullback-localization} to the smooth projection map $\pi$ and the obstruction theory $\bE^{\rig}$ from \eqref{eq:EE-rigidified}, we obtain a virtual class $\big[\hat\cO^\vir_{\bM}\big]$ amenable to $\bT$-equivariant localization.
\begin{proposition}[{\cite[Prop. 8.6]{Bo25}, \cite[Props. 10.20, 10.21]{Joyce2021}}]
\label{prop:master-space-fixed-loci}
  The $\bGm$-fixed point locus of $\bM$ is the disjoint union of the following pieces:
  \begin{enumerate}
  \item Let $\bM_{-} \coloneqq \{\rho_{-1} = 0\}
    \subset \bM$. Then $\rho_0 \neq 0$ holds due to $\hat{\tau}$-stability. For $s_-$
    from \eqref{eq:definition-s-pm}, there is a natural isomorphism of stacks $\bM_{-}\xrightarrow{\sim} M_{\al,\vec d}^{\bar{Q}(r)}(\tau^{s_-})$ acting by
    $$
      \left[(E, (V_0, \vec V), (0, \rho_0, \vec \rho))\right] \mapsto \left[(E, \vec V, \vec \rho)\right],
  $$
  which is illustrated by the following quiver
    \begin{equation}
    \vcenter{\hbox{\includegraphics[width=0.85\textwidth]{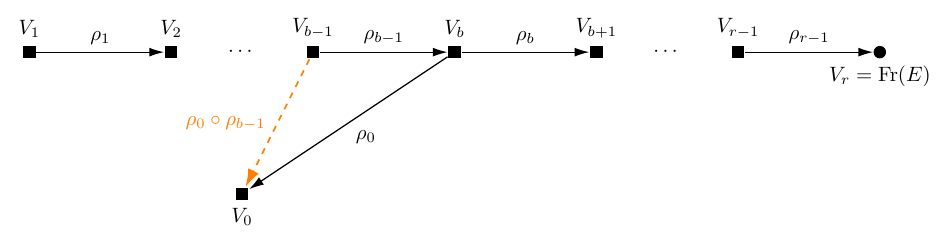}}}\label{wc:fig:wc-ms-fixed-locus-1}
    \end{equation}
    Writing $\nu_-:M_{\al,\vec d}^{\bar{Q}(r)}(\tau^{s_-})\to \fM_{\al}^{\Fr,\pl}$ for the forgetful morphism, we get $\Omega_\pi|_{\bM_-}\cong \Omega_{\nu_-}\oplus z^{-1}\big(\cV_0^\vee\otimes \cV_1\big)$. The virtual normal bundle of $\bM_{-}$ is
    \begin{equation}\label{wc:eq:ms-vir-normal-bdl-rho-1}
        \bN_{-}=z\big(\cV_1^\vee\otimes\cV_0\big)\oplus z^{-1}\big(\cV_0^\vee\otimes \cV_1[-2]\big).
    \end{equation}
    \item Let $\bM_{+} \coloneqq \{\rho_0 = 0\} \subset
    \bM$. Then $\rho_{-1} \neq 0$ holds due to $\hat{\tau}$-stability. For $s_+$ from \eqref{eq:definition-s-pm}, there is a natural isomorphism of stacks $\bM_{+} \xrightarrow{\sim} M_{\al,\vec d}^{\bar{Q}(r)}(\tau^{s_+})$ acting by
    \begin{align*}
      \left[(E, (V_0, \vec V), (\rho_{-1}, 0, \vec \rho))\right] &\mapsto \left[(E, \vec V, \vec \rho)\right],
    \end{align*}
   which is illustrated by the following quiver:
    \begin{equation}
    \vcenter{\hbox{\includegraphics[width=0.85\textwidth]{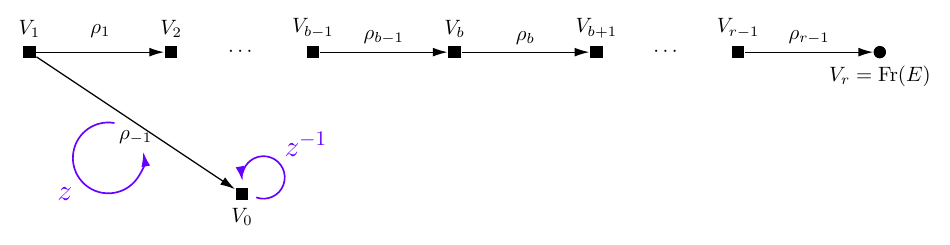}}}\label{wc:fig:wc-ms-fixed-locus-2}
    \end{equation}
    Writing $\nu_+:M_{\al,\vec d}^{\bar{Q}(r)}(\tau^{s_+})\to \fM_\al^{\Fr,\pl}$ for the forgetful morphism, we get $\Omega_\pi|_{\bM_+}\cong\Omega_{\nu_+}\oplus z\big(\cV_0^\vee\otimes (\cV_b/\cV_{b-1})\big)$ and its virtual normal bundle is
    \begin{equation}\label{wc:eq:ms-vir-normal-bdl-rho0}
        \bN_{+}=z^{-1}\big((\cV_b/\cV_{b-1})^\vee\otimes\cV_0\big)\oplus z\big(\cV_0^\vee\otimes (\cV_b/\cV_{b-1})[-2]\big).
    \end{equation}
  \item For each splitting
    $(\al, \vec d) = (\beta, \vec e) + (\gamma, \vec f)$ for a pair in $J_k$ from \eqref{eq:splitting-pair-Jo}, let
    \begin{equation}
        \bM_{(\beta,\vec e)}^{(\gamma, \vec f)} \coloneqq \left\{\left[(E' \oplus E'', (V_0, \vec V' \oplus \vec V''), (\rho_{-1}, \rho_0, \vec \rho' \oplus \vec \rho''))\right] : \begin{array}{c} \rho_{-1} \neq 0 \\ \rho_0\big|_{V'_b} = 0, \; \rho_0\big|_{V''_b} \neq 0 \end{array}\right\} \subset \bM
    \end{equation}
    where the class of $((E', \vec V', \vec \rho'))$ is $(\beta, \vec e)$ and
    the class of $((E'', \vec V'', \vec \rho''))$ is $(\gamma, \vec f)$. There are
    natural isomorphisms of stacks\footnote{In the proof of wall-crossing, the target stacks have no strictly $\tau^{s}$-semistable objects, see \cite[Lemma 4.4.5]{KLT25}.} $ \bM_{(\beta,\vec e)}^{(\gamma, \vec f)} \xrightarrow{\sim}  M^{\bar{Q}(r)}_{\beta,\vec e}(\tau^{s_k}) \times M^{\bar{Q}(r)}_{\gamma,\vec f}(\tau^{s_k}),\label{eq:cross-term-fixed-locus-isomorphism}$ acting by
    \begin{align}
      \left[(E, (V_0, \vec V), (\rho_{-1}, \rho_0, \vec\rho))\right] &\mapsto \left(\left[(E', \vec V', \vec \rho')\right], \left[(E'', \vec V'', \vec \rho'')\right]\right), \nonumber
    \end{align}
   which is illustrated by the following quiver
    \begin{equation}
    \vcenter{\hbox{\includegraphics[width=0.85\textwidth]{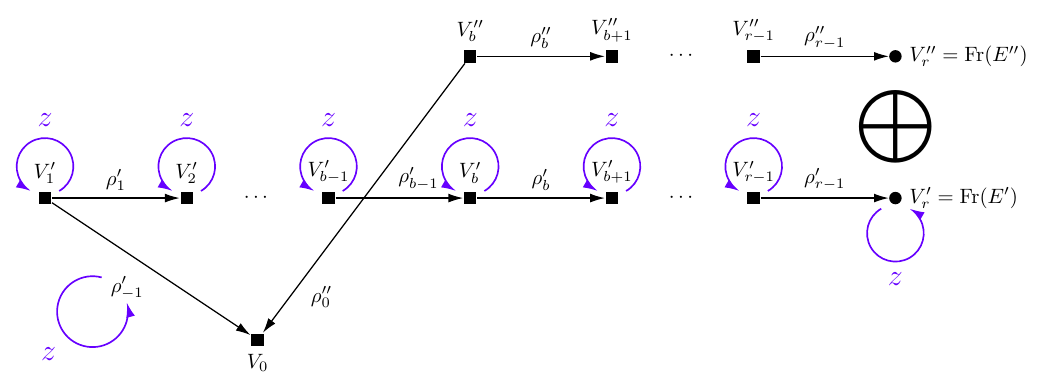}}}\label{wc:fig:wc-ms-fixed-locus-3}
    \end{equation}
    Writing $\nu_\times:M^{\bar{Q}(r)}_{\beta,\vec e}(\tau^{s_k}) \times M^{\bar{Q}(r)}_{\gamma,\vec f}(\tau^{s_k})\to \fM_\beta^{\Fr,\pl}\times\fM_\gamma^{\Fr,\pl}$ for the forgetful morphism and using \eqref{eq:framed-stack-forgetful-map-cotangent}, we have
    \begin{equation}
    \label{eq:Omega-pi-locus-3}
    \Omega_\pi|_{\bM_{(\beta,\vec e)}^{(\gamma,\vec f)}}\cong \Omega_{\nu_\times}\oplus z\big(\bF^{\bar{Q}(r)}_{\beta\gamma}(\vec e,\vec f)\big)^{\vee}\oplus z^{-1}(12)^*\big(\bF_{\gamma\beta}^{\bar{Q}(r)}(\vec f,\vec e)\big)^\vee\,.\end{equation}
   In terms of $\Theta$ from \eqref{eq:ThetaofA}, the positive half of the virtual normal bundle in the sense of §\ref{sp:sec:loc-setup} is identified with
    \begin{align*}
      \big[\bN^{>}_{\times}\big]&= z\nu_\times^*\Theta_{\beta\gamma} + z\bF^{\bar{Q}(r)}_{\beta\gamma}(\vec e,\vec f)^\vee + z(12)^*\bF_{\gamma\beta}^{\bar{Q}(r)}(\vec f,\vec e) \,
    \end{align*}
    under the isomorphism above.
  \end{enumerate}
\end{proposition}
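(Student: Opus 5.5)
We first pin down the $\bG_m$-fixed locus set-theoretically, then compute the cotangent and normal bundle data on each component. Since $\bG_m$ only rescales $\rho_{-1}$, a point $[(E,(V_0,\vec V),(\rho_{-1},\rho_0,\vec\rho))]$ is fixed exactly when there is a cocharacter $\lambda\colon\bG_m\to\Aut(E,\vec V)$ compensating the rescaling. There are three cases. If $\rho_{-1}=0$, the point is fixed with trivial compensating action. If $\rho_0=0$, we rescale $V_0$ by $z$. Otherwise, $\rho_{-1}$ and $\rho_0$ are both nonzero, and the compensating action splits $(E,\vec V)$ into weight spaces. We use $\hat\tau$-stability, as in \cite[Props. 10.20, 10.21]{Joyce2021}, to show that only two weights occur and that they are $0$ and $1$. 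The weight-$0$ summand then has class $(\gamma,\vec f)$ and carries $\rho_0$. The weight-$1$ summand has class $(\beta,\vec e)$, and $\rho_0$ vanishes on it; this is forced by the relation $\rho_0\circ\rho_{b-1}=0$ together with the choice of $b$. Both summands have the same $\tau^{s_k}$-slope, so the pair $((\beta,\vec e),(\gamma,\vec f))$ lies in $J_k$.

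The identifications with $M^{\bar Q(r)}_{\al,\vec d}(\tau^{s_\mp})$ and with $M^{\bar Q(r)}_{\beta,\vec e}(\tau^{s_k})\times M^{\bar Q(r)}_{\gamma,\vec f}(\tau^{s_k})$ also follow \cite{Joyce2021}. The key fact is that $\hat\tau$-stability with $\hat\delta_0=-\epsilon$ reduces to $\tau^{s_\mp}$-stability on the two extreme loci, because $\lambda=s_k\lambda^{t,t'}$ is perturbed to $s_\mp$. On the cross locus, $\hat\tau$-stability reduces to $\tau^{s_k}$-stability of each summand. We take these stability comparisons as given from the cited propositions.

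For the bundle statements we restrict \eqref{wc:eq:master-space-relative-cotangent} to each component and split it by $\bG_m$-weight.

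On $\bM_-$, the only edges with nonzero weight involve $V_0$. The term $\cV_1\otimes\cV_0^\vee$ has weight $z^{-1}$. The term $(\cV_b/\cV_{b-1})\otimes\cV_0^\vee$ drops out, because on this locus the relation presentation identifies it with the deformation of $\rho_{-1}$, which is also the $\rho_{-1}$ edge. What remains is exactly the cotangent complex of $\nu_-$. This gives $\Omega_\pi|_{\bM_-}\cong\Omega_{\nu_-}\oplus z^{-1}(\cV_0^\vee\otimes\cV_1)$.

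The virtual normal bundle is read off from Theorem~\ref{sp:thm:symm-pullback-localization}(c), namely as the moving part of $\pi^*\bE_\fM\oplus\Omega_\pi\oplus T_\pi[2]$ (up to shift). Here $\pi^*\bE$ is $\bG_m$-fixed, since $E$ is not rescaled. So only the $\Omega_\pi$ and $T_\pi[2]$ contributions survive, which gives \eqref{wc:eq:ms-vir-normal-bdl-rho-1}.

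The locus $\bM_+$ is symmetric: $V_0$ now carries weight $z$, the term $(\cV_b/\cV_{b-1})^\vee\otimes\cV_0$ becomes the moving direction, and we obtain \eqref{wc:eq:ms-vir-normal-bdl-rho0}.

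On the cross locus $\bM^{(\gamma,\vec f)}_{(\beta,\vec e)}$, we decompose $\cV_v=\cV_v'\oplus z\cV_v''$, with the weight convention fixed by the first paragraph. Every Hom-term in \eqref{wc:eq:master-space-relative-cotangent} then splits into diagonal blocks, which are fixed, and off-diagonal blocks of weight $z^{\pm1}$. The fixed diagonal blocks give $\Omega_{\nu_\times}$. The off-diagonal blocks, compared against the definition \eqref{eq:framed-stack-forgetful-map-cotangent} of $\bF^{\bar Q(r)}$, give \eqref{eq:Omega-pi-locus-3}. Next, we use bilinearity \eqref{eq:mVOA-bilinear-complex-conditions} and $\bE=\Delta^*\Theta$ to split
$$\pi^*\bE|=\bE_\beta\oplus\bE_\gamma\oplus z\Theta_{\beta\gamma}\oplus z^{-1}(12)^*\Theta_{\gamma\beta}.$$
Collecting the $z$-parts of $\pi^*\bE\oplus\Omega_\pi\oplus T_\pi[2]$ then yields the stated $[\bN^>_\times]$. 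This step uses the symmetry $\Theta_{\gamma\beta}=(12)^*\Theta_{\beta\gamma}^\vee$, which pairs the $z^{-1}$ terms with duals of the $z$ terms.

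The main obstacle is the bookkeeping of which copy of $\rho_0$ lives where. Because of the relation $\rho_0\circ\rho_{b-1}=0$, the ambient is really an open subset of a total space whose edge term is $(\cV_b/\cV_{b-1})^\vee\otimes\cV_0$ rather than $\cV_b^\vee\otimes\cV_0$. Getting this right is what determines the weights on $\bM_\pm$ and the precise $\bF$-classes on the cross locus. To handle it, we would first verify the weight assignment in the case $r=2$ by direct computation, and then extend to general $r$ by induction on the quiver length.
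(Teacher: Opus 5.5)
Your overall route matches the paper's. You import the fixed loci and the stability identifications from \cite[Props. 10.20, 10.21]{Joyce2021}, read off the weights of $\Omega_\pi$ from \eqref{wc:eq:master-space-relative-cotangent}, and build $\bN_\pm$ from $\Omega_\pi$ and $T_\pi[2]$ because $\pi^*\bE$ is fixed there. You obtain $\bN^>_\times$ from the bilinear splitting of $\pi^*\bE$ together with the symmetry of $\Theta$. However, the bookkeeping of the extra vertex $V_0$ is wrong as written in two places, and this is exactly the step you flag as the main obstacle.

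\textbf{On $\bM_\pm$.} The term $(\cV_b/\cV_{b-1})\otimes\cV_0^\vee$ is the deformation direction of $\rho_0$, not of $\rho_{-1}$; the $\rho_{-1}$-direction is $\cV_1\otimes\cV_0^\vee$, which is the moving term. The $\rho_0$-term has weight zero and is not identified with anything. It disappears because it cancels the term $\End(\cV_0)=\cV_0\otimes\cV_0^\vee$ of the new vertex. Indeed, $\rho_0\colon V_b/V_{b-1}\to V_0$ is a nonzero map of lines, so the infinitesimal action $A\mapsto A\circ\rho_0$ of $\End(V_0)$ on the $\rho_0$-direction is an isomorphism. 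Your account leaves $\End(\cV_0)$ unmatched, so the fixed part would come out as $\Omega_{\nu_-}-\cO$ rather than $\Omega_{\nu_-}$. Then hypothesis \eqref{sp:comp-ass:cotangent-isom}, which is used afterwards, would fail. On $\bM_+$ the roles are exchanged: $V_0$ absorbs the weight, the $\rho_{-1}$-term becomes fixed and cancels $\End(\cV_0)$ (using $\rho_{-1}\neq 0$ and $d_1=1$), and the $\rho_0$-term is the moving one.

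\textbf{On the cross locus.} Your decomposition $\cV_v=\cV'_v\oplus z\cV''_v$ contradicts the fixed-point condition. Since $\rho_0|_{V''_b}\neq 0$ is not scaled, $\cV_0$ has the weight of $\cV''$. The $\rho_{-1}$-term $\cV'_1\otimes\cV_0^\vee$ already carries $z^{-1}$ from the edge, as on $\bM_-$. With your weights it would therefore have weight $z^{-2}$, even though $\rho_{-1}\neq 0$ is an invariant section, which forces weight zero.

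The forced choice is $\cV_v=z\cV'_v\oplus\cV''_v$, with $E'$ of weight $z$ and $\cV_0$ of weight $0$. Only this yields $z\bF_{\beta\gamma}^\vee$ in \eqref{eq:Omega-pi-locus-3} and $z\Theta_{\beta\gamma}$ in $\pi^*\bE$, and later $D(z)$ acting on the first factor.

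The $\rho_{-1}$ edge is also an exception to your rule that off-diagonal blocks have weight $z^{\pm1}$. Here all three $V_0$-terms are fixed, so the fixed part is not just the diagonal blocks. Their net contribution is a trivial $\cO$, which is absorbed when one passes to the de-rigidified maps $\tilde\pi\circ I$ and $\tilde\nu_\times\circ I$ as in \S\ref{wc:sec:hor-flag-comparison}.

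\textbf{The $r=2$ plan.} No $r=2$ computation or induction on the quiver length is needed. Once the compensating weights on the vertices are fixed as above, every term of \eqref{wc:eq:master-space-relative-cotangent} can be read off directly for all $r$. That is all the paper does, citing the argument before \cite[(8.29)]{Bo25} for the cross term.
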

\subsubsection{}

\begin{proof}
  The fixed point loci have been determined in \cite[Prop. 10.20]{Joyce2021}, which also describes the action of $\bGm$ on the vertices used to neutralize the action on the edges as represented by the purple loops in the diagrams \eqref{wc:fig:wc-ms-fixed-locus-1}, \eqref{wc:fig:wc-ms-fixed-locus-2}, and \eqref{wc:fig:wc-ms-fixed-locus-3}. 
  
  In our case, we know the maps $\pi$, $\nu_\pm$, and $\nu_\times$ are smooth. So, using the actions depicted in \eqref{wc:fig:wc-ms-quiver}, \eqref{wc:fig:wc-ms-fixed-locus-2}, and \eqref{wc:fig:wc-ms-fixed-locus-3} respectively, \eqref{wc:eq:master-space-relative-cotangent} lets us read off that
  \begin{align*}
    \Omega_\pi|_{\bM_-} &\cong \Omega_{\nu_-}\oplus z^{-1}\cV_0^\vee\otimes \cV_1,\\ 
    \Omega_\pi|_{\bM_+}&\cong\Omega_{\nu_+}\oplus z\cV_0^\vee\otimes (\cV_b/\cV_{b-1}),\text{ and}\\
    \Omega_\pi|_{\bM_{(\beta,\vec e)}^{(\gamma,\vec f)}}&\cong \Omega_{\nu_\times}\oplus z\big(\bF^{\bar{Q}(r)}_{\beta\gamma}(\vec e,\vec f)\big)^{\vee}\oplus z^{-1}(12)^*\big(\bF_{\gamma\beta}^{\bar{Q}(r)}(\vec f,\vec e)\big)^\vee,
  \end{align*}
  where the last line can be read off from inserting the direct sum depicted in \eqref{wc:fig:wc-ms-fixed-locus-3} into \eqref{wc:eq:master-space-relative-cotangent} and multiplying out the resulting expression. Details of this can be found in the argument leading up to \cite[(8.29)]{Bo25}.
 
  In the first two cases, the virtual normal bundle is then simply constructed from the relative cotangent bundles above, since the embedding of $\bM_\pm$ into $\bM$ composed with $\pi$ is simply $\nu_\pm$. Note also that \eqref{eq:Omega-pi-locus-3} implies that $\bF^{\bar{Q}(r)}_{\beta\gamma}(\vec e,\vec f)$ and $(12)^*\bF_{\gamma\beta}^{\bar{Q}(r)}(\vec f,\vec e)$ when restricted to $M^{\bar{Q}(r)}_{\beta,\vec e}(\tau^{s_k})  \times M^{\bar{Q}(r)}_{\gamma,\vec f}(\tau^{s_k})$ are both vector bundles.

  Since the obstruction theory $\bE$ can be expressed using the bilinear elements $\Theta_{\alpha,\al}$ as in \eqref{eq:EE=Delta-Theta} we obtain
  \begin{equation}\label{wc:eq:virtual-tangent-pulled-back-crossing-fixed-locus}
    \pi^*\bE_{\al}|_{\bM_{(\beta,\vec e)}^{(\gamma,\vec f)}}\cong \nu_\times^*\big(\bE_{\beta}\boxplus \bE_{\gamma}\big) \oplus z\nu_\times^*\Theta_{\beta\gamma}\oplus z^{-1}\nu_\times^*(12)^*\Theta_{\gamma\beta}\,.
  \end{equation}
 This combines with the relative cotangent bundle computations above to give the desired expression
 \begin{align*}
  \left[\bN_\times^>\right] &= \pi^*\bE_\alpha|_{\bM_{(\beta,\vec e)}^{(\gamma,\vec f)}}^>+\Omega_\pi|_{\bM_{(\beta,\vec e)}^{(\gamma,\vec f)}}^>+T_\pi|_{\bM_{(\beta,\vec e)}^{(\gamma,\vec f)}}^>[2]\\
  &= z\nu_\times^*\Theta_{\beta\gamma} + z\bF^{\bar{Q}(r)}_{\beta\gamma}(\vec e,\vec f)^\vee + z(12)^*\bF_{\gamma\beta}^{\bar{Q}(r)}(\vec f,\vec e),
 \end{align*}
 where the first term uses symmetry of $\Theta$, and the third term can be found by dualizing the expression for $\Omega_\pi|_{\bM_{(\beta,\vec e)}^{(\gamma,\vec f)}}$ above.
\end{proof}
\subsubsection{}
\begin{remark}
    Note that in the setting of Assumption \ref{ass:pairWC} we do not have an obstruction theory on $\fN_{d,\al}(\tau^p)$ when $d=1$ as it is only provided on its rigidification. However, we still have the direct sum morphism 
    $$
    \mu:\fN_{1,\al_1}(\tau^p)^{\pl}\times \fN_{0,\al_2}(\tau^p)\to \fN_{1,\al_1+\al_2}(\tau^p)^{\pl}
    $$
    constructed using the de-rigidification map from Definition \ref{def:de-rigidification}. Let $I$ be an object of $\fN_{1,\al_1}(\tau^p)^{\pl}$ and $F$ and object of $\fN_{0,\al_2}(\tau^p)$. Then we see that
    $$
    \operatorname{Rhom}_X(I\oplus G,I\oplus G)_0 = \operatorname{Rhom}_X(I,I)_0\oplus \operatorname{Rhom}_X(I,G)\oplus \operatorname{Rhom}_X(G,I)\oplus \operatorname{Rhom}_X(G,G)
    $$
  so the analog of \eqref{wc:eq:virtual-tangent-pulled-back-crossing-fixed-locus} still holds in this setting. Thus the computation undertaken below will also apply in the case of Assumption \ref{ass:pairWC}.
\end{remark}
\subsubsection{}
\label{sec:flag-wall-crossing}
Recall the Lie algebra $L(\fM^{\bar{Q}(r)})_\loc$ on $\fM^{\bar{Q}(r)}$ outlined in Section~\ref{sec:homology-theories-LA}. For any $(\beta,\vec e)$, where the moduli space $M^{\bar{Q}(r)}_{\beta,\vec e}(\tau^{s})$ has a virtual class, consider the associated universal invariants
\begin{equation*}
  j_*\sZ_{M^{\bar{Q}(r)}_{\beta,\vec e}(\tau^{s})}\in H^\sT(\fM^{\bar{Q}(r),\pl}_{\beta,\vec e})_{\loc}
\end{equation*} 
of \eqref{eq:universal-inv-general}. Since we only work with dimension vectors with $d_1=1$, following Definition~\ref{def:de-rigidification}, we have a canonical de-rigidification map, which we call $I$ for every $(\beta,\vec e)$ by abuse of notation. We then denote
\begin{equation*}
  \sZ_{\beta,\vec e}(\tau^s)\coloneqq \big[I_*j_*\sZ_{M^{\bar{Q}(r)}_{\beta,\vec e}(\tau^{s})}\big]\in L(\fM^{\bar{Q}(r)})_{\loc}
\end{equation*}
for the class of the pushforward along $I$ in the Lie algebra $L(\fM^{\bar{Q}(r)})_{\loc}$, which we recall is defined as a quotient of $H^\sT(\fM^{\bar{Q}(r)})_{\loc}$. In this notation, we have the following identity.
\begin{theorem}
\label{thm:horizontal-flag-wc}
For any $s_k,s_{\pm}\in (0,1)$ as in \eqref{eq:definition-s-pm}, the simple flag wall-crossing formula
    \begin{equation}
    \label{eq:simple.flag-wall-crossing}
        \sZ_{\al,\vec d}(\tau^{s_+})  = \sZ_{\al,\vec d}(\tau^{s_-}) + \sum_{j\in J_k} \left[\sZ_{\be_j,\vec e_j}(\tau^{s_k}), \sZ_{\gamma_j,\vec f_j}(\tau^{s_k})\right]
    \end{equation}
    holds in $L(\fM^{\bar{Q}(r)})_{\loc}$.
\end{theorem}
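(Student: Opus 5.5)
We apply $\bG_m$-localization on the master space $\bM$ of Definition~\ref{wc:def:horizontal-master-space}, in the same way as for $\bbLambda_\alpha$ in \S\ref{sec:generalindependence}. The virtual class $[\hat\cO^\vir_\bM]$ comes from Theorem~\ref{sp:thm:symm-pullback-localization} applied to $\pi:\bM\to\fM^{\Fr,\pl}_\al$. We pick an insertion that cancels the extra $V_0$-directions, for instance
\[
\hat{\fc}_{\rk}\bigl(\Omega_{\pi}^\vee - \Omega_{\pi_{\bar Q(r)}}^\vee\bigr)\otimes(\text{suitable square root}),
\]
with the form chosen so that on each fixed component only terms in $\cV_0$ survive. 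We push the resulting universal invariant $I_*\sZ_\bM$, capped with this insertion, forward to $\fM^{\bar{Q}(r)}$ by forgetting $V_0$. We then take the residue $\rho_z$. The map $\bM\to\fM^{\bar Q(r)}$ is $\bG_m$-equivariant for the trivial action on the target, and the $\sT$-fixed locus of $\bM$ is proper by Assumption~\ref{ass:stab}~(i). So the total residue vanishes by \cite[§8.4]{Bo25}, \cite[Lemma 2.6.4]{KLT25}.

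Next we use Theorem~\ref{sp:thm:symm-pullback-localization}~(c) to write $[\hat\cO^\vir_\bM]$ as a sum over the three kinds of fixed loci listed in Proposition~\ref{prop:master-space-fixed-loci}.

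On $\bM_\pm$, the line bundle $\cV_1^\vee\otimes\cV_0$ (respectively $(\cV_b/\cV_{b-1})^\vee\otimes\cV_0$) is trivialized by the nonzero section $\rho_0$ (respectively $\rho_{-1}$). The normal bundles \eqref{wc:eq:ms-vir-normal-bdl-rho-1} and \eqref{wc:eq:ms-vir-normal-bdl-rho0} therefore contribute residues $\pm1$, exactly as for $\bbLambda_{\rho_4=0}$ and $\bbLambda_{\rho_3=0}$.

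To turn the classes $\tilde\cO^\vir_{\bM_\pm}$ into the classes $\hat\cO^\vir$ obtained by pullback along $\nu_\pm$, we apply Proposition~\ref{prop:comparison} with $\fN=\fM$ and $\varepsilon_Z=1$. The required isomorphism \eqref{sp:comp-ass:cotangent-isom} follows from the decompositions of $\Omega_\pi|_{\bM_\pm}$ stated in the proposition. The resulting signs are $(-1)^{\omega^>_\pi}$. We have $\omega^>_\pi=0$ on $\bM_-$, since the extra summand there has weight $z^{-1}$, and $\omega^>_\pi=1$ on $\bM_+$. These signs give the relative minus sign between $\sZ_{\al,\vec d}(\tau^{s_+})$ and $\sZ_{\al,\vec d}(\tau^{s_-})$. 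The identifications with the moduli spaces $M^{\bar Q(r)}_{\al,\vec d}(\tau^{s_\pm})$ hold on the nose.

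For the cross-term loci $\bM^{(\gamma,\vec f)}_{(\beta,\vec e)}$ we apply Proposition~\ref{prop:comparison} with $\psi=\Phi_{\beta,\gamma}$ and $\delta$ the projection given by \eqref{wc:eq:virtual-tangent-pulled-back-crossing-fixed-locus}. By the same bilinearity argument as in §\ref{sec:pairs-master-3}, this gives $\varepsilon_Z=\varepsilon_{\beta,\gamma}$. The sign $(-1)^{\omega^>_\pi}$ equals $(-1)^{\rk \bF^{\bar Q(r)}_{\beta\gamma}(\vec e,\vec f)}$ by \eqref{eq:Omega-pi-locus-3}. Together these reproduce $\varepsilon^{(\gamma,\vec f)}_{(\beta,\vec e)}$ from Definition~\ref{def:framed-epsilons}.

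The insertion cancels the $z\bF^\vee$ summand of $[\bN^>_\times]$. What remains in the denominator is $\hat\fe_\bT(z\Theta^{(\gamma,\vec f)}_{(\beta,\vec e)})$ from \eqref{eq:Theta-flag}. The $\bG_m$-weight on $E'$ and $\vec V'$ produces $D(z)\otimes\id$ on the first factor. Applying $\rho_z$ therefore yields exactly the bracket \eqref{bg:eq:aux-lie-bracket} of $\sZ_{\beta,\vec e}(\tau^{s_k})$ and $\sZ_{\gamma,\vec f}(\tau^{s_k})$. Summing over $J_k$ and setting the total residue to zero gives \eqref{eq:simple.flag-wall-crossing}.

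The main obstacle is the sign and orientation bookkeeping. Concretely, we must check the following.
\begin{itemize}
\item[(a)] The insertion together with the square-root factors exactly cancels the $\cV_0$ and $\bF$ contributions on every component.
\item[(b)] The signs $(-1)^{\omega^>_\pi}$ from Proposition~\ref{prop:comparison} match the factor $(-1)^{\rk\bF}$ in $\varepsilon^{(\gamma,\vec f)}_{(\beta,\vec e)}$ and the relative sign between $s_\pm$.
\end{itemize}
For this we follow \cite[Lemma 8.4, (8.29)]{Bo25}; the remark after Proposition~\ref{prop:comparison} says these signs agree with the ones computed there.
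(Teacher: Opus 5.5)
Your overall architecture matches the paper's proof. That is: localization on $\bM$, pushforward along the map $\eta\colon\bM\to\fM^{\bar{Q}(r),\pl}$ forgetting $V_0$, vanishing of the total residue, and the signs $\varepsilon_Z(-1)^{\omega^>_\pi}$ from Proposition~\ref{prop:comparison}, with $\omega^>_\pi=0,\,1,\,\rk \bF^{\bar{Q}(r)}_{\beta\gamma}(\vec e,\vec f)$ on $\bM_-$, $\bM_+$, $\bM_\times$. The insertion step, however, is wrong.

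\textbf{The cross terms must not be modified.} The bracket \eqref{bg:eq:aux-lie-bracket} on $L(\fM^{\bar{Q}(r)})_\loc$ is built from
\[
\Theta^{(\gamma,\vec f)}_{(\beta,\vec e)}=\pi^*\Theta_{\beta\gamma}+(12)^*\bF^{\bar{Q}(r)}_{\gamma\beta}(\vec f,\vec e)+\bF^{\bar{Q}(r)}_{\beta\gamma}(\vec e,\vec f)^\vee ,
\]
which already contains both framing terms. Proposition~\ref{prop:master-space-fixed-loci} gives $[\bN^>_\times]=z\Theta^{(\gamma,\vec f)}_{(\beta,\vec e)}$ exactly. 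So nothing may be cancelled on the cross-term loci. If your insertion removes $z\bF^\vee$, the remaining denominator is $\hat{\fe}_{\bT}\big(z\nu_\times^*\Theta_{\beta\gamma}+z(12)^*\bF_{\gamma\beta}\big)$. That is neither the flag bracket nor the bracket on $\fM_{\cat{A}}$. Your claim that the remainder is $\hat{\fe}_{\bT}(z\Theta^{(\gamma,\vec f)}_{(\beta,\vec e)})$ contradicts \eqref{eq:Theta-flag}. Cancelling framing terms by a $\hat{\fc}_{\rk}$-insertion belongs to the steps that push all the way down to $\fM_\alpha$: Proposition~\ref{prop:OmegaWC} and the wedge master space in \S\ref{sec:generalindependence}. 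Here the target is $\fM^{\bar{Q}(r)}$.

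\textbf{Your concrete candidate kills every term.} Since $\pi$ factors through $\eta$, we have $\Omega_\pi^\vee-\eta^*\Omega^\vee_{\pi_{\bar{Q}(r)}}=T_\eta=N_{-1}+N_0-\cO$, where $N_{-1}$ and $N_0$ are the lines in which $\rho_{-1}$ and $\rho_0$ are invariant sections.
\begin{itemize}
\item On $\bM_-$, $T_\eta$ restricts to $\cL_-=\bN^>_-$, so all $z$-dependence cancels and $\rho_z$ gives zero.
\item On $\bM_+$, it restricts to $\cL_+^\vee$ with $\cL_+=\bN^>_+$, so the quotient is $-1$ and again $\rho_z$ gives zero.
\item On $\bM_\times$, both $\rho_{-1}$ and $\rho_0$ are nonzero, so $T_\eta$ is the trivial rank-one class and its Euler class vanishes.
\end{itemize}

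\textbf{The fix.} The paper uses no Euler-class insertion at all, only the twist $(\cL_-\otimes\cL_+)^{1/2}$ with $\cL_-=z\cV_1^\vee\otimes\cV_0$ and $\cL_+=z\cV_0^\vee\otimes(\cV_b/\cV_{b-1})$. Each of these is equivariantly trivial wherever the corresponding map is nonzero. Hence $\rho_z\big(\cL_\pm^{1/2}/\hat{\fe}_{\bT}(\cL_\pm)\big)=1$ on $\bM_\pm$. Both residues are $+1$, not $\pm1$; the relative sign comes only from $(-1)^{\omega^>_\pi}$. On $\bM_\times$ the denominator stays $\hat{\fe}_{\bT}(z\Theta^{(\gamma,\vec f)}_{(\beta,\vec e)})$. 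Together with $\varepsilon_{\beta,\gamma}(-1)^{\rk \bF^{\bar{Q}(r)}_{\beta\gamma}(\vec e,\vec f)}=\varepsilon^{(\gamma,\vec f)}_{(\beta,\vec e)}$, this is exactly the bracket.

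One smaller point: $\Phi$ exists only on the unrigidified stacks. For the cross terms you must first pass from $\bE^{\rig}$ to $\bE$ via Lemma~\ref{lem:Mrig-and-M-no-difference} before applying Proposition~\ref{prop:comparison}.
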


\subsubsection{}
\begin{proof}
 We can use Theorem~\ref{sp:thm:symm-pullback-localization} to obtain the virtual classes $\big[\tilde{\mathcal{O}}_{Z}^{\vir}\big]$ from Theorem~\ref{sp:thm:symm-pullback-localization}(a) for each fixed-point locus in Proposition \ref{prop:master-space-fixed-loci}. They satisfy the localization formula
\begin{equation}\label{wc:eq:master-space-localization-base-form}
    \big[\widehat{\cO}_{\bM}^{\vir}\big]=\iota_*\left(\frac{\big[\tilde{\cO}_{\bM_{-}}^\vir\big]}{\widehat{\fe}_{\bT}\big(\bN^{>}_{-}\big)} + \frac{\big[\tilde{\cO}_{\bM_{+}}^\vir\big]}{\widehat{\fe}_{\bT}\big(\bN^{>}_{+}\big)} + \sum_{j\in J_k} \frac{\big[\tilde{\cO}_{\bM_{\times,j}}^\vir\big]}{\widehat{\fe}_{\bT}\big(\bN^{>}_{\times}\big)}\right),
\end{equation}
where we write $\bM_{\times, j}\coloneqq \bM_{(\beta_j,\vec e_j)}^{(\gamma_j,\vec f_j)}$. Each of the three types of fixed point loci has an alternative description using flag spaces by Proposition~\ref{prop:master-space-fixed-loci}, which yields another virtual class $\big[\widehat{\mathcal{O}}_{Z}^{\vir}\big]$ from Theorem~\ref{sp:thm:symm-pullback-localization}(a). We want to use Proposition~\ref{prop:comparison} to compare these virtual classes with the ones in the localization formula above.

\subsubsection{}\label{wc:sec:hor-flag-comparison}
For the first two types of loci, we have the diagram
\begin{equation*}
    \begin{tikzcd}
     M_{\al,\vec d}^{\bar{Q}(r)}(\tau^{s_\pm}) \ar{d}{\nu_\pm} \ar[hookrightarrow]{r}{\iota_\pm} & \bM \ar[loop right]{r}{\bG_m} \ar{d}{\pi} \\
      \fM_\al^\pl \ar[r,equals] & \fM_\al^\pl,
    \end{tikzcd}
\end{equation*}
where $\nu_\pm$ is the forgetful map $\nu_\pm:M_{\al,\vec d}^{\bar{Q}(r)}(\tau^{s_\pm})\to \fM_{\al}$. Since the bottom morphism is the identity, we see that \eqref{sp:comp-ass:bE-isom} is an isomorphism of CY4 obstruction theories. Proposition~\ref{prop:master-space-fixed-loci} shows that \eqref{sp:comp-ass:cotangent-isom} is satisfied, so applying Proposition~\ref{prop:comparison} tells us that 
\begin{equation*}
  \big[\tilde{\cO}_{\bM_{-}}^\vir\big]=(-1)^{\omega_{\pi,-}^>}\Big[\widehat{\cO}_{M^{\bar{Q}(r)}_{\al,\vec d}(\tau^{s_-})}^\vir\Big], \qquad \big[\tilde{\cO}_{Z_{+}}^\vir\big]=(-1)^{\omega_{\pi,+}^>}\Big[\widehat{\cO}_{M^{\bar{Q}(r)}_{\al,\vec d}(\tau^{s_+})}^\vir\Big],
\end{equation*}
where we compute $\omega_{\pi,-}^>=0$ and $\omega_{\pi,+}^>=1$ by Proposition~\ref{prop:master-space-fixed-loci} above.

For the third type of fixed point loci, we note that all dimension vectors involved have first entry equal to $1$, so that, following Definition~\ref{def:de-rigidification}, we have canonical de-rigidification maps all denotes by $I$ by abuse of notation. Hence, we have $\Pi\circ \tilde{\pi}\circ I=\pi$ and $\Pi\circ \tilde{\nu}_\times\circ I=\nu_\times$, where $\tilde{\pi}$ and $\tilde{\nu}_\times$ are the corresponding maps of un-rigidified stacks. Now use Lemma \ref{lem:Mrig-and-M-no-difference} to note that $\big[\tilde{\cO}_{\bM_{(\beta,\vec e)}^{(\gamma, \vec f)}}^\vir\big]$ can equivalently be constructed by starting from the obstruction theory $\bE$ on $\fM_{\cat{A}}$ and applying Theorem \ref{sp:thm:symm-pullback-localization}~(a) along the projection $\tilde{\pi}\circ I:\bM\to \fM_{\al}$. Thus, we may apply Proposition \ref{prop:comparison} to the diagram
\begin{equation*}
    \begin{tikzcd}
      \bM_{(\beta,\vec e)}^{(\gamma, \vec f)}\cong M^{\bar{Q}(r)}_{\beta,\vec e}(\tau^{s_k})  \times M^{\bar{Q}(r)}_{\gamma,\vec f}(\tau^{s_k}) \ar{d}{\tilde{\nu}_\times\circ I} \ar[hookrightarrow]{r}{\iota_\times} & \bM \ar[loop right]{r}{\bG_m} \ar{d}{\tilde{\pi}\circ I} \\
      \fM_\beta\times\fM_\gamma \ar[r,"\Phi"] & \fM_\al,
    \end{tikzcd}
\end{equation*}
where $\Phi$ is the morphism induced by direct sums. By Proposition~\ref{prop:master-space-fixed-loci} and the computation in \eqref{wc:eq:virtual-tangent-pulled-back-crossing-fixed-locus}, the conditions \eqref{sp:comp-ass:cotangent-isom} and \eqref{sp:comp-ass:bE-isom} are satisifed. Comparing the splitting \eqref{wc:eq:virtual-tangent-pulled-back-crossing-fixed-locus} to \cite[(4.3)]{Bo25}, we deduce that the comparison of orientations \eqref{as:eq:orientation-sum-discrepancy} is identified with the one from \eqref{sp:comp-ass:bE-isom}, so the additional sign is $ \varepsilon_{\be,\gamma}$. By Proposition~\ref{prop:comparison}, we then get\footnote{See \cite[(8.9)]{Bo25} for the case when obstruction theories exist.}
\begin{equation*}
  \Big[\tilde{\cO}_{\bM_{(\beta,\vec e)}^{(\gamma, \vec f)}}^\vir\Big] = \varepsilon_{\beta,\gamma}(-1)^{\omega_{\pi,\vec e, \vec f}^>} \Big[\widehat{\cO}^\vir_{M^{\bar{Q}(r)}_{\beta,\vec e}(\tau^{s_k}) }\Big]\boxtimes \Big[\widehat{\cO}^\vir_{M^{\bar{Q}(r)}_{\gamma,\vec f}(\tau^{s_k})}\Big].
\end{equation*}
By the computation of the moving part of the relative cotangent sheaf in Proposition~\ref{prop:master-space-fixed-loci} above, we get $\omega_{\pi,\vec e, \vec f}^>=\rk\big(\bF^{\bar{Q}(r)}_{\beta\gamma}(\vec e,\vec f)\big)$.

\subsubsection{}
We now insert the simplifications from \ref{wc:sec:hor-flag-comparison} into \eqref{wc:eq:master-space-localization-base-form}, and consider the corresponding universal invariants. All dimension vectors involved have first entry equal to $1$, so that, following Definition~\ref{def:de-rigidification}, we have canonical de-rigidification maps. For clarity, we suppress these and the open embeddings of semistable loci from the notation below. We push forward our universal invariants along the forgetful map
\begin{equation*}
  \eta:\bM\to \fM^{\bar{Q}(r),\pl},
\end{equation*}
which satisfies $\eta\circ \iota_\pm=\id$ and $\eta\circ \iota_\times=\Phi^{\bar{Q}(r)}$, where $\Phi^{\bar{Q}(r)}$ is the morphism induced by direct sums on $\fM^{\bar{Q}(r),\pl}$ via de-rigidification. 

To make the K-theoretic residues well-behaved, we introduce the insertion
\begin{equation*}
  \cL_-\coloneqq z\cV_1^\vee\otimes \cV_0,\qquad \cL_+\coloneqq z\cV_0^\vee\otimes(\cV_b/\cV_{b-1})\,.
\end{equation*}
Since the action of $\bG_m$ on $ \fM^{\bar{Q}(r),\pl}$ is trivial, we may take the residue\footnote{In the case of the equivariant homology theories from §\ref{sec:equivariant-homology-bivariant}, this is possible due to Example \ref{ex:trivialThom}.}
\begin{equation*}
  \rho_z\left\{\eta_*\left(\sZ_\bM\cap (\cL_-\otimes\cL_+)^{\frac{1}{2}}\right)\right\}
\end{equation*}
which vanishes due to the properness in Assumption \ref{ass:stab} (i) (see \cite[§8.4]{Bo25}, \cite[Lemma 2.6.4]{KLT25}). Note that $\cL_-$ is trivial on $\bM_+$ and all $\bM_{\times,j}$, whereas $\cL_+$ is trivial on $\bM_-$ and all $\bM_{\times,j}$. Hence, using Proposition~\ref{prop:master-space-fixed-loci} and the computations in Section~\ref{wc:sec:hor-flag-comparison} above, we get from \eqref{wc:eq:master-space-localization-base-form}
\begin{align*}
  0 = &\rho_z\id_*\left(\sZ_{\al,\vec d}(\tau^{s_-})\cap \frac{\cL_-^{\frac{1}{2}}}{\widehat{\fe}_{\bT}\big(\cL_-\big)}\right)-\rho_z\id_*\left(\sZ_{\al,\vec d}(\tau^{s_+})\cap \frac{\cL_+^{\frac{1}{2}}}{\widehat{\fe}_{\bT}\big(\cL_+\big)}\right)\\
  &+\sum_{j\in J_k}\varepsilon_{\beta_j,\vec e_j}^{\gamma_j,\vec f_j}\rho_z\Phi^{\bar{Q}(r)}_*(D(z)\times\id)\Bigg(\frac{\sZ_{\beta_j,\vec e_j}(\tau^{s_k})\boxtimes\sZ_{\gamma_j,\vec f_j}(\tau^{s_k})}{\widehat{\fe}_{\bT}\big(z\nu_\times^*\Theta_{\beta_j\gamma_j} + z\bF^{\bar{Q}(r)}_{\beta_j\gamma_j}(\vec e_j,\vec f_j)^\vee + z(12)^*\bF_{\gamma_j\beta_j}^{\bar{Q}(r)}(\vec f_j,\vec e_j)\big)}\Bigg)
\end{align*}
using the signs $\varepsilon_{(\beta_j,\vec e_j)}^{(\gamma_j, \vec f_j)}$ from Definition~\ref{def:framed-epsilons}. Here, the $D(z)\times\id$ operator enters the third term after localization, since the first entry is scaled by $z$ as illustrated in figure \eqref{wc:fig:wc-ms-fixed-locus-3}. We recognize the class $\Theta_{(\beta_j,\vec e_j)}^{(\gamma_j,\vec f_j)}$ from \eqref{eq:Theta-flag} in the denominator of the third term, which leads to
\begin{equation*}
  \varepsilon_{\beta_j,\vec e_j}^{\gamma_j,\vec f_j}\rho_z\Phi^{\bar{Q}(r)}_*(D(z)\times\id)\Bigg(\frac{\sZ_{\beta_j,\vec e_j}(\tau^{s_k})\boxtimes\sZ_{\gamma_j,\vec f_j}(\tau^{s_k})}{\widehat{\fe}_{\bT}\big(\Theta_{(\beta_j,\vec e_j)}^{(\gamma_j,\vec f_j)}\big)}\Bigg)=\left[\sZ_{\be_j,\vec e_j}(\tau^{s_k}), \sZ_{\gamma_j,\vec f_j}(\tau^{s_k})\right]
\end{equation*}
using the definition of the Lie bracket in \eqref{bg:eq:aux-lie-bracket}. Applying 
\begin{equation*}
  \rho_z \frac{(z\cL)^{\frac{1}{2}}}{\hat{\fe}(z\cL)} = 1\,,
\end{equation*}
which holds for any equivariantly trivial line bundle $\cL$, to the definition of $\cL_{\pm}$, we can also evaluate the residues in the first two terms to obtain the desired expression \eqref{eq:simple.flag-wall-crossing}.
\end{proof}

\subsection{Projecting along Flags}\label{sec:flagprojection}
\subsubsection{}
Here, we follow the arguments of \cite[§8.5]{Bo25} using Corollary \ref{cor:flag-pushforward}. We want to prove a wall-crossing formula between $\breve{\sZ}_{\alpha}^{\Fr}(\tau)$ and $\breve{\sZ}_{\alpha}^{\Fr}(\mathring{\tau})$ where $\tau$ and $\mathring{\tau}$ were fixed in §\ref{wc:sec:dominant-wc-strategy}. For this, recall from \cite[(8.2)]{Bo25}, which follows from \cite[Lemma 11.4, Proposition 10.2 and 10.5]{Joyce2021}, that
$$
M_{\al,\vec d}^{\bar{Q}(r)}(\tau^{1}) = M_{\al,\vec d}^{\bar{Q}(r)}\big((\mathring{\tau})^{0}\big)\qquad \textnormal{and}\qquad \Big[\wh{\cO}^{\vir}_{M_{\al,\vec d}^{\bar{Q}(r)}(\tau^{1})}\Big] = \Big[\wh{\cO}^{\vir}_{M_{\al,\vec d}^{\bar{Q}(r)}\big((\mathring{\tau})^{0}\big)}\Big]\,.
$$
For $\zeta = \tau,\mathring{\tau}$, there is a commutative diagram of forgetful maps
\begin{equation}
\label{eq:NrigstoMrig}
\begin{tikzcd}[column sep=large]
\arrow[dr," \pi^{\bar{Q}}_{\fM^{\Fr}_\alpha}"']\fM_{\al,\vec d}^{\bar{Q}(r)}\arrow[r,"\pi^{\Fr}_{\al,\vec{d}/1}"]& \fM^{Q^{\JS}}_{1,\al}\arrow[d," \pi^{\JS}_{\fM^{\Fr}_\alpha}"]\\
    &\fM_{\al},
\end{tikzcd}\qquad
\begin{tikzcd}[column sep=large]
\arrow[dr," \pi^{\bar{Q}}_{\fM^{\Fr,\pl}_\alpha}"']M_{\al,\vec d}^{\bar{Q}(r)}(\zeta^{0})\arrow[r,"\pi^{\Fr,\pl}_{\al,\vec{d}/1}"]& M^{\Fr}_{1,\al} (\zeta^{\JS})\arrow[d," \pi^{\JS}_{\fM^{\Fr,\pl}_\alpha}"]\\
    &\fM^{\pl}_{\al},
\end{tikzcd}
\end{equation}
where the horizontal map only retains the composed map $V_1\to\Fr(E)$, and the second diagram is induced by the first via rigidification and restricting to semistable loci. The second diagram above satisfies the assumption of Proposition \ref{prop:smoothpushforward}. Moreover, the map $\pi_{\al,\vec{d}/1}^{\Fr,\pl}$ is the full flag-bundle of the vector bundle $\cFr(\cE)/\cV_1$ of rank $\fr(\al)-1$, so we can apply Corollary \ref{cor:flag-pushforward} to prove that 
\begin{align*}
&\big(\pi^{\bar{Q}}_{\fM^{\Fr}_\alpha}\big)_*\Big(\sZ_{\al,\vec d}^{\bar{Q}(r)}(\zeta^{0})\cap\, \hat \fe\big(T_{\pi^{\bar{Q}}_{\fM^{\Fr}_\alpha}}\big)\Big)&\\
= &\big(\pi^{\JS}_{\fM^{\Fr}_\alpha}\big)_*\bigg(\big(\pi^{\Fr}_{\al,\vec{d}/1}\big)_*\Big(\sZ_{\al,\vec d}^{\bar{Q}(r)}(\zeta^{0})\cap\, \hat{\fe}\big(T_{\pi^{\Fr}_{\al,\vec{d}/1}}\big)\Big)\cap\, \hat{\fe}(T_{\pi^{\JS}_{\fM^{\Fr}_\alpha}})\bigg)&\text{(by \eqref{eq:NrigstoMrig})}\\  
=&\big(\fr(\al)-1\big)!\cdot\big(\pi_{\fM^{\Fr}_\alpha}\big)_*\Big(\sZ_{\alpha,1}^{\Fr}(\zeta^{\JS})\cap\, \hat \fe \big(T_{\pi_{\fM^{\Fr}_\alpha}}\big)\Big)&\text{(by Cor.~\ref{cor:flag-pushforward})}\\
=&\fr(\al)! \cdot  \breve{\sZ}_{\alpha}^{\Fr}(\tau)\,.&\text{(by \eqref{eq:pairs-invariant})}
\end{align*}
 Here, we suppress the canonical de-rigidification maps of Definition~\ref{def:de-rigidification} from the notation.

\subsubsection{}
The above computation motivates the definition of 
\begin{equation}\label{wc:def:breve-Z-alpha-d}
\breve{\sZ}_{\alpha,\vec d}^{\Fr}(\tau^{s}) := \frac{1}{\fr(\al)!}\big(\pi_{\fM^{\Fr}_\alpha}\big)_*\Big(\sZ_{\al,\vec d}^{\bar{Q}(r)}(\tau^{s})\cap\, \hat \fe\big(T_{\pi_{\fM^{\Fr}_\alpha}}\big)\Big)
\end{equation}
whenever there are no strictly $\tau^s$-semistable objects of class $(\al,\vec d)$. The wall-crossing between $\breve{\sZ}_{\alpha}^{\Fr}(\tau)$ and $\breve{\sZ}_{\alpha}^{\Fr}(\mathring{\tau})$ is the sum over all $k\in \{1,\ldots,p\}$ from Figure \ref{fig:dominant-wc-strategy-1} of the wall-crossing contribution in the next proposition, which hence finishes the proof of the dominant wall-crossing between $\tau$ and $\mathring{\tau}$.
\begin{proposition}
\label{prop:OmegaWC}
Fix $k\in \{1,\ldots,p\}$ and $s_k,s_{\pm}\in (0,1)$ as in \eqref{eq:definition-s-pm}. Then 
\begin{equation}
\label{eq:OmegaWC}
\breve{\sZ}_{\alpha,\vec d}^{\Fr}(\tau^{s_+}) - \breve{\sZ}_{\alpha,\vec d}^{\Fr}(\tau^{s_-}) = \mathlarger{\mathlarger{\sum}}_{j\in J_k}{\fr(\al)\choose \fr(\beta_j)
}^{-1}\Big[\breve{\sZ}_{\beta_j,\vec e_j}^{\Fr}(\tau^{s_k}),\breve{\sZ}_{\gamma_j,\vec f_j}^{\Fr}(\tau^{s_k})\Big]\,.
\end{equation}
holds in $L(\fM_{\cat{A}})_{\loc}$.
\end{proposition}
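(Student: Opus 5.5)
The plan is to push the simple flag wall-crossing formula \eqref{eq:simple.flag-wall-crossing} of Theorem~\ref{thm:horizontal-flag-wc} forward along the forgetful map $\pi^{\bar Q}\colon\fM^{\bar{Q}(r)}\to\fM_{\cat{A}}$, after capping with $\hat\fe(T_{\pi^{\bar Q}})$, and to divide by $\fr(\al)!$. This map is not a morphism of graded monoidal stacks, so the bracket terms need care.

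The two terms $\sZ_{\al,\vec d}(\tau^{s_\pm})$ go, by the definition \eqref{wc:def:breve-Z-alpha-d}, directly to $\fr(\al)!\,\breve{\sZ}^{\Fr}_{\al,\vec d}(\tau^{s_\pm})$. Since \eqref{eq:simple.flag-wall-crossing} is an identity in the quotient $L(\fM^{\bar Q(r)})_\loc$, we first lift it to an identity in $H^\sT(\fM^{\bar Q(r)})_\loc$. The master-space residue argument in the proof of Theorem~\ref{thm:horizontal-flag-wc} already gives such a lift, with explicit representatives $\rho_z\Phi^{\bar Q(r)}_*(D(z)\times\id)(\cdots)$. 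We then check, using Lemma~\ref{lem:pl-group-functoriality}, that capping with $\hat\fe(T_{\pi^{\bar Q}})$ and pushing forward sends $\im(1-D(z))$ into $\im(1-D(z))$. This holds because $T_{\pi^{\bar Q}}$ has $z^{\deg}$-weight zero: it is built from $\cV_i^\vee\otimes\cV_j$ and $\cFr(\cE)\otimes\cV^\vee$, which are invariant under the diagonal scaling $\Psi$.

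For the bracket term attached to $j\in J_k$, we restrict $T_{\pi^{\bar Q}}$ along $\Phi^{\bar Q(r)}$ to $M^{\bar Q(r)}_{\beta,\vec e}\times M^{\bar Q(r)}_{\gamma,\vec f}$. By bilinearity it splits as
\[
T_{\pi^{\bar Q}_\beta}\boxplus T_{\pi^{\bar Q}_\gamma}\;-\;(12)^*\bF^{\bar Q(r)}_{\gamma\beta}(\vec f,\vec e)\;-\;\bF^{\bar Q(r)}_{\beta\gamma}(\vec e,\vec f)^\vee
\]
together with $z$-weighted cross terms. Comparing with \eqref{eq:Theta-flag}, the factors $\hat\fe(z\,\bF)$ in $\hat\fe_\bT(z\Theta^{(\gamma,\vec f)}_{(\beta,\vec e)})^{-1}$ cancel against the cross terms of $\hat\fe(T_{\pi^{\bar Q}})$ after $D(z)$ is applied. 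What remains is $\hat\fe_\bT(z\Theta_{\beta\gamma})^{-1}$ and the sign $(-1)^{\rk \bF}$ from Definition~\ref{def:framed-epsilons}, which converts $\varepsilon^{(\gamma,\vec f)}_{(\beta,\vec e)}$ back to $\varepsilon_{\beta,\gamma}$. Since $\pi^{\bar Q}$ commutes with $\Phi$ and $\Psi$, the pushforward of the flag bracket becomes
\[
\big[\pi_*(\sZ_{\beta,\vec e}\cap\hat\fe(T)),\;\pi_*(\sZ_{\gamma,\vec f}\cap\hat\fe(T))\big]
\]
in $L(\fM_{\cat A})_\loc$. By \eqref{wc:def:breve-Z-alpha-d} this equals $\fr(\beta)!\,\fr(\gamma)!\,[\breve\sZ_\beta,\breve\sZ_\gamma]$. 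Dividing by $\fr(\al)!$ and using $\fr(\al)=\fr(\beta)+\fr(\gamma)$ (exactness of $\Fr$) gives the binomial coefficient ${\fr(\al)\choose\fr(\beta)}^{-1}$.

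The main obstacle is this K-theoretic cancellation. We must verify that $\hat\fe(T_{\pi^{\bar Q}})$ restricted to the direct-sum locus factors exactly against the framing part of $\Theta^{(\gamma,\vec f)}_{(\beta,\vec e)}$, including the square-root determinant twists in $\hat\fe$, which could leave residual powers of $z^{1/2}$ and signs. The fallback is to follow \cite[§8.5]{Bo25} verbatim. There the identity is checked on rational functions before taking $\rho_z$, using that the $z$-weight-$\pm1$ parts of $T_{\pi^{\bar Q}}$ pair off symmetrically. Alternatively, one can observe that $T_{\pi^{\bar Q}}$ is a difference of bundles whose $\hat\fe$ is multiplicative under the splitting, so the cancellation reduces to the identity $\hat\fe(zV)\hat\fe(z^{-1}V^\vee)^{-1}=(-1)^{\rk V}$ for each cross term $V$.
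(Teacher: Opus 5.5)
Your proposal is correct and is essentially the paper's proof. You apply $(\pi_{\fM^{\Fr}_\alpha})_*\big(-\cap\hat{\fc}_{\rk}(\bL^\vee_{\pi_{\fM^{\Fr}_\alpha}})\big)$ to \eqref{eq:simple.flag-wall-crossing}, split the relative tangent bundle on the product locus as in \eqref{eq:summands-of-cotangent}, and cancel it against the framing part of $\Theta^{(\gamma,\vec f)}_{(\beta,\vec e)}$ up to the sign $(-1)^{\rk\bF^{\bar{Q}(r)}_{\beta\gamma}(\vec e,\vec f)}$, which turns $\varepsilon^{(\gamma,\vec f)}_{(\beta,\vec e)}$ into $\varepsilon_{\beta,\gamma}$; the factorials then give the binomial coefficient, and your explicit check that the construction respects $\im(1-D(z))$ only spells out what the paper takes from \cite[Theorem 2.12]{GJT} and \cite[\S 8.5]{Bo25}. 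The one slip is your displayed splitting, which should read $\Phi^*T_{\pi_{\fM^{\Fr}_\alpha}}=T_{\pi_{\fM^{\Fr}_\beta}}\boxplus T_{\pi_{\fM^{\Fr}_\gamma}}\oplus\bF^{\bar{Q}(r)}_{\beta\gamma}(\vec e,\vec f)\oplus(12)^*\bF^{\bar{Q}(r)}_{\gamma\beta}(\vec f,\vec e)$ with no minus signs or duals: under $D(z)\times\id$ the two cross terms acquire weights $z^{-1}$ and $z$, one cancels exactly against $z\Theta^{(\gamma,\vec f)}_{(\beta,\vec e)}$, and the other yields the sign through your identity $\hat{\fe}(z^{-1}V)\,\hat{\fe}(zV^\vee)^{-1}=(-1)^{\rk V}$.
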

\subsubsection{}
\begin{proof}[Proof of Proposition \ref{prop:OmegaWC}]
To prove \eqref{eq:OmegaWC}, one applies 
\begin{equation}
\label{eq:projection-morphism}
\big(\pi_{\fM^{\Fr}_\alpha}\big)_*\left(-\cap \,\hat{\fc}_{\rk}\big(\bL^{\vee}_{\pi_{\fM^{\Fr}_\alpha}}\big)\right)
\end{equation}
to \eqref{eq:simple.flag-wall-crossing}. This was done in homology explicitly in  \cite[§8.5]{Bo25}. It was also observed there that one can apply \cite[Theorem 2.12]{GJT} directly. This is true even though $\bL^{\vee}_{\pi_{\fM^{\Fr}_\alpha}}$ is not a vector bundle on the entire $\fM^{\bar{Q}(r)}_{\al,\vec d}$. The following restriction
\begin{equation}
\label{eq:summands-of-cotangent}
\Phi^*T_{\pi_{\fM^{\Fr}_\alpha}}|_{M^{\bar{Q}(r)}_{\beta,\vec e}(\tau^{s_k})  \times M^{\bar{Q}(r)}_{\gamma,\vec f}(\tau^{s_k})} = T_{\pi_{\fM^{\Fr}_\beta}}\boxplus T_{\pi_{\fM^{\Fr}_\gamma}}\oplus  \bF^{\bar{Q}(r)}_{\beta\gamma}(\vec e,\vec f)\oplus(12)^*\bF_{\gamma\beta}^{\bar{Q}(r)}(\vec f,\vec e)\,,
\end{equation}
where we suppress the de-rigidification maps, is still a  vector bundle because it is the pullback of \eqref{eq:Omega-pi-locus-3}. Thus each of its summands is again a vector bundles, and the argument that proves \cite[Theorem 2.12]{GJT} still applies so \eqref{eq:projection-morphism} acts as a morphism of Lie algebras when restricted to \eqref{eq:simple.flag-wall-crossing}. Since the proof of \cite[Theorem 2.12]{GJT} is not publically available anywhere, we summarize the computation from \cite[§8.5]{Bo25}.

For this, we focus on the single summand 
\begin{equation}
\label{eq:single-bracket-puhsforward}
(\pi_{\fM^{\Fr}_{\al}})_*\left(\left[\sZ_{\be,\vec e}(\tau^{s_k}), \sZ_{\gamma,\vec f}(\tau^{s_k})\right]\cap\hat{\fc}_{\rk}(\bL^{\vee}_{\pi_{\fM^{\Fr}_\alpha}})\right)
\end{equation}
from \eqref{eq:simple.flag-wall-crossing}. We may write this as
$$
 \varepsilon_{\beta,\vec e}^{\gamma,\vec f}\rho_z \big(\pi_{\fM^{\Fr}_{\al}}\circ \Phi^{\bar{Q}(r)}\big)_*(D(z)\times\id)\Bigg(\frac{\sZ_{\beta,\vec e}(\tau^{s_k})\boxtimes\sZ_{\gamma,\vec f}(\tau^{s_k})}{\widehat{\fe}_{\bT}\big(\Theta_{(\beta,\vec e)}^{(\gamma,\vec f)}\big)}\cap\hat{\fc}_{\rk}\big((\Psi\times \id)^*\Phi^*\bL^{\vee}_{\pi_{\fM^{\Fr}_\alpha}}\big)\Bigg)
$$
where we identified the K-theory class of the universal line bundle on $B\bG_m$ with $z$. Using \eqref{eq:summands-of-cotangent} and including the $B\bG_m$-weights, $\cap\,\hat{\fc}_{\rk}\big((\Psi\times \id)^*\Phi^*\bL^{\vee}_{\pi_{\fM^{\Fr}_\alpha}}\big)$ becomes
$$
\cap\left(\hat{\fe}\big(T_{\pi_{\fM^{\Fr}_\beta}}\big)\boxtimes \hat{\fe}\big(T_{\pi_{\fM^{\Fr}_\gamma}}\big)\otimes  \hat{\fe}\big(z^{-1}\bF^{\bar{Q}(r)}_{\beta\gamma}(\vec e,\vec f)\big)\otimes\hat{\fe}\big(z(12)^*\bF_{\gamma\beta}^{\bar{Q}(r)}(\vec f,\vec e)\big)\right)\,.
$$
Combining with the formula \eqref{eq:Theta-flag}, we can cancel out the last two contributions in \eqref{eq:summands-of-cotangent} while picking up precisely the sign $(-1)^{\rk\big(\bF_{\beta\gamma}^{Q(\vec \Fr)}(\vec e,\vec f)\big)}$. Thus we may write \eqref{eq:single-bracket-puhsforward} as 
$$
 \varepsilon_{\beta,\gamma}\rho_z \Phi_*(D(z)\times\id)\left(\frac{({\pi}_{\fM^{\Fr}_{\be}})_*\left(\sZ_{\beta,\vec e}(\tau^{s_k})\cap \hat{\fe}(T_{\pi_{\fM^{\Fr}_\be}})\right)\boxtimes({\pi}_{\fM^{\Fr}_{\gamma}})_*\left(\sZ_{\gamma,\vec f}(\tau^{s_k})\cap \hat{\fe}(T_{\pi_{\fM^{\Fr}_\gamma}})\right)}{\widehat{\fe}_{\bT}\big(\Theta_{(\beta,\gamma)}\big)}\right)\,.
$$
Using \eqref{wc:def:breve-Z-alpha-d} then immediately implies the statements of the proposition once the factorials are included.
\end{proof}
\subsubsection{}
As explained in \S\ref{wc:sec:dominant-wc-strategy}, using the defining relation \eqref{eq:sstable-def} of semistable invariants, we can prove the dominant wall-crossing formula for a fixed $\Fr$
\begin{equation}  
\label{eq:Fr-dependent-wall-crossing}
  \sz^{\Fr}_\alpha(\mathring{\tau}) = \sum_{\substack{n>0\\\alpha = \alpha_1 + \cdots + \alpha_n\\\forall i:\, \fM_{\alpha_i}(\tau)\neq \emptyset}}\tilde U\left(\alpha_1,\dots,\alpha_n;\tau,\mathring{\tau}\right)\left[\left[\cdots\left[\sz^{\Fr}_{\alpha_1}(\tau),\sz^{\Fr}_{\alpha_2}(\tau)\right],\cdots\right],\sz^{\Fr}_{\alpha_n}(\tau)\right]\,.
\end{equation}
The detailed combinatorics of collecting the \eqref{eq:OmegaWC} to get the above wall-crossing formula is explained in \cite[\S10.5]{Joyce2021}. Using the independence of Theorem~\ref{thm:sst-invariants}, this wall-crossing extends to any two stability condition $\tau,\mathring{\tau}\in W$, finishing the proof of Theorem~\ref{thm:general-wall-crossing-intro}.

As explained in \cite[Remark 1.3]{Bo25}, sometimes one can instead directly piece the wall-crossing formula together as in \S\ref{sec:piecing-wc-together} for a fixed $\Fr$. This produces for example \eqref{eq:JSwall-crossingformula}.

\phantomsection
\addcontentsline{toc}{section}{References}

\begin{small}
\bibliographystyle{alpha}
\bibliography{paper}
\end{small}

\end{document}